\theoremstyle{plain}
\newtheorem{thm}{Theorem}[section]
\newtheorem{cor}[thm]{Corollary}
\newtheorem{lem}[thm]{Lemma}
\newtheorem{prop}[thm]{Proposition}
\newtheorem{conj}[thm]{Conjecture}
\theoremstyle{remark}
\newtheorem{ex}[thm]{Example}
\newtheorem{defn}{Definition}[section]
\newtheorem{rem}{Remark}[section]
\newtheorem{prob}{Problem}
\newtheorem{obs}{Observation}
\numberwithin{equation}{section}
\newcommand{\ra}{\rightarrow}
\newcommand{\Gal}{\mbox{Gal}}
\newcommand{\fS}{\mathfrak{S}}
\newcommand{\ZZ}{\mathbb Z}
\newcommand{\CC}{\mathbb C}
\newcommand{\EEE}{\mathbb E}
\newcommand{\QQ}{\mathbb Q}
\newcommand{\NN}{\mathbb N}
\newcommand{\PP}{\mathbb P}
\newcommand{\FF}{\mathbb F}
\newcommand{\LL}{\mathbb L}
\newcommand{\KK}{\mathbb K}
\newcommand{\bA}{\mathbb A}
\newcommand{\GQ}{\Gal(\overline{\QQ}/\QQ)}
\newcommand{\GK}{\Gal(\overline{\QQ}/K)}
\newcommand{\MM}{\mathcal M}
\newcommand{\OO}{\mathcal O}
\newcommand{\EE}{\mathcal E}
\newcommand{\Ql}{\QQ_{\ell}}
\newcommand{\ba}{\mathbf a}
\newcommand{\fA}{\mathfrak{A}}
\newcommand{\fB}{\mathfrak{B}}
\newcommand{\fC}{\mathfrak{C}}
\newcommand{\fp}{\mathfrak{p}}
\newcommand{\lra}{\longrightarrow}
\newcommand{\rank}{{\rm rank}\, }
\newcommand{\pic}{{\rm Pic}\, }
\newcommand{\lcm}{{\rm lcm}\, }
\begin{document}

\title[Automorphy of Calabi--Yau threefolds
of Borcea--Voisin type over $\QQ$]{Automorphy of  
Calabi--Yau threefolds of Borcea--Voisin type over $\QQ$}
\author{Yasuhiro Goto}
\address{Department of Mathematics, Hokkaido University
of Education, 1-2 Hachiman-cho, Hakodate 040-8567 Japan}
\email{goto.yasuhiro@h.hokkyodai.ac.jp}
\author{Ron Livn\'e}
\address{Institute of Mathematics, Hebrew University
of Jerusalem Givat-Ram, Jerusalem 919004, Israel}
\email{rlivne@math.huji.ac.il}
\author{Noriko Yui}
\address{Department of Mathematics and Statistics, Queen's University,
Kingston, Ontario Canada K7L 3N6}
\email{yui@mast.queensu.ca}
\date{\today}
\subjclass{Primary 14J32, 14J20; Secondary }
\keywords{Calabi--Yau threefolds of Borcea--Voisin type, involutions,
K3-fibrations, Elliptic fibrations, CM points,
Shimura varieties, Calabi--Yau varieties of CM type,
$L$-series, automorphy, motives}
\begin{abstract}
We consider certain Calabi--Yau threefolds
of Borcea--Voisin type defined over $\QQ$. We will discuss the
automorphy of the Galois representations associated to these 
Calabi--Yau threefolds. We construct such Calabi--Yau 
threefolds as the quotients of products of $K3$ surfaces $S$ 
and elliptic curves by a specific involution. 
We choose $K3$ surfaces $S$ over $\QQ$ 
with non-symplectic involution $\sigma$ acting by $-1$ on 
$H^{2,0}(S)$. We fish out $K3$ surfaces with the involution 
$\sigma$ from the famous $95$ families of $K3$ surfaces in 
the list of Reid \cite{R79}, and of Yonemura \cite{Yo89}, where
Yonemura described hypersurfaces defining these $K3$ surfaces 
in weighted projective $3$-spaces.

Our first result is that for all but few (in fact, nine) of the
$95$ families of $K3$ surfaces $S$ over $\QQ$ in Reid--Yonemura 
list, there are subsets of equations defining quasi-smooth 
hypersurfaces which are of Delsarte or Fermat type and endowed with 
non-symplectic involution $\sigma$. One implication of this 
result is that with this choice of defining equation,  
$(S,\sigma)$ becomes of CM type.

Let $E$ be an elliptic curve
over $\QQ$ with the standard involution $\iota$, and let
$X$ be a standard (crepant) resolution, defined over $\QQ$,
of the quotient threefold $E\times S/\iota\times\sigma$,
where $(S,\sigma)$ is one of the above $K3$ surfaces 
over $\QQ$ of CM type.  One of our main results is the automorphy of 
the $L$-series of $X$.

The moduli spaces of these Calabi--Yau threefolds are
Shimura varieties.  Our result shows the existence of a 
CM point in the moduli space.

We also consider the $L$-series of mirror pairs of 
Calabi--Yau threefolds of Borcea--Voisin type, and study
how $L$-series behave under mirror symmetry.
\end{abstract}

\maketitle
\setcounter{tocdepth}{1}
\tableofcontents

\section{Introduction}\label{sect1}

We will address the automorphy
of the Galois representations associated to certain
Calabi-Yau threefolds of Borcea--Voisin type over $\QQ$.
Here by the automorphy, we mean the Langlands reciprocity
conjecture which claims that the $L$-series (of 
the 
$\ell$-adic \'etale cohomology group) of the Calabi-Yau 
threefolds over $\QQ$ come from automorphic 
representations.  
For our Calabi--Yau threefolds, we will show that these 
representations arise as induced automorphic cuspidal
representations of $GL_2(K)$ of some abelian number 
fields $K$. 

Our Calabi--Yau threefolds were previously considered by
Voisin \cite{V93} and also by Borcea \cite{B94} from the point 
of view of geometry and also towards physics (mirror symmetry) 
applications.

We now describe briefly the Borcea--Voisin construction of 
Calabi--Yau threefolds over $\CC$.  Let $E$ be any elliptic 
curve with involution $\iota$, 
and let $S$ be 
a $K3$ surface with involution $\sigma$ acting by $-1$ on 
$H^{2,0}(S)$. 
The quotient threefold $E\times S/\iota\times \sigma$ is singular, 
but the singularities are all cyclic quotient singularities, and
there is an explicit crepant resolution, which yields a 
smooth Calabi--Yau threefold $X$. 

To find our $K3$ surfaces, we use the famous $95$ families
of $K3$ surfaces which can be given by weighted homogeneous 
equations in weighted projective $3$-spaces. They are classified by 
M. Reid~\cite{R79} (see also Iano--Fletcher~\cite{IF20}),
and also in Yonemura~\cite{Yo89}. Yonemura 
gave explicit equations for these surfaces as weighted 
hypersurfaces $h(x_0,x_1,x_2,x_3)=0$ using toric 
methods, and we will use Yonemura's list throughout this article. 

We first fish out, from the list of Yonemura, 
$K3$ surfaces $S$ having the required involutions $\sigma$ 
acting on the holomorphic $2$-forms of the surfaces as 
multiplication by $-1$.  Earlier, Borcea \cite{B94} 
found $48$ such pairs $(S, \sigma)$.
We will find additional $41+(3)$ such pairs $(S,\sigma)$ 
(our involutions may have a different formula from Borcea's 
examples), bringing the total to $92$ pairs $(S,\sigma)$.  

Nikulin \cite{Ni80} classified all $K3$ surfaces $(S,\sigma)$ over 
$\CC$ with non-symplectic involution $\sigma$ by triplets of integers 
$(r, a,\delta)$, and found that there are $75$ triplets up to deformation. 
In this paper, we calculate only the invariants $r$ and $a$ for our 
$92$ examples and realize at least $40$ Nikulin triplets $(r, a, \delta)$. 
As the task of calculating $\delta$ is more involved, especially 
because we often need a 
$\ZZ$-basis for $\pic (S)$, we leave the determination of the
invariant $\delta$ to a future publication(s).  
Since $\delta\in\{0,1\}$, the number of triplets realized may 
increase somewhat.

For $86$ of our $92$ pairs of $(S,\sigma)$ above, we find a 
representative hypersurface defining equation 
for $S$ of Delsarte type over $\QQ$, that is, the equation
consists exactly of four monomials with rational coefficients.  
Since $S$ needs to be 
quasi-smooth, we put a condition on the defining equation 
(see Subsection \ref{sect2.2}). Then our new $S$ has the same 
singularity configuration as the original hypersurface.
(We should call attention why we only have $86$ pairs: What happens to
the remaining $6$ pairs? This is because for the six weights, $K3$ surfaces
have involution but cannot be realized as quasi-smooth
hypersurfaces in four monomials.)

Thus, we obtain $K3$ surfaces $S$ of 
Delsarte type. Recall that a cohomology group of a variety is of CM 
type if its Hodge group is commutative; 
and a variety is of CM type if all its cohomology groups are of 
CM type (see Zarhin, \cite{Za83}). In general the computation 
of Hodge groups is notoriously difficult, and this is definitely
not the direction we will pursue. Instead, we will follow the argument
similar to the one in Livn\'e-Sch\"utt--Yui \cite{LSY10}: 
a Delsarte surface $S$ can be realized as a quotient of a Fermat 
surface by some finite group. Since we know that Fermat (hyper)surfaces 
are of CM type, it follows that a Delsarte surface is also of CM type.

It is known \cite{B94}, \cite{R09} that over $\CC$ the moduli 
spaces of Nikulin's $K3$ families are  Shimura varieties. 
Recently, the rationality of the moduli spaces of all but two
out of the $75$ Nikulin's $K3$ families has been
established by Ma \cite{M11, M12}, combined with the results of
Kondo \cite{K94}, and Dolgachev--Kondo \cite{DK12}.

Our results give explicit CM points in these moduli spaces 
defined over $\QQ$; we do not know what their fields of 
definition (or moduli) are in the Shimura variety.

Next we take a product $E\times S$, where $E$ is an elliptic curve 
over $\QQ$ with the $-1$-involution $\iota$, and $S$ is a $K3$
surface of CM type over $\QQ$ with involution $\sigma$ as above. 
Take the quotient $E\times S/\iota\times\sigma$.
Let $X$ be a crepant resolution of the quotient  
threefold $E\times S/\iota\times\sigma$. Then $X$ is a smooth
Calabi--Yau threefold. We first show that $X$ has a model 
defined over $\QQ$. Then we will establish
the automorphy of the Galois representations associated to $X$,
in support of the Langlands reciprocity conjecture. 
We show that $X$ is of CM type if and only if $E$ also has complex 
multiplication. 

This generalizes the work by Livn\'e and Yui \cite{LY05} on the
modularity of the non-rigid 
Calabi--Yau threefold over $\QQ$ obtained from the quotient 
$E\times S/\iota\times\sigma$, where $S$ is a singular $K3$ surface 
with involution $\sigma$ (and hence of CM type).

We also construct mirror partners $X^{\vee}$ (if they exist)
of our Calabi--Yau threefolds using the Borcea--Voisin
construction. In fact, $57$  of the $95$ families of
$K3$ surfaces $S$ of Reid and Yonemura have mirror partners
$S^{\vee}$ within the list.  We show that all these $57$
families have subfamilies with involution $\sigma$ and a 
CM point rational over $\QQ$.  Then the quotients of
the products $E\times S^{\vee}/\iota\times\sigma^{\vee}$ give rise to
mirror partners of $E\times S/\iota\times\sigma$.

 From the point of view of mirror symmetry computations, our 
results supply particularly convenient base points in both 
the moduli space and in the mirror moduli space: they are 
defined over $\QQ$, and their $\ell$-adic \'etale 
cohomological Galois representations 
are attached to some automorphic forms whose $L$-series 
are known.

\section{$K3$ surfaces}\label{sect2}

\subsection{$K3$ surfaces with involution}\label{sect2.1}

Let $S$ be a $K3$ surface over $\CC$. Then $H^2(S,\ZZ)$ is 
torsion-free and the intersection pairing gives it the structure 
of a lattice, even and unimodular, of rank $22$ and signature 
$(3,19)$. By the classification theorem of such 
lattices, up to isometry,
$$H^2(S,\ZZ)\simeq U^3\oplus (-E_8)^2$$
where $U$ is the usual hyperbolic lattice of rank $2$ and 
$E_8$ is the unique even unimodular lattice of rank $8$.

Let $\pic (S)$ be the Picard lattice of $S$. It is torsion free 
and finitely generated, and together with the intersection 
pairing it can be identified as the sublattice 
$\pic (S)=H^2(S,\ZZ)\cap H^{1,1}(S)$ of $H^2(S,\ZZ)$.
We define the transcendental lattice of $S$, denoted by $T(S)$,
to be the orthogonal complement of $\pic (S)$ in $H^2(S,\ZZ)$,
i.e., $T(S):=\pic (S)^{\perp}$ in $H^2(S,\ZZ)$, with respect
to the intersection pairing.
\smallskip

Consider now a pair $(S, \sigma)$, where $S$ is a $K3$ surface 
and $\sigma$ is an involution of $S$ acting by $-1$ on 
$H^{2,0}(S)$.  Let $\pic (S)^{\sigma}$ denote the sublattice of 
$\pic (S)$ fixed by $\sigma$. 
Let $(\pic (S)^{\sigma})^* :=\rm{Hom}(\pic (S)^{\sigma},\ZZ)$ be 
the dual lattice of $\pic (S)^{\sigma}$.
Let $T(S)_0=(\pic (S)^{\sigma})^{\perp}$ be the orthogonal 
complement of $\pic (S)^{\sigma}$ in $H^2(S,\ZZ)$, and let
$T(S)_0^*$ be the dual lattice of $T(S)_0$.
 From the assumption that $\sigma$ acts as $-1$ on the 
holomorphic $2$-forms of $S$, one can show that it acts by 
$-1$ on $T(S)_0$ (and by $1$ on $\pic (S)^{\sigma}$).
\smallskip

Consider the quotient groups
$(\pic (S)^{\sigma})^*/\pic (S)^{\sigma}$ and $T(S)_0^*/T(S)_0$.
Since $H^2(S,\ZZ)$ is unimodular, 
the two quotient abelian groups are canonically isomorphic:
$$(\pic (S)^{\sigma})^*/\pic (S)^{\sigma}\simeq T(S)_0^*/T(S)_0.$$
On $(\pic (S)^{\sigma})^*/\pic (S)^{\sigma}$, $\sigma$ acts by 
$1$, while on $T(S)_0^*/T(S)_0$ it acts by $-1$. Then the only finite
abelian groups where $+1$ is $-1$ are the
$(\ZZ/2\ZZ)^a$ for some $a$. This shows that
$$(\pic (S)^{\sigma})^*/\pic (S)^{\sigma}\simeq (\ZZ/2\ZZ)^a
\quad\mbox{for some non-negative integer $a$}.$$

Nikulin \cite{Ni80, Ni86} has classified such pairs $(S, \sigma)$.

\begin{thm} [Nikulin] {\sl The pair $(S, \sigma)$ of $K3$
surfaces with non-symplectic involution $\sigma$ is determined,
up to deformation, by a triplet $(r, a, \delta)$, where
$r=\rank \, \pic (S)^{\sigma}$,
$(\pic (S)^{\sigma})^*/\pic (S)^{\sigma}\simeq (\ZZ/2\ZZ)^a,$ and
$\delta=0$ if $(x^*)^2\in\ZZ$ for any
$x^*\in (\pic (S)^{\sigma})^*$, and $1$ otherwise.

There are in total $75$ triplets $(r,a,\delta)$, as shown in {\em Figure 
\ref{pyramid}}. 
 
The moduli space of $(S,\sigma)$ with given triplet $(r,a,\delta)$
is a bounded symmetric domain of type IV having dimension $20-r$.} 
\end{thm}

\begin{figure} 
\centering 
\includegraphics[width=12cm]{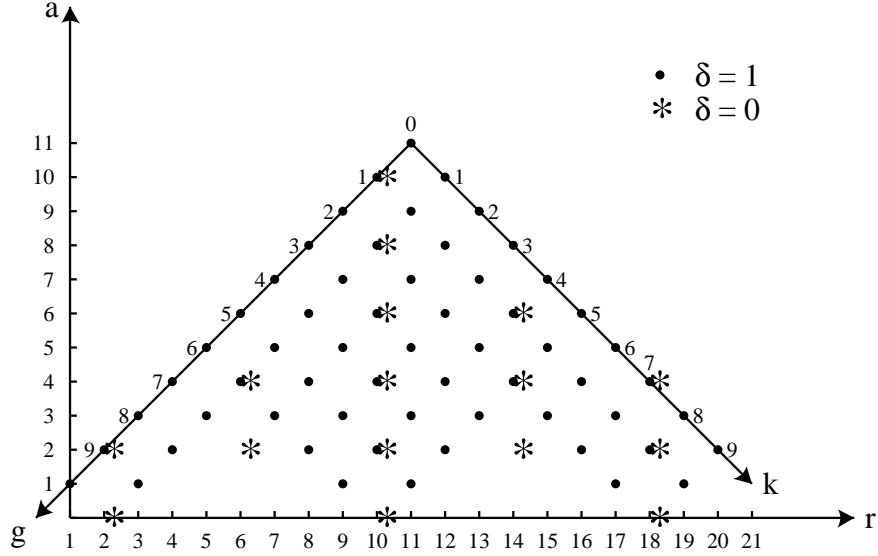}  %
\caption{Nikulin's pyramid} 
\label{pyramid} 
\end{figure} 

For a given pair $(S, \sigma)$ of a $K3$ surface $S$ with 
involution $\sigma$, we now consider the geometric structure 
of the fixed part $S^{\sigma}$ of $S$ under $\sigma$ (i.e., 
the part where $\sigma$ acts as identity). We follow Voisin 
\cite{V93} for this exposition.
\smallskip

\begin{prop} \label{r-and-a} {\sl
There are three types for $S^{\sigma}$:
\smallskip

(I) For $(r,a,\delta)\neq (10,10,0),\, (10,8,0)$,
$S^{\sigma}$ is a disjoint union of a smooth curve $C_g$ of 
genus $g$ and $k$ rational curves $L_i$:
$$S^{\sigma}=C_g\cup L_1\cup\cdots \cup L_k.$$

(II) For $(r,a,\delta)=(10,10,0)$,  $S^{\sigma}=\emptyset$.
\smallskip

(III) For $(r,a,\delta)=(10,8,0)$, $S^{\sigma}$ is a disjoint 
union of two elliptic curves $C_1$ and $\bar C_1$:
$$S^{\sigma}=C_1\cup \bar{C}_1.$$

Furthermore, in the case (I), the genus $g$ and the number $k$ of
rational curves can be determined in terms of the triplet 
$(r, a, \delta)$ as follows:

$$g=\frac{1}{2}(22-r-a),$$
and
$$k=\frac{1}{2}(r-a).$$

Equivalently, $(r,a)$ and $(g,k)$ are related by the
identities:
$$r=11-g+k,\quad a=11-g-k.$$}
\end{prop}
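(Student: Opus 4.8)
The plan is to determine $S^\sigma$ by combining a local analysis of the fixed points with two global counting arguments: one \emph{rational} (the Euler characteristic) and one \emph{$2$-adic} (the discriminant length $a$). First I would analyze $\sigma$ near a fixed point $p$. Linearizing the holomorphic involution, $d\sigma_p$ is an involution of $T_pS$, so its eigenvalues are $\pm 1$, and the induced action on the fibre of $K_S$ at $p$, i.e. on $\bigwedge^2 T_p^\vee S$, is by $\det(d\sigma_p)$. Since a nowhere-vanishing holomorphic $2$-form generates $H^{2,0}(S)$ and $\sigma$ acts there by $-1$, the eigenvalues must be $(+1,-1)$; the alternative $(-1,-1)$ has determinant $+1$ and is excluded. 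Hence there are no isolated fixed points and $S^\sigma$ is a disjoint union of smooth curves, with $\sigma$ acting by $-1$ on each normal bundle. Adjunction on the K3 surface then gives $C^2 = 2\gamma - 2$ for a fixed curve of genus $\gamma$.

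Next I would extract the first relation from the topological Lefschetz fixed point formula. Using $b_0 = b_4 = 1$, $b_1 = b_3 = 0$, and $\mathrm{tr}(\sigma^*\mid H^2(S,\QQ)) = r - (22 - r) = 2r - 22$ (the invariant sublattice being $\pic(S)^\sigma$ of rank $r$), one obtains $\chi(S^\sigma) = 2r - 20$. To pin down the component structure I would invoke the Hodge index theorem: the intersection form on $\pic(S)$ has signature $(1,\rho-1)$, so among the disjoint fixed curves at most one can have non-negative self-intersection. Thus, outside the degenerate configurations, the fixed locus is $C_g \cup L_1 \cup \cdots \cup L_k$ with each $L_i$ a $(-2)$-curve of genus $0$ and $C_g$ the unique curve of genus $g$; writing $\chi(S^\sigma) = (2-2g) + 2k = 2r - 20$ yields $r = 11 - g + k$.

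The hard part will be the second relation, the one producing $a$. Because $a$ is the length of the discriminant group $(\ZZ/2)^a$ of the $2$-elementary lattice $\pic(S)^\sigma$, it is a genuinely $2$-adic invariant and is therefore invisible to the Euler characteristic and to the $G$-signature, both of which depend only on $r$. I would recover it from $\ZZ/2$-coefficient equivariant cohomology: since $H^*(S,\ZZ)$ is torsion-free with total $\ZZ/2$-Betti number $24$, Smith theory bounds the total $\ZZ/2$-Betti number of $S^\sigma$ by $24$, and the defect is exactly $2a$ --- this is Nikulin's identification of $a$ with the failure of $\pi^* H^2(S/\sigma)$ to be unimodular, read off from the $\ZZ/2$-homology of the branch locus $B = S^\sigma$ of the double cover $\pi\colon S \to S/\sigma$. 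Granting $\sum_i \dim_{\ZZ/2} H^i(S^\sigma,\ZZ/2) = 24 - 2a$ and evaluating the left side as $(2+2g) + 2k$ for the configuration above gives $a = 11 - g - k$. Combining with $r = 11 - g + k$ and solving yields $g = \tfrac12(22 - r - a)$ and $k = \tfrac12(r - a)$, the asserted identities.

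Finally I would treat the two excluded configurations directly. When $\sigma$ is fixed-point-free the quotient $S/\sigma$ is an Enriques surface, forcing $S^\sigma = \emptyset$; one checks this occurs exactly for $(r,a,\delta) = (10,10,0)$ (case (II)). The remaining degenerate case, where two disjoint fixed curves both have self-intersection $0$, produces two elliptic curves rather than a higher-genus curve plus a rational one; by Hodge index this is the only way the ``unique positive curve'' conclusion can fail, and it is realized precisely by $(10,8,0)$ (case (III)). In both, the numerical invariants $(r,a)$ agree with what the type-(I) formulas would predict, which is exactly why they must be isolated by hand. The principal obstacle throughout is the third step: tying the purely topological $\ZZ/2$-Betti count of the fixed locus to the arithmetic discriminant length $a$, which is the substance of Nikulin's lattice-theoretic classification.
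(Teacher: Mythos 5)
The paper does not actually prove this proposition: it is quoted from Nikulin's classification, following Voisin's exposition in \cite{V93}, so your attempt has to be judged against the standard literature argument rather than against anything written in the text. Your first half reconstructs that argument correctly and completely: linearization at a fixed point forces $d\sigma_p$ to have eigenvalues $(+1,-1)$ (since $\sigma^*$ multiplies a nowhere-vanishing holomorphic $2$-form by $-1$), so $S^{\sigma}$ is a disjoint union of smooth curves; adjunction on a $K3$ gives $C^2=2\gamma-2$; the identification $H^2(S,\ZZ)^{\sigma}=\pic(S)^{\sigma}$ (invariant classes are of type $(1,1)$) turns the topological Lefschetz fixed point formula into $\chi(S^{\sigma})=2r-20$, i.e.\ $r=11-g+k$; and the Hodge index theorem correctly limits how many fixed curves can have non-negative self-intersection.

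The genuine gap is in the second counting relation, and you have in effect deferred it to the theorem you are proving. The equality $\sum_i \dim_{\FF_2}H^i(S^{\sigma},\FF_2)=24-2a$ is \emph{not} a consequence of Smith theory: Smith theory yields only the inequality $\le 24$ together with a parity congruence, and identifying the defect with twice the discriminant length $a$ of the $2$-elementary lattice $\pic(S)^{\sigma}$ is precisely the lattice-theoretic heart of Nikulin's theorem (it requires comparing $\pi^*H^2(S/\sigma,\ZZ)$ and the transfer with the invariant lattice and its discriminant form for the branched double cover). Moreover, as literally stated the identity is false in general: for the fixed-point-free (Enriques) involution the left-hand side is $0$ while $24-2a=4$, so it cannot be ``granted'' uniformly and then also invoked to say the exceptional cases are consistent --- the correct formulation already presupposes the trichotomy being established. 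Two smaller instances of the same problem: Hodge index alone does not show that in case (III) the fixed locus is \emph{exactly} two elliptic curves (a priori it could also contain $(-2)$-curves fixed inside reducible fibres of the elliptic pencil that the two curves define), and the assignments (II) $\leftrightarrow (10,10,0)$ and (III) $\leftrightarrow (10,8,0)$ are asserted, not derived. In short, your Lefschetz half stands on its own, but every statement involving $a$ --- which you yourself flag as the hard part --- is exactly the content of Nikulin's classification and would need his lattice-theoretic argument (or the double-cover computation on $S/\sigma$) to be supplied before this counts as a proof.
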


\begin{rem}
Since $\sigma$ is a non-symplectic involution, the quotient 
$S/\sigma$ is either a rational surface or Enriques surface. 
It is an Enriques surface if and only if 
$S^{\sigma}=\emptyset$, i.e. $(r,a,\delta)=(10,10,0)$.
Note that if $\sigma$ is a symplectic involution, 
then $\sigma$ has eight fixed points and the minimal resolution 
of $S/\sigma$ is again a $K3$ surface.
\end{rem}

\subsection{Realization of $K3$ surfaces as hypersurfaces over $\QQ$}
\label{sect2.2}

We are interested in finding defining equations over $\QQ$
for pairs $(S,\sigma)$ of $K3$ surfaces $S$ with non-symplectic
involution $\sigma$.  For this, we appeal to the famous $95$
families of $K3$ surfaces of M. Reid \cite{R79} (see also
Iano--Fletcher \cite{IF20}) and of Yonemura \cite{Yo89}. All these
$95$ families of $K3$ surfaces are realized in weighted
projective $3$-spaces $\PP^3(w_0,w_1,w_2,w_3)$. Reid determined
$95$ possible weights $(w_0,w_1,w_2,w_3)$, and singularities
as they are all determined by the weights. Then Yonemura 
described concrete families of 
hypersurfaces defining them, using toric constructions.

We first recall a result of Borcea \cite{B94}. 
Here we say that $Q=(w_0,w_1,w_2,w_3)$ is
{\it normalized} if $\gcd (w_i,w_j,w_k)=1$ for every distinct
$i, j, k$. Also, we assume that $w_i$'s are ordered in such a
way that $w_0\geq w_1\geq w_2\geq w_3$.

\begin{prop} [Borcea] \label{prop-BY}
{\sl Assume that $Q=(w_0,w_1,w_2,w_3)$ is normalized and 
$w_0=w_1+w_2+w_3$. Then there are in total $48$ weights
$(w_0,w_1,w_2,w_3)$ giving rise to pairs $(S,\sigma)$ of $K3$
surfaces $S$ with involution $\sigma$ acting by $-1$ on $H^{2,0}(S)$.
More precisely, if $w_0$ is odd, there are $29$ weights, and if
$w_0$ is even, there are $19$ weights. 

$S$ may be realized as the minimal resolution
of a hypersurface $S_0$ of degree $2w_0$ in $\PP^3(w_0,w_1,w_2,w_3)$
of the form
$$x_0^2=f(x_1,x_2,x_3)$$
where $\mbox{deg}(x_i)=w_i$ for $0\leq i\leq 3$. A non-symplectic involution
$\sigma$ on $S_0$ is defined by $\sigma(x_0)=-x_0$, and $f$ is a
homogeneous polynomial in the variables $x_1,x_2,x_3$ of degree
$2w_0$. }
\end{prop}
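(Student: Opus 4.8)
The plan is to separate the two assertions: the geometric claim that the double cover $x_0^2 = f$ is a K3 surface carrying an involution $\sigma$ that acts by $-1$ on $H^{2,0}$, and the enumerative claim that exactly $48$ weights occur, splitting as $29+19$ according to the parity of $w_0$. For the geometric part I would begin with adjunction: a quasi-smooth hypersurface $S_0$ of degree $d$ in $\PP^3(w_0,w_1,w_2,w_3)$ has dualizing sheaf $\OO_{S_0}(d-\sum_i w_i)$, and the hypotheses $d = 2w_0$ and $w_0 = w_1+w_2+w_3$ give $\sum_i w_i = 2w_0 = d$, so $\omega_{S_0}$ is trivial. Quasi-smoothness forces the singularities to be cyclic quotient (hence rational) singularities, so the minimal resolution $S$ is a genuine K3 surface. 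To reach the normal form, note that $\deg x_0 = w_0 = d/2$ forces $x_0$ to appear to degree at most $2$ in any degree-$d$ monomial; quasi-smoothness at the vertex $[1:0:0:0]$ requires $x_0^2$ to be present, and completing the square in $x_0$ over $\QQ$ removes the term linear in $x_0$, producing $x_0^2 = f(x_1,x_2,x_3)$ with $f$ homogeneous of degree $2w_0$. The involution $\sigma(x_0)=-x_0$ then manifestly preserves this equation and is defined over $\QQ$.

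The heart of the geometric part is the sign of $\sigma$ on $H^{2,0}(S)$. A generator of $H^{2,0}(S)$ is the Griffiths--Poincar\'e residue $\omega = \mathrm{Res}(\Omega/g)$, where $g = x_0^2 - f$ and
\[
\Omega = \sum_{i=0}^3 (-1)^i w_i x_i \, dx_0 \wedge \cdots \wedge \widehat{dx_i} \wedge \cdots \wedge dx_3
\]
is the Euler form of degree $\sum_i w_i$ on the weighted space. I would check term by term that $\sigma^*\Omega = -\Omega$: the $i=0$ summand acquires its sign directly from $x_0 \mapsto -x_0$, while every summand with $i \ge 1$ contains the factor $dx_0$ and so acquires the same sign. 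Since $g$ is visibly $\sigma$-invariant, it follows that $\sigma^*\omega = -\omega$, which is exactly the assertion that $\sigma$ acts by $-1$ on $H^{2,0}(S)$. It remains only to note that $\sigma$ lifts to the minimal resolution, which is automatic since $\sigma$ preserves the singular locus.

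For the enumerative part, the relation $w_0 = w_1+w_2+w_3$ eliminates $w_0$ and reduces the problem to normalized triples $(w_1,w_2,w_3)$ for which the curve $\{f=0\}$ of degree $2(w_1+w_2+w_3)$ in $\PP^2(w_1,w_2,w_3)$ has a quasi-smooth member; a short computation with $\nabla g$ shows $\{g=0\}$ is quasi-smooth if and only if $\{f=0\}$ is. I would then apply the standard combinatorial criteria for the existence of quasi-smooth weighted hypersurfaces (Iano--Fletcher) to bound the $w_i$ a priori, reducing to a finite search, enumerate the admissible triples, and sort them by the parity of $w_0$ to obtain the counts $29$ and $19$. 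An equivalent and cleaner route is to intersect Reid's explicit list of $95$ weight systems with the condition $w_0 = w_1+w_2+w_3$, checking that each selected system does carry the double-cover structure above. The main obstacle is precisely this enumeration: the challenge is not conceptual but lies in showing the list is \emph{complete} through a careful, uniform application of the quasi-smoothness conditions across all candidate weights, and in the attendant bookkeeping.
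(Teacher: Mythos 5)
The paper gives no proof of this proposition at all: it is recalled verbatim from Borcea \cite{B94} (the paper's own work starts with Theorem \ref{thm-newS}, which extends Borcea's list), so there is no internal argument to compare yours against; what can be judged is whether your proposal stands on its own. Its geometric half does. The adjunction computation ($\omega_{S_0}\cong\OO_{S_0}(d-\sum_i w_i)$ with $d=2w_0=\sum_i w_i$), the observation that $3w_0>2w_0$ confines $x_0$ to degree at most $2$, the quasi-smoothness argument at the cone point $(1,0,0,0)$ forcing the monomial $x_0^2$ to appear, completing the square over $\QQ$, the residue computation $\sigma^*\Omega=-\Omega$ and $\sigma^*g=g$ giving $\sigma^*=-1$ on $H^{2,0}(S)$, the lift of $\sigma$ to the minimal resolution, and the equivalence (via Euler's relation) of quasi-smoothness of $x_0^2=f$ with quasi-smoothness of $\{f=0\}\subset\PP^2(w_1,w_2,w_3)$ are all correct, and this is the standard way to establish the qualitative part of the statement.

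The gap is in the enumerative half, which is the actual quantitative content of the proposition: you reduce the count to a finite search but never perform it, so the numbers $48=29+19$ are asserted rather than derived. A complete proof must either (i) run the Iano--Fletcher quasi-smoothness criteria over all normalized triples $(w_1,w_2,w_3)$ after an a priori bound, exhibiting the $48$ admissible weights and excluding all others, or (ii) follow your ``cleaner route'': invoke the completeness of Reid's list of $95$ weights, select those satisfying $w_0=w_1+w_2+w_3$, and note that for each such weight a quasi-smooth member of the special form $x_0^2=f$ exists --- your completing-the-square lemma shows this is automatic once any quasi-smooth member of degree $2w_0$ exists, so the selection is purely mechanical. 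This second route is in effect what the paper itself does in adjacent results: Tables \ref{table1}, \ref{table2} and \ref{table3} record precisely the $29$ odd-$w_0$ weights and the $11+8=19$ even-$w_0$ weights with explicit equations, and the proof of Theorem \ref{thm-newS} proceeds by exactly this kind of case-by-case inspection of Yonemura's equations. Without that finite but essential verification, your write-up proves ``every weight of this shape carrying a quasi-smooth double cover yields a pair $(S,\sigma)$ of the stated form,'' but not ``there are exactly $48$ such weights, split $29$ and $19$ by parity.''
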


By abuse of notation, we often write $\sigma$ for the involution on $S_0$ 
as well as that induced on $S$ by desingularization. 

Our first result is to extend the list of Borcea by
adding more weights that yield $K3$ surfaces $(S,\sigma)$ with
involution $\sigma$.

\begin{thm} \label{thm-newS}
{\sl There are in total $92=48+44$ normalized weights $(w_0,w_1,w_2,w_3)$
giving rise to pairs $(S,\sigma)$ of $K3$ surfaces $S$ with
non-symplectic involution $\sigma$ defined by $\sigma (x_i)=-x_i$
for some single variable $x_i$. In other words, we have $44$
new weights (i.e., not in the list of Borcea) yielding $K3$
surfaces with involution $\sigma$.

We divide the $92$ cases into two groups:

\noindent {\em (i)}
The $48$ weights of Borcea and defining equations for quasi-smooth
$K3$ surfaces $S_0$ are tabulated in {\em Tables \ref{table1},
\ref{table2}} and {\em \ref{table3}} in {\em Section \ref{tables}}.

\noindent {\em (ii)}
The additional $44$ weights and defining equations $F(x_0,x_1,x_2,x_3)
=0$ for quasi-smooth $K3$ surfaces $S_0$ are tabulated in {\em Tables
\ref{table-y2}, \ref{table-y4}, \ref{table-y6}} and {\em \ref{table-y5}}
in {\em Section \ref{tables}}.
}
\end{thm}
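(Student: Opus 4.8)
The plan is to run a systematic search through the $95$ weight systems of the Reid--Yonemura list. For a normalized weight $Q=(w_0,w_1,w_2,w_3)$ the $K3$ condition forces the defining hypersurface to have degree $d=w_0+w_1+w_2+w_3$, and we seek, for some index $i$, a quasi-smooth polynomial $F$ of degree $d$ in which the variable $x_i$ occurs only to \emph{even} powers; the involution is then $\sigma(x_i)=-x_i$, with the remaining coordinates fixed.

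First I would establish the criterion that singles out the non-symplectic involutions. The holomorphic $2$-form on $S$ is the residue of $\Omega/F$, where $\Omega=\sum_{j=0}^{3}(-1)^j w_j x_j\, dx_0\wedge\cdots\wedge\widehat{dx_j}\wedge\cdots\wedge dx_3$ is the Euler form on $\PP^3(w_0,w_1,w_2,w_3)$ (note $\deg\Omega=d=\deg F$, so $\Omega/F$ has degree $0$). A single sign change $x_i\mapsto -x_i$ sends every summand of $\Omega$ to its negative --- through the explicit factor $x_i$ in the term $j=i$, and through the surviving differential $dx_i$ in the terms $j\neq i$ --- so $\sigma^*\Omega=-\Omega$. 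Hence if $x_i$ appears in $F$ only to even powers then $\sigma^*F=F$, the residue form transforms by $-1$, and $\sigma$ is non-symplectic; if $x_i$ appears only to odd powers then $\sigma^*F=-F$ and $\sigma$ is symplectic. Thus the involutions we want correspond exactly to even-power occurrences of a single variable.

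Next I would reduce existence to a combinatorial condition on $Q$ and verify it case by case. Confining $x_i$ to even powers while keeping $F$ quasi-smooth means finding, among the monomials with even $x_i$-exponent, a configuration satisfying the standard quasi-smoothness conditions of Iano--Fletcher (for each variable either a pure power $x_j^{m}$ of degree $d$ or an admissible tail monomial $x_j^{m}x_k$). Borcea's Proposition \ref{prop-BY} is precisely the subcase $w_0=w_1+w_2+w_3$, $d=2w_0$, $F=x_0^2-f$, which recovers his $48$ weights; replacing the pure square $x_0^2$ by more general even-power monomials in a chosen variable produces the remaining candidates. Sweeping all $95$ systems, recording for each an admissible index $i$ together with a witnessing quasi-smooth $F$, yields $48+44=92$ weights; the witnesses are exactly the equations displayed in Tables \ref{table1}, \ref{table2}, \ref{table3} and \ref{table-y2}, \ref{table-y4}, \ref{table-y6}, \ref{table-y5} of Section \ref{tables}, so exhibiting them proves both the count and parts (i)--(ii) simultaneously.

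The main obstacle is the quasi-smoothness bookkeeping under the even-power constraint, and in particular isolating the three weight systems ($95-92=3$) for which no coordinate can be confined to even powers in any quasi-smooth member. For these the parity of $d$ against the individual weights $w_j$ obstructs the even-$x_i$-degree pure powers and tail monomials that quasi-smoothness demands, so no single coordinate sign change defines a non-symplectic involution on a quasi-smooth hypersurface in the family. Everything else is the finite tabulation packaged in Section \ref{tables}.
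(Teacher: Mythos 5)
Your proposal follows the same overall route as the paper's proof: a finite, case-by-case sweep of the $95$ Reid--Yonemura weight systems, with the tabulated equations serving as witnesses, organized around the observation that $\sigma(x_i)=-x_i$ preserves a quasi-smooth $F$ and is non-symplectic exactly when $x_i$ occurs in $F$ only to even powers (this matches every table entry: $x_0^2=f$, $x_0^2x_i=f$ after discarding $x_0x_j^m$-terms, $x_0^3$ traded for $x_0^2x_1$ in Table \ref{table-y4}, and involutions in a variable other than $x_0$ in Table \ref{table-y6}). The one step where you genuinely depart from the paper is the justification of non-symplecticity: the paper asserts that in each of the $92$ cases the quotient $S/\sigma$ is rational or Enriques and deduces from this that $\sigma$ acts by $-1$ on $H^{2,0}$, whereas you compute $\sigma^*\Omega=-\Omega$ for the Euler form and conclude that the residue $2$-form of $\Omega/F$ is anti-invariant whenever $\sigma^*F=F$. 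Your argument is uniform and self-contained, needs no case inspection and no appeal to the classification of involution quotients, and it explains a priori why ``even powers of a single variable'' is the correct combinatorial criterion; this is a real improvement in that step. (You should add the standard remark that the $2$-form and the involution lift to the minimal resolution $S$ of $S_0$, which is where Nikulin's classification applies; and note that your ``all odd powers'' alternative is vacuous, since then $x_i$ divides $F$.)

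The gap is your final paragraph. Quasi-smoothness of a proper subsystem (the even-$x_i$ monomials) is not decided by a variable-by-variable checklist of pure powers and tails, nor by ``the parity of $d$ against the individual weights $w_j$'': it requires the full base-locus, stratum-by-stratum analysis, because the typical failure mode is that an entire coordinate axis or plane of the affine cone lands in the singular locus. This is exactly what happens for $\#53=(6,5,4,3)$ and $\#54=(7,6,5,3)$: both possess plenty of even-power monomials (e.g.\ $x^2w^2$, $y^2z^2$, $w^6$ for $\#53$), and what fails is that every admissible subsystem leaves some coordinate stratum singular -- a fact invisible to any parity count. Worse, for $\#15=(5,4,3,3)$ there is no obstruction at all: the even-$z$ system spanned by $x^3$, $y^3w$, $w^5$, $xyw^2$, $z^2w^3$, $xyz^2$, $z^4w$ has quasi-smooth members (checking all coordinate strata of the cone verifies this), so that weight also carries a non-symplectic involution $z\mapsto -z$, contradicting your claimed obstruction. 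So your proposed proof of the complementary statement (``exactly three weights fail'') is both unsubstantiated and false as stated; note that the paper itself only says it ``finds no obvious involution'' in those three cases and never proves exactness of the count. What your sweep does prove -- and all that the paper's own proof establishes -- is the existence statement for the $92$ tabulated weights, together with parts (i) and (ii).
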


\begin{proof}
The proof of Theorem \ref{thm-newS} is done by case by case
analysis. Yonemura \cite{Yo89} determined hypersurface 
equations defining Reid's $95$ families of $K3$ surfaces 
using toric geometry. We use his list of
equations to find $K3$ surfaces with non-symplectic involutions.

If the defining equation contains the term $x_0^2$ or $x_0^2x_i$,
then we can define involution $\sigma$ by $\sigma (x_0)=-x_0$
just as in Borcea's 48 cases. If the defining equation contains
the term $x_0^2x_i +x_0x_j^m$ (say), then we remove $x_0x_j^m$ to
define the involution $\sigma (x_0)=-x_0$ (see Tables \ref{table-y2}
and \ref{table-y5}).

For the equations in Table \ref{table-y4}, we change $x_0^3$ to
$x_0^2x_1$ to define an involution by $\sigma (x_0)=-x_0$.

For the equations in Table \ref{table-y6}, we choose
variables other than $x_0$ (and remove several terms if necessary)
to define an involution.

Note that in each of the $92$ cases, the quotient $S/\sigma$ is a
rational or Enriques surface. Hence $\sigma$ is a non-symplectic
involution.
\end{proof}

\begin{rem}
Among the $95$ $K3$ weights of Reid, there are three cases $\#15, 
\#53, \#54$ where we find no obvious involution; that is, there
is no involution $\sigma$ on $S$ acting as $\sigma (x_i)=-x_i$
for some variable $x_i$. These cases are tabulated in Table
\ref{table-y1} in Section \ref{tables}.
\end{rem}

For our arithmetic purposes, it is useful that we can compute 
the zeta-functions of $S$ explicitly. One of such classes of 
varieties are those defined by equations of {\it Delsarte
type} (i.e. equations consisting of the same number of monomials as
the variables) named after Delsarte (see \cite{LSY10}, Section 4).
Hypersurfaces of Delsarte type are finite quotients of Fermat varieties. 
Our next task is to find the subset of the $92$ cases of Theorem 
\ref{thm-newS} which can be defined by equations of Delsarte type, namely 

\begin{enumerate}
\item for each $S$ of \cite{Yo89}, find an equation $h(x_0,x_1,x_2,x_3)$ 
consisting exactly of four monomials, and 
\item make sure that the hypersurface obtained in (1) is quasi-smooth.
\end{enumerate}

Conditions (1) and (2) give a restriction on the form of $S$, but 
many of its geometric properties are unchanged. For instance, the 
types of singularities on $S$ remain the same as the original hypersurfaces
$h(x_0,x_1,x_2,x_3)=0$ of \cite{Yo89}.

\begin{thm} \label{thm-DelsarteS}
{\sl There are $86$ weights $(w_0,w_1,w_2,w_3)$ for which
there exist a quasi-smooth $K3$ surface $S_0$ in
$\PP^3(w_0,w_1,w_2,w_3)$ defined by a Delsarte equation
with an involution. Moreover, this involution defines
a non-symplectic involution $\sigma$ on the
minimal resolution $S$ of $S_0$.

{\em (a)} If $(S,\sigma)$ is one of the $48$ pairs determined in
{\em Proposition \ref{prop-BY}} other than $\#90, \#91, \#93$, then
$S_0$ can be defined by an equation over $\QQ$ of four monomials
$$x_0^2=f(x_1,x_2,x_3)\quad \subset \PP^3(w_0,w_1,w_2,w_3).$$
The equation is obtained by removing several terms from the equation
of Yonemura \cite{Yo89}, where $f$ is a homogeneous polynomial over
$\QQ$ of degree $w_0+w_1+w_2+w_3$ (cf. {\em Tables \ref{table1},
\ref{table2}, \ref{table3}}).

{\em (b)} Let $(S,\sigma)$ be one of the additional $38$ pairs
determined in {\em Theorem \ref{thm-newS} (ii)} other than $\#85$, 
$\#90$, $\#91$, $\#93$, $\#94$ and $\#95$. Then $S_0$ can be defined 
by an equation $F(x_0,x_1,x_2,x_3)=0$ over $\QQ$ consisting of four 
monomials of degree $w_0+w_1+w_2+w_3$. In most cases, $F(x_0,x_1,x_2,x_3)$ 
can be chosen as
$$F(x_0,x_1,x_2,x_3)=x_0^2x_i+f(x_1,x_2,x_3).$$
The weights and equations are listed in {\em Tables
\ref{table-y2}, \ref{table-y4}} and {\em \ref{table-y6}}
in {\em Section \ref{tables}}.
 }
\end{thm}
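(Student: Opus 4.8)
The plan is to prove the count together with parts (a) and (b) by a single case-by-case analysis, taking as input the $92$ pairs $(S,\sigma)$ produced in Theorem~\ref{thm-newS} together with Yonemura's explicit equations from \cite{Yo89}. For each of the $92$ weights I would begin with Yonemura's defining polynomial $h(x_0,x_1,x_2,x_3)$ of weighted degree $d=w_0+w_1+w_2+w_3$ and try to extract, after the same sign/term modifications already used in the proof of Theorem~\ref{thm-newS}, a sub-sum $F$ consisting of exactly four monomials of this same degree $d$. The statement then reduces to checking, for each candidate $F$, two independent conditions: (i) that the negating involution still acts by $-1$ on $H^{2,0}(S)$, and (ii) that $\{F=0\}\subset\PP(w_0,w_1,w_2,w_3)$ is quasi-smooth; and, for the six remaining weights, that no admissible four-monomial $F$ can satisfy (ii).

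Condition (i) is the easy half and can be disposed of once and for all. Since $d=\sum_i w_i$, the weighted Euler form
\[
\Omega_0=\sum_{i=0}^{3}(-1)^i w_i\,x_i\,dx_0\wedge\cdots\wedge\widehat{dx_i}\wedge\cdots\wedge dx_3
\]
has degree $d=\deg F$, so $\Omega_0/F$ is homogeneous of degree $0$ and its Poincar\'e residue is a nowhere-vanishing holomorphic two-form on the quasi-smooth surface. If $\sigma$ negates the single variable $x_j$ and fixes the rest, then in each summand of $\Omega_0$ exactly one sign is produced---from the coefficient $x_j$ in the term omitting $dx_j$, and from the factor $dx_j$ in every other term---so $\sigma^*\Omega_0=-\Omega_0$. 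Hence, as soon as $\sigma^*F=F$, i.e. as soon as every monomial of $F$ has even degree in $x_j$, we get $\sigma^*(\Omega_0/F)=-\Omega_0/F$, and $\sigma$ acts by $-1$ on $H^{2,0}$; in particular $\sigma$ is non-symplectic. Both normal forms in the statement, $x_0^2=f(x_1,x_2,x_3)$ in (a) and $x_0^2x_i+f(x_1,x_2,x_3)$ in (b), have even $x_0$-degree in every monomial, so taking $\sigma(x_0)=-x_0$ makes (i) automatic.

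The hard part will be condition (ii). Passing from Yonemura's (usually many-term) equation down to four monomials can easily create a non-isolated singularity of the affine cone and so destroy quasi-smoothness---and this is precisely what obstructs the exceptional weights. My plan is to exploit that a four-monomial polynomial in four variables with invertible exponent matrix is an \emph{invertible polynomial}, and to certify quasi-smoothness by recognizing each chosen $F$ as a Thom--Sebastiani sum of the atomic Fermat, chain, and loop pieces: such an $F$ has an isolated critical point at the origin, so its cone is smooth away from $0$ and $\{F=0\}$ is quasi-smooth. Concretely I would verify, on every coordinate stratum $\{x_i=0:\ i\notin I\}$, the combinatorial quasi-smoothness criterion for weighted hypersurfaces as in Iano--Fletcher \cite{IF20}, namely that $F$ restricts there to a polynomial in which each surviving variable still appears in a pure power $x_a^{m}$ or a linked monomial $x_a^{m}x_b$. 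Running this test against the weight data confirms quasi-smoothness in all surviving cases, and exhibits its failure for $\#85,\#90,\#91,\#93,\#94,\#95$, where the degree-$d$ four-monomial constraint forces some variable to enter only in a way that leaves a positive-dimensional singular locus on the cone.

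Finally I would record the explicit equations case by case in Tables~\ref{table1}--\ref{table-y6}, after which the count is pure bookkeeping: the surviving weights number $86=92-6$, the six obstructed ones being $\#85,\#90,\#91,\#93,\#94,\#95$, of which $\#90,\#91,\#93$ lie among Borcea's $48$ (hence their exclusion in (a)) and $\#85,\#94,\#95$ among the $44$ new weights (hence the exclusion in (b)). It then remains only to note that, because the singular points of a quasi-smooth weighted hypersurface are governed by the weights and by the incidence of $S_0$ with the coordinate points and lines rather than by the particular admissible equation, the reduced surface $\{F=0\}$ carries the same singularity configuration as Yonemura's; its minimal resolution is therefore the same $K3$ surface $S$, with the same Nikulin invariants $(r,a)$ computed earlier. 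The main obstacle throughout is step (ii): everything else is either automatic or combinatorial, whereas deciding quasi-smoothness of the truncated four-monomial equation is exactly where the six exceptions arise.
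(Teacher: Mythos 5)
Your proposal is correct and follows essentially the same route as the paper's proof: a case-by-case truncation of Yonemura's equations to four monomials of degree $w_0+w_1+w_2+w_3$, with quasi-smoothness certified by a combinatorial criterion --- your Fermat/chain/loop (Thom--Sebastiani) decomposition is precisely the paper's variable-wise condition that the monomials containing each $x_i$ form one of $x_i^n$, $x_i^nx_j$, $x_i^n+x_ix_j^m$, $x_i^nx_k+x_ix_j^m$ --- together with the observation that the singularity configuration, and hence the resolution $S$ with its invariants, is unchanged, and the bookkeeping $86=92-6$ with the six exceptions split as in the paper. Your residue-form computation showing $\sigma^*$ acts by $-1$ on the holomorphic $2$-form is just a more explicit substitute for the paper's appeal (in the proof of Theorem \ref{thm-newS}) to the quotient $S/\sigma$ being rational or Enriques; the substance is the same.
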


\begin{proof}
This can be proved by case by case checking of the list of
equations of Yonemura \cite{Yo89}. In both (a) and (b), we
transform $S$ into a Delsarte type by removing several terms of its
original defining equation. In doing so, we make sure that
the condition (2) above is satisfied so that the new surface is also
quasi-smooth.

Condition (2) is met if for each variable $x_i\ (0\leq i\leq 3)$,
the set of monomials containing $x_i$ takes one of the following forms:
$$x_i^n,\ x_i^nx_j,\ x_i^n+x_ix_j^m,\ x_i^nx_k+x_ix_j^m$$
for some $j$ and $k$ $(j\neq k)$ different from $i$.

This choice for the defining hypersurface preserves the
configuration of singularities on $S$ (i.e., types and the number
of singularities) as the original hypersurfaces. The point
is that for each of the $86$ families, we can find a 
defining equation which consists of four monomials
containing $x_i^n$ or $x_i^nx_j $ ($i\neq j$) with
nonzero coefficients.
\end{proof}
\medskip

\begin{rem}
Our list of defining equations for $S$ does not cover all possible
equations of Delsarte type. For instance, in case $\#19$ of
weight $(3,2,2,1)$ in Table \ref{table-y2}, we may also choose 
an equation $x_0^2x_1+x_1^3x_2+x_2^4+x_3^8=0$.
\end{rem}

\begin{rem}
For the $95$ families of quasi-smooth weighted $K3$ hypersurfaces,
Yonemura \cite{Yo89} described the number of parameters (i.e., the
number of monomials) for their defining equations. The minimum
number was four, but often equations contain more than four
monomials.  Our result shows that except for the six cases
$\#85, \#94, \#95$ of Table \ref{table-y5} and $\#90, \#91, \#93$
of Table \ref{table1}, the minimum number
of parameters is attained.
\end{rem}

\begin{ex}
$\bullet$ Consider $\#42$ in Yonemura =$\#3$ in  Borcea.
The weight is $(5,3,1,1)$ and a hypersurface is given by
$$x_0^2=f(x_1,x_2,x_3)=x_1^3x_3+x_1^3x_3+x_2^{10}+x_3^{10}$$
of degree $10$.  We can remove the second monomial $x_1^3x_3$.
The singularity is of type $A_2$.
\smallskip

$\bullet$ Consider $\#78$ in Yonemura = $\#10$ in Borcea. The weight
is $(11,6,4,1)$ and a hypersurface is given by
$$x_0^2=f(x_1,x_2,x_3)=
x_1^3x_2+x_1^3x_3^4+x_1x_2^4+x_2^5x_3^2+x_3^{22}$$
of degree $22$. We can remove the second and the fourth
monomials $x_1^3x_3^4$ and $x_2^6x_3^2$.
The singularity is of type $A_1+A_3+A_5$.
\smallskip

$\bullet$ Consider $\#19$ in Yonemura. The weight is $(3,2,2,1)$
and a hypersurface is given by
$$F(x_0,x_1,x_2,x_3)
=x_0^2x_1+x_0^2x_2+x_0^2x_3^2+x_1^4+x_2^4+x_3^8$$
of degree $8$. We can remove the first or the second monomials
$x_0^2x_1$ or $x_0^2x_2$, the third monomial $x_0^2x_3^2$.
The involution is given by $x_0\to -x_0$.
The singularity is of type $4A_1+A_2$.
\end{ex}

\begin{rem} \label{rmk-table8}
The cases $\#85$, $\#90$, $\#91$, $\#93$, $\#94$ and $\#95$ of
Yonemura cannot be realized as quasi-smooth hypersurfaces in four
monomials with involution $\sigma$.

For instance, consider the case $\#85$ of weight $(5,4,3,2)$
and degree $14$. All the possible monomials of degree $14$ are
$$x_0^2x_1,\, x_0^2x_3^2,\, x_0x_1x_2x_3,\, x_1x_2^3,\, x_0x_2x_3^3,\,
x_1^3x_3,\, x_1^2x_2^2,\,$$
$$x_1^2x_3^3,\, x_1x_2^2x_3^2,\, x_1x_3^5,\, 
x_2^4x_3,\, x_2^2x_3^4,\, x_3^7.$$
To make the polynomial quasi-smooth and defined by four monomials,
we remove the monomials 
$$x_0^2x_3^2,\, x_0x_1x_2x_3,\, x_0x_2x_3^3,\, x_1^2x_2^2,\, 
x_1^2x_3^3,\, x_1x_2^2x_3^2,\, x_2^2x_3^4.$$ 
Then we obtain monomials 
$$x_0^2x_1,\ x_1x_2^3,\ x_1^3x_3,\ x_1x_3^5,\ x_2^4x_3,\ x_3^7.$$
There are no four monomials from this set such that their sum 
defines a quasi-smooth polynomial.

Note that if we allow more than four monomials, we can
define a non-symplectic involution on this surface. For
example, the surface defined by
$$x_0^2x_1+x_0^2x_3^2+x_1^3x_3+x_1^2x_3^2+x_2x_3^5
+x_2^4x_3+x_3^7=0$$
is quasi-smooth and endowed with an involution $\sigma (x_0) =-x_0$.
\end{rem}

\subsection{$K3$ surfaces of CM type}\label{sect2.3}

Recall a definition of a CM type variety.

\begin{defn}
{\rm A cohomology group of a variety is said to be {\it of CM type} if
its Hodge group is commutative, and a variety is said to
be {\it of CM type} if all its cohomology groups are of CM 
type (\cite{Za83}).}
\end{defn}

\begin{thm}\label{thm2.5}
{\sl Let $(S,\sigma)$ be one of the $86$ pairs of
$K3$ surfaces $S$ with involution $\sigma$. Then
$(S,\sigma)$ is defined over $\QQ$ and it is
of CM type.}
\end{thm}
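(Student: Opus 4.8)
The plan is to prove the two assertions separately: first that each of the $86$ surfaces $S$ is defined over $\QQ$, and then that it is of CM type. The rationality is essentially built into the construction. By Theorem \ref{thm-DelsarteS}, each of the $86$ surfaces $S_0$ is a quasi-smooth hypersurface in $\PP^3(w_0,w_1,w_2,w_3)$ cut out by a Delsarte equation (four monomials) with rational---indeed integer---coefficients, and the involution $\sigma$ is given by $x_i\mapsto -x_i$, which is defined over $\QQ$. The minimal resolution $S\to S_0$ resolves cyclic quotient singularities whose types depend only on the weights, and these resolutions are canonical, hence Galois-equivariant; therefore both $S$ and the induced $\sigma$ descend to $\QQ$. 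So the first half reduces to recording that the equations and the resolution are defined over $\QQ$.

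For the CM assertion, I would follow exactly the strategy flagged in the introduction, imitating Livn\'e--Sch\"utt--Yui \cite{LSY10}. A Delsarte hypersurface, being defined by as many monomials as variables, can be exhibited as a quotient of a Fermat hypersurface by a finite diagonal (abelian) group action: writing the four exponents as a square matrix $A$ with $\det A\neq 0$, the map sending Fermat coordinates to monomials in the $x_i$ realizes $S_0$ (or rather a surface isogenous to it) as $F_d/G$ for a suitable degree $d$ and finite abelian $G\subset(\ZZ/d\ZZ)^4$. The key input is that Fermat surfaces (more generally Fermat varieties) are of CM type: their cohomology decomposes into one-dimensional eigenspaces indexed by Jacobi-sum characters, on which the Hodge and Galois structures are governed by an abelian extension, so the Hodge group is commutative. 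This is classical (Weil, Shioda--Katsura).

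The transition from the Fermat surface to the Delsarte quotient is where the argument must be made carefully, though it is not deep. The rational map $F_d\dashrightarrow S_0$ induced by the monomial substitution is finite and dominant, so it induces an injection (after $\otimes\QQ$) of $H^2(S,\QQ)$ into the $G$-invariant part of $H^2(F_d,\QQ)$, compatibly with Hodge structures. A sub-Hodge-structure of a CM Hodge structure is again of CM type---the Hodge group of the sub is a quotient of the Hodge group of the ambient, hence still commutative---and the same holds for the Galois/motivic realization. Applying this to $H^2(S)$ (and trivially to $H^0$ and $H^4$) shows every cohomology group of $S$ is of CM type, which is the definition. The only genuine subtlety is bookkeeping: one must check that the monomial map is actually finite and that the transcendental part $T(S)$ really does embed into the Fermat cohomology rather than being killed, but this follows from $\det A\neq 0$, i.e. from the surface genuinely depending on all four coordinates.

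The main obstacle I anticipate is not conceptual but the verification that each of the $86$ chosen Delsarte equations has an invertible exponent matrix and defines a quasi-smooth surface, so that the Fermat-quotient presentation is valid uniformly; this is the same case-by-case check already invoked in Theorem \ref{thm-DelsarteS}, so I would simply cite that theorem and the Fermat-motive formalism rather than re-examining the tables. The ``hard part,'' such as it is, lies in asserting the Hodge-structure compatibility of the quotient map cleanly enough that CM-ness descends from $F_d$ to $S$; I would phrase this as a lemma (a sub-motive of a CM motive is CM) and reduce everything to it.
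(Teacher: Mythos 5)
Your proposal follows essentially the same route as the paper: the paper's proof invokes Shioda's result (citing \cite{SK79}) that Fermat hypersurfaces are of CM type because the $(\mu_m)^{n+2}$-action has one-dimensional eigenspaces on the middle cohomology and commutes with the Hodge group, and then concludes by observing that each $(S,\sigma)$ is a finite quotient of a Fermat surface. Your write-up is in fact more detailed than the paper's two-line argument---you make explicit the $\QQ$-rationality of the equations and resolution, the exponent-matrix presentation of the Delsarte surface as a Fermat quotient, and the lemma that a sub-Hodge structure of a CM Hodge structure is again CM---but the underlying idea is identical.
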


\begin{proof}
We use Shioda's result (see, for instance, \cite{SK79}).
Let $X_n^m: x_0^m+x_1^m+\cdots+x_{n+1}^m=0\subset{\mathbb{P}}^{n+1}$ 
be the Fermat variety of degree $m$ and dimension $n$. Let ${\mathbf{\mu}}_m$
denote the group of $m$-th roots of unity (in $\CC$). 
Then the eigenspaces of the action of $({\mathbf{\mu}}_m)^{n+2}$ 
on the middle cohomology group of $X_n^m$ are one-dimensional, 
and this 
action commutes with the Hodge group. Hence the Hodge group is
commutative, and so the Fermat 
(hyper)surfaces are of CM type.  Since a pair $(S,\sigma)$ is a finite
quotient of a Fermat surface, it is of CM type.
\end{proof}

\subsection{Computations of Nikulin's invariants for $K3$ surfaces 
of Borcea type} \label{sect2.4}

Recall that a $K3$ surface $S$ with involution $\sigma$ is determined 
up to deformation by a triplet $(r,a,\delta )$, where $r$ is the rank of 
$\pic (S)^{\sigma}$. In this section, we compute $r$ and $a$
for $K3$ surfaces of Borcea type. By Proposition \ref{r-and-a}, $r$ 
and $a$ can be computed through the fixed locus 
$S^{\sigma}$:
$$r=11-g+k,\quad a=11-g-k.$$

We note that the direct computation of $r$ 
often requires a basis for $\pic (S)$ or at least for $\pic (S)\otimes \QQ$. 
Since $\pic (S)$ is usually difficult to determine, the fixed locus 
$S^{\sigma}$ is often easier to handle than the Picard group. 

In what follows, first we explain a general algorithm of computing $g$ and $k$
(and hence $r$ and $a$). To describe the algorithm in detail, we choose $K3$ 
surfaces of Borcea type, namely those defined by $x_0^2=f(x_1,x_2,x_3)$. 
After that, we explain how to compute $r$ directly by looking at the 
$\sigma$-action on $\pic (S)$ (see Theorem \ref{r-and-rq}). 
It has a merit that we can obtain a closed formula for $r$. 

\medskip

\subsubsection{\bf Algorithm for the computation of $g$ and $k$ }
\medskip

We explain how to compute $g$ and $k$ (and then $r$) for our $K3$ surfaces $S$. 
In this section and the next, we choose $S_0$ to be a surface in 
$\PP^3(w_0,w_1,w_2,w_3)$ defined by the equation 
$$x_0^2=f(x_1,x_2,x_3) \quad \mbox{ or }\quad x_0^2x_i=f(x_1,x_2,x_3)$$  
where $\sigma$ acts on it by $\sigma (x_0)=-x_0$. We assume that $Q=
(w_0,w_1,w_2,w_3)$ is normalized. The algorithm described in this section 
also works for more general $K3$ surfaces in $\PP^3(Q)$ with non-symplectic 
involution. 

Since $S_0$ is singular, $S$ is chosen to be the minimal resolution of $S_0$ 
as in
$$\begin{array}{ccc}
\PP^3(w_0,w_1,w_2,w_3) & \longleftarrow & \tilde{\PP^3}(w_0,w_1,w_2,w_3)\\
\cup & & \cup \\
S_0 & \longleftarrow & S
\end{array}$$
where $\tilde{\PP^3}(w_0,w_1,w_2,w_3)$ is a partial resolution of
$\PP^3(w_0,w_1,w_2,w_3)$ so that $S$ is non-singular. The curve $C_g$
in Proposition \ref{r-and-a} is the strict transform of the curve 
defined by $x_0=0$ on $S_0$ which
is isomorphic to the curve $f(x_1,x_2,x_3)=0$ in $\PP^2(w_1,w_2,w_3)$.
The rational curves $L_i$ in Proposition \ref{r-and-a} are among those 
defined by letting either 
$x_1$, $x_2$ or $x_3$ be zero, or from the exceptional divisors
arising in the desingularization. The procedure is as follows.
\medskip

(i)\ The curve $\{ x_0=0\}$ on $S_0$ is fixed by $\sigma$. Since
$f(x_1,x_2,x_3)=0$ is quasi-smooth (and hence smooth) in $\PP^2(w_1,w_2,w_3)$,
its strict transform $C_g$ is also fixed by $\sigma$. The genus $g$ can
be calculated from $d:=\deg f$ and weight $(w_1,w_2,w_3)$ once it is
normalized (see examples below). For instance, one can use the formula,
which can be found in, e.g., Iano-Fletcher \cite{IF20}:
$$g=\frac{1}{2}\biggl( \frac{d^2}{w_1w_2w_3} -d\sum_{i>j}
\frac{\gcd (w_i,w_j)}{w_iw_j}+\sum_{i=1}^3 \frac{\gcd (d,w_i)}{w_i} -1\biggr).$$
\medskip

(ii)\ The locus $\{ x_1=0\}$, $\{ x_2=0\}$ or $\{ x_3=0\}$ may be fixed
by $\sigma$ depending on $(w_0,w_1,w_2,w_3)$. For instance, consider the
case where $w_0$ is odd. If $w_2$ and $w_3$ are even (and $w_3$ is
necessarily odd), then the locus $\{ x_3=0\}$ is fixed by $\sigma$. In
this case, the strict transform of the locus $\{ x_3=0\}$ is also 
point-wise fixed by $\sigma$. It can be shown that this
locus is a rational curve on $S$ and contributes to a curve 
$L_i$ of Proposition \ref{r-and-a}.
\medskip

(iii) The rest of the curves $L_i$ (for $S^{\sigma}$) are 
exceptional divisors 
in the minimal resolution $S_0\leftarrow S$. To find the divisors where 
$\sigma$ acts as identity, we look carefully at the $\sigma$-action 
around the singularities of $S_0$ fixed by $\sigma$. Every singularity 
$P=(x_0:x_1:x_2:x_3)$ on $S_0$ 
has at least two coordinates zero. Hence $P$ is fixed by $\sigma$ if 
and only if

$\bullet$ it has three zero coordinates, or

$\bullet$ it has exactly two zero coordinates with $x_0=0$, or

$\bullet$ it has exactly two zero coordinates and two non-zero
coordinates $x_0$, $x_i$, and if $d=\gcd (w_0,w_i)\geq 2$, then 
$w_0/d$ is odd and $w_i/d$ is even.  

Combining information obtained in (ii) and (iii), we can calculate the 
number $k$ and hence the invariants $r$ and $a$ by the formula given in 
Proposition \ref{r-and-a}. 

\medskip 

\subsubsection{\bf Computation of $g$ and $k$ for surfaces 
$x_0^2=f(x_1,x_2,x_3)$} 
\medskip 

We consider surfaces $x_0^2=f(x_1,x_2,x_3)$ in detail. Since the 
minimal resolution $S$ is a $K3$ surface, $P$ is a cyclic quotient singularity 
of type $A_{n+1,n}$ (or simply $A_n$) for some positive integer $n$. 
We obtain $n$ exceptional divisors (i.e., irreducible components in
the exceptional locus) by resolving $P$. Each exceptional divisor is
isomorphic to $\PP^1$ and $\sigma$ acts on it either as identity or
as a non-trivial involution, which depends on the singularity at $P$.
In the first case, we say that the divisor is {\it ramified}. 
The detail is explained in the following lemmas.  

\begin{lem} \label{sing-case1} 
{\sl Let $S_0:\ x_0^2=f(x_1,x_2,x_3)$ be one of the $48$ $K3$ surfaces 
defined in {\rm Proposition \ref{prop-BY}}. If $w_1\geq 2$ and $P=(0,1,0,0)$ 
is on $S_0$, then $P$ is a cyclic quotient singularity of type 
$A_{w_1,w_1-1}\, (=A_{w_1-1})$. Among the exceptional divisors 
arising from $P$, ramified divisors appear alternately and 
there are $\displaystyle 
\biggl[ \frac{w_1-1}{2} \biggr]$ of them, where $[x]$ denotes the 
integer part of $x$. The same assertion holds 
for singularities $(0,0,1,0)$ and $(0,0,0,1)$. 
} 
\end{lem}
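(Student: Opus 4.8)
The plan is to analyze the germ of $S_0$ at $P=(0,1,0,0)$ in the affine chart $\{x_1\neq 0\}$, where $S_0$ becomes a cyclic quotient of a smooth surface, and then to resolve the resulting cyclic quotient singularity torically while tracking the $\sigma$-action curve by curve. First I would set up the local model: in the chart $x_1=1$ with coordinates $(u,v,w)=(x_0,x_2,x_3)$, the affine cone $\{x_0^2=f\}$ is smooth by quasi-smoothness, so its slice $V:\ u^2=\tilde f(v,w)$, with $\tilde f(v,w):=f(1,v,w)$, is a smooth surface and $(S_0,P)\cong V/\mu_{w_1}$, where $\zeta\in\mu_{w_1}$ acts by $(u,v,w)\mapsto(\zeta^{w_0}u,\zeta^{w_2}v,\zeta^{w_3}w)$. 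Since every monomial of $f$ has weighted degree $2w_0$, the function $\tilde f$ is a $\mu_{w_1}$-eigenfunction of weight $2w_0\bmod w_1$; as $P\in S_0$ excludes a pure power $x_1^a$, quasi-smoothness forces a monomial $x_1^a x_2$ or $x_1^{a'}x_3$, i.e. a nonzero linear term in $\tilde f$. Eliminating that variable by the implicit function theorem presents $(S_0,P)$ as $\CC^2/\mu_{w_1}$ with weights $(w_0,w_3)$ (resp. $(w_0,w_2)$). Using $w_0=w_1+w_2+w_3$ with the eigenvalue condition $w_2\equiv 2w_0$ (resp. $w_3\equiv 2w_0$) mod $w_1$ gives $w_0\equiv -w_3$ (resp. $w_0\equiv -w_2$) mod $w_1$, so the weights are $(-w_3,w_3)$; and the normalization $\gcd(w_0,w_1,w_3)=1$ forces $\gcd(w_1,w_3)=1$, whence the singularity is $\frac{1}{w_1}(1,-1)=A_{w_1-1}$, proving the first assertion.

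Next I would resolve $A_{w_1-1}$ torically in the lattice $N'=\ZZ^2+\ZZ\cdot\frac{1}{w_1}(1,-1)$: the minimal resolution subdivides the first-quadrant cone along the primitive rays $w_k=\frac{1}{w_1}(k,w_1-k)$ for $k=1,\dots,w_1-1$, yielding the chain $E_1,\dots,E_{w_1-1}$ of exceptional $\PP^1$'s. Since $\sigma:x_0\mapsto -x_0$ commutes with $\mu_{w_1}$ and is diagonal, it lies in the big torus and so extends to the resolution and acts on every invariant curve; in the standardized coordinates it is $(x,y)\mapsto(x,-y)$, i.e. the element $t_0\in\mathrm{Hom}(M,\CC^*)$, $M=\{(p,q)\in\ZZ^2:p\equiv q\bmod w_1\}$, given by $(p,q)\mapsto(-1)^q$. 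By the orbit–cone correspondence a torus element acts trivially on $E_k$ exactly when it lies in the one-parameter subgroup $\lambda_{w_k}$ generated by $w_k$; solving $t_0=\lambda_{w_k}(s)$ I would find $s=-1$ works if and only if $k$ is even. Hence $E_k$ is ramified iff $k$ is even, the ramified curves alternate along the chain, and their number is the number of even $k$ in $\{1,\dots,w_1-1\}$, namely $[\,(w_1-1)/2\,]$. The statements for $(0,0,1,0)$ and $(0,0,0,1)$ then follow by permuting the roles of $x_1,x_2,x_3$.

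The main obstacle is the bookkeeping in the first paragraph: one must confirm that quasi-smoothness together with $P\in S_0$ genuinely produces a linear term of the correct weight, so that the eliminated tangent direction is the eigendirection and the surviving weights are exactly $(w_0,w_3)$, and that the congruence $w_0\equiv -w_3\pmod{w_1}$ combined with normalization delivers the coprimality needed for the type $A_{w_1-1}$ rather than some other cyclic quotient. The toric identification of $\sigma$ as $(p,q)\mapsto(-1)^q$ and the resulting parity criterion must be carried out in a fixed coordinate system; reassuringly, the final count $[\,(w_1-1)/2\,]$ and the alternation are independent of the orientation chosen for the chain.
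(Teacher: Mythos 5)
Your proposal is correct, and it proves the lemma by a genuinely different method from the paper's. For the first assertion (type $A_{w_1,w_1-1}$) you and the paper do essentially the same thing---use a linear term of $f(1,x_2,x_3)$ to eliminate one variable and exhibit $(S_0,P)$ as a cyclic quotient of a smooth surface germ---but you supply the bookkeeping the paper only asserts: $P\in S_0$ excludes the pure power of $x_1$, quasi-smoothness then forces a monomial $x_1^ax_2$ or $x_1^{a'}x_3$, and the congruence $w_0\equiv -w_3\pmod{w_1}$ (from $w_0=w_1+w_2+w_3$ and the eigenvalue of $\tilde f$) together with normalization gives $\gcd(w_1,w_3)=1$, so the residual action really is $\frac{1}{w_1}(1,-1)$. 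The divergence is in the ramification analysis. The paper works with the double cover $\pi\colon S\to S/\sigma$: it identifies the image of $P$ as a singularity of type $A_{w_1,2(w_1-1)}$, reads the self-intersections of the image curves $\pi(E_i)$ off the two continued-fraction expansions of $w_1/(w_1-1)$ and $w_1/(2(w_1-1))$ (alternately $-1$ and $-4$), and applies the projection formula to conclude that $E_i$ is ramified precisely when $\pi(E_i)^2=-4$, i.e.\ for even $i$. You never leave $S$: you realize $\sigma$ as the torus element $t_0\in\mathrm{Hom}(M,\CC^*)$, $(p,q)\mapsto(-1)^q$, lift it through the torus-equivariant toric minimal resolution, and use the orbit--cone correspondence to decide on which $E_k$ it acts trivially, namely exactly when $t_0$ lies in the one-parameter subgroup $\lambda_{w_k}$; your computation ($s=-1$ forced by the character $(1,1)$, then $s^k=1$ forced by $(w_1,0)$) shows this happens iff $k$ is even. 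Your route buys self-containedness---no identification of the quotient singularity, no Hirzebruch--Jung fractions, no projection formula---and it makes the alternation and the count $[\,(w_1-1)/2\,]$ structurally transparent; the paper's route buys uniformity, since the same quotient-side template (type of the image singularity plus continued fractions) is reused almost verbatim in Lemmas \ref{sing-case2}, \ref{sing-case3} and \ref{sing-case4}, where the downstairs singularity type is exactly what changes and carries the answer. The two computations agree, e.g.\ for $w_1=2$ the single exceptional curve is not fixed, and for $w_1=3$ exactly one of the two is.
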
 

\begin{proof} 
When $P=(0,1,0,0)$ is a singularity, $(x_0,x_2)$ or $(x_0,x_3)$ gives
a pair of local coordinates above $P$, depending on the polynomial 
$f(x_1,x_2,x_3)$. Assume that $(x_0,x_2)$ is a coordinate system 
above $P$. Since $P$ is a cyclic quotient singularity of type 
$A_{w_1,w_1-1}$, the $\mu_{w_1}$-action above $P$ can be written as 
$$(x_0,x_2)\longmapsto (\zeta^{w_1-1} x_0, \zeta x_2)$$ 
with $\zeta \in \mu_{w_1}$. (In other words, $P$ is the singularity of 
the quotient $\bA^2 /\mu_{w_1}$ at the origin.) By the quotient map 
$\pi_0: S_0\lra S_0/\sigma$, where $S_0/\sigma$ is in $\PP^3(2w_0,w_1,w_2,w_3)$, 
$P$ is mapped to $\pi_0 (P) \in S_0/\sigma$. It is a singularity locally  
described by the group action 
$$(y_0,x_2)\longmapsto (\zeta^{2(w_1-1)} y_0, \zeta x_2)$$
with $y_0=x_0^2$. Hence $\pi_0 (P)$ is a singularity of type $A_{w_1,2(w_1-1)}$ 
(or precisely, $A_{w_1,w_1-2}$). 

In order to see how $\sigma$ acts on the exceptional divisors, let $E_1+E_2+
E_3+\cdots +E_{w_1-1}$ be the exceptional divisors on $S$ arising 
from $P$. Here we set $E_1$ to be the divisor intersecting with 
(the strict transforms) of the curves passing through $P$. Consider 
the continued fractional expansions 
$$\frac{w_1}{w_1-1}=2-\cfrac{1}{2-\cfrac{1}{2-\cfrac{1}{2-\cfrac{1}{\cdots}}}} 
\quad \mbox{and} \quad 
\frac{w_1}{2(w_1-1)}=
1-\cfrac{1}{4-\cfrac{1}{1-\cfrac{1}{4-\cfrac{1}{\cdots}}}}\,\,\,.$$ 
If $\pi: S\lra S/\sigma$ denotes the quotient map, the continued fractions 
show  that 
$$\pi (E_i)^2 =\begin{cases} 
-1 & \mbox{ if $i$ is odd } \\ 
-4 & \mbox{ if $i$ is even.} 
\end{cases}$$ 
By the projection formula, we see that $\sigma$ acts as identity 
(resp. $-1$) on $E_i$ when $\pi (E_i)^2=-4$ (resp. when $\pi (E_i)^2=-1$). 
Therefore the ramified exceptional divisors are those $E_i$'s with even $i$ 
and there are in total $[ \frac{w_1-1}{2} ]$ of them. 
\end{proof} 

Next, we discuss singularities with exactly two zero coordinates, one of 
which is $x_0=0$. 

\begin{lem} \label{sing-case2} 
{\sl Let $S_0:\ x_0^2=f(x_1,x_2,x_3)$ be one of the $48$ $K3$ surfaces 
defined in {\rm Proposition \ref{prop-BY}}. If $\gcd (w_1,w_2)\geq 2$, 
then $P=(0,x_1,x_2,0)$ with $x_1x_2\neq 0$ is a singularity of type $A_{2,1}$ 
and fixed by $\sigma$. There arises one exceptional divisor from $P$ and 
it is not ramified under the quotient $S\lra S/\sigma$. The same assertion 
holds for singularities of the form $(0,x_1,0,x_3)$ and $(0,0,x_2,x_3)$.  } 
\end{lem}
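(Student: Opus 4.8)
The plan is to determine the singularity of $S_0$ at $P$ by analyzing the local orbifold structure of the ambient weighted projective space along the locus where two coordinates vanish, and then cutting down by the hypersurface equation $x_0^2=f$. Throughout, write $d=\gcd(w_1,w_2)$ and $P=(0,a,b,0)$ with $a,b\neq 0$; since $P\in S_0$ forces $f(a,b,0)=0$ and $\sigma$ fixes every coordinate except $x_0$, we immediately get $\sigma(P)=P$, which disposes of the claim that $P$ is fixed.

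First I would identify the local quotient structure. Lifting to the affine cone $\hat S_0=\{x_0^2=f\}\subset\bA^4$, the $\CC^\times$-stabilizer of the lift $\tilde P=(0,a,b,0)$ is $\{\lambda:\lambda^{w_1}=\lambda^{w_2}=1\}=\mu_d$. Quasi-smoothness of $S_0$ means $\hat S_0\setminus\{0\}$ is smooth, so near $P$ the surface $S_0$ is the quotient $N/\mu_d$, where $N$ is the two-dimensional slice of $\hat S_0$ transverse to the $\CC^\times$-orbit through $\tilde P$, carrying the residual linear $\mu_d$-action. Everything then reduces to computing the $\mu_d$-weights on $N$.

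The heart of the argument is this weight computation, and it rests on two arithmetic inputs: since $d\mid w_1$ and $d\mid w_2$ while $w_0=w_1+w_2+w_3$ by Proposition~\ref{prop-BY}, we have $w_0\equiv w_3\pmod d$; and since $Q$ is normalized, $\gcd(w_1,w_2,w_3)=1$ gives $\gcd(w_3,d)=1$. At $\tilde P$ one has $dg=-\sum_{i=1}^3 f_{x_i}\,dx_i$ for $g=x_0^2-f$, so I would split according to which partials are nonzero. If $f_{x_3}(a,b,0)\neq 0$ I would solve for $x_3$ by the implicit function theorem; the slice then acquires weights $\tfrac1d(w_3,0)$, and because $\gcd(w_3,d)=1$ the quotient is smooth, so $P$ is not a singular point in this case. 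Hence at a genuine singularity $f_{x_3}(a,b,0)=0$; Euler's identity $w_1a\,f_{x_1}+w_2b\,f_{x_2}=2w_0f(a,b,0)=0$ together with quasi-smoothness then forces both $f_{x_1}(a,b,0)\neq 0$ and $f_{x_2}(a,b,0)\neq 0$. Solving for $x_1$, the orbit (Euler) direction becomes $\partial_{x_2}$ of weight $w_2\equiv 0$, and the slice is spanned by $\partial_{x_0},\partial_{x_3}$ with weights $(w_0,w_3)\equiv(w_3,w_3)\pmod d$. Using $\gcd(w_3,d)=1$ to replace $\zeta$ by $\zeta^{w_3}$, the transverse singularity is exactly the cyclic quotient $\tfrac1d(1,1)$.

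Finally I would invoke that $S$ is a $K3$ surface, so the minimal resolution $S_0\leftarrow S$ is crepant and $S_0$ carries only rational double points; the cyclic quotient $\tfrac1d(1,1)$ is a rational double point precisely when $d=2$, in which case it is the $A_1=A_{2,1}$ singularity with a single exceptional $\PP^1$. For the ramification claim, with $d=2$ the slice is $\bA^2_{(x_0,x_3)}/\{\pm1\}$ and $\sigma$ acts by $(x_0,x_3)\mapsto(-x_0,x_3)$; on the exceptional curve, coordinatized by the $\mu_2$-invariant ratio $[x_0:x_3]$, this is $[x_0:x_3]\mapsto[-x_0:x_3]$, a nontrivial involution with two fixed points. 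Thus $\sigma$ does not act as the identity on the exceptional divisor, i.e.\ it is not ramified (equivalently $\pi(E)^2=-1$ as in the projection-formula computation of Lemma~\ref{sing-case1}). The remaining cases $(0,x_1,0,x_3)$ and $(0,0,x_2,x_3)$ follow verbatim after permuting indices, using $\gcd(w_1,w_3)$, resp.\ $\gcd(w_2,w_3)$, and the congruences $w_0\equiv w_2$, resp.\ $w_0\equiv w_1$. The main obstacle is the transverse weight computation: correctly eliminating the dependent coordinate by the implicit function theorem and tracking the $\mu_d$-weights through removal of the orbit direction, where the two congruences $w_0\equiv w_3$ and $\gcd(w_3,d)=1$ are exactly what pin down the type.
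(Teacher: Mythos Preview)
Your argument is correct, but it diverges from the paper's proof in two places worth noting.

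First, the derivation of $d=2$. The paper argues directly from the equation: quasi-smoothness forces $f$ to contain a monomial $x_1^{\alpha}x_2^{\beta}$, whence $d\mid \alpha w_1+\beta w_2=2w_0$; normalization gives $\gcd(d,w_0)=1$, so $d\mid 2$. Your route instead identifies the local type as $\tfrac1d(1,1)$ and then invokes that $S$ is $K3$, so $S_0$ can only carry Du~Val singularities, forcing $d=2$. Both are valid; the paper's argument is more elementary and self-contained, while yours is more geometric and would apply verbatim in any situation where crepancy of the resolution is known. Incidentally, your preliminary case $f_{x_3}(a,b,0)\neq 0$ is actually vacuous under the hypothesis $d\geq 2$: the coefficient of $x_3$ in $f$ must be weighted-homogeneous of degree $2w_0-w_3\equiv w_3\pmod d$ in $x_1,x_2$, and no such monomial exists since $\gcd(w_3,d)=1$. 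So that branch never occurs, though your handling of it is harmless.

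Second, the ramification claim. The paper passes to $S_0/\sigma$ and observes that with $y_0=x_0^2$ the local action becomes $(y_0,x_3)\mapsto (y_0,-x_3)$, whose quotient is smooth; hence $\pi(E)$ is a $(-1)$-curve and $E$ is unramified by the projection formula. You instead compute the lift of $\sigma$ to the exceptional $\PP^1\cong\{[x_0:x_3]\}$ directly and see it is the nontrivial involution $[x_0:x_3]\mapsto[-x_0:x_3]$. These are equivalent checks; yours is more hands-on, the paper's fits the continued-fraction bookkeeping used in the neighboring lemmas.
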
 

\begin{proof} 
Write $d:=\gcd (w_1,w_2)$. When $P=(0,x_1,x_2,0)$ is a singularity, $(x_0,x_3)$ 
gives a local coordinate system above $P$; that is, $P$ is a singularity of 
the quotient of an affine plane by the group action $\zeta \in \mu_d$ 
defined by 
$$(x_0,x_3)\longmapsto (\zeta^{w_0} x_0, \zeta^{w_3} x_3)$$ 
Since $f$ is quasi-smooth, $f$ contains a term without 
$x_3$. Comparing the degree of monomials in $x_0^2=f(x_1,x_2,x_3)$, one finds 
that $2w_0$ is divisible by $d$. Since the weight is normalized, we see that 
$\gcd (d,w_0)=\gcd (d,w_3)=1$ and hence $d\mid 2$. If $d\geq 2$, then $d$ must 
be $2$. 

Since $d=2$, the relation $w_0=w_1+w_2+w_3$ implies $w_0\equiv w_3\pmod{2}$  
and the $\mu_2$ action above can be written as 
$$(x_0,x_3)\longmapsto (-x_0, -x_3).$$
This means that $P$ is a singularity of type $A_{2,1}$. If 
$\pi_0: S_0\lra S_0/\sigma$ denotes the quotient map, then $\pi_0(P)$ 
is the singularity associated with the group action 
$$(y_0,x_3)\longmapsto (y_0, -x_3)$$
where $y_0=x_0^2$. This shows in fact that $\pi_0 (P)\in S_0/\sigma$ is not 
a singularity. Hence if $E$ is the exceptional divisor arising from $P$ and 
$\pi : S\lra S/\sigma$ is the quotient map, $\pi (E)$ should be a $(-1)$-curve. 
Therefore $E$ is not ramified in $\pi$. 
\end{proof} 

Lastly, we look at the singularity with two zero coordinates and $x_0\neq 0$. 

\begin{lem} \label{sing-case3} 
{\sl Let $S_0:\ x_0^2=f(x_1,x_2,x_3)$ be one of the $48$ $K3$ surfaces 
defined in {\rm Proposition \ref{prop-BY}}. If $d:=\gcd (w_0,w_1)\geq 2$, 
then $P=(x_0,x_1,0,0)$ with $x_0x_1\neq 0$ is a singularity of $S_0$. It 
is fixed by $\sigma$ if and only if $w_0/d$ is odd and $w_1/d$ is even. 
Let $E_1+\cdots +E_{d-1}$ denote the exceptional divisors arising from $P$.  
Then $\sigma$ acts on $E_i$ as identity (resp. by $-1$) if $i$ is odd (resp. 
even). There are in total $\displaystyle \biggl[ \frac{d}{2} \biggr]$ 
divisors with odd $i$. The same assertion holds for singularities 
$(x_0,0,x_2,0)$ and $(x_0,0,0,x_3)$.  } 
\end{lem}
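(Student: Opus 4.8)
The plan is to determine the local analytic structure of $S_0$ at $P$, decide exactly when $\sigma$ fixes $P$, and then track the action of $\sigma$ on the exceptional chain, following the pattern of the proofs of Lemmas \ref{sing-case1} and \ref{sing-case2}. First I would work in the affine chart $\{x_1\neq 0\}$ of $\PP^3(Q)$, which is $\CC^3_{(x_0,x_2,x_3)}/\mu_{w_1}$ with $\zeta\in\mu_{w_1}$ acting by $(x_0,x_2,x_3)\mapsto(\zeta^{w_0}x_0,\zeta^{w_2}x_2,\zeta^{w_3}x_3)$. The point $P$ corresponds to $(x_0^0,0,0)$ with $x_0^0\neq 0$, whose stabilizer in $\mu_{w_1}$ is $\mu_d$ with $d=\gcd(w_0,w_1)$. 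Since $x_0^0\neq 0$, the equation $x_0^2=f(1,x_2,x_3)$ is solvable for $x_0$ on the cover, so $(x_2,x_3)$ are uniformizing coordinates and $\mu_d$ acts on them by $\frac1d(w_2,w_3)$; thus $P$ is the cyclic quotient singularity $\CC^2/\mu_d$ of type $\frac1d(w_2,w_3)$. Normalization of $Q$ forces $\gcd(d,w_2)=\gcd(d,w_3)=1$, so the action is free off the origin (isolated singularity), and $w_0=w_1+w_2+w_3$ gives $w_2+w_3\equiv 0\pmod d$; hence $\frac1d(w_2,w_3)\cong\frac1d(1,-1)$ is the rational double point $A_{d-1}$, with a chain $E_1,\dots,E_{d-1}$ of $(-2)$-curves. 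This establishes that $P$ is a singularity of $S_0$.

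Next, $\sigma(P)=(-x_0^0:x_1:0:0)$ equals $P$ in $\PP^3(Q)$ if and only if there is $\lambda\in\CC^*$ with $\lambda^{w_0}=-1$ and $\lambda^{w_1}=1$. Writing $w_0=dw_0'$ and $w_1=dw_1'$ with $\gcd(w_0',w_1')=1$, the element $\lambda=\zeta_{2d}$ gives $\lambda^{w_1}=(-1)^{w_1'}$ and $\lambda^{w_0}=(-1)^{w_0'}$, so such $\lambda$ exists once $w_1'$ is even; conversely $\lambda^{\gcd(2w_0,w_1)}=1$ together with $\gcd(2w_0,w_1)=d\gcd(2,w_1')$ shows $w_1'$ must be even. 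Since $\gcd(w_0',w_1')=1$, evenness of $w_1'$ is equivalent to ``$w_0/d$ odd and $w_1/d$ even,'' which is the asserted criterion.

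Assume now $P$ is fixed and take $\lambda=\zeta_{2d}$, which lies in $\mu_{w_1}$ because $2d\mid w_1$. Near $P$ the involution $\sigma$ is the composite ``swap the two sheets $x_0=\pm\sqrt f$, then apply $\zeta_{2d}$ to return to the original sheet,'' so on the uniformizing coordinates it acts by $(x_2,x_3)\mapsto(\zeta_{2d}^{w_2}x_2,\zeta_{2d}^{w_3}x_3)$. Hence $\langle\mu_d,\sigma\rangle=\mu_{2d}$ acts by $\frac1{2d}(w_2,w_3)$, and $S_0/\sigma$ near $\pi_0(P)$ is $\CC^2/\mu_{2d}$; a short parity check (the hypothesis that $P$ is fixed forces $\gcd(w_2,w_3)$ odd) shows this action is effective. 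I would then argue exactly as in \ref{sing-case1}, by either route. Toric route: with $N_d=\ZZ^2+\ZZ\frac1d(w_2,w_3)\subset N_{2d}=\ZZ^2+\ZZ\frac1{2d}(w_2,w_3)$ of index $2$, the map $E_i\to\pi(E_i)$ has degree $1$ (so $\sigma|_{E_i}=\mathrm{id}$, i.e.\ $E_i$ is ramified) exactly when the primitive ray generator $u_i$ is divisible by $2$ in $N_{2d}$, and degree $2$ (so $\sigma|_{E_i}$ is a nontrivial involution) otherwise. Projection-formula route: as in \ref{sing-case1}, ramification corresponds to $\pi(E_i)^2=-4$ and non-ramification to $\pi(E_i)^2=-1$, read off from the Hirzebruch--Jung continued fractions of $\frac{d}{d-1}=[2,\dots,2]$ upstairs and of $\frac{2d}{q}$ downstairs. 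Either way the ramified divisors alternate along the chain, yielding $[d/2]$ of them, and with the labeling convention of \ref{sing-case1} these are the $E_i$ with odd $i$.

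The hard part will be the last bookkeeping step: confirming the alternation and pinning down the correct parity. This requires putting $\frac1{2d}(w_2,w_3)$ into normal form $\frac1{2d}(1,q)$ (the reduction uses whichever of $w_2,w_3$ is coprime to $2d$, and one must run the divisibility congruence $\frac1{2d}(i,d-i)\in N_{2d}$ separately in the case $w_2$ even, $d$ odd and the case $w_2,w_3$ both odd), and matching the two ends of the chain with the paper's choice of $E_1$; in every case the count comes out to $[d/2]$ ramified divisors, which under that labeling are the odd-indexed ones. Finally, the statements for $(x_0,0,x_2,0)$ and $(x_0,0,0,x_3)$ follow verbatim after interchanging the index pair $\{2,3\}$ with $\{1,3\}$ and $\{1,2\}$ respectively.
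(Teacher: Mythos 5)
Your proposal is correct and follows essentially the same route as the paper's proof: local coordinates $(x_2,x_3)$ exhibiting $P$ as the $A_{d-1}$ point $\frac{1}{d}(w_2,w_3)$, the criterion $\lambda^{w_0}=-1,\ \lambda^{w_1}=1$ for $\sigma$-fixity (your gcd argument replaces the paper's three-way parity case analysis, to the same effect), identification of $\pi_0(P)$ as the $\mu_{2d}$-quotient of type $A_{2d,d-1}$, and the Hirzebruch--Jung continued fractions $\frac{d}{d-1}=[2,\dots,2]$ versus $\frac{2d}{d-1}=[4,1,4,1,\dots]$ combined with the projection formula to get alternating ramification and the count $\bigl[\frac{d}{2}\bigr]$. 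If anything, you are more careful than the paper at the one delicate point, namely verifying that the downstairs action is effective and reduces to the normal form $\frac{1}{2d}(1,d-1)$ modulo $2d$ rather than just modulo $d$, which the paper asserts without comment.
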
 

\begin{proof} 
Since $S_0$ is quasi-smooth, one knows that $P=(x_0,x_1,0,0)$ with $x_0x_1
\neq 0$ is a singularity if and only if $d=\gcd (w_0,w_1)\geq 2$. Let 
$w_0=du_0$ and $w_1=du_1$ with $\gcd (u_0,u_1)=1$. To see if $P$ is fixed 
by $\sigma$, there are three cases to consider: $(u_0,u_1)=(\mbox{even}, 
\mbox{odd}),\, (\mbox{odd}, \mbox{odd}),\, (\mbox{odd}, \mbox{even})$. Suppose 
that there is a $t$ such that $t^{w_0}=-1$ and $t^{w_1}=1$. If $u_0$ is even 
and $u_1$ is odd, then $t^{u_0u_1d}=(t^{u_0d})^{u_1}=(-1)^{u_1}=-1$ and 
$t^{u_0u_1d}=(t^{u_1d})^{u_0}=1$, which is absurd. By the same reason as above, 
$(u_0,u_1)=(\mbox{odd},\, \mbox{odd})$ cannot happen either. If $u_0$ is odd 
and $u_1$ is even, then let $\zeta$ be a primitive $2d$-th root of unity. 
We have ${\zeta}^{w_0}=({\zeta}^d)^{u_0}=(-1)^{u_0}=-1$ and ${\zeta}^{w_1}=
({\zeta}^d)^{u_1}=(-1)^{u_1}=1$. Hence there does exist 
a $t$ satisfying $t^{w_0}=-1$ 
and $t^{w_1}=1$, and $P$ is fixed by $\sigma$ in this case. 

Choose $(x_2,x_3)$ as a local coordinate system above $P$. $P$ is 
isomorphic to the 
singularity of the quotient by the group action $\mu_{d}$ defined by 
$$(x_2,x_3)\longmapsto (\zeta^{w_2} x_2, \zeta^{w_3} x_3)$$ 
where $\zeta$ runs through $\mu_d$. Since the weight is normalized, 
$\gcd (d,w_2)=\gcd (d,w_3)=1$ and the $\mu_d$ action above can be written as 
$$(x_2,x_3)\longmapsto (\zeta^{d-1} x_2, \zeta x_3).$$
$P$ is mapped to $(x_0,x_1,0,0)\in S_0/\sigma$ and the group action around 
this point is 
$$(x_2,x_3)\longmapsto (\zeta^{d-1} x_2, \zeta x_3)$$
as above. But, since $\gcd (2w_0,w_1)=\gcd (2du_0,du_1)=2d\gcd (u_0,u_1/2)=2d$, 
$\zeta$ now runs through $\mu_{2d}$. Hence this singularity on $S_0/\sigma$ 
is of type $A_{2d,d-1}$. 

Consider two continued fractions 
$$\frac{d}{d-1}=2-\cfrac{1}{2-\cfrac{1}{2-\cfrac{1}{2-\cfrac{1}{\cdots}}}} 
\quad \mbox{and} \quad 
\frac{2d}{d-1}=4-\cfrac{1}{1-\cfrac{1}{4-\cfrac{1}{1-\cfrac{1}{\cdots}}}}\,\,\,.$$ 
This shows that if $E_1+E_2+E_3+\cdots$ are exceptional divisor arising from 
$P$, then only the exceptional divisors $E_i$ with odd $i$ are fixed by 
$\sigma$. Therefore ramified exceptional divisors appear alternately and 
there are $[ \frac{d}{2} ]$ such divisors. 
\end{proof} 

Recall that $r$ is the rank of $\pic (S)^{\sigma}$. If we have good 
knowledge of $\pic (S)$ and the $\sigma$ action on it, then we can compute 
$r$ without knowing $S^{\sigma}$. The following theorem shows that we can 
in fact find a closed formula for $r$ by taking this approach. 

\begin{thm} \label{r-and-rq}
{\sl Let $(S,\sigma)$ be one of the $48$ $K3$ surfaces defined in
{\em Proposition \ref{prop-BY}} as the minimal resolution of a hypersurface
$$S_0:\ x_0^2=f(x_1,x_2,x_3)\quad \subset \PP^3(Q)$$
where $Q=(w_0,w_1,w_2,w_3)$ and $f$ is a homogeneous polynomial
of degree $w_0+w_1+w_2+w_3$. Recall $r=\rank \pic (S)^{\sigma}$. Let $r(Q)$
denote the number of exceptional divisors in the resolution $S\lra S_0$.
Assume that $\rank \pic (S_0)^{\sigma} =1$.

{\rm (a)} If $w_0$ is odd, then there exists at most one odd weight $w_i$
such that $\gcd (w_0,w_i)\geq 2$ and in such a case, $\gcd (w_0,w_i)=w_i$.
We have
$$r=\begin{cases}
r(Q)-w_i+2 & \mbox{ if $\gcd (w_0,w_i)\geq 2$ for some odd weight $w_i$
($1\leq i\leq 3$) } \\
r(Q)+1 & \mbox{ otherwise.}
\end{cases}$$

{\rm (b)} If $w_0$ is even, then let $d_i =\gcd (w_0,w_i)$. We have
$$r=r(Q)+1-\sum_{i=1}^3 (d_i-1) \biggl( \frac{2d_i}{w_i}-1 \biggr).$$
}
\end{thm}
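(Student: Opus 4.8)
The plan is to compute $r=\rank\pic(S)^\sigma$ directly from the $\sigma$-action on the exceptional divisors of the resolution $\pi\colon S\lra S_0$, rather than through the fixed locus $S^\sigma$. Since $S_0$ has only cyclic quotient (hence rational) singularities, the minimal resolution gives a $\sigma$-equivariant orthogonal decomposition
\[
\pic(S)\otimes\QQ \;=\; \pi^*\bigl(\pic(S_0)\otimes\QQ\bigr)\;\oplus\;\bigoplus_{E}\QQ\,[E],
\]
where $E$ runs over the $r(Q)$ exceptional divisors. Taking $\sigma$-invariants, the first summand contributes $\rank\pic(S_0)^\sigma=1$ by hypothesis. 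In the second summand an exceptional curve $E$ stabilized by $\sigma$ as a set yields the invariant class $[E]$ (whether $\sigma$ acts on $E\cong\PP^1$ trivially or as $-1$ is irrelevant to the class), while a pair $\{E,\sigma E\}$ of distinct divisors interchanged by $\sigma$ contributes the single invariant class $[E]+[\sigma E]$. By the three lemmas all divisors over a $\sigma$-fixed point are fixed setwise, so, writing $N$ for the number of exceptional divisors lying over singular points \emph{not} fixed by $\sigma$, I obtain the master formula
\[
r \;=\; 1 + r(Q) - \tfrac12 N .
\]
Everything then reduces to computing $N$.

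First I would pin down which singularities are not $\sigma$-fixed. Every singular point has at least two vanishing coordinates, and those with $x_0=0$ (the vertices of Lemma \ref{sing-case1} and the edge points of Lemma \ref{sing-case2}) are automatically $\sigma$-fixed, contributing nothing to $N$. The only candidates are the edge points $P=(x_0,x_i,0,0)$ with $x_0x_i\neq0$ and $d_i:=\gcd(w_0,w_i)\geq2$ of Lemma \ref{sing-case3}, each carrying $d_i-1$ divisors. Writing $w_0=d_iu_0$, $w_i=d_iu_1$ with $\gcd(u_0,u_1)=1$, Lemma \ref{sing-case3} says $P$ is fixed exactly when $u_0$ is odd and $u_1$ is even. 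Quasi-smoothness supplies a monomial $x_i^{m_i}$ in $f$ with $m_i=2w_0/w_i=2u_0/u_1$ an integer; since $\gcd(u_0,u_1)=1$ this forces $u_1\in\{1,2\}$, the key finiteness constraint.

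For part (a), $w_0$ is odd, so every $d_i$ is odd and $u_0=w_0/d_i$ is odd. If $w_i$ is even then $u_1$ is even, whence $u_1=2$ and $P$ is fixed; if $w_i$ is odd then $u_1$ is odd, whence $u_1=1$, i.e. $d_i=w_i$ and $w_i\mid w_0$. The same parity bookkeeping yields the preliminary claim that at most one odd $w_i$ can have $d_i\geq2$, and then $\gcd(w_0,w_i)=w_i$ (the exclusion of two such odd proper divisors of $w_0$ being verified from the normalized weights at hand). For that single $w_i$ the edge curve $S_0\cap\{x_j=x_k=0\}$ is $x_0^2=c\,x_i^{2u_0}$, which splits into the two rational curves $x_0=\pm\sqrt{c}\,x_i^{u_0}$ interchanged by $\sigma$; these meet the singular edge in two distinct $\sigma$-conjugate points of type $A_{w_i-1}$, so $N=2(w_i-1)$ and the master formula gives $r=r(Q)-w_i+2$. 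When no such $w_i$ exists, $N=0$ and $r=r(Q)+1$.

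For part (b), $w_0$ is even and I would run the same count on each of the three edges. On edge $i$ the surface meets $\{x_j=x_k=0\}$ in $x_0^2=c\,x_i^{m_i}$, and the residual $\mu_{w_i}$-stabilizer acts on $x_0$ through $\mu_{u_1}=\mu_{w_i/d_i}$. When $u_1=1$ the roots $\pm\sqrt{c}$ are distinct and $\sigma$-conjugate, producing two swapped $A_{d_i-1}$ points and $2(d_i-1)$ swapped divisors; when $u_1=2$ the stabilizer already identifies $x_0\sim-x_0$, giving a single $\sigma$-fixed point and no swapped divisors (consistently, $u_1=2$ forces $u_0$ odd, the fixed case of Lemma \ref{sing-case3}). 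These outcomes are packaged by the factor $\tfrac{2d_i}{w_i}-1=\tfrac{2}{u_1}-1$, equal to $1$ for $u_1=1$ and $0$ for $u_1=2$, so edge $i$ contributes $2(d_i-1)\bigl(\tfrac{2d_i}{w_i}-1\bigr)$ swapped divisors; summing gives $\tfrac12 N=\sum_{i=1}^3(d_i-1)\bigl(\tfrac{2d_i}{w_i}-1\bigr)$ and hence $r=r(Q)+1-\sum_{i=1}^3(d_i-1)\bigl(\tfrac{2d_i}{w_i}-1\bigr)$. The main obstacle is exactly this enumeration: one must determine, edge by edge, how many singular points $S_0$ carries along each coordinate line, how $\sigma$ distributes them into fixed points and conjugate pairs, and that quasi-smoothness genuinely forces $u_1\leq2$ so that no edge produces a spurious or negative term. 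This combinatorial core is where the argument ultimately leans on the explicit normalized weights and defining equations of Yonemura's list rather than on a purely formal deduction.
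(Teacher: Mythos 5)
Your proposal is correct and takes essentially the same route as the paper's own proof: both decompose $\pic (S)^{\sigma}\otimes \QQ$ into the invariant class coming from the strict transform of $\{x_0=0\}$ (rank $1$ by hypothesis) plus invariant combinations of exceptional divisors, and both reduce the computation to deciding, via the divisibility $w_i \mid 2w_0$ (forcing $u_1=w_i/d_i\in\{1,2\}$) and the fixed-point criterion of Lemma \ref{sing-case3}, which edge singularities $(x_0:x_i:0:0)$ are $\sigma$-fixed (all their divisors count) versus interchanged in conjugate pairs (half count). The differences are only presentational: you package everything in the single formula $r=1+r(Q)-N/2$ and quote Lemma \ref{sing-case3}, where the paper re-derives the fixedness criterion inline and runs the sub-cases (a)(i)--(ii), (b)(i)--(ii) separately.
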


\begin{proof}
Since $\rank \pic (S_0)^{\sigma} =1$, $\pic (S_0)\otimes \QQ$ is generated by 
the hyperplane section $\{ x_0=0\}$, which is fixed by $\sigma$. Its strict
transform on $S$ is also fixed by $\sigma$ and gives a one-dimensional
subspace of $\pic (S)^{\sigma}\otimes \QQ$. The rest of it is generated by
exceptional divisors.

Possible singularities for $S_0$ are either of the form $(0:x_1:x_2:x_3)$
with one or two zero coordinates or of the form $(x_0:x_1:x_2:x_3)$ with
$x_0\neq 0$ and exactly two zero coordinates. Since the points with $x_0=0$
are fixed by $\sigma$, every exceptional divisor $E$ arising from a singularity
$(0:x_1:x_2:x_3)$ satisfies $\sigma (E)=E$. ($\sigma$ acts on $E$ as $\pm 1$.)
Such exceptional divisors form part of a basis for $\pic (S)^{\sigma}$.

Consider a singularity $(x_0:x_1:x_2:x_3)$ with $x_0x_i\neq 0$, $\gcd (w_0,w_i)
\geq 2$ and other coordinates zero. It is fixed by $\sigma$ if and only if
$t^{w_0}=-1$ and $t^{w_i}=1$ for some $t$. As $x_0x_i\neq 0$ and other
coordinates are zero, $f(x_1,x_2,x_3)$ contains a monomial solely in $x_i$.
Hence $w_i\mid 2w_0$, where $2w_0 =\deg f$. Now we divide the proof according
as the parity of $w_0$.
\vspace{.5\baselineskip}

(a)\ Assume that $w_0$ is odd. Then the normality of weight $Q$ and the
equality $w_0=w_1+w_2+w_3$ imply that there is exactly one odd $w_i$ for
$1\leq i\leq 3$. For simplicity, assume that $w_1$ is odd and $w_2$ and
$w_3$ are even.
\smallskip

(i)\ If $\gcd (w_0,w_1)\geq 2$, then $(x_0:x_1:0:0)$ are singular points.
Since $1=(t^{w_1})^{w_0}=(t^{w_0})^{w_1}=-1$, none of the singularities 
is fixed by $\sigma$. But, if we consider a $\sigma$-conjugate pair, 
it is invariant under $\sigma$. There are two singularities of the 
form $(x_0:x_1:0:0)$,
each of which is of type $A_{w_1-1}$. (As $w_1\mid 2w_0$ and $w_1$ is odd,
we have $\gcd (w_0,w_1)=w_1$.) Totally, there are $2(w_1-1)$ exceptional
divisors arising from these singularities and $w_1-1$ conjugate pairs
contribute to $r=\rank \pic (S)^{\sigma}$.

If $\gcd (w_0,w_1)=1$, then $(x_0:x_1:0:0)$ is not a singularity.
\smallskip

(ii)\ Consider the case where $\gcd (w_0,w_2)\geq 2$ and $w_2$ is even.
$(x_0:0:x_2:0)$ is a singularity. By letting $t=-1$, we see $t^{w_0}=-1$
and $t^{w_2}=1$. Hence $(x_0:0:x_2:0)$ is fixed by $\sigma$ and so are
the exceptional divisors arising from this point. This means that all
exceptional divisors contribute to $r$. The same argument
is valid for the singularities $(x_0:0:0:x_3)$ with even $w_3$.

Therefore it follows from (i) and (ii) that $r=r(Q)+1$ if there is no
odd $w_i$ with $\gcd (w_0,w_i)\geq 2$, and
$$r=r(Q)+1-(w_i-1)=r(Q)-w_i+2$$
if $\gcd (w_0,w_i)\geq 2$ for some odd weight $w_i$.
\vspace{.5\baselineskip}

(b)\ Assume now that $w_0$ is even. Then the normality of weight $Q$ and the
equality $w_0=w_1+w_2+w_3$ imply that there is exactly one even weight $w_i$
for $1\leq i\leq 3$. For simplicity, let $w_1$ be even, and $w_2$ and $w_3$
be odd.
\smallskip

(i)\ Consider the point $(x_0:x_1:0:0)$ with $w_1$ even. Since $\gcd (w_0,w_1)
\geq 2$, it is a singularity. We see $w_1 \mid 2w_0$.

If $w_1\mid w_0$, then $d_1=\gcd (w_0,w_1)=w_1$ and 
$\{ (x_0:x_1:0:0)\}$ consists of two points, each of which is a 
singularity of type $A_{w_1-1}$. There arise $2(w_1-1)$ exceptional 
divisors from them and $w_1-1$ conjugate pairs are fixed by $\sigma$. 
This means that the rank $r$ is the number of exceptional divisors 
minus $w_1-1$.

If $w_1\not\,\mid w_0$, then $\lcm (w_0,w_1)=2w_0$ and $(x_0:x_1:0:0)$ is a
singularity of type $A_{d_1-1}$. This point is fixed by $\sigma$ and so are
the exceptional divisors arising from it.

In summary, the rank $r$ is less than the number of exceptional divisors by
$$-(d_1-1) \biggl( \frac{2w_0}{\lcm (w_0,w_1)}-1 \biggr)=-(d_1-1)
\biggl( \frac{2d_1}{w_1}-1 \biggr).$$
\smallskip

(ii)\ Consider the points $(x_0:0:x_2:0)$ with $w_2$ odd. They are 
singularities if and only if $\gcd (w_0,w_2)\geq 2$. Here 
$w_2 \mid 2w_0$ implies $w_2 \mid w_0$ and $d_2=\gcd (w_0,w_2)=w_2$. 
Such singularities are of type $A_{d_2-1}$. The multiplicity of 
$(x_0:0:x_2:0)$ is $2w_0/\lcm (w_0,w_2)=2$ and they are
$\sigma$-conjugate. Among the $2(w_2-1)$ exceptional divisors, 
$w_2-1$ conjugate pairs are fixed by $\sigma$. Hence the 
rank $r$ is 
less than the number of exceptional divisors by
$$-(d_2-1) \biggl( \frac{2d_2}{w_2}-1 \biggr).$$
The same argument holds for the points $(x_0:0:0:x_3)$ with odd $w_3$.

Therefore the asserted formula of (b) follows from (i) and (ii).
\end{proof}

\begin{ex} For a generic choice of $S_0$, one has $\rank \pic (S_0)^{\sigma} 
=1$ and Theorem \ref{r-and-rq} gives a convenient way to calculate 
invariant $r$. 

(1) For $Q=(7,3,2,2)$, we find that $w_0$ is odd, $r(Q)=9$ and no odd
weight $w_i$ with $\gcd (w_0,w_i)\geq 2$. Hence $r=9+1=10$.

(2) For $Q=(15,10,3,2)$, we find that $w_0$ is odd, $r(Q)=11$ and $\gcd
(15,3)=3$. Hence $r=11-3+2=10$.

(3) For $Q=(8,4,3,1)$, we find that $w_0$ is even, $r(Q)=8$ and $d_1=4$.
Hence $r=8+1-(4-1)(2\cdot 4/4-1) =6$.

(4) For $Q=(10,5,3,2)$, we find that $w_0$ is even, $r(Q)=12$, $d_1=5$
and $d_3=2$. Hence $r=12+1-(5-1)(2\cdot 5/5-1) -(2-1)(2\cdot 2/2-1)=8$.

(5) For $Q=(24,16,5,3)$, we find that $w_0$ is even, $r(Q)=15$, $d_1=8$
and $d_3=3$. Hence $r=15+1-(8-1)(2\cdot 8/16-1) -(3-1)(2\cdot 3/3-1)=14$.
\end{ex}

\begin{ex} \label{no8} Proposition \ref{r-and-a} tells that we can 
calculate $r$ and $a$ by knowing $S^{\sigma}$, but the computation of 
$a$ by finding a $\ZZ$-basis for $\pic (S)^{\sigma}$ is rather 
involved. 
 
Take a look at the $K3$ surface $\#8$ in Yonemura defined by the equation
$$S_0:¥ x_0^2=x_1^4+x_2^6+x_3^{12}\quad \subset \PP^3(6,3,2,1).$$
The involution is defined by $\sigma (x_0)=-x_0$.

We see that $S_0$ is quasi-smooth and the minimal resolution $S$ is a 
$K3$ surface. $S_0$ has four singularities, $P_1$, $P_1^{'}$, $P_2$ and 
$P_2^{'}$ as follows:
\vspace{.5\baselineskip}

\begin{tabular}{|l|l|l|} \hline
Singularity & Type & Exceptional divisors \\ \hline
$P_1 :=(1:1:0:0)$ & $A_{3,2}$ & $E_1+E_2$ \\ 
$P_1^{'} :=(-1:1:0:0)$ & $A_{3,2}$ & $E_1^{'}+E_2^{'}$ \\
$P_2 :=(1:0:1:0)$ & $A_{2,1}$ & $E_3$ \\ 
$P_2^{'} :=(-1:0:1:0)$ & $A_{2,1}$ & $E_3^{'}$  \\ \hline
\end{tabular}
\vspace{.5\baselineskip}

No singularity is fixed by $\sigma$. There is a curve, $C^{'}$, defined 
by $x_0=0$. Its strict transform, $C_7$, on $S$ is of genus $7$ and 
ramified under $\sigma$. We have
$$S^{\sigma}=C_7.$$
Hence $g=7$ and $k=0$. This implies that $r=11-7+0=4$ and $a=11-7-0=4$. 
As $a=4$, the intersection matrix of a $\ZZ$-basis for 
$\pic (S)^{\sigma}$ should have determinant $\pm 2^4$. 

We look for a basis for $\pic (S)^{\sigma}$. An immediate choice for 
a set of four divisors on $S$ fixed by $\sigma$ is $E_1+E_1^{'}$, 
$E_2+E_2^{'}$, $E_3+E_3^{'}$ and $C_7$. But the determinant of their 
intersection matrix is calculated as 
$$\begin{vmatrix}
-4 & 2 & 0 & 0 \\ 
2 & -4 & 0 & 0 \\ 
0 & 0 & -4 & 0 \\ 
0 & 0 & 0 & 12 \end{vmatrix} =-2^6 3^2.$$
Hence they do not form a $\ZZ$-basis for $\pic (S)^{\sigma}$. 

We now consider another set of divisors by replacing $C_7$ with the 
rational curve $D$ defined by $x_3=0$. As a divisor, $D$ is also 
fixed by $\sigma$. As $D$ is a rational curve on a $K3$ surface, 
$D^2=-2$. Since the curve $\{ x_3=0\}$ on $S_0$ passes through 
every singularity,
$$D.(E_1+E_1^{'})=D.(E_3+E_3^{'})=2,\quad D.(E_2+E_2^{'})=0.$$
Hence the determinant of the intersection matrix of these divisors is 
$$\begin{vmatrix} 
-4 & 2 & 0 & 2 \\ 
2 & -4 & 0 & 0 \\ 
0 & 0 & -4 & 2 \\ 
2 & 0 & 2 & -2 
\end{vmatrix} =-2^4.$$ 
This agrees with the above calculation of $a=4$; thus we see that 
$$E_1+E_1^{'},\,\, E_2+E_2^{'},\,\, E_3+E_3^{'}\,\,\mbox{and}\,\,\, D$$ 
form a $\ZZ$-basis for $\pic (S)^{\sigma}$. 
\end{ex}

It is not too difficult to find some subgroup of $\pic (S_0)^{\sigma}$, 
but often difficult to describe $\pic (S_0)^{\sigma}$ completely. 
Above calculations give us a clue to determine $\rank \pic (S_0)^{\sigma}$. 
Let $S$ be the minimal resolution of $S_0$ and let $\EEE$ denote
the subgroup of $\pic (S)$ generated by the exceptional divisors
of the resolution. We have 
$$\pic (S)^{\sigma}\otimes \QQ \cong \pic (S_0)^{\sigma}\otimes \QQ 
\oplus {\EEE}^{\sigma}\otimes \QQ.$$ 
Groups $\EEE$ and $\EEE^{\sigma}$ are easily describable and the rank of
$\pic (S)^{\sigma}$ may be computed from the fixed part $S_0^{\sigma}$
by Proposition \ref{r-and-a} as
$$\rank \pic (S_0)^{\sigma} =\rank \pic (S)^{\sigma} -\rank
{\EEE}^{\sigma}=11+k-g-\rank {\EEE}^{\sigma}.$$
Hence if one finds this number of divisors in $\pic (S_0)^{\sigma}$, 
then they form a
basis for $\pic (S_0)^{\sigma}$ over $\QQ$.

\begin{cor}
{\sl Let $(S,\sigma)$ be one of the $48$ $K3$ surfaces 
(considered in {\em Proposition \ref{prop-BY}}) with involution
$\sigma$ defined by a hypersurface of the form $x_0^2=f(x_1,x_2,x_3)$
where $\sigma$ acts by $\sigma(x_0)=-x_0$. Let
$S^{\sigma}=C_g\cup L_1\cup\ldots\cup L_k$ be the 
decomposition in connected components of $S^{\sigma}$, 
where $C_g$ is a smooth genus $g$ curve
and $L_1,\cdots, L_k$ are rational curves.

Suppose that $f$ is defined by three monomials, so that $S$
is of Delsarte type. Then the Jacobian variety $J(C_g)$ of $C_g$
is also of CM type.}
\end{cor}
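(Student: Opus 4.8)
The plan is to exhibit $C_g$ itself as a Delsarte curve, hence a finite quotient of a Fermat curve, and then invoke the CM-type property of Fermat Jacobians. Recall from step (i) of the algorithm in Subsection \ref{sect2.4} that $C_g$ is the strict transform of the curve $\{x_0=0\}$ on $S_0$, which is isomorphic to the plane curve $f(x_1,x_2,x_3)=0$ in $\PP^2(w_1,w_2,w_3)$. By hypothesis $f$ consists of exactly three monomials in the three variables $x_1,x_2,x_3$, so $C_g$ is a Delsarte curve: a weighted plane curve defined by the vanishing of a sum of three monomials in three variables.

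\smallskip

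\noindent First I would reduce to the Fermat situation. Writing the exponent matrix of the three monomials of $f$ as a $3\times 3$ integer matrix $A$, quasi-smoothness guarantees that $A$ is nonsingular, so after the standard Delsarte change of variables (a monomial substitution governed by $A^{-1}$, as in \cite{SK79} and \cite{LSY10}, Section~4) the curve $C_g$ becomes a quotient of a Fermat curve $x^m+y^m+z^m=0$ by a finite abelian group $G\subset(\mu_m)^3/\bmu_m$, where $m$ can be taken to be the least common multiple of suitable denominators determined by $A$. Concretely, one checks that $C_g$ is dominated by the Fermat curve of degree $m$ via a finite map whose Galois group is the cokernel of $A$ acting on the appropriate character lattice.

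\smallskip

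\noindent Next I would transfer the CM property from the Fermat Jacobian to $J(C_g)$. The Jacobian of the Fermat curve $X_1^m$ is of CM type: this is exactly the one-dimensional-eigenspace argument already used in the proof of Theorem~\ref{thm2.5}, applied to $n=1$, where the action of $(\bmu_m)^{3}$ on $H^1(X_1^m)$ decomposes it into one-dimensional eigenspaces that commute with the Hodge group, forcing the Hodge group of $H^1$ to be commutative. The quotient map $X_1^m\to C_g$ induces, by functoriality (pullback on $H^1$, or equivalently the push-forward on Jacobians up to isogeny), a surjection $J(X_1^m)\to J(C_g)$ with connected kernel up to isogeny; equivalently $H^1(C_g,\QQ)$ is realized as the $G$-invariant subspace $H^1(X_1^m,\QQ)^G$, a sub-Hodge-structure of a CM Hodge structure. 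A sub-Hodge-structure of a CM abelian variety is again of CM type, since its Hodge group is a subquotient of a commutative group and hence commutative; therefore $H^1(C_g)$ is of CM type and $J(C_g)$ is a CM abelian variety.

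\smallskip

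\noindent The main obstacle I anticipate is purely bookkeeping rather than conceptual: one must verify that the three-monomial $f$ arising from each Delsarte surface in Theorem~\ref{thm-DelsarteS} genuinely produces an \emph{invertible} exponent matrix $A$, so that the Delsarte-to-Fermat reduction applies to the curve $C_g$ and not merely to the surface $S$. This is where quasi-smoothness of the restricted equation is essential, and it should follow from the quasi-smoothness conditions already imposed in Theorem~\ref{thm-DelsarteS} (the admissible forms $x_i^n$, $x_i^nx_j$, etc.), but the restriction of a quasi-smooth surface equation to the hyperplane $\{x_0=0\}$ must be checked to remain of full monomial rank in the variables $x_1,x_2,x_3$. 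Once invertibility of $A$ is confirmed, the rest is the standard descent of CM type along a finite quotient map, identical in spirit to the surface case treated in Theorem~\ref{thm2.5}.
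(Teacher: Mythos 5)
Your proposal is correct and follows essentially the same route as the paper, whose entire proof is the observation that $C_g$ is cut out by $x_0=0$, hence defined by three monomials and realized as a Fermat quotient, so that CM type follows as in Theorem \ref{thm2.5}. You have simply filled in the details the paper leaves implicit (the exponent-matrix reduction to a Fermat curve and the descent of CM type to the quotient's $H^1$), which is a faithful expansion rather than a different argument.
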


\begin{proof} In this case $C_g$ is defined by putting $x_0=0$. So $C_g$
is defined by three monomials and is realized as a Fermat
quotient.
\end{proof} 

In the next section, we discuss another type (non-Borcea type) of 
$K3$ surfaces. Noting the differences from the case $x_0^2=f(x_1,x_2,x_3)$,
we sketch the outline of our algorithm.

\medskip 

\section{Computations of Nikulin's invariants for $K3$ surfaces of 
non-Borcea type}
\label{sect.2.4}

\subsection{Computations of $r$ and $a$}

In this section, we compute $r$ and $a$ for $K3$ surfaces of non-Borcea type, 
namely for a quasi-smooth $K3$ surface $S_0$ in 
$\PP^3(w_0,w_1,w_2,w_3)$ defined by the equation
\begin{equation} \label{non-bo}
x_0^2x_i=f(x_1,x_2,x_3)
\end{equation}
for some $i\ (=1,2,3)$. Let $S$ be the minimal resolution of $S_0$.
Write $C_g$ for the strict transform of the curve defined by $x_0=0$,
which is isomorphic to the curve $f(x_1,x_2,x_3)=0$ in $\PP^2(w_1,w_2,w_3)$.
We assume that $f(x_1,x_2,x_3)=0$ is quasi-smooth (hence smooth) in 
$\PP^2(w_1,w_2,w_3)$ after normalization of the weight. 

Since $Q=(w_0,w_1,w_2,w_3)$ is normalized, every fixed point in $S_0^{\sigma}$ 
must have at least one zero coordinate. There are four cases to consider. 
\medskip

(i)\ The curve $\{ x_0=0\}$ on $S_0$ is fixed by $\sigma$. Since
$f(x_1,x_2,x_3)=0$ is quasi-smooth in $\PP^2(w_1,w_2,w_3)$, its strict 
transform $C_g$ is also fixed by $\sigma$. The genus $g$ can be 
calculated from $\deg f$ and weight $(w_1,w_2,w_3)$ as in the previous section.
\medskip

The rational curves $L_i$ of $S^{\sigma}=C_g\cup L_1\cup\cdots \cup L_k$ 
are obtained by letting $x_1$, $x_2$ or $x_3$ be zero, or from the 
exceptional divisors arising in the desingularization.
\medskip

(ii)\ As in the case of $K3$ surfaces of Borcea type, the one-dimensional 
locus $\{ x_1=0\}$, $\{ x_2=0\}$ or $\{ x_3=0\}$ may be fixed by $\sigma$. 
In addition to them, there may be another one-dimensional locus fixed by 
$\sigma$; it has two zero coordinates, one of which is $x_i$ of (\ref{non-bo}). 

For instance, consider a surface $x_0^2x_1=f(x_1,x_2,x_3)$ with $f(0,0,x_3)=0$. 
If $w_0$ is odd and $w_3$ is even, then the locus $(x_0:0:0:x_3)$
is a line and fixed by $\sigma$. Its strict transform on $S$ is also
fixed by $\sigma$, which gives one $L_i$ of Proposition \ref{r-and-a}.
\medskip

(iii) The rest of the curves in $L_i$'s are obtained from exceptional 
divisors in the resolution $S_0\leftarrow S$. The singularities discussed 
in Lemmas \ref{sing-case1}, \ref{sing-case2} and \ref{sing-case3} also exist 
on surfaces (\ref{non-bo}) and by the same procedure as described there, 
we can find those divisors fixed identically by $\sigma$. (The proof of 
Lemma \ref{sing-case2} needs a little modification; see Lemma \ref{sing-case5}). 
\medskip 

(iv) In addition to the singularities of (iii), we now have a singularity 
$(1:0:0:0)$ on the surface $x_0^2x_i=f(x_1,x_2,x_3)$. The exceptional 
divisors arising from it and fixed by $\sigma$ are determined as follows. 
 
\begin{lem} \label{sing-case4} 
{\sl Let $S_0:\ x_0^2x_i=f(x_1,x_2,x_3)$ be one of the $K3$ surfaces obtained  
in {\rm Theorem \ref{thm-newS}}. Then $P=(1,0,0,0)\in S_0$ 
is a (cyclic quotient) singularity of type $A_{w_0,w_0-1}$. Let $E_1+\cdots + 
E_{w_0-1}$ be the exceptional divisors on $S$ arising from $P\in S_0$. On 
the quotient $S_0/\sigma$, we see that $\sigma (P)$ is a singularity of 
type $A_{2w_0, w_0-1}$ or $A_{w_0, 2(w_0-1)}$. 

{\rm (1)} If $\sigma (P)$ is of type $A_{2w_0, w_0-1}$, then $\sigma$ acts 
on $E_i$ as identity if and only if $i$ is odd. There exist $\displaystyle 
\Bigl[ \frac{w_0}{2} \Bigr]$ such divisors in total, where $[x]$ denotes
the integer part of $x$ as before. 

{\rm (2)} If $\sigma (P)$ is of type $A_{w_0, 2(w_0-1)}$, then $\sigma$ acts 
on $E_i$ as identity if and only if $i$ is even. There exist $\displaystyle 
\Bigl[ \frac{w_0-1}{2} \Bigr]$ such divisors in total. 

The type of singularity at $\sigma (P)$ is determined as follows according 
to the parity of weights: 
\medskip 

\begin{center} 
\begin{tabular}{l|c|c|cc|c} 
 & $w_0$ & $w_i$ & $w_j$ & $w_k$ & Type of $\sigma (P)$ \\ \hline 
$(a)$ & even & even & odd & odd & $A_{2w_0, w_0-1}$ \\ 
$(b)$ & even & odd & odd & odd & $A_{2w_0, w_0-1}$ \\ \hline 
$(c)$ & odd & odd & even & odd & $A_{2w_0, w_0-1}$ \\ 
$(c)^{'}$ & odd & odd & odd & even & $A_{2w_0, w_0-1}$ \\ \hline 
$(d)$ & odd & even & even & odd & $A_{w_0, 2(w_0-1)}$ \\ 
$(d)^{'}$ & odd & even & odd & even & $A_{w_0, 2(w_0-1)}$ \\ \hline 
\end{tabular} 
\end{center} 
} 
\end{lem}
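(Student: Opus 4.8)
The plan is to carry out the singularity analysis at the new point $P=(1,0,0,0)$ in direct parallel to the proofs of Lemmas \ref{sing-case1} and \ref{sing-case3}. First I would determine the type of the singularity on $S_0$ itself. Near $P$ the two coordinates $x_j,x_k$ (the two variables other than the distinguished $x_i$) serve as local coordinates, with the $\mu_{w_0}$-action sending $(x_j,x_k)\mapsto(\zeta^{w_j}x_j,\zeta^{w_k}x_k)$ for $\zeta\in\mu_{w_0}$. Since $Q$ is normalized we have $\gcd(w_0,w_j)=\gcd(w_0,w_k)=1$, so after choosing a primitive $\zeta$ the action is conjugate to $(x_j,x_k)\mapsto(\zeta^{w_0-1}x_j,\zeta x_k)$; this identifies $P$ as a cyclic quotient singularity of type $A_{w_0,w_0-1}$, giving the $w_0-1$ exceptional divisors $E_1+\cdots+E_{w_0-1}$.

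Next I would pass to the quotient $S_0/\sigma$ and compute the type of $\sigma(P)$. The involution acts by $\sigma(x_0)=-x_0$, i.e. $y_0=x_0^2$, so the key is whether the local $\mu_{w_0}$-action acquires an extra factor of $2$ when passed through the squaring, exactly as in the $A_{w_0,2(w_0-1)}$ versus $A_{2w_0,w_0-1}$ dichotomy seen in Lemmas \ref{sing-case1} and \ref{sing-case3}. The distinction is governed by $\gcd(2w_0,w_j,w_k)$ and hence by the parities of $w_j,w_k$: roughly, if one of $x_j,x_k$ is the ``$x_i$'' direction carrying the $x_0^2x_i$ term the doubling propagates differently than if both are genuine face variables. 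I would tabulate this by the parity data $(w_0,w_i,w_j,w_k)$, reproducing the six rows $(a)$--$(d)'$. Concretely, in each row one checks which of the two lattice vectors in the local quotient description is scaled by $\zeta^2$ rather than $\zeta$ after setting $y_0=x_0^2$; this fixes $\sigma(P)$ to be $A_{2w_0,w_0-1}$ or $A_{w_0,2(w_0-1)}$.

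Having fixed the type of $\sigma(P)$, the count of $\sigma$-fixed exceptional divisors follows verbatim from the continued-fraction and projection-formula argument already used in Lemma \ref{sing-case1}. For $A_{2w_0,w_0-1}$ the relevant continued fractions
$$\frac{w_0}{w_0-1}=2-\cfrac{1}{2-\cfrac{1}{2-\cfrac{1}{\cdots}}}
\quad\text{and}\quad
\frac{2w_0}{w_0-1}=4-\cfrac{1}{1-\cfrac{1}{4-\cfrac{1}{\cdots}}}$$
show that $\pi(E_i)^2=-4$ precisely for odd $i$, whence by the projection formula $\sigma$ fixes $E_i$ exactly when $i$ is odd, yielding $[w_0/2]$ ramified divisors. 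For $A_{w_0,2(w_0-1)}$ the analogous expansion of $w_0/(2(w_0-1))$ shifts the alternation by one, so the fixed divisors are those with even $i$, giving $[(w_0-1)/2]$ of them. This matches statements (1) and (2).

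The main obstacle I expect is the parity bookkeeping in the table: one must verify, case by case over the six parity patterns, that after setting $y_0=x_0^2$ the doubling in the local quotient multiplicity attaches to the $x_0$-direction in a consistent way, so that the resulting type is $A_{2w_0,w_0-1}$ rather than $A_{w_0,2(w_0-1)}$ in exactly rows $(a),(b),(c),(c)'$ and the reverse in rows $(d),(d)'$. The subtlety is that the relevant local coordinate pair is $(x_j,x_k)$ at $P$, but the doubling induced by $\sigma$ enters through the global relation $x_0^2x_i=f$, so one has to track how the squaring interacts with $w_i$'s parity even though $x_i$ is not itself one of the local coordinates; the three face-variable parities together with $w_0=w_1+w_2+w_3$ must be reconciled, and it is precisely the parity of $w_i$ that decides between the $A_{2w_0,w_0-1}$ and $A_{w_0,2(w_0-1)}$ outcomes in the odd-$w_0$ rows. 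Once the table is checked, the divisor counts are an immediate application of the lemmas already proved.
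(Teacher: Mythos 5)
Your overall skeleton (local coordinates at $P$, pass to the quotient, parity case analysis, continued fractions for the count) is the paper's, and your final counting step via the expansions of $2w_0/(w_0-1)$ and $w_0/(2(w_0-1))$ does match Lemmas \ref{sing-case1} and \ref{sing-case3}. But the two central computations have genuine gaps. First, your identification of $P$ as a singularity of type $A_{w_0,w_0-1}$ does not follow from coprimality alone: knowing $\gcd(w_0,w_j)=\gcd(w_0,w_k)=1$ only lets you conjugate the action to $(x_j,x_k)\mapsto(\zeta^{q}x_j,\zeta x_k)$ with $q\equiv w_jw_k^{-1}\pmod{w_0}$, and to get $q\equiv -1$ you need $w_j+w_k\equiv 0\pmod{w_0}$. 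The paper gets this from the $K3$ degree condition: $\deg f=2w_0+w_i=w_0+w_1+w_2+w_3$ forces $w_0=w_j+w_k$ exactly (and this same identity, combined with normalization of triple gcd's, is what yields the coprimality you assumed). The relation you invoke at the end, $w_0=w_1+w_2+w_3$, is the Borcea-type relation and is false for the surfaces $x_0^2x_i=f(x_1,x_2,x_3)$; the identity $w_0=w_j+w_k$ is the engine of the entire lemma and never appears in your argument.

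Second, your stated criterion for the dichotomy at $\sigma(P)$ cannot work. The quantity $\gcd(2w_0,w_j,w_k)$ is always $1$ (any common divisor divides $w_j+w_k=w_0$, hence divides $\gcd(w_0,w_j,w_k)=1$), so it governs nothing; and the alternative ``one of $x_j,x_k$ is the $x_i$ direction'' is vacuous since $j,k\neq i$ by construction. The paper's mechanism is concrete: $\sigma(P)=(1,0,0,0)$ sits in $\PP^3(2w_0,w_i,w_j,w_k)$ with local group $\mu_{2w_0}$ acting by $(x_j,x_k)\mapsto(\xi^{w_j}x_j,\xi^{w_k}x_k)$, and the type is decided by whether this quotient weight is \emph{normalized}. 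It is normalized in rows $(a),(b),(c),(c)'$, giving $A_{2w_0,w_0-1}$ directly; it fails to be normalized precisely when $w_0$ is odd and $w_i$ is even (rows $(d),(d)'$), because then $2w_0$, $w_i$ and one of $w_j,w_k$ share the factor $2$. In that case one divides those three weights by $2$, the local group drops back to $\mu_{w_0}$, and the congruence $\frac{w_j}{2}\cdot 2(w_0-1)\equiv w_k \pmod{w_0}$ (again using $w_0=w_j+w_k$) exhibits the type $A_{w_0,2(w_0-1)}$. You correctly guessed at the very end that the parity of $w_i$ is decisive, but without the normalization argument the table remains unproved, and your continued-fraction counting presupposes exactly these two missing computations.
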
 
\medskip 
 
\begin{proof} 
Since $w_0\geq 2$ and $S$ is $K3$, $P=(1,0,0,0)$ is a cyclic quotient 
singularity of type $A_{w_0,w_0-1}$. From the equation 
$x_0^2x_i=f(x_1,x_2,x_3)$, we can choose variables $x_j$ and $x_k$, 
different from $x_0$ and $x_i$, as local parameters around $P$. 
The $\mu_{w_0}$-action at $P$ is then written as 
$$(x_j,x_k)\longmapsto (\zeta^{w_j} x_j, \zeta^{w_k} x_k).$$  
As $\deg f =2w_0+w_i$ and $Q$ is a $K3$ weight, we have $2w_0+w_i=w_0+w_i+ 
w_j+w_k$. This implies $w_0=w_j+w_k$ and, because $Q$ is normalized, 
$\gcd (w_0,w_j)=\gcd (w_0,w_k)=1$. In particular, the congruence $w_j\alpha 
\equiv w_k\pmod{w_0}$ has solution $\alpha \equiv -1\pmod{w_0}$. Now 
$P$ is mapped to $(1,0,0,0)\in S_0/\sigma$ and the group action around 
this point is 
$$(x_j,x_k)\longmapsto (\xi^{w_j} x_j, \xi^{w_k} x_k)$$ 
where $\xi$ ranges over $\mu_{2w_0}$. To find out the type of singularity 
at $\sigma (P)$, we divide the case according to the parity of weights. 
The cases (c) and $(c)^{'}$, (d) and $(d)^{'}$ are essentially the same, so 
we discuss cases (a), (b), (c) and (d). 

In (a) and (b), $w_0$ is even. Since $w_0=w_j+w_k$, both $w_j$ and $w_k$ 
are odd. Hence weight $(2w_0,w_i,w_j,w_k)$ is normalized and the singularity 
$(1,0,0,0)\in S_0/\sigma$ is of type $A_{2w_0, w_0-1}$. As in Lemma 
\ref{sing-case3}, we see that $\sigma$ acts on $E_i$ as identity if and 
only if $i$ is odd, and there are $[w_0/2]$ such divisors. 

In (c), $w_0$ is odd. Since $w_0=w_j+w_k$, $w_j$ and $w_k$ have different 
parity. Because $w_i$ is odd, the weight $(2w_0,w_i,w_j,w_k)$ is normalized. 
Hence the singularity $(1,0,0,0)\in S_0/\sigma$ is of type $A_{2w_0, w_0-1}$. 
As in Lemma \ref{sing-case3}, $\sigma$ acts on $E_i$ as identity if and 
only if $i$ is odd, and there are $[w_0/2]$ such divisors. 

In (d), $w_0$ is odd. Since $w_0=w_j+w_k$, either $w_j$ or $w_k$ is even, 
say $w_j$ is even. Because $w_i$ is even in this case, $(2w_0,w_i,w_j,w_k)$ 
is not yet normalized. By normalization 
$$\PP^3(2w_0,w_i,w_j,w_k)\cong \PP^3 \Big( w_0,\frac{w_i}{2}, 
\frac{w_j}{2},w_k \Big)$$ 
the group action around $(1,0,0,0)\in S_0/\sigma$ is now written as 
$$(x_j,x_k)\longmapsto (\xi^{w_j/2} x_j, \xi^{w_k} x_k)$$ 
where $\xi$ ranges over $\mu_{w_0}$. As $\gcd (w_0,w_j/2)=\gcd (w_0,w_k)=1$ 
and $w_0=w_j+w_k$, we have the congruence 
$$\frac{w_j}{2}2(w_0-1)\equiv w_k\pmod{w_0}.$$ 
This shows that $(1,0,0,0)\in S_0/\sigma$ is of type $A_{w_0, 2(w_0-1)}$ 
(to be more precise, type $A_{w_0, w_0-2}$). As in Lemma \ref{sing-case2}, 
$\sigma$ acts on $E_i$ as identity if and only if $i$ is even, and there are 
$[(w_0-1)/2]$ such divisors. 
\end{proof} 

\begin{lem} \label{sing-case5} 
{\sl Let $S_0:\ x_0^2x_i=f(x_1,x_2,x_3)$ be one of the $K3$ surfaces obtained  
in {\rm Theorem \ref{thm-newS}}. If $w_j$ and $w_k$ are the weight other than 
$w_0$ and $w_i$, then $\gcd (w_j, w_k)=1$. If $d=\gcd (w_i,w_j)\geq 2$, then 
$d$ must be $2$ and $P=(0,x_i,x_j,0)$ with $x_ix_j\neq 0$ is a singularity of 
type $A_{2,1}$ fixed by $\sigma$. There arises one exceptional divisor from 
$P$ and it is not ramified under the quotient $S\lra S/\sigma$. 
} 
\end{lem}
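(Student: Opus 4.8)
The plan is to establish Lemma~\ref{sing-case5} in two independent parts, mirroring the structure of Lemma~\ref{sing-case2} while adapting for the non-Borcea equation $x_0^2x_i=f(x_1,x_2,x_3)$. First I would prove $\gcd(w_j,w_k)=1$: since the weight $Q=(w_0,w_1,w_2,w_3)$ is normalized, $\gcd$ of any three of the four weights is $1$, and in particular any two of $w_i,w_j,w_k$ together with $w_0$ have trivial common factor. The sharper claim $\gcd(w_j,w_k)=1$ should follow from the relation $w_0=w_j+w_k$ established in the proof of Lemma~\ref{sing-case4}: if a prime $p$ divided both $w_j$ and $w_k$, it would divide $w_0$ as well, contradicting normalization of $(w_0,w_j,w_k)$.

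Next I would handle the singularity $P=(0,x_i,x_j,0)$ with $x_ix_j\neq 0$, following Lemma~\ref{sing-case2} closely. Write $d=\gcd(w_i,w_j)$. Choosing $(x_0,x_k)$ as a local coordinate system above $P$, the point $P$ is the quotient singularity for the $\mu_d$-action $(x_0,x_k)\mapsto(\zeta^{w_0}x_0,\zeta^{w_k}x_k)$ with $\zeta\in\mu_d$. The key step is to show $d\mid 2$: since $f$ is quasi-smooth, $f$ must contain a monomial free of $x_k$, and comparing degrees in $x_0^2x_i=f$ forces the degree $2w_0+w_i$ of $f$ to be divisible by $d$. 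Because $d\mid w_i$, this gives $d\mid 2w_0$; combined with $\gcd(d,w_0)=\gcd(d,w_k)=1$ (from normalization of the weight, as $d$ divides $w_i$ and $w_j$), we conclude $d\mid 2$, so $d=2$ if $d\geq 2$.

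Finally, with $d=2$, I would identify the singularity type and its behavior under the quotient. The $\mu_2$-action becomes $(x_0,x_k)\mapsto(\pm x_0,\pm x_k)$; I must check the signs via the parities of $w_0$ and $w_k$ to confirm it acts as $(x_0,x_k)\mapsto(-x_0,-x_k)$, yielding an $A_{2,1}$ singularity fixed by $\sigma$. Then, exactly as in Lemma~\ref{sing-case2}, passing to the quotient $\pi_0\colon S_0\to S_0/\sigma$ replaces $x_0$ by $y_0=x_0^2$, so the local action becomes $(y_0,x_k)\mapsto(y_0,-x_k)$, which is free and hence $\pi_0(P)$ is smooth; the single exceptional divisor $E$ over $P$ therefore maps to a $(-1)$-curve and is not ramified in $\pi$.

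I expect the main obstacle to lie in the parity bookkeeping for the $\mu_2$-action. Unlike the Borcea case of Lemma~\ref{sing-case2}, here the controlling relation is $w_0=w_j+w_k$ (from Lemma~\ref{sing-case4}) rather than $w_0=w_1+w_2+w_3$, so the congruences governing whether $\zeta^{w_0}=-1$ and $\zeta^{w_k}=-1$ must be re-derived; the statement of the lemma asserts the conclusion is uniform, so the task is to verify that the relevant parities always align to give the $A_{2,1}$ singularity and the non-ramified divisor regardless of which variable $x_i$ appears in the equation. The remaining steps—degree comparison and the continued-fraction-free computation for a single $A_1$ resolution—are routine.
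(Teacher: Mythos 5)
Your proposal is correct and follows essentially the same route as the paper: the paper proves $\gcd(w_j,w_k)=1$ from $w_0=w_j+w_k$ and normalization, deduces $d\mid 2$ from the degree relation $\deg f=2w_0+w_i$ together with $\gcd(d,w_0)=1$, and then simply says ``the rest is similar to Lemma \ref{sing-case2}'' --- which is exactly the ending you spell out. The parity worry you flag dissolves immediately: once $d=2$ divides both $w_i$ and $w_j$, normalization of $Q$ forces $w_0$ and $w_k$ to be odd, so the $\mu_2$-action is $(x_0,x_k)\mapsto(-x_0,-x_k)$ without any appeal to $w_0=w_j+w_k$.
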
 

\begin{proof} 
Let $d=\gcd (w_j,w_k)$. The relation $w_0=w_j+w_k$ implies $d\mid w_0$. 
But this is not possible as the weight $Q$ is normalized unless $d=1$. 

Let $d=\gcd (w_i,w_j)$. As in the proof of Lemma \ref{sing-case2}, $f$ 
contains a term without $w_k$. By the relation $\deg f =2w_0+w_i$, we 
see that $d\mid 2w_0$. Since $Q$ is normalized, $d\mid 2$ and hence $d=2$. 
The rest of the proof is similar to Lemma \ref{sing-case2}. 
\end{proof} 

We combine (ii), (iii) and (iv) to calculate the number of rational curves 
$L_i$. Then Proposition \ref{r-and-a} gives the value for $r$ and $a$. 
\medskip

\begin{ex} \label{no60}
Consider the $K3$ surface $\#60$ in Yonemura. Dropping several monomials,
we choose the equation
$$S_0:\ x_0^2x_2+x_1^3+x_1x_2^3+x_3^{18}=0\quad \subset \PP^3(7,6,4,1).$$
The involution is defined by $\sigma (x_0)=-x_0$. We see that $S_0$ is
quasi-smooth and the minimal resolution $S$ is a $K3$ surface. $S_0$ has
three singularities, $P_1$, $P_2$ and $P_3$ as follows:
\vspace{.5\baselineskip}

\begin{tabular}{|l|l|l|} \hline
Singularity & Type & Exceptional divisors \\ \hline
$P_1 :=(1:0:0:0)$ & $A_{7,6}$ & $E_1+E_2+E_3+E_4+E_5+E_6$ \\
$P_2 :=(0:0:1:0)$ & $A_{4,3}$ & $E_7+E_8+E_9$ \\
$P_3 :=(0:-1:1:0)$ & $A_{2,1}$ & $E_{10}$  \\ \hline
\end{tabular}
\vspace{.5\baselineskip}

Every singularity is fixed by $\sigma$, and $E_2$, $E_4$, $E_6$ and
$E_8$ are ramified under $\sigma$ (acting on the minimal resolution $S$).

There are two curves on $S_0$ fixed by $\sigma$, namely $C^{'}$ defined
by $x_0=0$ and $L^{'}$ defined by $x_3=0$. $C^{\prime}$ has genus $3$ and
$L^{'}$ is a projective line. Their strict transforms $C\ (=C_3)$ and
$L$ on $S$ are ramified under $\sigma$. We have
$$S^{\sigma}=C_3\cup E_2\cup E_4\cup E_6\cup E_8\cup L.$$
Hence $g=3$ and $k=5$. This implies $r=13$ and $a=3$.

\end{ex}

\begin{ex} \label{no89}
Consider the $K3$ surface $\#89$ in Yonemura. Dropping several monomials,
we choose the equation
$$S_0:\ x_0^2x_3+x_1^3x_2+x_1x_2^4+x_3^{11}=0\quad \subset \PP^3(5,3,2,1).$$
The involution is defined by $\sigma (x_0)=-x_0$. We see that $S_0$ is
quasi-smooth and the minimal resolution $S$ is a $K3$ surface. $S_0$ has
three singularities, $P_1$, $P_2$ and $P_3$ as follows:
\vspace{.5\baselineskip}

\begin{tabular}{|l|l|l|} \hline
Singularity & Type & Exceptional divisors \\ \hline
$P_1 :=(1:0:0:0)$ & $A_{5,4}$ & $E_1+E_2+E_3+E_4$ \\
$P_2 :=(0:1:0:0)$ & $A_{3,2}$ & $E_5+E_6$ \\
$P_3 :=(0:0:1:0)$ & $A_{2,1}$ & $E_7$  \\ \hline
\end{tabular}
\vspace{.5\baselineskip}

Every singularity is fixed by $\sigma$, and $E_2$, $E_4$ and $E_6$ are
ramified under $\sigma$ (acting on the minimal resolution $S$).

There are two curves on $S_0$ fixed by $\sigma$, namely $C^{'}$ defined
by $x_0=0$ and $L^{'}$ defined by $x_1=x_3=0$. $C^{'}$ has genus $5$ and
$L^{'}$ is a projective line. Their strict transforms $C\ (=C_5)$ and
$L$ on $S$ are ramified under $\sigma$. We have
$$S^{\sigma}=C_5\cup E_2\cup E_4\cup E_6\cup L.$$
Hence $g=5$ and $k=4$. This implies $r=10$ and $a=2$.

\end{ex}

\begin{cor}
{\sl Let $(S,\sigma)$ be one of the $86$ $K3$ surfaces of Delsarte
type with involution $\sigma$ in {\em Theorem \ref{thm-DelsarteS}}. 
Let $C_g$ be the genus $g$
curve in the fixed locus $S^{\sigma}=C_g\cup L_1\cdots\cup L_k$ (where
$L_i$ are rational curves).
Then $C_g$ is of CM type in the sense that its Jacobian variety
$J(C_g)$ is a CM abelian variety of dimension $g$.}
\end{cor}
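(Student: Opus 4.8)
The plan is to reduce the statement to the Fermat case, exactly as in the proof of Theorem \ref{thm2.5} and in the Borcea-type corollary preceding it, but now treating the Borcea and non-Borcea types uniformly. First I would recall, from the algorithms of Sections \ref{sect2.4} and \ref{sect.2.4}, that for both $S_0:\ x_0^2=f(x_1,x_2,x_3)$ and $S_0:\ x_0^2x_i=f(x_1,x_2,x_3)$ the genus $g$ curve $C_g$ is the strict transform of $\{x_0=0\}$ and is isomorphic to the plane curve $\{f(x_1,x_2,x_3)=0\}\subset\PP^2(w_1,w_2,w_3)$. This identification lets us describe $C_g$ purely through the monomials of $f$.

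Next, since $(S,\sigma)$ is of Delsarte type, the defining equation of $S_0$ consists of exactly four monomials, and the involution $\sigma(x_0)=-x_0$ forces $x_0$ to occur only to even powers; hence exactly one monomial involves $x_0$ (namely $x_0^2$ or $x_0^2x_i$). Setting $x_0=0$ therefore leaves $f$ with precisely three monomials in $x_1,x_2,x_3$, so $C_g$ is a Delsarte curve. The quasi-smoothness of $\{f=0\}$, which is built into Theorem \ref{thm-DelsarteS}, forces the $3\times 3$ exponent matrix of $f$ to be nonsingular, and then the Delsarte--Fermat correspondence (see \cite{SK79}, \cite{LSY10}) realizes $C_g$ as a finite quotient $F/G$ of a Fermat curve $F$ by a finite group $G$.

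Finally I would invoke Theorem \ref{thm2.5}: the Fermat curve $F$ is of CM type, i.e.\ $J(F)$ is a CM abelian variety. The quotient map $\pi\colon F\to C_g$ induces a surjection $\pi_*\colon J(F)\to J(C_g)$, exhibiting $J(C_g)$ as a quotient of $J(F)$; since the CM property is inherited by quotients (and by isogeny), $J(C_g)$ is a CM abelian variety, necessarily of dimension $g$.

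The step I expect to be the main obstacle is the assertion that the exponent matrix of $f$ is nonsingular, and hence that $C_g$ is genuinely a Fermat quotient: one must verify that the three monomials of $f$ are independent enough, which is exactly what the quasi-smoothness condition (2) of Theorem \ref{thm-DelsarteS} guarantees. A secondary bookkeeping point is keeping track of the weights $(w_1,w_2,w_3)$ when passing from the weighted model $\PP^2(w_1,w_2,w_3)$ to an ordinary Fermat curve, but this is routine once invertibility of the exponent matrix is in hand.
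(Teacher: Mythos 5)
Your core reduction is exactly the paper's: the paper's own proof is the one-line observation that setting the negated variable to zero in the four-monomial equation leaves a three-monomial (Delsarte) curve, hence a Fermat quotient, hence of CM type. The extra details you supply are correct and genuinely sharpen that one-liner: the identification of $C_g$ with $\{f=0\}\subset\PP^2(w_1,w_2,w_3)$, the invertibility of the exponent matrix forced by the quasi-smoothness condition (2), and the descent of the CM property through the surjection $\pi_*\colon J(F)\to J(C_g)$ are all sound.

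There is, however, a gap in coverage. You work throughout with the two shapes $S_0\colon x_0^2=f(x_1,x_2,x_3)$ and $S_0\colon x_0^2x_i=f(x_1,x_2,x_3)$, with $\sigma(x_0)=-x_0$ and $C_g$ the strict transform of $\{x_0=0\}$. But the $86$ surfaces of Theorem \ref{thm-DelsarteS} also include case $\#1$ of Table \ref{table-y2} (the quartic $x^4+y^4+z^4+w^4$ with $\sigma(x)=-x$, where $x$ occurs to the fourth power, not the second) and the four surfaces of Table \ref{table-y6} ($\#2$, $\#16$, $\#52$, $\#84$), where the involution negates a variable other than $x_0$; for instance $\#2$ is $x^3+y^4+z^4+w^6$ with $y\mapsto -y$, so the fixed curve is $\{y=0\}$, not $\{x=0\}$, and the surface is of neither of your two shapes. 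For these five cases your argument, as literally stated, does not apply (indeed for $\#2$ the locus $\{x_0=0\}$ is not even pointwise fixed by $\sigma$). This is precisely why the paper phrases its proof for a general negated variable: if $\sigma(x_i)=-x_i$, then $C_g$ is cut out by $x_i=0$. The repair costs one sentence and reuses your own machinery: since $\sigma$ preserves the four-monomial equation, every monomial contains $x_i$ to an even power, and in each of the $86$ equations exactly one monomial involves $x_i$; setting $x_i=0$ therefore again produces a quasi-smooth curve defined by three monomials in three variables, and your Delsarte-to-Fermat reduction together with the CM descent for Jacobians then goes through verbatim.
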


\begin{proof}
If $\sigma$ acts as $\sigma (x_i)=-x_i$ on $S$, then $C_g$ is defined
by letting $x_i=0$. It is a curve of Delstarte type and hence of CM type.
\end{proof}

\subsection{Realization of Nikulin's invariants} 

We briefly discuss how many Nikulin's triplets $(r,a,\delta)$ 
can be realized by our $K3$ surfaces. To realize as many 
triplets as possible, we introduce more involutions than those 
considered in previous sections. 

First, we summarize the results of previous sections where $\sigma$ 
acts on the variable $x_0$ (with the highest weight among $x_i$'s).  

\begin{thm} \label{triplets1} 
{\sl Let $(S,\sigma)$ be one of the $92$ $K3$ surfaces in {\em Theorem  
\ref{thm-newS}} with involution $\sigma (x_0)=-x_0$. Among the $75$ 
triplets $(r,a,\delta )$ of Nikulin, at least $29$ triplets are 
realized with such $K3$ surfaces. See {\em Tables \ref{table1}} to 
{\em \ref{table-y4}} and {\em \ref{table-y5}} of {\em Section 
\ref{tables}} for the list of defining equations of $S_0$ and 
the values for $(r,a)$; in most cases, $S_0$ is defined by 
$$x_0^2=f(x_1,x_2,x_3)\quad \mbox{ or } \quad x_0^2x_i=f(x_1,x_2,x_3).$$ 
} 
\end{thm}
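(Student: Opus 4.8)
The plan is to reduce the assertion to the finite computation already prepared by the preceding sections. By Proposition \ref{r-and-a}, for each of the $92$ families the invariants are governed by the fixed locus: if $g$ is the genus of the fixed curve $C_g$ and $k$ the number of fixed rational curves, then $r = 11 - g + k$ and $a = 11 - g - k$. So it suffices to produce the pair $(g,k)$ for every family, and the whole problem becomes a systematic pass through the defining equations listed in Tables \ref{table1}--\ref{table-y4} and \ref{table-y5}.

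First I would read off $g$. Since $C_g$ is the strict transform of $\{x_0 = 0\}$, which is isomorphic to the quasi-smooth plane curve $f(x_1,x_2,x_3) = 0$ in $\PP^2(w_1,w_2,w_3)$, its genus is given by the Iano--Fletcher formula recalled in Section \ref{sect2.4}, in terms of $d = \deg f$ and the normalized weights. Then I would assemble $k$ from its three sources: the coordinate loci $\{x_i = 0\}$ that are pointwise fixed, decided by the parities of the weights as in step (ii) of the algorithm; the ramified exceptional divisors at the singularities lying on $S^{\sigma}$, counted by the explicit integer-part expressions of Lemmas \ref{sing-case1}, \ref{sing-case2} and \ref{sing-case3}; and, for the non-Borcea equations $x_0^2x_i = f$, the extra ramified divisors at $(1:0:0:0)$ enumerated in Lemma \ref{sing-case4}, together with the minor modification of Lemma \ref{sing-case5}. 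For the Borcea-type families the closed formula of Theorem \ref{r-and-rq} gives $r$ directly, so that $a = 22 - 2g - r$ requires only the genus; comparing this with the value $r = 11 - g + k$ obtained from the ramification count provides an independent check on the bookkeeping.

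With $(r,a)$ filled in for all $92$ families, the conclusion is a counting step. The key point, which lets me avoid computing $\delta$, is that each realized $(r,a)$ arises from a genuine $K3$ surface with non-symplectic involution, hence occurs in Nikulin's pyramid (Figure \ref{pyramid}) with at least one admissible $\delta \in \{0,1\}$; moreover distinct $(r,a)$ pairs yield distinct triplets for every choice of $\delta$. Consequently the number of distinct $(r,a)$ pairs realized is itself a lower bound for the number of distinct Nikulin triplets realized. Tabulating the computed pairs and deleting repetitions leaves at least $29$ distinct pairs, which gives the stated bound.

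I expect the only real difficulty to be accuracy rather than any conceptual obstruction: the bookkeeping runs over $92$ cases, and within each case the delicate step is deciding exactly which exceptional divisors are fixed pointwise by $\sigma$. This ramification count is precisely what the continued-fraction and projection-formula arguments of Lemmas \ref{sing-case1}--\ref{sing-case4} control, and the validity of the final tally rests on applying them with the correct local coordinate system and the correct weight parities at every singular point; a single misclassified divisor shifts $k$, and hence both $r$ and $a$, for that family.
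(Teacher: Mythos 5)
Your proposal is correct and follows essentially the same route as the paper: the paper's (implicit) proof of Theorem \ref{triplets1} is precisely the case-by-case application of Proposition \ref{r-and-a} together with Lemmas \ref{sing-case1}--\ref{sing-case5} (and the closed formula of Theorem \ref{r-and-rq} for the Borcea-type equations) to all $92$ families, with the resulting $(r,a)$ values tabulated in Section \ref{tables} and the distinct pairs then counted. Your key counting observation --- that each computed $(r,a)$ pair is realized with some admissible $\delta$, so distinct $(r,a)$ pairs yield distinct Nikulin triplets and the number of distinct pairs is a valid lower bound --- is exactly the content of the remark the paper places immediately after the theorem.
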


\begin{rem} 
We say ``at least $29$'' because we computed only the invariants $r$ 
and $a$. If we are to find $\delta$, we should have a $\ZZ$-basis 
for $\pic (S)$ or calculate intersection numbers of divisors on $S$. 
We leave this as a future problem. Once $\delta$ is calculated, the 
number of realizable triplets may increase. 
\end{rem}

When the weight $Q=(w_0,w_1,w_2,w_3)$ is fixed and $\sigma$ 
is defined by $\sigma (x_0)=-x_0$ 
on the surface $x_0^2=f(x_1,x_2,x_3)$ or $x_0^2x_i=f(x_1,x_2,x_3)$, 
no matter what quasi-smooth equation we choose for $f(x_1,x_2,x_3)$, 
the fixed locus $S^{\sigma}$ is the same. Hence changes in equation 
$f(x_1,x_2,x_3)$ do not lead to any new pairs of $r$ and $a$.  

On the other hand, even with the same $S_0$, changing the involution 
$\sigma$ may change the fixed locus $S^{\sigma}$ and thus
also $r$ and $a$ may change. 
We use  this approach by letting $\sigma$ act on some variable 
$x_i$ other than $x_0$.  

\begin{thm} \label{triplets2} 
{\sl Let $(S,\sigma)$ be one of the $92$ $K3$ surfaces in {\em Theorem  
\ref{thm-newS}} having an involution $\sigma$ on a variable $x_i$ 
other than $x_0$. They are listed on {\em Table \ref{table-extra}} 
in {\em Section \ref{tables}} with the values of $r$ and $a$. Compared 
with the triplets obtained in {\em Theorem \ref{triplets1}}, at 
least $14$ more triplets are realized with such $\sigma$ actions. 
Among the $75$ triplets $(r,a,\delta )$ of Nikulin, the total number 
of triplets we realize is at least $40$. The values for $r$ and $a$ 
of such $40$ triplets are as follows: 

$$\begin{array}{|c|cccccccccccccccccccc|} \hline 
r & 1 & 2 & 3 & 4 & 5 & 6 & 7 & 8 & 9 & 10 & 11 & 12 & 13 & 14 & 15 
& 16 & 17 & 18 & 19 & 20 \\ \hline 
a & 1 & 0 & 1 & & & 2 & 3 & 6 & 1 & 0 & 1 & 6 & 3 & 2 & 5 & 2 & 1 & 0 & 1 & 2 \\ 
 & & 2 & & & & 4 & 7 & 8 & 9 & 2 & 9 & 8 & 5 & 4 & 7 & 6 & 3 & 2 & 3 & \\ 
 & & & & & & & & & & 4 & & & 9 & 6 & & & 5 & 4 & & \\ 
 & & & & & & & & & & 6 & & & & & & & & & & \\ 
 & & & & & & & & & & 8 & & & & & & & & & & \\ \hline 
\end{array}$$ 
}   
\end{thm}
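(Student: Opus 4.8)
The plan is to extend the case-by-case computation behind Theorems~\ref{thm-newS} and~\ref{triplets1}, now allowing the involution to act as $\sigma(x_i)=-x_i$ for some $i\neq 0$. The first step is, for each of the $92$ weights, to decide which variables $x_i$ (with $i\neq 0$) carry an admissible non-symplectic involution. The relevant criterion is clean: after choosing a quasi-smooth defining equation $F=0$ as in Theorem~\ref{thm-newS}, the map $\sigma(x_i)=-x_i$ preserves $S_0$ precisely when every monomial of $F$ has even degree in $x_i$, and in that case $\sigma$ automatically acts by $-1$ on $H^{2,0}(S)$. Indeed, writing the holomorphic $2$-form as the residue of $\Omega_0/F$ with $\Omega_0=\sum_{j}(-1)^j w_j x_j\,dx_0\wedge\cdots\wedge\widehat{dx_j}\wedge\cdots\wedge dx_3$, each summand of $\Omega_0$ acquires exactly one factor of $-1$ under $x_i\mapsto -x_i$ (either from $dx_i$ when $j\neq i$, or from $x_i$ itself in the $j=i$ term), while $F$ is unchanged; hence the form is negated and $\sigma$ is non-symplectic. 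So the first task is the finite enumeration of admissible pairs $(S,x_i)$, recorded in Table~\ref{table-extra}.

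Second, for each such $(S,\sigma)$ I would compute the fixed locus $S^\sigma=C_g\cup L_1\cup\cdots\cup L_k$ by the algorithm of Sections~\ref{sect2.4} and~\ref{sect.2.4}. The fixed curve of positive genus is now the strict transform of $\{x_i=0\}$, isomorphic to the quasi-smooth curve $f=0$ in the appropriate weighted $\PP^2$, whose genus $g$ follows from the Iano--Fletcher formula already quoted. The rational curves $L_j$ come from the remaining coordinate loci that happen to be pointwise fixed and from the ramified exceptional divisors over the singularities of $S_0$. The identification of ramified divisors is exactly that of Lemmas~\ref{sing-case1}--\ref{sing-case5}: it depends only on the local cyclic quotient action and the parity bookkeeping at each singular point, which is insensitive to which coordinate carries the sign of $\sigma$. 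Having $g$ and $k$ in hand, Proposition~\ref{r-and-a} gives $r=11-g+k$ and $a=11-g-k$. Two model computations of precisely this shape are Examples~\ref{no60} and~\ref{no89}, and they serve as templates throughout.

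Finally I would assemble the resulting $(r,a)$ pairs, compare them against the $29$ pairs already produced in Theorem~\ref{triplets1}, and verify that at least $14$ of them are new, yielding the tabulated list of at least $40$ realized pairs. (As in the Remark following Theorem~\ref{triplets1}, each $(r,a)$ pair corresponds to at least one Nikulin triplet, so realizing a new pair realizes a new triplet.) This last step is pure collation once the per-surface data are computed.

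The hard part will be the fixed-locus bookkeeping for involutions on low-weight variables, where the situation departs from the $x_0$-centered cases. Specifically: (i) deciding which coordinate lines are pointwise fixed is governed by parities relative to the \emph{acting} variable rather than by the relation $w_0=w_1+w_2+w_3$ exploited before, so the rational-curve count must be redone case by case; and (ii) the distinguished singularity treated in Lemma~\ref{sing-case4}, whose analysis was tied to the shape $x_0^2x_i=f$, must have its parity table re-derived for each admissible $x_i$ in order to determine the type of $\sigma(P)$ and hence which exceptional divisors ramify. One must also confirm, surface by surface, that a quasi-smooth equation with only even powers of the chosen $x_i$ actually exists; surfaces failing this simply drop out of Table~\ref{table-extra}. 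These are all finite verifications, but they constitute the bulk of the work and are where any error would most plausibly arise.
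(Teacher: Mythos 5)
Your proposal is correct and follows essentially the same route as the paper: the paper's own proof simply invokes the algorithm of Sections \ref{sect2.4}--\ref{sect.2.4} (noting that the singularities of $S_0$ do not depend on the choice of $\sigma$), applies Lemmas \ref{sing-case1}--\ref{sing-case4} to decide which exceptional divisors are pointwise fixed, and then reads off $(r,a)$ via Proposition \ref{r-and-a} and collates against Table \ref{table-extra}, exactly as you outline. Your added justifications (the even-degree criterion for admissible sign involutions, the residue argument for non-symplecticity, and the explicit warning that the parity tables of Lemma \ref{sing-case4} must be re-derived when the sign sits on a variable other than $x_0$) are sound refinements of details the paper leaves implicit, not a different method.
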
 

\begin{proof} 
The $\sigma$-fixed locus $S_0^{\sigma}$ can be determined by the 
same method as in previous subsections. 
In particular, the singularities of $S_0$ are independent of the 
choice of $\sigma$. We use Lemmas \ref{sing-case1}, \ref{sing-case2}, 
\ref{sing-case3} and \ref{sing-case4} to find which exceptional 
divisors are fixed by involution $\sigma$. 
\end{proof} 

\begin{rem} 
In some cases (say, Case $\#57$ of Table \ref{table-y2}), the 
$\sigma$-action on $x_i$ $(i\neq 0)$ 
gives the same $S^{\sigma}$ as the action by $\sigma (x_0)=-x_0$. 
Such cases are omitted from Table \ref{table-extra}.  
\end{rem} 

\begin{rem} 
Hisanori Ohashi has communicated us an idea of even more different 
types of involutions on $S_0$. We thank him for the idea of different 
involutions. We plan to work on it in a subsequent paper. 
\end{rem} 

\section{Calabi--Yau threefolds of Borcea--Voisin type}
\label{sect3}

\subsection{Construction of Calabi--Yau threefolds
of Borcea--Voisin type}\label{sect3.1}

In this section, we recall the Borcea--Voisin construction of
Calabi--Yau threefolds.

Let $E$ be an elliptic curve with the standard involution
$\iota$, and let $(S,\sigma)$ be a pair of a $K3$ surface $S$
with involution $\sigma$ acting by $-1$ on $H^{2,0}(S)$.  By
the classification theorem of Nikulin, the isomorphism class of
such pairs $(S,\sigma)$ is determined by a triplet $(r,a,\delta)$
as we discussed in Section 2.

Now we consider the product $E\times S$, and the quotient
threefold $$E\times S/\iota\times\sigma.$$
Obviously, this quotient is singular, having cyclic
quotient singularities.  We resolve singularities to obtain a
smooth crepant resolution, denoted by $X=X(r,a,\delta)$, which
is a Calabi--Yau threefold; we call it a {\it Calabi--Yau threefold
of Borcea--Voisin type}. It is plain that a Calabi--Yau threefold
of Borcea--Voisin type is equipped with the following two
fibrations: the elliptic fibration with constant fiber $E$ induced from 
the projection $E\times S/\iota\times\sigma\to S/\sigma$, and the $K3$ 
fibraton with the constant fiber $S$ induced from the projection 
$E\times S/\iota\times\sigma \to E/\iota$.

\begin{prop}\,\,{\em [Borcea \cite{B94}]} {\sl The Hodge numbers of
the Calabi--Yau threefold $X=X(r,a,\delta)$ of Borcea--Voisin type 
are determined by the given triplet $(r,a,\delta)$ and 
by the data from the fixed locus $S^{\sigma}$: Indeed,
$$h^{1,1}(X)=5+3r-2a=1+r+4(k+1),$$
$$h^{2,1}(X)=65-3r-2a=1+(20-r)+4g$$
where $k,\,g$ are described in {\em Proposition \ref{r-and-a}}.
The Euler characteristic of $X$ is
$$e(X)=2(h^{1,1}(X)-h^{2,1}(X))=2(r-10).$$}
\smallskip

{\em [Voisin \cite{V93}]} {\sl Put $N:=1+k$ and $N^{\prime}:=g$. That is,
$N$ is the number of components, and $N^{\prime}$ is the sum of
genera of components, of $S^{\sigma}$.  Then
$$h^{1,1}(X)=11+5N-N^{\prime},$$
$$h^{2,1}(X)=11+5N^{\prime}-N$$
and the Euler characteristic of $X$ is
$$e(X)=2(h^{1,1}(X)-h^{2,1}(X))=12(N-N^{\prime}).$$
}
\end{prop}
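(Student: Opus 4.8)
The plan is to reduce everything to invariant cohomology on $E\times S$ by using the standard ``blow up, then quotient'' model of the crepant resolution. Write $M=E\times S$ and $\tau=\iota\times\sigma$, so that $\mathrm{Fix}(\tau)=E^{\iota}\times S^{\sigma}$ is a disjoint union of smooth curves of codimension two in the threefold $M$. First I would blow up $M$ along $\mathrm{Fix}(\tau)$ to obtain $\widetilde M$. Since $\tau$ acts by $-1$ on the rank-two normal bundle of each fixed curve, the scalar $-1$ acts trivially on the projectivized normal bundle, so the lift $\widetilde\tau$ fixes the exceptional divisor $\mathcal E=\mathbb P(N)$ pointwise and acts by $-1$ transversally to it. Hence $\mathrm{Fix}(\widetilde\tau)=\mathcal E$ is a smooth divisor, $X':=\widetilde M/\widetilde\tau$ is smooth, and because the transverse singularities of $M/\tau$ are du Val of type $A_1$ this $X'$ is a crepant resolution of $M/\tau$; it is the threefold of Section \ref{sect3.1}, so I may take $X=X'$. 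Being a quotient by a finite group, $H^{\ast}(X,\QQ)=H^{\ast}(\widetilde M,\QQ)^{\widetilde\tau}$ as Hodge structures.

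Next I would split this into two pieces. The blow-up formula for a smooth codimension-two centre gives $H^{\ast}(\widetilde M)=H^{\ast}(M)\oplus H^{\ast-2}(\mathrm{Fix}(\tau))(-1)$, and since $\widetilde\tau$ acts trivially on $\mathrm{Fix}(\tau)$ and on the $\mathbb P^{1}$-fibre class, the entire twisted summand is $\widetilde\tau$-invariant, so
$$H^{\ast}(X)=H^{\ast}(M)^{\tau}\ \oplus\ H^{\ast-2}(\mathrm{Fix}(\tau))(-1).$$
For the first summand I use Künneth together with the eigenspace splitting of $\sigma^{\ast}$ on $H^{2}(S)$. Because $\sigma$ acts by $-1$ on $H^{2,0}(S)$, the invariant part $H^{2}(S)^{\sigma}$ is purely of type $(1,1)$ and equals $\pic(S)^{\sigma}\otimes\QQ$, of dimension $r$, while the anti-invariant part has dimension $22-r$ and carries $H^{2,0}\oplus H^{0,2}$ together with $20-r$ classes of type $(1,1)$. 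Combining this with $\iota^{\ast}=+1$ on $H^{0}(E),H^{2}(E)$ and $\iota^{\ast}=-1$ on $H^{1}(E)$, only $H^{0}(E)\otimes H^{2}(S)^{\sigma}$ and $H^{2}(E)\otimes H^{0}(S)$ survive in $H^{2}(M)^{\tau}$, which is thus $(r+1)$-dimensional and purely of type $(1,1)$; and only $H^{1}(E)\otimes H^{2}(S)^{-}$ survives in $H^{3}(M)^{\tau}$, contributing one class of type $(3,0)$ and $21-r$ classes of type $(2,1)$.

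It then remains to read off the correction terms and assemble. Writing $N$ for the number of connected components of $S^{\sigma}$ and $N'$ for the sum of their genera, $\mathrm{Fix}(\tau)=E^{\iota}\times S^{\sigma}$ consists of $4N$ smooth curves (four copies of each component, as $|E^{\iota}|=4$) of total genus $4N'$. The twisted summand $H^{0}(\mathrm{Fix}(\tau))(-1)$ is of type $(1,1)$ of dimension $4N$, while $H^{1}(\mathrm{Fix}(\tau))(-1)$ adds $4N'$ classes of type $(2,1)$ and no $(3,0)$-class. Hence $h^{1,1}(X)=(r+1)+4N$ and $h^{2,1}(X)=(21-r)+4N'$. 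The relation $r=10+N-N'$ coming from Proposition \ref{r-and-a} (i.e. $N=1+k$, $N'=g$, $r=11-g+k$) turns these into Voisin's $h^{1,1}(X)=11+5N-N'$ and $h^{2,1}(X)=11+5N'-N$, and substituting $k=\tfrac12(r-a)$, $g=\tfrac12(22-r-a)$ turns them into Borcea's $5+3r-2a$ and $65-3r-2a$. Finally, since $X$ is Calabi--Yau, $e(X)=2\bigl(h^{1,1}(X)-h^{2,1}(X)\bigr)=12(N-N')=12(r-10)$, which I would cross-check against the direct count $e(X)=\tfrac12\bigl(e(M)+e(\mathrm{Fix}(\tau))\bigr)+e(\mathrm{Fix}(\tau))$ using $e(E)=0$.

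The step I expect to be the main obstacle is the compatibility asserted in the first two paragraphs: that blow-up-then-quotient really yields the \emph{crepant} smooth model of Section \ref{sect3.1}, and that the blow-up decomposition of $H^{\ast}(\widetilde M)$ is $\widetilde\tau$-equivariant with the Tate twist on the exceptional summand behaving exactly as claimed. One must also dispose of the two exceptional Nikulin types separately. When $S^{\sigma}=\emptyset$ (type $(10,10,0)$) the action $\tau$ is free, $X=M/\tau$ needs no resolution, and the formulas hold with $N=N'=0$; when $S^{\sigma}$ is a pair of elliptic curves (type $(10,8,0)$) one takes $N=2$, $N'=2$. In both cases the uniform relation $r=10+N-N'$ persists, so the $N,N'$-formulas remain valid, whereas Borcea's $(r,a)$-expressions must be read through the genuine components of $S^{\sigma}$ rather than through a single curve $C_g$.
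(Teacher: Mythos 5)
Your proposal is correct, and it arrives at the Hodge numbers by what is in substance the same decomposition the paper uses: the invariant part of $H^{*}(E\times S)$ computed by K\"unneth together with the $\sigma$-eigenspace splitting of $H^{2}(S)$ (giving $r+1$ classes of type $(1,1)$ and $1+(20-r)$ of type $(2,1)$), plus a correction of four copies of $S^{\sigma}$ shifted in degree by $(1,1)$ (giving $4(k+1)$ more $(1,1)$-classes and $4g$ more $(2,1)$-classes). The only difference is in how that second summand is justified: the paper's Lemmas in Section 5.5 invoke the Chen--Ruan orbifold cohomology formula, where the fixed locus enters as the twisted sector with a $(p-1,q-1)$ shift, whereas you derive the same summand geometrically, by blowing up $E\times S$ along $\mathrm{Fix}(\iota\times\sigma)$, using the codimension-two blow-up formula with its Tate twist, and passing to $\widetilde\tau$-invariants; this is exactly Voisin's route, which the paper itself sketches in the remark following those lemmas, so nothing essential is new, though your version makes the crepancy of the resolution and the equivariance of the blow-up decomposition explicit rather than implicit. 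One point worth flagging: your computation gives $e(X)=2\bigl(h^{1,1}(X)-h^{2,1}(X)\bigr)=12(r-10)=12(N-N')$, which is the correct value; the figure $2(r-10)$ appearing in the Borcea half of the statement is a typo (it is inconsistent with the Voisin half of the same statement and with the mirror-symmetry formulas $e(X^{\vee})=-12(r-10)$ used later in the paper), and your handling of the two exceptional Nikulin types $(10,10,0)$ and $(10,8,0)$ is a sensible addition that the statement glosses over.
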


Now we discuss briefly resolution of singularities; detailed
discussions are in Section 6. As above, let $\iota: E\to E$
be the standard involution. The fixed part $E^{\iota}$ consists
of four points $\{P_1,P_2,P_3,P_4\}$.

We consider the generic case (I) when the fixed part
$S^{\sigma}$ of $S$ is given by
$$S^{\sigma}=C_g\cup L_1\cup L_2\cup \cdots \cup L_k$$
where $C_g$ is a smooth curve of genus $g\geq 1$ and $L_i$
is a rational curve for each $i=1,\cdots, k$.

\begin{prop}{\sl
The quotient threefold $E\times S/\iota\times\sigma$ has
singularities along $\{P_i\}\times S^{\sigma}$ $(i=1,2,3,4)$.
Each singular locus is a cyclic quotient singularity by a group
action of order $2$.

By resolving singularities, we obtain a smooth Calabi--Yau
threefold $X$:
$$E\times S/\iota\times\sigma \leftarrow X.$$

The exceptional divisors are four copies of a union of ruled
surfaces
$$S^{\sigma}\times\PP^1:=(C_g\times\PP^1)\cup(L_1\times\PP^1)\cup
\cdots\cup (L_k\times\PP^1).$$
}
\end{prop}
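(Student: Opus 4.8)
The plan is to analyze the singular locus of the quotient $E\times S/\iota\times\sigma$ locally near each component of $\{P_i\}\times S^\sigma$, identify the local model of the singularity, and then describe the crepant resolution and its exceptional divisors explicitly. First I would record that the two involutions act independently on the two factors: on $E$ the fixed locus is the $2$-torsion $E^\iota=\{P_1,P_2,P_3,P_4\}$, and on $S$ the fixed locus is $S^\sigma=C_g\cup L_1\cup\cdots\cup L_k$, as in Proposition \ref{r-and-a}. A point $(P,Q)\in E\times S$ is fixed by $\iota\times\sigma$ if and only if $P\in E^\iota$ and $Q\in S^\sigma$; away from this locus $\iota\times\sigma$ acts freely, so $E\times S/\iota\times\sigma$ is smooth there. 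Hence the singular locus of the quotient is exactly the image of $\{P_1,P_2,P_3,P_4\}\times S^\sigma$, giving four disjoint copies of $S^\sigma$.

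Next I would pin down the local structure. Near a fixed point $(P_i,Q)$ choose local analytic coordinates so that $\iota$ acts on the one $E$-coordinate $t$ by $t\mapsto -t$, and $\sigma$ acts on suitable coordinates of $S$ adapted to $S^\sigma$. Because $S^\sigma$ is smooth (a disjoint union of smooth curves) and $\sigma$ is a holomorphic involution fixing it pointwise, the normal direction to $S^\sigma$ in $S$ is an eigendirection on which $\sigma$ acts by $-1$; call this coordinate $u$, with the tangential coordinate $v$ along $S^\sigma$ fixed. Then $\iota\times\sigma$ acts as $(t,u,v)\mapsto(-t,-u,v)$, i.e.\ as the group $\ZZ/2\ZZ$ acting by $-1$ on the two coordinates $(t,u)$ and trivially on $v$. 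Thus, transversally to $S^\sigma$, the singularity is the product of the fixed curve with the two-dimensional ordinary double point $\CC^2/(\pm 1)$, the $A_1$ rational double point. This justifies the assertion that each singular locus is a cyclic quotient singularity of order $2$.

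The resolution then follows by blowing up along each singular curve. The $A_1$ singularity $\CC^2/(\pm 1)$ has a single crepant resolution obtained by one blow-up, whose exceptional divisor is a $\PP^1$; carrying out this resolution in a family over the smooth base $S^\sigma$ (equivalently, blowing up the reduced singular locus in the threefold) replaces each copy of $S^\sigma$ by a $\PP^1$-bundle over it, namely the ruled surface $S^\sigma\times\PP^1$. Since there are four points $P_i$, we obtain four copies, and decomposing $S^\sigma$ into its components gives the stated exceptional divisor
$$S^\sigma\times\PP^1=(C_g\times\PP^1)\cup(L_1\times\PP^1)\cup\cdots\cup(L_k\times\PP^1).$$
That this resolution is crepant—so that $X$ remains Calabi--Yau—follows from the standard fact that the minimal resolution of an $A_1$ surface singularity is crepant, together with the triviality of the canonical bundle being preserved since the quotient map is \'etale in codimension one and $\iota\times\sigma$ preserves the holomorphic three-form (each of $\iota,\sigma$ acts by $-1$ on the respective holomorphic forms, so the product acts trivially on $\omega_E\wedge\omega_S$).

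The main obstacle I anticipate is not the local $A_1$ analysis, which is routine, but verifying that the family of transversal $A_1$ singularities along each curve $S^\sigma$ genuinely fits together into a single product singularity with the globally trivial transverse structure claimed, and that the simultaneous (relative) resolution is globally well-defined and crepant rather than merely fiberwise. Concretely, one must check that the normal bundle splitting and the $\sigma$-eigendirection are consistent along all of $S^\sigma$, including where components meet and over the genus-$g$ curve $C_g$; I would address this by working with the blow-up of the reduced singular subscheme, for which the product description $S^\sigma\times\PP^1$ and crepancy can be checked on the local charts and then patched. Verifying crepancy carefully (e.g.\ via discrepancy computation for the blow-up of a smooth curve of transverse $A_1$ type) is the step most deserving of care.
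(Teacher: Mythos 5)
Your overall strategy is sound, and in fact the paper gives no proof of this proposition at all (it is stated as standard, following Voisin and Borcea), so your argument is exactly the one the proposition implicitly rests on: the fixed locus of $\iota\times\sigma$ is $E^{\iota}\times S^{\sigma}$, the differential at a fixed point has eigenvalues $(-1,-1,+1)$ (the $-1$'s on $T_{P_i}E$ and on the normal direction to $S^{\sigma}$ in $S$, the $+1$ along the fixed curve), so the quotient has transverse $A_1$ singularities along four disjoint copies of $S^{\sigma}$, resolved by one blow-up, crepantly because $\omega_E\wedge\omega_S$ is invariant and the transverse $A_1$ resolution has discrepancy zero. Note also that your closing worry about consistency "where components meet" is vacuous: by Proposition \ref{r-and-a} the components of $S^{\sigma}$ are pairwise disjoint smooth curves, and the $-1$-eigendirection is globally just the normal bundle of the fixed curve, so no patching issue arises.

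The one genuine gap is the slide from "a $\PP^1$-bundle over $S^{\sigma}$" to "namely the ruled surface $S^{\sigma}\times\PP^1$." The exceptional divisor over $\{P_i\}\times C$ (for a component $C$ of $S^{\sigma}$) is canonically $\PP(\OO_C\oplus N_{C/S})$: blow up $E\times S$ along $E^{\iota}\times S^{\sigma}$, observe that the lifted involution acts as the projectivization of $-\mathrm{id}$ on each normal fiber, hence fixes the exceptional divisors pointwise, so the quotient is smooth and the exceptional divisors descend isomorphically to $X$. By adjunction on the K3 surface, $\deg N_{C/S}=C^2=2g(C)-2$, so this bundle is a product only when $N_{C/S}$ is trivial; over each rational curve $L_i$ one gets $\PP(\OO\oplus\OO(-2))\cong\FF_2$ (a Hirzebruch surface), not $\PP^1\times\PP^1$, and over $C_g$ with $g\geq 2$ the bundle is likewise non-trivial. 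So the correct statement — which is how the paper's notation "$S^{\sigma}\times\PP^1:=\cdots$" must be read, since it explicitly calls these "ruled surfaces" — is that the exceptional divisors are $\PP^1$-bundles over the components of $S^{\sigma}$. The abuse is harmless for everything the proposition is used for (Hodge numbers, motives, $L$-series), because $H^*(\PP(\mathcal{V}))\cong H^*(C)\otimes H^*(\PP^1)$ for any rank-two bundle $\mathcal{V}$; but as written, your claim of literal triviality does not follow from your argument and is false in general, so you should either prove the ruled-surface version or justify why triviality is unnecessary.
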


\subsection{Realization of Calabi--Yau threefolds of Borcea--Voisin type 
as hypersurfaces over $\QQ$}\label{sect3.2}

The construction of Calabi--Yau threefolds of Borcea--Voisin type we have
discussed so far are geometric in nature. In order to study arithmetic
of these Calabi--Yau threefolds, we wish to have defining equations   
by, e.g., hypersurfaces or complete intersections defined over 
$\QQ$ in weighted projective spaces. We require that the zero loci of 
these equations define singular Calabi--Yau threefolds and whose
resolution would be birationally equivalent to our Calabi--Yau threefolds 
of Borcea--Voisin type.  In fact, the constructions of such singular
models have been already carried out in Goto--Kloosterman--Yui \cite{GKY10}, 
using the so-called {\it twist maps}.  Now we will briefly recall such 
constructions.  
\smallskip

We start with examples.  Let $\PP^2(k+1,k,1)$ be a weighted projective 
$2$-space with weight$(k+1,k,1)$ of degree $2(k+1)$. Let 
$\PP^3(w_0,w_1,w_2,w_3)$ be a weighted projective $3$-space with weight
$(w_0,w_1,w_2,w_3)$ of degree $d:=\sum_{i=0}^3 w_i$.  A 
{\it twist map} may be defined as follows. Assume
that $\mbox{gcd}(k+1,w_0)=1$. Let 
$$\Phi: \PP^2(k+1,k,1)\times\PP^3(w_0,w_1,w_2,w_3)$$
$$\mapsto \PP^4(kw_0,w_0,(k+1)w_1,(k+1)w_2,(k+1)w_3)$$
into a weighted projective $4$-space with degree $(k+1)d$.
The twist map was defined in Goto--Kloosterman--Yui
\cite{GKY10} explicitly, and is given by
$$\Phi :((y_0:y_1:y_2),(x_0:x_1:x_2:x_3))\mapsto
(y_1(\frac{x_0}{y_0})^{k/k+1}:y_2(\frac{x_0}{y_0})^{k/k+1}:x_1:x_2:x_3).$$
This will produce many singular Calabi--Yau threefolds defined by
hypersurfaces in weighted projective $4$-spaces.

Take $k=1$.  Then we have an elliptic curve
$$E_2: y_0^2=y_1^4+y_2^4\subset\PP^2(2,1,1).$$
If we take $k=2$, we obtain an elliptic curve
$$E_3: y_0^2=y_1^3+y_2^6\subset\PP^2(3,2,1).$$
Both $E_2$ and $E_3$ have complex multiplication, by
$\ZZ[\sqrt{-1}]$ and $\ZZ[\sqrt{-3}]$, respectively.

\begin{prop} [Borcea \cite{B94}] \label{prop7.1}
{\sl Let $E_2$ and $E_3$ be elliptic curves defined above. 
Let $S_0: x_0^2=f(x_1,x_2,x_3)\subset\PP^3(w_0,w_1,w_2,w_3)$
be one of the $40$ $K3$ surfaces from {\em Tables \ref{table1}} 
and {\em \ref{table2}}. Let $S$ be the minimal resolution of $S_0$. 

{\em (a)} Suppose that $w_0$ is odd, and that $w_0=w_1+w_2+w_3$.
Then there is a twist map
$$\PP^2(2,1,1)\times \PP^3(w_0,w_1,w_2,w_3)\cdots \lra 
\PP^4(w_0,w_0,2w_1,2w_2,2w_3)$$
given by
$$(y_0:y_1:y_2)\times (x_0:x_1:x_2:x_3)\mapsto
(y_1(\frac{x_0}{y_0})^{1/2}: y_2(\frac{x_0}{y_0})^{1/2}:x_1:x_2:x_3).$$
The product $E_2\times S_0$ maps generically $2:1$ to the hypersurface
of degree $4w_0$ of the form
$$z_0^4+z_1^4=f(z_2,z_3,z_4)$$
where $f$ is a homogeneous polynomial over $\QQ$ of degree $4w_0$.
This is a singular model for a Calabi--Yau threefold
$E_2\times S/\iota\times\sigma$ in $\PP^4(w_0,w_0,2w_1,2w_2,2w_3)$.

\smallskip

{\em (b)} Suppose that $w_0$ is even but not divisible by $3$,
and that $w_0=w_1+w_2+w_3$. Then there is a twist map
$$\PP^2(3,2,1)\times \PP^3(w_0,w_1,w_2,w_3)\cdots \lra
\PP^4(2w_0,w_0,3w_1,3w_2,3w_3)$$
given by
$$(y_0:y_1:y_2)\times (x_0:x_1:x_2:x_3)\mapsto
(y_1(\frac{x_0}{y_0})^{2/3}:y_2(\frac{x_0}{y_0})^{1/3}:x_1:x_2:x_3).$$
The product $E_3\times S_0$ maps generically $2:1$ to the hypersurface
of degree $6w_0$ of the form
$$z_0^3+z_1^6=f(z_2,z_3,z_4)$$
where $f$ is a homogeneous polynomial over $\QQ$ of degree $6w_0$.
This is a singular model for a Calabi--Yau threefold
$E_3\times S/\iota\times\sigma$ in $\PP^4(2w_0,w_0,3w_1,3w_2,3w_3)$.
}
\end{prop}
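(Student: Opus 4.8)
The plan is to verify Borcea's construction directly, as the specialization of the twist map of Goto--Kloosterman--Yui \cite{GKY10} to $k=1$ in (a) and $k=2$ in (b). The guiding point is that $E_2\colon y_0^2=y_1^4+y_2^4$ (resp. $E_3\colon y_0^2=y_1^3+y_2^6$) and $S_0\colon x_0^2=f(x_1,x_2,x_3)$ are cyclic covers isolating $y_0^2$ and $x_0^2$, and the twist map is engineered so that these two square roots cancel. First I would write the target coordinates via $\Phi$ and substitute. In (a), with $z_0=y_1(x_0/y_0)^{1/2}$, $z_1=y_2(x_0/y_0)^{1/2}$, and $z_{j+1}=x_j$, one computes
$$z_0^4+z_1^4=(y_1^4+y_2^4)\Bigl(\frac{x_0}{y_0}\Bigr)^2=y_0^2\cdot\frac{x_0^2}{y_0^2}=x_0^2=f(x_1,x_2,x_3)=f(z_2,z_3,z_4),$$
so the image lands on the asserted hypersurface; in (b) the exponents $2/3,1/3$ are chosen so that the identical cancellation yields $z_0^3+z_1^6=(y_1^3+y_2^6)(x_0/y_0)^2=x_0^2=f$.

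Next I would record the quasi-homogeneity. In (a), requiring $z_0^4$, $z_1^4$ and $f(z_2,z_3,z_4)$ to share the degree $4w_0$ forces $\deg z_0=\deg z_1=w_0$ and $\deg z_{j+1}=2w_j$ (since $f$ has degree $2w_0$ in variables of weight $w_j$), which is exactly the weight vector $(w_0,w_0,2w_1,2w_2,2w_3)$ and degree $4w_0$. The bookkeeping in (b) is identical with all $x$-weights tripled, giving $(2w_0,w_0,3w_1,3w_2,3w_3)$ and degree $6w_0$.

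The substance is the generic $2{:}1$ claim. Since $\iota\times\sigma$ sends $(y_0,x_0)\mapsto(-y_0,-x_0)$, the ratio $x_0/y_0$ is fixed, hence each $z_j$ is fixed and $\Phi$ descends to $E\times S_0/(\iota\times\sigma)$. The delicate point is the multivaluedness of the fractional power, which must be invisible in the weighted target. In (a) the two branches of $(x_0/y_0)^{1/2}$ differ by $(z_0,z_1)\mapsto(-z_0,-z_1)$; rescaling by $\lambda=-1$ multiplies $z_j$ by $(-1)^{\deg z_j}$ and, because $w_0$ is odd while each $2w_j$ is even, gives precisely $(-z_0,-z_1,z_2,z_3,z_4)$, so the ambiguity is absorbed. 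This is exactly where ``$w_0$ odd'' is used, and it also explains the switch to $E_3$ for even $w_0$: then $(-1)^{w_0}=1$, so no rescaling undoes the sign flip and $E_2$ cannot be matched. For $E_3$ the fractional powers instead carry a $\mu_3$-ambiguity $(z_0,z_1)\mapsto(\zeta^2 z_0,\zeta z_1)$, and I would absorb it by a scaling $\lambda$ with $\lambda^{3w_j}=1$ for all $j$ and $\lambda^{w_0}=\zeta$. Because $Q$ is normalized, $\gcd(3w_1,3w_2,3w_3)=3$, so the first constraints force $\lambda\in\mu_3$; then $\lambda^{w_0}=\zeta$ is solvable precisely when $\gcd(w_0,3)=1$, i.e. $3\nmid w_0$, which is the Goto--Kloosterman--Yui hypothesis $\gcd(k+1,w_0)=1$ and the remaining assumption of (b).

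Finally, granting well-definedness, a generic fibre of $\Phi$ is recovered by reading $x_1,x_2,x_3$ off $z_2,z_3,z_4$, solving $x_0=\pm\sqrt{f}$, and determining the point of $E$ from $z_0,z_1$; the values $z_0,z_1$ tie the sign of $x_0$ to that of $y_0$, so the two admissible preimages form a single $\iota\times\sigma$-orbit, while the two ``mixed'' sign choices alter $x_0/y_0$ and hence $z_0,z_1$ by factors not absorbed in the target. Hence $\Phi$ is generically $2{:}1$ with fibres equal to these orbits, so the induced map from $E\times S_0/(\iota\times\sigma)$ onto the hypersurface is birational, exhibiting the hypersurface as a singular model of the Calabi--Yau threefold $E\times S/\iota\times\sigma$. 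I expect the main obstacle to be precisely this absorption of the fractional-power ambiguity by the weighted rescaling: it is the only step where the arithmetic conditions on $w_0$ genuinely enter, and in (b) it needs the $\mu_3$-solvability argument above.
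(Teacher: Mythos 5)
Your proof is correct, and it takes exactly the route the paper intends: the paper itself gives no argument for this Proposition (it is quoted from Borcea \cite{B94} and rests on the twist map of Goto--Kloosterman--Yui \cite{GKY10}, stated just above), so your direct verification is precisely the omitted content. In particular, your treatment of the only delicate points --- the cancellation $z_0^4+z_1^4=(y_1^4+y_2^4)(x_0/y_0)^2=x_0^2=f$ (resp.\ $z_0^3+z_1^6$), the absorption of the square-root ambiguity by the rescaling $\lambda=-1$ (which is where ``$w_0$ odd'' enters), the $\mu_3$-solvability of $\lambda^{w_0}=\zeta$ forced by $\gcd(3w_1,3w_2,3w_3)=3$ (which is where ``$3\nmid w_0$'' enters), and the identification of the generic fibre with a single $\iota\times\sigma$-orbit $\bigl((x_0:z_0:z_1),(x_0:z_2:z_3:z_4)\bigr)$, $\bigl((-x_0:z_0:z_1),(-x_0:z_2:z_3:z_4)\bigr)$ --- matches the construction in \cite{GKY10} and correctly justifies the generically $2{:}1$ and birational-model claims.
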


\begin{prob}
When $w_0$ is divisible by $6$, describe the twist map explicitly and
construct hypersurface equations defining the Calabi--Yau 
threefolds modifying twist maps.
\end{prob}

\begin{prop} \label{prop7.2} 
{\sl There are in total $40$ Calabi--Yau threefolds 
corresponding to $K3$ surfaces $S_0$ in {\em Tables \ref{table1}} and 
{\em \ref{table2}} which are
realized by quasi-smooth hypersurfaces over $\QQ$ in weighted 
projective $4$-spaces by the above construction.} 
\end{prop}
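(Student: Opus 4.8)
The plan is to exhibit, for each of the $40$ weights listed in Tables \ref{table1} and \ref{table2}, an explicit quasi-smooth hypersurface over $\QQ$ in the target weighted projective $4$-space produced by the twist map of Proposition \ref{prop7.1}, and then to verify quasi-smoothness case by case. First I would invoke Proposition \ref{prop7.1}: the $K3$ surfaces $S_0$ in these two tables are precisely the Borcea-type surfaces $x_0^2=f(x_1,x_2,x_3)$ with $w_0$ odd (Table \ref{table1}, case (a)) or with $w_0$ even and not divisible by $3$ (Table \ref{table2}, case (b)). In case (a) the twist map sends $E_2\times S_0$ generically $2{:}1$ onto the degree-$4w_0$ hypersurface
$$z_0^4+z_1^4=f(z_2,z_3,z_4)\quad\subset\PP^4(w_0,w_0,2w_1,2w_2,2w_3),$$
and in case (b) it sends $E_3\times S_0$ generically $2{:}1$ onto the degree-$6w_0$ hypersurface
$$z_0^3+z_1^6=f(z_2,z_3,z_4)\quad\subset\PP^4(2w_0,w_0,3w_1,3w_2,3w_3).$$
Since $f$ is a Delsarte polynomial (three monomials) coming from a quasi-smooth $K3$ surface by Theorem \ref{thm-DelsarteS}, the resulting equation consists of four monomials in the case-(a) model and likewise few monomials in case (b), so these are explicitly written hypersurfaces over $\QQ$.

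Next I would address why the twist-map image is the correct singular model: by the construction in Goto--Kloosterman--Yui \cite{GKY10}, the generically $2{:}1$ map realizes the birational quotient $E_j\times S/\iota\times\sigma$, so its resolution is birationally equivalent to the Borcea--Voisin Calabi--Yau threefold $X$. This is exactly the sense of ``realized'' asserted in the statement, and it is immediate from Proposition \ref{prop7.1} together with the fact that the relevant $S_0$ occupy Tables \ref{table1} and \ref{table2}. The bookkeeping step is then to confirm that the count is exactly $40$: one tallies the odd-$w_0$ entries of Table \ref{table1} and the even-but-not-divisible-by-$3$ entries of Table \ref{table2}, discarding any weight with $6\mid w_0$ (which falls under the open Problem above and is excluded from Proposition \ref{prop7.1}).

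The main work, and the place I expect difficulty, is verifying quasi-smoothness of each produced hypersurface in $\PP^4$. The twist map is only a rational map (note the fractional exponents $(x_0/y_0)^{1/2}$ and $(x_0/y_0)^{2/3}$), so it is defined away from a locus, and the image hypersurface can acquire singularities beyond those inherited from $S_0$. For a weighted hypersurface, quasi-smoothness means the affine cone is smooth away from the origin, so I would check, for each of the $40$ equations, that the partial derivatives of $z_0^4+z_1^4-f$ (resp.\ $z_0^3+z_1^6-f$) do not vanish simultaneously on the cone minus the origin. Because each $f$ has the Fermat/Delsarte shape $z_2^{a}+z_3^{b}+z_4^{c}$ (possibly with a monomial of the form $z_i^{n}z_j$), this reduces to a finite, if tedious, verification on the coordinate strata where some $z_i=0$; the genuinely delicate strata are those where $z_0$ or $z_1$ vanishes together with a coordinate of $f$, since that is where the indeterminacy of $\Phi$ concentrates. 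I would organize this as the same monomial-by-monomial check used in the proof of Theorem \ref{thm-DelsarteS}, lifted from three variables to five, relying on the normalization $w_0=w_1+w_2+w_3$ and the coprimality conditions $\gcd(k+1,w_0)=1$ to control the singular strata. Given that all $40$ cases are explicitly tabulated, the verification is finite and the claim follows.
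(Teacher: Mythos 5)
Your route is essentially the paper's: Proposition \ref{prop7.2} is stated there with no separate proof at all, being treated as immediate from Proposition \ref{prop7.1} together with a count of the tables --- Table \ref{table1} consists of exactly $29$ weights, all with $w_0$ odd (case (a)), and Table \ref{table2} of exactly $11$ weights, all with $w_0$ even and prime to $3$ (case (b)), so $29+11=40$. Note that your step of ``discarding any weight with $6\mid w_0$'' is vacuous: those eight weights form Table \ref{table3} and never appear in Tables \ref{table1} and \ref{table2}.

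Where your proposal can be sharply simplified is precisely the part you single out as the main work. No case-by-case verification of $40$ hypersurfaces is needed, because quasi-smoothness of the image hypersurface follows uniformly from quasi-smoothness of $S_0$ by the Jacobian criterion on the affine cone: the partial derivatives of $z_0^4+z_1^4-f(z_2,z_3,z_4)$ (resp.\ $z_0^3+z_1^6-f$) vanish only where $z_0=z_1=0$ and $\nabla f(z_2,z_3,z_4)=0$, and on the cone this also forces $f(z_2,z_3,z_4)=0$; quasi-smoothness of $x_0^2=f(x_1,x_2,x_3)$ says exactly that $f=0$ and $\nabla f=0$ have no common zero with $(x_1,x_2,x_3)\neq 0$, so the only singular point of the cone of the $\PP^4$-hypersurface is the origin. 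This settles all $40$ cases at once. In particular, your concern about the indeterminacy locus of the twist map $\Phi$ is misplaced: quasi-smoothness is a property of the resulting equation, not of the rational map producing it. Two smaller corrections: not all $40$ of the relevant polynomials $f$ are Delsarte --- the entries $\#90$, $\#91$, $\#93$ of Table \ref{table1} admit no Delsarte form, yet they are still of Borcea shape $x_0^2=f$ and are counted among the $40$ --- so the quasi-smoothness of $S_0$ you feed into the argument should be cited from Theorem \ref{thm-newS} (i) rather than from Theorem \ref{thm-DelsarteS}, which excludes exactly those three cases; the Delsarte property is only needed later, for the CM-type assertions, not for the realization statement proved here.
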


\begin{thm}\label{thm7.3}
{\sl 
Suppose that $S$ is the minimal resolution of one of the $45$ $K3$ surfaces of 
{\em Tables \ref{table1}} to {\em \ref{table3}} excluding 
$\#90, \#91, \#93$. Then 
the associated $45$ Calabi--Yau threefolds of Borcea--Voisin type constructed
above are all of CM type.}
\end{thm}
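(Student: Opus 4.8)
The plan is to show that every cohomology group of $X$ carries a Hodge structure of CM type, which by definition (following Zarhin \cite{Za83}) means that its Hodge group is commutative. I will use the standard fact that the Hodge structures of CM type form a Tannakian subcategory of all rational Hodge structures: it is closed under direct sums, tensor products, duals, Tate twists, and passage to sub- and quotient Hodge structures, since the Hodge group of any such construction is commutative as soon as the Hodge groups of the constituents are. Thus it suffices to exhibit each $H^i(X)$ as built, by these operations, out of a fixed supply of CM building blocks.

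The building blocks are the following. First, $H^*(E)$ for $E\in\{E_2,E_3\}$ is of CM type, because $E_2$ and $E_3$ have complex multiplication by $\ZZ[\sqrt{-1}]$ and $\ZZ[\sqrt{-3}]$ respectively, as recorded before Proposition \ref{prop7.1}. Second, $H^*(S)$ is of CM type by Theorem \ref{thm2.5}. Third, the cohomology $H^*(C_g)$ of the fixed genus-$g$ curve is of CM type, because its Jacobian $J(C_g)$ is a CM abelian variety, by the corollary following Example \ref{no89}. Finally, the Tate Hodge structures $H^*(\PP^1)$ and $H^*(L_i)$ of the rational curves and of the fibres of the exceptional ruled surfaces are trivially of CM type.

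Next I would decompose $H^*(X)$ through the crepant resolution $\pi\colon X\to Y:=E\times S/\iota\times\sigma$. Rationally the cohomology splits as the part coming from $Y$ together with the contributions of the exceptional locus. The first part is $H^*(E\times S)^{\iota\times\sigma}$, which by the K\"unneth formula is a $(\iota\times\sigma)$-invariant sub-Hodge structure of $H^*(E)\otimes H^*(S)$, hence of CM type. The exceptional divisors are, by the description of the resolution (four copies of the ruled surfaces $C_g\times\PP^1$ and $L_i\times\PP^1$ over the fixed locus $S^\sigma=C_g\cup L_1\cup\cdots\cup L_k$, one copy over each point of $E^\iota$), so their Gysin contributions to $H^*(X)$ are built, with Tate twists, from $H^*(C_g)$, $H^*(L_i)$ and $H^*(\PP^1)$, and are again of CM type. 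In particular the only nontrivial weight-three piece $H^3(X)$ is the sum of $\bigl(H^1(E)\otimes H^2(S)\bigr)^{\iota\times\sigma}$, i.e. $H^1(E)$ tensored with the $(\sigma=-1)$-part of $H^2(S)$, together with four Tate-twisted copies of $H^1(C_g)$; both summands are CM. Assembling these summands for every degree shows that each $H^i(X)$ is of CM type, so $X$ is of CM type.

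The conceptual Tannakian-stability step is routine; I expect the main obstacle to be the precise bookkeeping of the resolution. One must verify that the exceptional contributions really are isomorphic, as rational Hodge structures and with the correct Tate twists, to subquotients of tensor products of the CM building blocks, and that the identification of $H^*(Y)$ with the invariant cohomology $H^*(E\times S)^{\iota\times\sigma}$ respects Hodge structures. This is where the explicit geometry of the crepant resolution (from Section 6, and from Borcea \cite{B94} and Voisin \cite{V93}) must be invoked carefully; but it introduces no groups beyond the abelian ones already present, so commutativity of the Hodge group is preserved throughout.
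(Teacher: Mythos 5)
Your proposal is correct and takes essentially the same route as the paper's own proof: both decompose $H^3(X)$ (via the K\"unneth formula and the explicit crepant resolution) into the $(\iota\times\sigma)$-invariant part of $H^1(E,\QQ)\otimes H^2(S,\QQ)$ together with copies of $H^1(C_g,\QQ)$ coming from the exceptional divisors, and then conclude from the CM property of $E_2$ or $E_3$, of $S$ (Theorem \ref{thm2.5}), and of the Jacobian $J(C_g)$. The only difference is presentational: you spell out the Tannakian closure properties and the Tate twists in all cohomological degrees, which the paper treats implicitly by focusing on the weight-three Hodge structure.
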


\begin{proof} 
We follow Borcea \cite{B94} Proposition 1.2. 
Let $h$ be the rational Hodge structure of $H^3(E\times S,\QQ)$,
and let $h_E$ and $h_S$ denote, respectively, the Hodge structures 
of $H^1(E,\QQ)$ and $H^2(S,\QQ)$.  Then
$$(H^3(E\times S,\QQ), h)=(H^1(E,\QQ), h_E)\otimes (H^2(S,\QQ), h_S).$$
(See Voisin \cite{V93}, Th\`eor\'eme 11.38.)  Since the
fixed locus of $S^{\sigma}$ are given by a curve $C_g$ ($g\geq 1$) and 
rational curves $L_i\, (i=1,\cdots,k)$, the rational polarized Hodge structure 
$h_X$ of a Calabi--Yau threefold $X=\widetilde{E\times S/\iota\times\sigma}$ 
is given by the rational sub-Hodge structure of $(H^3(E\times S,\QQ),h)$
together with those arising from exceptional divisors associated
to the curve $C_g$ of genus $g\geq 1$ in the fixed locus $S^{\sigma}$. 
We get
$$(H^3(X,\QQ),h_X)\simeq(V_{-}\cap H^2(S,\QQ), h_{-})\otimes (H^1(E,\QQ), h_E)$$
$$\oplus (H^1(C_g,\QQ), h_{C_g})$$ 
where $(V_{-}, h_{-})$ denotes the restricted Hodge structure
on $H^{1,1}(S)$ of the $-1$ eigenspace, 
and $h_{C_g}$ is the Hodge structure of $H^1(C_g,\QQ)$. Note 
that there is an associated Abel--Jacobi map 
$H^{1,0}(C_g)\to H^{2,1}(X)$ and we identify its image 
with $H^{1,0}(C_g)$ in $H^{2,1}(X)$ (see Clemens
and Griffiths \cite{CG72}).
Then $h_X$ is of CM type if and only if
$h_E$ and $h_S$ (more precisely, $h_{-}$ and $h_{C_g}$) are all of CM type. 
Now with our choices of $E$ and $S$, 
$E=E_2$ or $E_3$ is of CM type, and $S$ is of CM type (in particular,
$C_g$ is of CM type (cf. Lemma 5.13 below)) . Therefore,
$h_E$ and $h_S$ ($h_{-}$ and $h_{C_g}$) are all of CM type, and 
hence $X$ is of CM type. (Cf.  Borcea \cite{B92}, Proposition 1.2).
\end{proof}

\begin{rem}
More examples of CM type Calabi--Yau threefolds of Borcea--Voisin
type may be obtained by taking any CM type elliptic
curves $E$. In fact, one notices that
any elliptic curve $E$ can be embedded in $\PP^2(2,1,1)$, with
equation
$$E :x_0^2=x_2(x_1^3+ax_0x_2+bx_2^3)\quad\mbox{with $a,b\in\QQ$}.$$
In particular, elliptic curves with CM by a quadratic field
but with $j$-invariant in $\QQ$ can be realized in this
way.  


Rohde \cite{R09} (Example A.1.9) gave four examples of elliptic curves over
$\QQ$ with complex multiplication in the usual projective $2$-space.
\end{rem}

For the additional $41$ pairs $(S,\sigma )$ of $K3$ surfaces with 
involution $\sigma$ of Theorem \ref{thm-DelsarteS} (b) (see Tables 
\ref{table-y2}, \ref{table-y4} and \ref{table-y6}), 
the situation is slightly different from the above cases. 
Calabi--Yau threefolds $X$ are birational to hypersurfaces over 
$\QQ$, but they are not quasi-smooth. More precisely, we have the 
following result. 

\begin{thm} \label{thm7.3} 
{\sl Let $(S,\sigma)$ be (the minimal resolution of) one of the $41$ 
pairs of $K3$ surfaces with 
involution $\sigma$ of {\em Theorem \ref{thm-DelsarteS} (b)}, which is 
not in the list of Borcea. Let $E$ be an elliptic curve over $\QQ$ with 
involution $\iota$ with or without CM. Take the product $E\times S$ 
and consider the quotient threefold $E\times S/\iota\times \sigma$. 
Resolving singularities, we obtain a smooth Calabi--Yau threefold $X$ 
over $\QQ$. Further, $X$ is of CM type if and only if $E$ is of CM type. 

About the realization of $X$ as a hypersurface, the following holds. 

{\em (a)} If $(S,\sigma )$ is one of the $K3$ surfaces listed in {\em 
Tables \ref{table-y2}} and {\em \ref{table-y4}} other than $\#22$ and 
$\#58$, then $S$ is birational to $x_0^2x_i+f(x_1,x_2,x_3)=0$ for some 
$i\neq 0$ and $X$ is birational to a (non-quasi-smooth) hypersurface 
over $\QQ$ defined by 
$$\begin{cases} 
z_{i+1}(z_0^4+z_1^4)+f(z_2,z_3,z_4)=0 & \mbox{ if $w_0$ is odd and $E=E_2$} \\ 
z_{i+1}(z_0^3+z_1^6)+f(z_2,z_3,z_4)=0 & \mbox{ if $w_0$ is even but not 
divisible by $3$ and $E=E_3$.}
\end{cases}$$  

{\em (b)} Let $(S,\sigma )$ be one of the $K3$ surfaces listed in {\em 
Table \ref{table-y6}} other than $\#16$. If we choose $E=E_2$, then 
$X$ is birational to the following (non-quasi-smooth) hypersurface 
over $\QQ$: 
$$\begin{cases} 
(z_0^4+z_1^4)^2+z_2^3+z_3^4+z_4^6=0 & \mbox{ in $\#2$} \\ 
(z_0^4+z_1^4)^2+z_2^3+z_2z_3^3+z_3z_4^4=0 & \mbox{ in $\#52$ } \\ 
z_3(z_0^4+z_1^4)^2+z_2^3+z_2z_4^3+z_3^3z_4=0 & \mbox{ in $\#84$. } 
\end{cases}$$  
}
\end{thm}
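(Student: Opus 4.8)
The plan is to prove Theorem \ref{thm7.3} in two logically independent parts: first the CM statement, and then the explicit hypersurface realizations via twist maps. The CM assertion should follow the template already established in the proof of Proposition~\ref{prop7.1}'s companion (the earlier Theorem on the $45$ CM threefolds). Concretely, I would invoke the Künneth-type Hodge structure decomposition
$$(H^3(E\times S,\QQ),h)=(H^1(E,\QQ),h_E)\otimes (H^2(S,\QQ),h_S),$$
restrict to the $\iota\times\sigma$-invariant part, and add the contribution of the fixed curve $C_g$ via the Abel--Jacobi image $H^{1,0}(C_g)\hookrightarrow H^{2,1}(X)$, obtaining
$$(H^3(X,\QQ),h_X)\simeq(V_{-}\cap H^2(S,\QQ),h_{-})\otimes(H^1(E,\QQ),h_E)\oplus(H^1(C_g,\QQ),h_{C_g}).$$
By Theorem~\ref{thm2.5} the pair $(S,\sigma)$ is of CM type (these $41$ surfaces are among the $86$ Delsarte surfaces), so $h_{-}$ is of CM type, and by the Corollary following Example~\ref{no89} the Jacobian $J(C_g)$ is a CM abelian variety, so $h_{C_g}$ is of CM type as well. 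Hence $h_X$ is of CM type if and only if the remaining tensor factor $h_E$ is, i.e.\ if and only if $E$ has complex multiplication. This gives the ``if and only if'' cleanly, since both the $V_{-}$-tensor summand and the $C_g$-summand carry a nontrivial $h_E$ dependence only through the elliptic factor.

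For part (a), the strategy is to reuse the twist map of Proposition~\ref{prop7.1} but applied to a non-Borcea surface $S_0\colon x_0^2x_i+f(x_1,x_2,x_3)=0$. The key observation is that the twist construction of Goto--Kloosterman--Yui \cite{GKY10} only uses the $x_0^2$-dependence to fiber the elliptic curve over $S/\sigma$; here the extra factor $x_i$ rides along as a spectator variable. Thus when $w_0$ is odd I would compose with the map $\Phi$ sending $(y_0:y_1:y_2)\times(x_0:\ldots:x_3)$ as in Proposition~\ref{prop7.1}(a), and track how the monomial $x_0^2x_i$ transforms: the $x_0^2$ becomes $z_0^4+z_1^4$ after the $2{:}1$ identification, while $x_i$ becomes $z_{i+1}$, producing the stated equation $z_{i+1}(z_0^4+z_1^4)+f(z_2,z_3,z_4)=0$. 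The even-$w_0$ case is identical with $E_3$ in place of $E_2$, giving $z_{i+1}(z_0^3+z_1^6)+f=0$. I would then note that generic quasi-smoothness fails precisely because the defining polynomial no longer has the pure-power structure the twist map needs, which is exactly the asserted ``non-quasi-smooth'' caveat; birationality over $\QQ$ nonetheless holds because the twist map is a birational morphism away from the exceptional loci. The excluded cases $\#22$ and $\#58$ should be those where $w_0$ is divisible by $6$ (hence by both $2$ and $3$), which is precisely the configuration flagged as open in the Problem after Proposition~\ref{prop7.1}, so no twist map is available there.

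For part (b), the surfaces in Table~\ref{table-y6} are the ones where the involution acts on a variable other than $x_0$, so the equation is not of the standard $x_0^2 x_i = f$ shape, and I would handle the three listed weights ($\#2,\#52,\#84$) by direct substitution into the $k=1$ twist map for $E_2$. The visible feature is that the squared factor $(z_0^4+z_1^4)^2$ appears rather than the linear-in-$z$ factor of part (a); this reflects that the relevant monomial in $S_0$ carries $x_0^2$ to the second power in the degree bookkeeping, which after twisting doubles into a square of $z_0^4+z_1^4$. I would verify the three equations case by case, matching the weighted degrees, and confirm birationality as before. The main obstacle I anticipate is the bookkeeping in part (b): because these three surfaces use idiosyncratic involutions (from the construction in Theorem~\ref{thm-newS}(ii) and the Table~\ref{table-y6} analysis), one must recompute for each case how the twist map interacts with the particular monomial carrying the squared variable, and confirm that the exceptional divisor structure matches the crepant resolution $X$ so that the birational equivalence is genuine rather than merely a rational dominance. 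Once the degree and weight compatibilities are checked, the birationality of the twist map over $\QQ$ (already established in \cite{GKY10}) supplies the conclusion.
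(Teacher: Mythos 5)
Your proposal is correct and follows essentially the same route as the paper: the CM assertion is reduced to the CM property of $E$ via the Borcea--Voisin Hodge decomposition (the paper simply cites its earlier argument verbatim), and parts (a) and (b) are obtained exactly as you describe, by applying the twist map of Proposition \ref{prop7.1} to the equation rewritten as $x_0^2=-f(x_1,x_2,x_3)/x_i$, respectively with the odd-weight involution variable playing the role of the squared variable so that $(z_0^4+z_1^4)^2$ appears. Your identification of the excluded cases $\#22$ and $\#58$ as precisely those with $w_0$ divisible by $6$ (where neither twist map is available) makes explicit a point the paper leaves implicit.
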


\begin{proof} 
Since the singular locus of $E\times S/\iota\times \sigma$ is defined 
over $\QQ$, resolving singularities, we obtain a smooth Calabi--Yau 
threefold $X$ over $\QQ$. Here $X$ is not necessarily of CM type. By the 
same argument as for proof of Theorem \ref{thm7.3}, $X$ is of CM type if 
and only if each component, $E$ and $S$, is of CM type. Since we already 
know that $S$ is of CM type, $X$ is of CM type if and only if $E$ is a 
CM elliptic curve over $\QQ$. 

(a) If we choose an appropriate elliptic curve $E$, then the twist map of 
Proposition \ref{prop7.1} works for $x_0^2=-f(x_1,x_2,x_3)/x_i$. Depending 
on the parity of $w_0$, we obtain the equations as claimed. 

(b) Since the variable associated with the involution $\sigma$ carries 
an odd weight, we may choose $E=E_2$. Then the twist map of Proposition 
\ref{prop7.1} (a) works for $x_0^2=\sqrt{-f(x_1,x_2,x_3)}$ or 
$x_0^2=\sqrt{-f(x_1,x_2,x_3)/x_i}$ and we obtain the equations as 
asserted. 
\end{proof}

\begin{rem} Note that $K3$ surfaces $S_0$ realized by Yonemura in 
weighted projective $3$-spaces are often singular. To have smooth $K3$ 
surfaces, we ought to consider minimal resolutions $S$. The 
involution $\sigma$ is lifted to $S$ and we use $S$ to 
carry out the above construction of Calabi--Yau threefolds $X$. The 
procedure is shown as follows: 
\medskip
$$\begin{array}{ccccc}
E\times S_0 & \longleftarrow & E\times S & & \\
& & \downarrow & & \\
&& E\times S/\iota\times\sigma & \longleftarrow & X
\end{array}
$$
\end{rem}

\begin{rem}
The above constructions work with any elliptic curves, not only  
with $E_2$ and $E_3$. 
\end{rem}

\subsection{Singularities and resolutions on Calabi--Yau threefolds
of Borcea--Voisin type}\label{sect3.3}

Let $S_0$ be a $K3$ surface defined by a weighted hypersurface
$$S_0: x_0^2=f(x_1,x_2,x_3)\subset\PP^3(w_0,w_1,w_2,w_3)$$
of degree $\mbox{deg}(f)=w_0+w_1+w_2+w_3$. The involution $\sigma$ is
given by $\sigma(x_0)=-x_0$. The singularities on $S_0$ are
determined by the weight. Let $S$ be the
minimal resolution of $S_0$. The involution $\sigma$ is
extended to $S$. Let $S^{\sigma}$ be the
fixed part of $S$ by $\sigma$.

Let $E$ be an elliptic curve
$$E_2 : y_0^2=y_1^4+y_2^4\subset\PP^2(2,1,1)$$
or
$$E_3 : y_0^2=y_1^3+y_2^6\subset\PP^2(3,2,1),$$
defined in Section 4.2.
The involution $\iota$ is given by $\iota(y_0)=-y_0$.
The fixed part $E^{\iota}$ consists of four points
$$E^{\iota}=\{P_1,P_2,P_3,P_4\}.$$

We use for $E$ either $E_2$ if $w_0$ is even, or $E_3$ if $w_0$ is odd. Take
the quotient threefold $E\times S/\iota\times\sigma$.  

\begin{rem}
In fact, the above statement is true for any elliptic curve
$$E: y^2=x^3+ax+b\in\PP^2(1,1,1)\quad\mbox{with $a,b\in\QQ$}.$$
$E$ has the involution $y\to -y$ and the fixed points
consists of $4$ points.  Thus, there is no need to
confine our discussions to $E_2$ or $E_3$.  We will
get extra singularities working in weighted projective
spaces, but this is not intrinsic to the Borcea--Voisin
construction.    
\end{rem}

Let $X$ be a smooth resolution of $E\times S/\iota\times\sigma$.
Then the singular loci $\{P_i\}\times S^{\sigma}$ are determined from 
the weight of $S_0$ and the singularity data of the ambient space.
\smallskip

Here are examples.

\begin{ex} \label{ex7-1} 
Let $$S_0: x_0^2=x_1^5+x_2^5+x_3^{10}\subset\PP^3(5,2,2,1).$$
(This is $\#6$ in Yonemura =$\#2$ in Borcea.)
Let $$E_2: y_0^2=y_1^4+y_2^4\subset\PP^2(2,1,1).$$

$\bullet$ Then $S_0$ is a singular $K3$ surface and the singular locus is:
$$\mbox{Sing}(S_0)=\{(0:x_1:x_2:0)\,|\,x_1^5+x_2^5=0\,\}
=\{Q_1,Q_2,Q_3,Q_4,Q_5 \}$$
where every $Q_i$ is a cyclic quotient singularity of type $A_1$.

$\bullet$ Let $C^{'}$ be a curve on $S_0$ defined by $x_0=0$, that is,
$$C^{'}=\{\,x_0=0\}:x_1^5+x_2^5+x_3^{10}=0\subset\PP^2(2,2,1).$$
Since $\PP^2(2,2,1)\simeq \PP^2(1,1,1)$, $C^{'}$ is identified with
$$C^{'}: x_1^5+x_2^5+x_3^5=0\subset\PP^2$$
which is a smooth curve of genus $6$.

$\bullet$ Let $L^{'}$ be a curve on $S$ defined by $x_3=0$, that is,
$$L^{'}=\{\, x_3=0\,\}: x_0^2=x_1^5+x_2^5\subset\PP^2(5,2,2).$$
Since $\PP^2(5,2,2)\simeq \PP^2(5,1,1)$, $L^{'}$ is identified with
$$L^{'}: x_0=x_1^5+x_2^5\subset\PP^2(5,1,1)$$
and hence $L^{'}$ is a rational curve.

$\bullet$ We see that
$$C^{'}\cap L^{'}=\{Q_1,Q_2,Q_2,Q_4,Q_5\}.$$

$\bullet$ Let $S$ be the minimal resolution of $S_0$. The
involution $\sigma$ is lifted to $S$.
Let $C_6$ and $L_1$ be the respective strict transforms of $C^{'}$
and $L^{'}$ to the minimal resolution $S$.
Let $\EE_1,\cdots, \EE_5$ be the exceptional divisors on $S$
arising from singularities $Q_i,\, (i=1,\cdots,5)$, respectively.

Then $C_6$ and $L_1$ are fixed by $\sigma$, but not the exceptional
divisors $\EE_i$ for any $i\in\{1,2,\cdots, 5\}$. Hence
$$S^{\sigma}=C_6\cup L_1$$ 
and we see $g=6$ and $k=1$ (so $r=6, \, a=4$ in Nikulin's notation). 
The resolution picture is given in Figure \ref{eg7-1}, where the curves 
in boldface are fixed by $\sigma$.   
\medskip

\begin{figure} 
\centering 
\includegraphics[width=5cm]{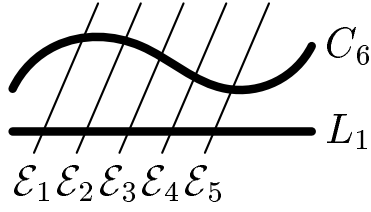}  %
\caption{Exceptional divisors and the fixed locus} 
\label{eg7-1} 
\end{figure}
\medskip

\begin{prop} {\sl Let $X$ be a crepant resolution of the 
quotient threefold $E_2\times S/\iota\times\sigma$ of 
{\rm Example \ref{ex7-1}}. Then $X$ is a Calabi--Yau threefold  
corresponding to the triplet $(6,4,0)$, and its exceptional 
divisors are four copies of the ruled surfaces 
$$(C_6\times\PP^1)\cup(L\times\PP^1).$$

Furthermore, $X$ is of CM type, and has a (quasi-smooth) model 
$$z_0^4+z_1^4=z_2^5+z_3^5+z_4^{10}\subset \PP^4(5,5,4,4,2).$$ 
The Hodge numbers are given by
$$h^{1,1}(X)=15,\,\,h^{2,1}(X)=39.$$}
\end{prop}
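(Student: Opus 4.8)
The plan is to obtain each of the four assertions—the Nikulin triplet, the shape of the exceptional divisors, the CM property, and the quasi-smooth model with its Hodge numbers—from a result already available for this surface, which is Borcea's case $\#2$ (Yonemura $\#6$) with $f(x_1,x_2,x_3)=x_1^5+x_2^5+x_3^{10}$ and $Q=(5,2,2,1)$. First I would pin down the triplet. Example \ref{ex7-1} already computes $S^\sigma=C_6\cup L_1$, so that $g=6$ and $k=1$, and Proposition \ref{r-and-a} then yields $r=11-g+k=6$ and $a=11-g-k=4$. To promote $(r,a)=(6,4)$ to the full triplet $(6,4,0)$ I must establish $\delta=0$; I would do this by producing a $\ZZ$-basis of $\pic(S)^\sigma$—using the fixed curve $C_6$, the rational curve $L_1$, and suitable exceptional combinations, exactly in the spirit of Example \ref{no8}—and checking that the associated discriminant form $(\pic(S)^\sigma)^*/\pic(S)^\sigma\cong(\ZZ/2\ZZ)^a$ is even, i.e. $(x^*)^2\in\ZZ$ for all $x^*$; alternatively one reads $\delta=0$ off Borcea's tabulation of his $48$ pairs. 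The exceptional divisors are then immediate from the resolution proposition of Section \ref{sect3.1}: since $E_2^\iota=\{P_1,P_2,P_3,P_4\}$ and each locus $\{P_i\}\times S^\sigma$ is a cyclic quotient singularity of order $2$, resolving produces four copies of the ruled surface $S^\sigma\times\PP^1=(C_6\times\PP^1)\cup(L_1\times\PP^1)$, which is the claim with $L=L_1$.

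For the CM property I would argue that $X$ is of CM type precisely because both factors are. The surface $S$ is defined over $\QQ$ by the three-monomial (hence Delsarte) equation $x_0^2=x_1^5+x_2^5+x_3^{10}$, so $S$ is of CM type by Theorem \ref{thm2.5}, while $E_2:y_0^2=y_1^4+y_2^4$ has CM by $\ZZ[\sqrt{-1}]$. By the Hodge-structure decomposition recalled in Section \ref{sect3.2} (Borcea \cite{B94}, Proposition 1.2), the Hodge structure $h_X$ of $X$ splits into pieces built from $h_E$, the $\sigma$-anti-invariant part of $h_S$, and $h_{C_6}$; as all of these are of CM type, so is $h_X$, and hence $X$. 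For the explicit model I would apply the twist map of Proposition \ref{prop7.1}(a): since $w_0=5$ is odd, the map $\PP^2(2,1,1)\times\PP^3(5,2,2,1)\dashrightarrow\PP^4(5,5,4,4,2)$ sends $E_2\times S_0$ generically $2:1$ onto the degree-$20$ hypersurface $z_0^4+z_1^4=z_2^5+z_3^5+z_4^{10}$, where the weights give $\deg z_0^4=\deg z_1^4=20$ and $\deg z_2^5=\deg z_3^5=\deg z_4^{10}=20$. Quasi-smoothness is then clear, since the equation is of Fermat/Delsarte type with each of the five variables occurring as a pure power, so the partial derivatives generate an ideal whose only common zero in the affine cone is the apex. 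Finally the Hodge numbers follow from the formulas of Section \ref{sect3.1} with $(r,a)=(6,4)$: Borcea's formulas give $h^{1,1}(X)=5+3r-2a=15$ and $h^{2,1}(X)=65-3r-2a=39$, and Voisin's formulas with $N=1+k=2$, $N'=g=6$ give the same values, a useful consistency check.

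The step I expect to be the main obstacle is the determination $\delta=0$: unlike $r$ and $a$, it genuinely requires a $\ZZ$-basis of $\pic(S)^\sigma$ and the evaluation of its discriminant form, and this is exactly the computation the paper defers in general. A secondary point needing care is the passage from the twist-map image to $X$ itself: the map is only generically $2:1$ and the hypersurface in $\PP^4(5,5,4,4,2)$ is singular, so I would invoke the construction of Goto--Kloosterman--Yui \cite{GKY10} underlying Proposition \ref{prop7.1} to ensure that a crepant resolution of this hypersurface is birational to the Borcea--Voisin resolution $X$, justifying the word ``model''. Everything else reduces to direct substitution into results established earlier in the paper.
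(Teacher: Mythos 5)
Your proposal is correct and takes essentially the same route as the paper's (implicit) proof, which consists of the fixed-locus computation $S^{\sigma}=C_6\cup L_1$ in Example \ref{ex7-1} combined with Proposition \ref{r-and-a} for $(r,a)=(6,4)$, the general resolution proposition of Section \ref{sect3.1} for the four copies of $S^{\sigma}\times\PP^1$, Theorem \ref{thm2.5} plus the Borcea Hodge-structure decomposition for the CM property, the twist map of Proposition \ref{prop7.1}(a) for the quasi-smooth model in $\PP^4(5,5,4,4,2)$, and Borcea's and Voisin's formulas for the Hodge numbers. Your extra care about $\delta=0$ (which the paper asserts without computation, deferring the invariant $\delta$ in general to future work, so citing Borcea's tabulation is the appropriate fix) and about the birationality of the twist-map image is a refinement of, not a departure from, the paper's argument.
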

\end{ex}

\begin{ex} \label{ex7-3} 
Consider the surface
$$S_0: x_0^2=x_1^3x_3+x_2^7+x_3^{28}\subset\PP^3(14,9,4,1).$$
This is $\#45$ in Yonemua=$\#36$ in Borcea. 
Let $$E_3: y_0^2=y_1^3+y_2^6\subset\PP^2(3,2,1).$$

$\bullet$ The surface $S_0$ is a singular $K3$ surface. There
are two singular points:
$$Q:=(0:1:0:0) \quad\mbox{of type $A_{9,8}$},$$
and
$$R:=(1:0:1:0) \quad\mbox{of type $A_{2,1}$}.$$

$\bullet$ Let $C^{'}$ be the curve on $S_0$ defined by $x_0=0$ 
$$C^{'}=\{x_0=0\} : x_1^3x_3+x_2^7+x_3^{28}=0\subset\PP^2(9,4,1).$$
Then $C^{'}$ is a quasi-smooth curve with singularity $Q$.

$\bullet$ No other curves defined by $x_i=0$ $(i\neq 0)$ are fixed by
the involution $\sigma$.

$\bullet$ Let $S$ be the minimal resolution of $S_0$.
Then $S$ is a smooth $K3$ surface and the involution
$\sigma$ is lifted to $S$. Let $C_6$ be the strict 
transform of $C^{'}$ to $S$; it has genus $6$. 
Let $\EE_1,\cdots, \EE_8$ be exceptional
divisors arising from singularity $Q$.  Let $\EE_9$ be
the exceptional divisor arising from $R$. Then
$\EE_{2i}\, (i=1,2,3,4)$ and $\EE_9$ are fixed by $\sigma$,
but others are not.

Put $L_i:=\EE_{2i}\, (i=1,2,3,4)$ and $L_5:=\EE_9$. Then
$$S^{\sigma}=C_6\cup L_1\cup\cdots\cup L_5.$$
So $g=6$ and $k=5$ (so $r=10, a=0$ in Nikulin's notation). 
The resolution picture is given in Figure \ref{eg7-3}, where the curves 
in boldface are fixed by $\sigma$.  

$\bullet$ The quotient threefold $E_3\times S/\iota\times\sigma$
has singularities along $\{P_i\}\times S^{\sigma}$ where
$E_3^{\iota}=\{P_1,P_2,P_3,P_4\}$.
\medskip

\begin{figure} 
\centering 
\includegraphics[width=5cm]{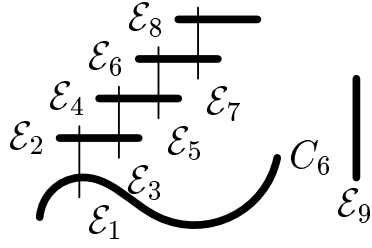}  %
\caption{Exceptional divisors and the fixed locus} 
\label{eg7-3} 
\end{figure} 
\medskip

Summarizing the above, we have

\begin{prop} {\sl A crepant resolution $X$ of the quotient threefold 
$E_3\times S/\iota\times\sigma$ of {\rm Example \ref{ex7-3}} 
is a Calabi--Yau threefold corresponding to the triplet $(10,0,0)$,
and its exceptional divisors are four copies of
$$(C_6\times\PP^1)\cup(L_1\times\PP^1)\cup\cdots\cup(L_5\times\PP^1).$$

Furthermore, $X$ is of CM type, and has a (quasi-smooth) model
$$z_0^3+z_1^6=z_2^3z_4+x_3^7+x_4^{28}\subset\PP^4(28,14,27,12,3).$$
The Hodge numbers are given by
$$h^{1,1}(X)=35,\,\, h^{2,1}(X)=35.$$}
\end{prop}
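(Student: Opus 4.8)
The plan is to assemble the proposition from the data already computed in Example~\ref{ex7-3} together with the general structural results of Sections~\ref{sect2} and~\ref{sect3}. In the example we found $S^{\sigma}=C_6\cup L_1\cup\cdots\cup L_5$, so that $g=6$ and $k=5$. First I would read off the first two Nikulin invariants from Proposition~\ref{r-and-a}: $r=11-g+k=10$ and $a=11-g-k=0$. To upgrade this to the full triplet $(10,0,0)$ I would argue that $a=0$ forces $\delta=0$. Indeed, $a=0$ means that $(\pic(S)^{\sigma})^{*}/\pic(S)^{\sigma}$ is trivial, so $\pic(S)^{\sigma}$ is unimodular and hence self-dual; since it is a sublattice of the even lattice $H^2(S,\ZZ)$, every $x^{*}\in(\pic(S)^{\sigma})^{*}=\pic(S)^{\sigma}$ satisfies $(x^{*})^2\in 2\ZZ\subset\ZZ$, whence $\delta=0$. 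The Calabi--Yau property and the description of the exceptional locus then follow directly from the Borcea--Voisin construction recalled in Section~\ref{sect3.1}: a crepant resolution of $E_3\times S/\iota\times\sigma$ is a smooth Calabi--Yau threefold whose exceptional divisors are four copies of $S^{\sigma}\times\PP^1=(C_6\times\PP^1)\cup(L_1\times\PP^1)\cup\cdots\cup(L_5\times\PP^1)$.

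For the Hodge numbers I would substitute $r=10$ and $a=0$ into Borcea's formulas $h^{1,1}(X)=5+3r-2a$ and $h^{2,1}(X)=65-3r-2a$, obtaining $h^{1,1}(X)=h^{2,1}(X)=35$; as a check one may also use Voisin's formulas with $N=1+k=6$ and $N^{\prime}=g=6$. For the CM assertion I would first note that the defining equation $x_0^2=x_1^3x_3+x_2^7+x_3^{28}$ consists of exactly four monomials and is therefore of Delsarte type, so $S$ is of CM type by Theorem~\ref{thm2.5}. Since $E_3$ has complex multiplication by $\ZZ[\sqrt{-3}]$, Theorem~\ref{thm7.3} then gives that $X$ is of CM type.

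To produce the explicit model I would apply the twist map of Proposition~\ref{prop7.1}. Here $w_0=14$ is even and not divisible by $3$, so case~(b) applies with $E=E_3$: the target is $\PP^4(2w_0,w_0,3w_1,3w_2,3w_3)=\PP^4(28,14,27,12,3)$, and $E_3\times S_0$ maps generically $2:1$ onto the hypersurface of degree $6w_0=84$ given by $z_0^3+z_1^6=f(z_2,z_3,z_4)$, where $f$ comes from $x_1^3x_3+x_2^7+x_3^{28}$ by renaming the variables. This yields $z_0^3+z_1^6=z_2^3z_4+z_3^7+z_4^{28}$, as claimed. I would then verify quasi-smoothness by the monomial criterion used in the proof of Theorem~\ref{thm-DelsarteS}: among the five monomials $z_0^3,\ z_1^6,\ z_2^3z_4,\ z_3^7,\ z_4^{28}$, the variables $z_0,z_1,z_3$ occur as pure powers, $z_2$ occurs only as $z_2^3z_4$, and $z_4$ occurs as $z_4^{28}+z_2^3z_4$ (the admissible form $z_i^n+z_iz_j^m$), so the hypersurface is quasi-smooth and its crepant resolution is a model for $X$.

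The step I expect to be the main obstacle is the last one: justifying rigorously that this weighted hypersurface really is a model for $X$. The twist map $\Phi$ is only generically $2:1$ and is a priori a rational correspondence between weighted projective spaces, so one must check that its image is precisely the quotient $E_3\times S_0/\iota\times\sigma$ under the identifications forced by the weights, and that the crepant resolution of the (quasi-smooth) image coincides with the Borcea--Voisin resolution $X$ and not merely with something birational to it. Tracking how the $A_{9,8}$ and $A_{2,1}$ singularities of $S_0$, the four fixed points of $\iota$ on $E_3$, and the additional singularities coming from the ambient weighted space interact under $\Phi$ is where the genuine care is needed; by contrast, all the remaining assertions are immediate consequences of the results cited above.
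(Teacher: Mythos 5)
Your proposal is correct and takes essentially the same route as the paper, whose (implicit) proof of this proposition is exactly the assembly you describe: the fixed-locus data $S^{\sigma}=C_6\cup L_1\cup\cdots\cup L_5$ (so $g=6$, $k=5$) from Example \ref{ex7-3}, Proposition \ref{r-and-a} for $(r,a)=(10,0)$, the Borcea--Voisin resolution of Section \ref{sect3.1} for the Calabi--Yau property and the exceptional divisors, Borcea's formulas for $h^{1,1}=h^{2,1}=35$, the CM-type theorem of Section \ref{sect3.2}, and the twist map of Proposition \ref{prop7.1}(b) giving the hypersurface $z_0^3+z_1^6=z_2^3z_4+z_3^7+z_4^{28}$ in $\PP^4(28,14,27,12,3)$. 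Your two additions (the lattice argument that $a=0$ forces $\delta=0$, and the explicit quasi-smoothness verification) fill in details the paper leaves unstated, and the obstacle you flag at the end is not actually needed for the claim, since the paper uses ``model'' only in the birational sense established by the twist-map construction of \cite{GKY10}, so the generically $2:1$ rational map suffices.
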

\end{ex}

\begin{ex} \label{ex7-5} 
Consider the surface
$$S_0: x_0^2=x_1^3+x_2^{10}+x_2^{15}\subset\PP^3(15,10,3,2).$$
This is $\#11$ in Yonemura =$\#18$ in Borcea. Let
$$E_2: y_0^2=y_1^4+y_2^4\subset\PP^2(2,1,1).$$

$\bullet$ $S_0$ is a singular $K3$ surface and singularities are
$$Q_1,Q_2,Q_3=(0:x_1:0:x_3)\quad\mbox{of type $A_{2,1}$,}$$
$$R:=(x_0:x_1:0:0)\quad\mbox{of type $A_{5,4}$,}$$
and
$$T_1,T_2:=(x_0:0:x_2:0)\quad\mbox{of type $A_{3,2}$.}$$

$\bullet$ Let $C^{'}$ be the curve on $S_0$ defined by $x_0=0$:
$$C^{'}=\{x_0=0\}: x_1^3+x_2^{10}+x_3^{15}=0\subset\PP^2(10,3,2).$$
Via the isomorphism $\PP^2(10,3,2)\simeq \PP^2(5,3,1)$, $C^{'}$
is identified with
$$C^{'}: x_1^3+x_2^5+x_3^{15}=0\subset\PP^2(5,3,1).$$

$\bullet$ Let $L^{'}$ be the curve on $S_0$ defined by $x_2=0$:
$$L^{'}=\{x_2=0\}: x_0^2=x_1^3+x_3^{15}\subset\PP^2(15,10,2).$$
Via the isomorphism $\PP^2(15,10,2)\simeq\PP^2(3,1,1)$, $L^{'}$
is identified with
$$L^{'}: x_0=x_1^3+x_3^3\subset\PP^2(3,1,1).$$

$\bullet$ $C^{'}$ has genus $4$ and $L^{'}$ is rational with
$$C^{'}\cap L^{'}=\{Q_1,Q_2, Q_3\}\quad\mbox{and}\quad R\in L^{'}.$$

$\bullet$ Let $S$ be the minimal resolution of $S_0$. The
involution $\sigma$ is lifted to $S$.  Let $C_4$ and $L_1$ be the
strict transforms of $C^{'}$ and $L^{'}$ on $S$,
respectively.  Let
$\EE_i\, (i=1,2,3)$ be the exceptional divisors arising from singularities
$Q_i\, (i=1,2,3)$, $\EE_{4+j}\,(j=0,1,2,3)$ be the
exceptional divisors arising from $R$, and $\EE_{8+t}\,
(t=0,1,2,3)$ be the exceptional divisors arising from singularities
$T_1,T_2$.

$\bullet$ $C, L_1, \EE_5=:L_2$ and $\EE_7=:L_3$ are fixed
by $\sigma$, but all others are not. Hence
$$S^{\sigma}=C_4\cup L_1\cup L_2\cup L_3.$$
So $g=4$ and $k=3$ (so $r=10, a=4$ in Nikulin's notation). 
The resolution picture is given in Figure \ref{eg7-5}, where the curves 
in boldface are fixed by $\sigma$.  

\begin{figure} 
\centering 
\includegraphics[width=7cm]{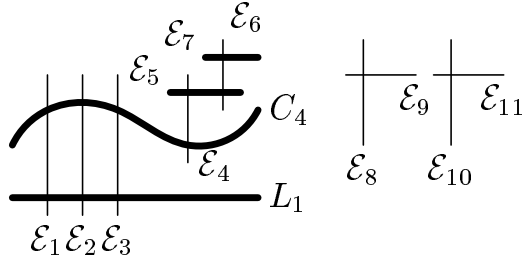}  %
\caption{Exceptional divisors and the fixed locus} 
\label{eg7-5} 
\end{figure} 

$\bullet$ The quotient threefold $E_2\times S/\iota\times\sigma$ has
singularities $\{P_i\}\times S^{\sigma}\,(i=1,2,3,4)$ where
$E_2^{\iota}=\{P_1,P_2,P_3,P_4\}$.

\begin{prop} {\sl A crepant resolution $X$ of the quotient threefold 
$E_2\times S/\iota\times\sigma$ of {\rm Example \ref{ex7-5}} 
is a Calabi--Yau
threefold corresponding to the triplet $(10,4,0)$,
and its exceptional divisors are four copies of ruled
surfaces:
$$(C_4\times\PP^1)\cup(L_1\times\PP^1)\cup(L_2\times\PP^1)
\cup(L_3\times\PP^1).$$

Furthermore, $X$ is of CM type, and has a (quasi-smooth) model
$$z_0^4+z_1^4=z_2^3+z_3^{10}+z_4^{15}\subset\PP^4(15,15,20,6,4).$$
The Hodge numbers are given by
$$h^{1,1}(X)=27,\,\, h^{2,1}(X)=27.$$}
\end{prop}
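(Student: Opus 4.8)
The plan is to assemble the statement from the general structural results together with the invariants already extracted in Example \ref{ex7-5}. That example establishes that for $S_0: x_0^2=x_1^3+x_2^{10}+x_3^{15}\subset\PP^3(15,10,3,2)$ with $\sigma(x_0)=-x_0$ one has $S^{\sigma}=C_4\cup L_1\cup L_2\cup L_3$, so $g=4$ and $k=3$, whence $r=10$ and $a=4$ by Proposition \ref{r-and-a}. Feeding $S^{\sigma}$ into the general resolution Proposition of Subsection \ref{sect3.1}, the crepant resolution $X$ of $E_2\times S/\iota\times\sigma$ is a Calabi--Yau threefold whose exceptional locus is exactly four copies of $S^{\sigma}\times\PP^1=(C_4\times\PP^1)\cup(L_1\times\PP^1)\cup(L_2\times\PP^1)\cup(L_3\times\PP^1)$, one over each of the four fixed points of $\iota$ on $E_2$. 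The Hodge numbers then follow by substituting $(r,a)=(10,4)$ (equivalently $N=1+k=4$ and $N'=g=4$) into the formulas $h^{1,1}(X)=5+3r-2a$ and $h^{2,1}(X)=65-3r-2a$, giving $h^{1,1}(X)=h^{2,1}(X)=27$; one checks that the Voisin form $h^{1,1}=11+5N-N'$, $h^{2,1}=11+5N'-N$ yields the same values.

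For the CM assertion I would invoke the criterion underlying Theorem \ref{thm7.3}: $X$ is of CM type if and only if both factors $E_2$ and $S$ are. Here $E_2$ has complex multiplication by $\ZZ[\sqrt{-1}]$, while the chosen equation $x_0^2=x_1^3+x_2^{10}+x_3^{15}$ consists of exactly four monomials and is therefore of Delsarte type; by Theorem \ref{thm2.5} such an $S$, being a finite quotient of a Fermat surface, is of CM type. Hence $X$ is of CM type.

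To produce the quasi-smooth model I would apply the twist map of Proposition \ref{prop7.1}(a), which is the relevant case since $w_0=15$ is odd and we take $E=E_2$. The map sends $\PP^2(2,1,1)\times\PP^3(15,10,3,2)$ to $\PP^4(15,15,20,6,4)$ and carries $E_2\times S_0$ generically $2:1$ onto $z_0^4+z_1^4=f(z_2,z_3,z_4)$ with $f=z_2^3+z_3^{10}+z_4^{15}$; a degree check confirms that every monomial has weighted degree $4w_0=60$, giving the asserted hypersurface $z_0^4+z_1^4=z_2^3+z_3^{10}+z_4^{15}\subset\PP^4(15,15,20,6,4)$. Quasi-smoothness is immediate because this is a diagonal (Brieskorn--Pham) hypersurface in which each variable occurs as a pure power, so the partial derivatives vanish simultaneously only at the origin, which is excluded from the weighted projective space.

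The one genuinely delicate point is the value $\delta=0$ in the triplet, since $\delta$ is deferred in general (it requires a $\ZZ$-basis of $\pic(S)^{\sigma}$ together with the associated discriminant quadratic form). Here I expect the cleanest route is a direct lattice computation in the spirit of Example \ref{no8}: exhibit an explicit $\ZZ$-basis of $\pic(S)^{\sigma}$ built from the strict transform of $C_4$, the fixed rational curves, and the $\sigma$-invariant exceptional classes, confirm that its discriminant has absolute value $2^a=2^4$ (consistent with $a=4$), and then verify that $(x^*)^2\in\ZZ$ for every $x^*$ in the dual, which forces $\delta=0$. Alternatively, since $S$ is $\#18$ in Borcea's list, the full triplet $(10,4,0)$ may simply be read off from \cite{B94}. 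I expect this $\delta$-determination to be the main obstacle, as all the remaining assertions are direct specializations of results already established.
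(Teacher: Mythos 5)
Your proposal is correct and follows essentially the same route as the paper: the Proposition there is simply a packaging of the fixed-locus computation in the example ($g=4$, $k=3$, hence $(r,a)=(10,4)$ by the formula $r=11-g+k$, $a=11-g-k$), the general resolution statement giving four copies of $S^{\sigma}\times\PP^1$ as exceptional divisors, the Delsarte/Fermat-quotient argument plus CM of $E_2$ for the CM claim, the twist map for odd $w_0$ producing the quasi-smooth diagonal model in $\PP^4(15,15,20,6,4)$, and the Borcea/Voisin Hodge-number formulas yielding $h^{1,1}=h^{2,1}=27$. Your caution about $\delta=0$ is warranted and goes slightly beyond the paper, which asserts the full triplet $(10,4,0)$ without computing $\delta$ (consistent with its stated policy of deferring $\delta$ to future work), so reading it off from Borcea's list or carrying out the lattice computation you sketch is exactly what would be needed to close that remaining gap.
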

\end{ex}

\section{Automorphy of Calabi--Yau threefolds 
of Borcea--Voisin type over $\QQ$}\label{sect4}

\subsection{The $L$-series}

Let $X$ be a Calabi--Yau variety defined over $\QQ$ 
of dimension $d$ where $d\leq 3$. 
Hence $X$ is an elliptic curve for $d=1$, a $K3$ surface for $d=2$
and a Calabi--Yau threefold for $d=3$. 

We may assume that $X$ has defining equations with
integer coefficients. A prime $p$ is said to be 
{\it good} if the reduction 
$X_p=X\otimes \FF_p$ is smooth and defines
a Calabi--Yau variety over $\FF_p$. A prime $p$ is said to be {\it bad}
if it is not a good prime.  There are only finitely many bad
primes and we denote by $S$ the product of bad primes. 
Then a Calabi--Yau variety $X$ has an integral model over
$\ZZ[1/S]$.

Put $\bar{X}:=X\otimes_{\QQ}\bar{\QQ}$. We will consider the Galois 
representation associated to the $\ell$-adic \'etale cohomology groups
$H^i_{et}(\bar X, \QQ_{\ell})$ ($0\leq i\leq 2d$) of $X$, 
where $\ell$ is a prime.

The absolute Galois group $G_{\QQ}:=\GQ$ acts on $\bar X$.
For each $i,\, 0\leq i\leq 2d$, one has a Galois
representation on the cohomology group $H^i_{et}(\bar X,\QQ_{\ell})$
where $\ell$ is a prime different from $p$. 
This defines a continuous $\ell$-adic representation
$\rho : G_{\QQ}\to GL_r(\QQ_{\ell})$ of some finite rank $r^{\prime}$
where $r^{\prime}=\mbox{dim}_{\QQ_{\ell}} H^i_{et}(\bar X,\QQ_{\ell})=B_i(X)$
(the $i$-th Betti number of $X$). 

For a good prime $p$ the structure of this Galois
representation can be studied by passing to the reduction $X_p=X\otimes
\FF_p$.
The Frobenius morphism $\mbox{Frob}_p$ induces a $\QQ_{\ell}$-linear
map $\rho(\mbox{Frob}_p)$ on $H^i_{et}(\bar X,\QQ_{\ell})$
($i,\, 0\leq i\leq 2d$).  Let
$$P_p^i(X,\rho,t):=\mbox{det}(1-\rho(\mbox{Frob}_p)\,t\,|\, H^i_{et}(\bar
X,\QQ_{\ell}))$$
be the characteristic polynomial of $\rho(\mbox{Frob}_p)$, where $t$ is an
indeterminate.  By the validity of the Weil conjectures,
one knows that
\begin{itemize}
\item 
$P_p^i(X,\rho,t)\in 1+\ZZ[t]$ has degree $B_i(X)$.\\
\item The reciprocal roots of $P_p^i(X,\rho,t)$ are algebraic 
integers with complex absolute value $p^{i/2}$ (the Riemann Hypothesis 
for $X_p$).\\ 
\item The zeta-function of $X_p$ is a rational function of
$t$ over $\QQ$ and is given by
$$\zeta(X_p,t)=\frac{P_p^1(X,\rho,t)P_p^3(X,\rho,t)\cdots P_p^{2d-1}(X,\rho,t)}{P_p^0(X,\rho,t)P_p^2(X,\rho,t)\cdots P_p^{2d}(X,\rho,t)}.$$
\end{itemize}

Now putting all local data together, we can define
the (incomplete) global $L$-series and the (incomplete) 
zeta-function of $X$.
 
\begin{defn}({\rm
For each $i, 0\leq i\leq 2d$, we define the $i$-th (incomplete)
$L$-series by $$L_i(X,s):=L(H^i_{et}(\bar X,\QQ_{\ell}),s)=\prod_{p\not\in S}
\frac{1}{P_p^i(X, \rho, p^{-s})}.$$

The (Hasse--Weil) zeta-function of $X$ is then defined by
$$\zeta(X,s)=\frac{\prod_{i=0}^{d} L_{2i-1}(X,s)}{\prod_{i=1}^{d}
L_{2i}(X,s)}.$$}
\end{defn}

The use of the terminology of ``incomplete'' $L$-series is based
on the fact that it does not include
a few Euler factors corresponding to bad primes. We can also define
Euler factors for primes $p\in S$ to complete the $L$-series
bringing in the {\it Gamma factor} corresponding to the prime
at infinity, and also factors corresponding to bad primes. 

We denote by $\zeta(\QQ,s)=\sum_{n=1}^{\infty}\frac{1}{n^s}=
\prod_{p:prime}\frac{1}{1-p^{-s}}$ the Riemann zeta-function.

\begin{ex} 
We consider an elliptic curve $E$ defined over $\QQ$. 
Let $S$ be a set of bad primes. Then one knows that
the $L$-series of $E$ is given by
$$L_0(E,s)=\zeta(\QQ, s),\,\,L_2(E,s)=\zeta(\QQ, s-1),$$
$$L_1(E,s)=L(H^1(E),s)=\prod_{p\not\in S} P_p^1(E, \rho,p^{-s})^{-1}
=\prod_{p\not\in S}\frac{1}{1-a_pp^{-s}+pp^{-2s}},$$
where $a_p=p+1-\#E(\FF_p)=\mbox{trace}(\rho(\mbox{Frob}_p))$.

The zeta-function of $E$ is then given by
$$\zeta(E,s)=\frac{L_1(E,s)}{\zeta(\QQ,s)\zeta(\QQ,s-1)}.$$

These assertions are true a priori for good primes, but
they can be extended to include bad primes and also
prime at infinity. 

This is a classical result and can be found, for instance, in 
Silverman \cite{Sil94}. 
\end{ex}

\begin{ex}
We consider a $K3$ surface $S$ defined over $\QQ$.
The zeta-function of $S$ is given by
$$\zeta(S,s)=\frac{L_1(S,s)L_3(S,s)}{L_0(S,s)L_2(S,s)L_4(S,s)}
=\frac{1}{L_0(S,s)L_2(S,s)L_4(S,s)}$$
where $L_4(S,s)=L_0(S,s-2)$ by Poincar\'e duality.
The $L_2(S,s)$ factors as a product 
$$L_2(S,s)=L(H^2(S,\Ql), s)=L(NS (S)\otimes\QQ_{\ell},s)L(T(S)\otimes\QQ_{\ell},s)$$
in accordance with the decomposition 
$H^2_{et}(\bar S,\QQ_{\ell})=(NS(S)\oplus T(S))\otimes\Ql$ where
$NS(S)$ is the N\'eron--Severi group spanned by algebraic cycles and 
$T(S)$ is its orthogonal complement, and this decomposition is
Galois invariant. Also this factorization is independent of
the choice of $\ell$. The Tate conjecture
\cite{Ta91}(Theorem 5.6) further asserts that 
$$NS(S)\otimes_{\QQ}\Ql=H^2_{et}(\bar{S},\Ql)^G$$
where $G_{\QQ}$ denotes the Galois group 
$\mbox{Gal}(\bar{\QQ}/\QQ)$. (We follow the proof in
\cite{Ta91}, due to D. Ramakrishnan. It rests on the facts: (1)
the existence of an abelian variety $A$ and the absolute Hodge cycle
on $S\times A$ inducing an injection $H^2_{et}(S,\Ql)\hookrightarrow
H^2_{et}(A,\Ql)$ (Deligne \cite{D72}), (2) the theorem  
of Faltings that the Tate conjecture is true for $A$, and (3)
the theorem of Lefschetz that rational classes of type $(1,1)$
are algebraic.) Therefore, the Picard number $\rho(S)$ of $S$ 
is equal to the dimension of the $G$-invariant subspace
$H^2_{et}(\bar S,\Ql)^G$.
With the validity of the Tate conjecture
the zeta-function of $S$ takes the form
$$\zeta(S,s)=[\zeta(\QQ,s)\zeta(\QQ,s-2)\zeta(\QQ,s-1)^{\rho(S)}
L(T(S)\otimes\QQ_{\ell},s)]^{-1}.$$ 

Now $NS(\bar{S})\neq NS(S)$ in general. In that case,  
not all algebraic cycles in $NS(\bar{S})$ are defined over $\QQ$,
let $\LL$ be the smallest algebraic number field over which
all $\rho(S)$ algebraic cycles are defined. Let $\zeta(\LL,s)$
denote the Dedekind zeta-function of $\LL$, that is,
$$\zeta(\LL,s)=\sum_{I\subseteq{\mathcal{O}}_{\LL}} \frac{1}{N_{\LL/\QQ}(I)^s}
=\prod_{P\subseteq{\mathcal{O}_{\LL}}}\frac{1}{1-N_{\LL/\QQ}(P)^{-s}}$$
where $\mathcal{O}_{\LL}$ is the ring of integers of $\LL$,
$I$ (resp. $P$) is an ideal (resp. prime ideal) of $\mathcal{O}_{\LL}$,
and $N(I)$ (resp. $N(P)$) denotes the norm.
Then $\zeta(\LL,s)$ is a product over the Artin $L$-functions of the
irreducible complex representations of the Galois group, whereas only
some need to occur in $NS(\bar{S})$ and the multiplicity of an 
irreducible representation in $NS(\bar{S})_{\CC}$ depends on the
geometry.  
Then the zeta-function of $S$ is of the form
$$\zeta(S,s)=[\zeta(\QQ,s)\zeta(\QQ,s-2)\zeta(\LL,s-1)^{t}
L(\rho, s)L(T(S)\otimes\QQ_{\ell},s)]^{-1},$$
where 
the exponent $t$ is some integer $1\leq t\leq\rho(\bar{S})$, which
is rather difficult to determine explicitly, and 
$L(\rho, s)$ is the Artin $L$-series of the irreducible complex 
representation.  
(In general, the automorphy of the Artin $L$-function is still an open problem.) 

For example, let $S$ be a $K3$ surface with $NS(\bar{S})_{\QQ}\cong\QQ^2$
so $\rho(\bar{S})=2$. Let $\LL$ be a quadratic extension of $\QQ$ so
that $NS(S_{\LL})_{\QQ}\cong \QQ$, and the Galois group acts trivially on it,
but acts by a non-trivial character on a complementary one-dimensional
subspace. Then 
$$L(NS(\bar{S}),s)=\zeta(\LL, s-1)=\zeta(\QQ,s-1) L(\LL,s-1)$$
where $L(\LL,s)$ is the Dirichlet $L$-function of $\LL$.
\end{ex} 

\begin{ex}
We now consider a Calabi--Yau threefold $X$ over $\QQ$.
The zeta-function of $X$ is given by
$$\zeta(X,s)=\frac{L_1(X,s)L_3(X,s)L_5(X,s)}{L_0(X,s)L_2(X,s)L_4(X,s)L_6(X,s)}$$
$$=\frac{L_3(X,s)}{L_0(X,s)L_2(X,s)L_4(X,s)L_6(X,s)}$$
where $L_6(X,s)=L_0(X,s-3),\, L_4(X,s)=L_2(X,s-1)$ by Poincar\'e duality.
The zeta-function of $X$ is of the form
$$\zeta(X,s)=\frac{L_3(X,s)}{\zeta(\QQ,s)\zeta(\QQ,s-3)L_2(X,s)L_2(X,s-1)}
.$$
\end{ex}

\begin{conj}(Langlands reciprocity conjecture \cite{L78}) {\sl Let $X$ be a
Calabi--Yau variety defined over $\QQ$. Then the zeta-function $\zeta(X,s)$ is 
automorphic.}
\end{conj}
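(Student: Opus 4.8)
The plan is to reduce the automorphy of $\zeta(X,s)$ to an automorphy statement for each irreducible constituent of the $G_{\QQ}:=\GQ$-action on the finitely many cohomology groups $H^i_{et}(\bar X,\Ql)$, $0\le i\le 2d$. Poincar\'e duality and the rationality of the zeta-function recorded above already cut the work down: the even-degree algebraic classes contribute Tate twists of Artin $L$-functions attached to the finite Galois action on $NS(\bar X)$, while the transcendental summand of the even degrees and the middle odd degree carry the genuinely motivic part. One would first decompose each $H^i_{et}(\bar X,\Ql)$ into irreducibles $V$ and seek, for each, a cuspidal automorphic representation $\pi$ of $GL_n(\bA_{\QQ})$ with $n=\dim V$ whose standard $L$-function agrees with $L(V,s)$ outside the finitely many bad and archimedean factors; the proposed proof of the conjecture is precisely the assertion that such a $\pi$ exists for every $V$.

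The mechanisms available to produce $\pi$, in increasing order of difficulty, are: automorphic induction of an algebraic Hecke character from a CM field when $V$ is of CM type, which is unconditional; the modularity lifting theorems in the Taylor--Wiles--Kisin framework when $V$ is two-dimensional and odd, yielding (potential) modularity; and the potential automorphy theorems of Barnet-Lamb--Gee--Geraghty--Taylor when $V$ is essentially self-dual, regular, and odd of arbitrary dimension, which produce $\pi$ after base change to a suitable totally real field, descent to $\QQ$ being a further nontrivial step. The concrete first step for any given $X$ is therefore to compute, constituent by constituent, the Hodge--Tate weights, the dimension, the determinant, and the duality type of $V$, and to sort each piece into one of these regimes.

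The hard part -- and the reason the statement as worded is an open conjecture rather than a theorem -- is that a general Calabi--Yau variety over $\QQ$ imposes no such structure on its motives. For a rigid threefold the constituents of $H^3_{et}$ are two-dimensional and fall to the modularity machinery, but in the non-rigid or higher-dimensional case one meets constituents of higher rank that need be neither self-dual nor regular, for which no automorphy theorem is presently available; this is exactly the portion of Langlands reciprocity equivalent to general functoriality, and it is out of reach. Even the even-degree algebraic part is not innocent: its automorphy is the strong Artin conjecture, known only in abelian and certain solvable two-dimensional cases. I would accordingly not expect to prove the conjecture unconditionally; the realistic target, and the one the present paper adopts, is to arrange the geometry of $X$ so that every constituent is of CM type and hence falls into the single unconditional regime above -- automorphic induction from Hecke characters of abelian number fields.
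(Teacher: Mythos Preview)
Your analysis is correct and, importantly, you have recognized the essential point: the statement is labeled a \emph{conjecture} in the paper, and the paper offers no proof of it in the stated generality. There is therefore no ``paper's own proof'' to compare against. What the paper does instead is exactly what you anticipate in your final paragraph: it restricts to Calabi--Yau threefolds of Borcea--Voisin type built from a CM-type $K3$ surface $S$ (Delsarte, hence a Fermat quotient) and an elliptic curve $E$ over $\QQ$, so that every irreducible constituent of $H^i_{et}(\bar X,\Ql)$ is either an Artin representation coming from algebraic cycles on a Fermat quotient, or a Hecke character of an abelian (cyclotomic) field, or a tensor product of $\rho_E$ with such an induced character. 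The paper then invokes Wiles et al.\ for $\rho_E$, automorphic induction (Arthur--Clozel, extended to solvable towers as in Rajan) for the CM pieces, and the Rankin--Selberg formalism for the tensor products; this is spelled out in Theorems 5.6--5.11 and the Appendix.

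Your survey of the available tools is accurate and your diagnosis of the obstruction in the general case---non-self-dual, non-regular higher-rank constituents, plus the strong Artin conjecture for the algebraic part---is the standard one. The only minor refinement is that in the paper's setting even the Artin pieces are automorphic unconditionally, because they arise from Fermat surfaces and hence from one-dimensional characters of cyclotomic fields (Corollary following Theorem 5.6); you need not invoke the strong Artin conjecture there.
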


\begin{rem} For our Calabi--Yau varieties over $\QQ$,
we know the form of their zeta-functions, and the Riemann 
zeta-function $\zeta(s)=\zeta(\QQ,s)$ (and its translates) 
are trivially automorphic as they correspond to the identity 
representation. The automorphy question for our $K3$ surfaces
and our Calabi--Yau varieties over $\QQ$ is then for the automorphy 
of the $L$-series $L_i(X,s)$ for each $i$, $0\leq i\leq \mbox{dim}(X)$. 

Are there any automorphic forms 
(representations) such that $L_i(X,s)$ for each $i$ ($0\leq i\leq 
\mbox{dim}(X)$) are determined by the $L$-series of such automorphic
objects?  
\end{rem}

First we will discuss some examples in support of 
the Langlands reciprocity conjecture. 

\subsection{Elliptic curves over $\QQ$}

For dimension $1$ Calabi--Yau varieties over $\QQ$, we have
the well-known celebrated results of Wiles \cite{Wiles95},
Taylor--Wiles \cite{TW95}.

\begin{thm} {\sl Let $E$ be an elliptic curve defined over $\QQ$. Then
there exists a normalized weight $2$ new form $f_E$ of level $N_E$
which is an eigenvector of the Hecke operators, such that
$$L_1(E,s)=L(f_E,s).$$
Here $N_E$ is the conductor of $E$ and $f_E$ has the
$q$-expansion ($q=e^{2\pi i z},\,z=x+iy$ with $y>0$)
$$f_E=q+a_2q^2+\cdots+a_pq^p+\cdots$$
where $a_p$ is the same as defined in Example 5.1.

In terms of Galois representations, let 
$\rho_{E,\ell}$ be an $\ell$-adic
representation of $\GQ$ on the $\ell$-adic
Tate module $T_{\ell}(E)$ of $E/\QQ$. Then $\rho_{E,\ell}$ is
modular for some $\ell$. That is, there exists
a cusp form $f_E$ and a representation $\rho_{f_E}$ of $\GQ$ 
such that $\rho_{E,\ell}=\rho_{f_E}$.
We will denote this representation simply by $\rho_E$.}
\end{thm}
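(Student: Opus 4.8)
The plan is to follow the strategy of Wiles \cite{Wiles95} and Taylor--Wiles \cite{TW95}, reformulating the assertion entirely in terms of the $\ell$-adic Galois representations $\rho_{E,\ell}\colon \GQ \to GL_2(\Ql)$ on $T_\ell(E)\otimes\Ql$. Once one knows that for some prime $\ell$ this representation is isomorphic to the representation $\rho_f$ attached by Deligne to a weight $2$ newform $f$, the desired equality $L_1(E,s)=L(f,s)$ of Euler products follows by comparing characteristic polynomials of Frobenius at the good primes (where $\mathrm{tr}\,\rho_E(\mathrm{Frob}_p)=a_p$ exactly as in Example 5.1), while the identification of the level with the conductor $N_E$, and the fact that $f=f_E$ is a newform, come from local--global compatibility at the bad primes via the local Langlands correspondence. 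So the entire content is to prove that $\rho_{E,\ell}$ is \emph{modular}. First I would fix $\ell=3$ and reduce modularity of $\rho_{E,3}$ to modularity of its residual representation $\overline{\rho}_{E,3}\colon\GQ\to GL_2(\FF_3)$.

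The key input for residual modularity is that $GL_2(\FF_3)$ is solvable (the projective image of an irreducible $\overline{\rho}_{E,3}$ lands in $PGL_2(\FF_3)\cong S_4$). In that irreducible case one lifts the associated projective representation to a complex two-dimensional Artin representation and invokes the Langlands--Tunnell theorem, whose proof proceeds through cyclic base change; this exhibits the Artin representation via a weight $1$ newform, and multiplying modulo $3$ by a suitable Eisenstein series (the Hasse invariant) produces a weight $2$ form whose residual representation is $\overline{\rho}_{E,3}$. The remaining case, when $\overline{\rho}_{E,3}$ is reducible and Langlands--Tunnell is unavailable, is handled by Wiles's ``$3$--$5$ switch'': one constructs an auxiliary elliptic curve $E'$ over $\QQ$ with $\overline{\rho}_{E',5}\cong\overline{\rho}_{E,5}$ but with $\overline{\rho}_{E',3}$ irreducible, proves $E'$ modular by the $\ell=3$ argument, concludes that $\overline{\rho}_{E,5}$ is modular, and then runs the lifting theorem at $\ell=5$.

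The technical heart, and the step I expect to be by far the main obstacle, is the \emph{modularity lifting theorem}: granting that $\overline{\rho}_{E,\ell}$ is modular and satisfies suitable irreducibility and local conditions, one must deduce that the characteristic-zero lift $\rho_{E,\ell}$ is itself modular. I would set up Mazur's deformation theory, attaching to $\overline{\rho}_{E,\ell}$ a universal deformation ring $R$ parametrizing lifts with prescribed ramification, together with a Hecke algebra $T$ acting on the relevant space of modular forms; there is a natural surjection $R\twoheadrightarrow T$ and the goal is to prove it is an isomorphism ($R=T$). This is carried out by the Taylor--Wiles patching method: one adjoins carefully chosen auxiliary sets of ``Taylor--Wiles primes'', passes to a limit to build a patched module over a power-series ring, and applies a numerical commutative-algebra criterion (the complete-intersection criterion) to force $R=T$. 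The delicate ingredients are the local deformation analysis at $\ell$, that is, control of the restriction of $\rho_{E,\ell}$ to a decomposition group at $\ell$, which is manageable only in the ordinary or semistable situation, and the level-raising and nonvanishing input required to make the patching converge.

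Finally I would remove the restrictive local hypotheses. The original Taylor--Wiles argument proves modularity for semistable $E$; to reach every elliptic curve over $\QQ$ one needs the refined local deformation theory at $\ell$ in the potentially semistable (in particular potentially Barsotti--Tate) cases, supplied by the work of Breuil, Conrad, Diamond, and Taylor. Granting those lifting theorems, the residual-modularity-plus-lifting strategy above applies uniformly and yields a normalized weight $2$ newform $f_E$ of level $N_E$ equal to the conductor, with $\rho_{E,\ell}\cong\rho_{f_E}$ and hence $L_1(E,s)=L(f_E,s)$, which is the assertion.
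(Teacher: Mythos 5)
The paper does not actually prove this theorem: it is quoted as the celebrated modularity theorem, with the proof delegated to the citations of Wiles \cite{Wiles95} and Taylor--Wiles \cite{TW95}. Your sketch is a correct outline of exactly that cited proof (residual modularity at $\ell=3$ via solvability of $GL_2(\FF_3)$ and Langlands--Tunnell, the $3$--$5$ switch when $\overline{\rho}_{E,3}$ is reducible, and the $R=T$ modularity lifting theorem established by Taylor--Wiles patching), so you are taking essentially the same route as the paper; if anything you are more careful, since the statement for arbitrary, not necessarily semistable, elliptic curves over $\QQ$ also needs the Breuil--Conrad--Diamond--Taylor extension, which you invoke but the paper's two citations by themselves do not cover.
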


\begin{rem}
If $E$ is an elliptic curve over $\QQ$ with CM by
an imaginary quadratic field $K/\QQ$, then $L_1(E,s)$
is equal to a Hecke $L$-series of $K$ with a
suitable Grossencharacter of $K$. (Deuring \cite{D53}.)
\end{rem}

\subsection{$K3$ surfaces over $\QQ$ of CM type}

For dimension $2$ Calabi--Yau varieties, namely, $K3$ surfaces,
our results on automorphy is formulated as follows.
We can establish the automorphy of $K3$ surfaces over $\QQ$ of CM type.
This generalizes the result of Shioda and Inose \cite{SI77}
for singular $K3$ surfaces, and also the results of Livn\'e--Sch\"utt--Yui
\cite{LSY10} for certain $K3$ surfaces with non-symplectic group actions.

\begin{thm} \label{thm4.5}
{\sl Let $(S,\sigma)$ be one of the $86$ pairs $(S,\sigma)$ of $K3$
surfaces in {\em Theorem \ref{thm-DelsarteS}}. Then $(S,\sigma)$ 
is defined over $\QQ$, and
there exists a quadruple $(\rho, \KK, \iota, \chi)$ with the
following properties:
\begin{enumerate}
\item $\rho$ is an (Artin) Galois representation of $\GQ$, and 
the degree of $\rho$ is $\rho(\bar S)$ (the geometric Picard number of $S$),
\item $\KK$ is a CM abelian extension of $\QQ$,  
\item $\iota:\KK\ra\CC$ is an embedding,
\item $\chi$ is a Hecke character of $\KK$ of $\infty$ type
$z\ra \iota(z)^2$,
\item $\dim{\rho}+[\KK:\QQ]=22$ where 
$[\KK:\QQ]=\dim\, T(\bar S)_{\QQ}$.
\end{enumerate}
such that the zeta-function $\zeta(S,s)$ and the 
$L_2(S,s)$ of $S/\QQ$ are given by
\[ \zeta(S/\QQ,s)=[\zeta(\QQ, s)\zeta(\QQ, s-2)L_2(S,s)]^{-1} \]
where
\[L_2(S,s)=L(\rho,s-1)L(\chi,s). \] 
}
\end{thm}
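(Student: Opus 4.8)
The plan is to read the two factors of $L_2(S,s)$ off the Galois-equivariant orthogonal decomposition $H^2_{et}(\bar S,\Ql)=(NS(\bar S)\oplus T(\bar S))\otimes\Ql$ and to identify each factor separately, following the method of Livn\'e--Sch\"utt--Yui \cite{LSY10}. Since the Delsarte equation and the involution $\sigma(x_i)=-x_i$ both have rational coefficients, $(S,\sigma)$ is visibly defined over $\QQ$, which disposes of the first assertion. For the zeta-function I would first invoke the Tate conjecture for $K3$ surfaces, valid by Ramakrishnan's argument recalled in Example~5.3, to write
\[
\zeta(S,s)=[\zeta(\QQ,s)\,\zeta(\QQ,s-2)\,L_2(S,s)]^{-1},\qquad
L_2(S,s)=L(NS(\bar S)\otimes\Ql,s)\,L(T(\bar S)\otimes\Ql,s),
\]
so that the whole problem reduces to identifying the two factors on the right.

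For the algebraic factor, the $\rho(\bar S)$ classes spanning $NS(\bar S)$ are each defined over some finite abelian extension of $\QQ$, and $\GQ$ permutes them through a finite quotient; this yields an Artin representation $\rho$ of $\GQ$ of degree $\rho(\bar S)$, giving property (1). Because every algebraic class is a Tate class, the subrepresentation $NS(\bar S)\otimes\Ql\subset H^2_{et}(\bar S,\Ql)$ is $\rho$ tensored with $\Ql(-1)$, so its Frobenius eigenvalues are $p$ times roots of unity and hence $L(NS(\bar S)\otimes\Ql,s)=L(\rho,s-1)$. This settles the first factor.

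The heart of the argument is the transcendental factor, and here I would use that $S$ is of CM type (Theorem~\ref{thm2.5}) through its realization as a finite quotient of the Fermat surface $X_2^m$ by a finite abelian group $G$. The space $T(\bar S)\otimes\CC$ is the $G$-invariant part of the primitive transcendental cohomology of $X_2^m$, which splits into one-dimensional eigenspaces $V(\mathbf a)$ under the $\mu_m^4$-action; on each $V(\mathbf a)$ the Frobenius acts through a Jacobi sum, which by Weil's theorem is an algebraic Hecke character. Since $S$ is of CM type, $T(\bar S)_{\QQ}$ is an irreducible CM Hodge structure on which a CM field $\KK$ acts, with $[\KK:\QQ]=\dim T(\bar S)_{\QQ}=22-\rho(\bar S)$, and the surviving characters $\mathbf a$ form a single orbit under $\Gal(\QQ(\mu_m)/\QQ)$. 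Assembling this orbit into a motive over $\QQ$ identifies $L(T(\bar S)\otimes\Ql,s)$ with the Hecke $L$-series $L(\chi,s)$ of a single Hecke character $\chi$ of $\KK$, where $\iota\colon\KK\to\CC$ is the embedding determined by the CM type; the $\infty$-type $z\mapsto\iota(z)^2$ is forced by the Hodge type, since the holomorphic two-form lies in $H^{2,0}$ and $H^2$ has weight $2$. This yields properties (2)--(4) and the second factor.

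Combining the two identifications gives $L_2(S,s)=L(\rho,s-1)\,L(\chi,s)$ and the asserted shape of $\zeta(S,s)$, while the count $\dim\rho+[\KK:\QQ]=\rho(\bar S)+(22-\rho(\bar S))=22=b_2(S)$ is property (5). I expect the transcendental step to be the main obstacle: one must verify, for each of the $86$ Delsarte families, that the transcendental eigenspaces $V(\mathbf a)$ genuinely constitute a single $\Gal(\QQ(\mu_m)/\QQ)$-orbit --- equivalently that $T(\bar S)_{\QQ}$ is $\KK$-irreducible --- so that a single Hecke character results rather than a product. This is precisely where the CM hypothesis, i.e.\ the commutativity of the Hodge group, does the work, and the explicit Fermat-quotient bookkeeping is the laborious part.
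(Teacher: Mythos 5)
Your proposal is correct and follows essentially the same route as the paper's own proof: the Galois-invariant splitting of $H^2_{et}(\bar S,\Ql)$ into the algebraic part and the transcendental part, an Artin representation (Tate-twisted) accounting for $L(\rho,s-1)$, and Weil's Jacobi-sum computation on the Delsarte/Fermat-quotient realization giving one-dimensional pieces of the stated infinity type which the absolute Galois group permutes transitively, yielding $L(\chi,s)$. The one small deviation is your appeal to the Tate conjecture for the factorization, which is superfluous: the decomposition and the identification of the algebraic factor need only that $\GQ$ acts on $NS(\bar S)$ through a finite quotient, exactly as in the paper.
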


\begin{proof}
We follow an argument similar to the one in \cite{LSY10},
to which we refer for
further details. Since $S$ is a surface defined over $\QQ$, its
$\Ql$-cohomology is $1$-dimensional in dimensions $1$ and $4$,
which contribute the factors $\zeta(s)$ and $\zeta(s-2)$
respectively.  Since $S$ is a $K3$ surface the first and
the third cohomology groups vanish, giving no contribution
to the $L$-function. The second cohomology group is a direct
sum $N_S\oplus T_S$ of the algebraic part, spanned by
the subspace $N_S$ of algebraic cycles and its orthogonal
complement $T_S$, called the space of transcendental cycles.
This direct sum decomposition is Galois invariant. Since
$N_S$ is the $\Ql$-span of the image by the cycle map of 
the N\'eron-Severi group $NS(\bar S)$ (with scalars extended to $\Ql$), 
the Galois group acts on $NS(\bar S)$ through a finite quotient. Hence
it acts on $N_S$ by the Tate twist $\rho(1)$ of the
corresponding Artin representation $\rho$.
                                                              
To obtain the last factor we first consider the cohomology
with complex coefficients. Since the defining equation for
$S$ uses only $4$ monomials, it is a Delsarte surface. Hence
it is a quotient of a
surface in $\PP^3$ with a (homogeneous) diagonal equation
$\sum_{i=1,\dots,4}a_i w_i^r=0$ by some diagonal action of
roots of unity. Moreover in our case the monomials in
the (diagonal) equation for $S$ have coefficients $1$, which
implies that the $a_i$'s can also be taken to be all
$1$\/'s. Weil's calculation (see \cite{LSY10}, Section 6)
gives that over an appropriate cyclotomic field the
Galois representation on the transcendental cycles is
a sum of $1$-dimensional representations coming from Jacobi
sums, of infinity type as in the statement of Theorem \ref{thm4.5},
which the absolute Galois group permutes transitively.
The Theorem \label{thm4.5} follows.  (For detailed discussion on
Jacobi sums, the reader is referred to Appendex in the section $7$
below, or Gouv\^ea and Yui \cite{GY95}.)
\end{proof}

\begin{cor} {\sl
Let $(S,\sigma)$ be as in Theorem 5.6. Then 
$S$ has CM by a cyclotomic field $\KK=\QQ(\zeta_t)$ for some $t$.
(Here $\zeta_t$ denotes a primitive $t$-th root of unity.)} 
\end{cor}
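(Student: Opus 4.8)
The plan is to trace the field $\KK$ produced in Theorem \ref{thm4.5} back to the Fermat geometry used in its proof and to identify it explicitly as the fixed field of an easily computed stabilizer subgroup of a cyclotomic Galois group. The upshot will be that this stabilizer is the kernel of a reduction map $(\ZZ/m\ZZ)^*\to(\ZZ/t_0\ZZ)^*$ for a suitable divisor $t_0\mid m$, and hence that $\KK$ is not merely contained in a cyclotomic field but is itself $\QQ(\zeta_{t_0})$.

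First I would recall the structure established in the proof of Theorem \ref{thm4.5}: $S$ is a Delsarte surface, hence a quotient $X_2^m/G$ of the Fermat surface $X_2^m$ of some degree $m$ by a finite diagonal group $G$. Over $\QQ(\zeta_m)$ the transcendental part $T(\bar S)\otimes\QQ(\zeta_m)$ decomposes into one-dimensional eigenspaces $V(\ba)$ indexed by those characters $\ba=(a_0,a_1,a_2,a_3)$ with each $a_i\not\equiv 0$ and $\sum_i a_i\equiv 0\pmod m$ which are $G$-invariant and non-algebraic. The Galois group $\Gal(\QQ(\zeta_m)/\QQ)\cong(\ZZ/m\ZZ)^*$ acts by $t\cdot V(\ba)=V(t\ba)$, and, as recorded in that proof (Weil's Jacobi-sum computation), it permutes the transcendental characters in a single transitive orbit $\Omega$. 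In the induced-representation description of $T(\bar S)$ this forces $\KK=\QQ(\zeta_m)^{H}$, where $H\subset(\ZZ/m\ZZ)^*$ is the stabilizer of a chosen $\ba_0\in\Omega$; this is compatible with the equality $[\KK:\QQ]=|\Omega|=\dim T(\bar S)_{\QQ}$ asserted in Theorem \ref{thm4.5}.

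The heart of the argument is then the explicit computation of $H$. An element $t$ fixes $\ba_0=(a_0,a_1,a_2,a_3)$ precisely when $ta_i\equiv a_i\pmod m$ for every $i$, i.e. $(t-1)a_i\equiv 0\pmod m$, equivalently $t\equiv 1\pmod{m/\gcd(m,a_i)}$, for all $i$. Putting
$$t_0:=\lcm\Bigl(\frac{m}{\gcd(m,a_0)},\frac{m}{\gcd(m,a_1)},\frac{m}{\gcd(m,a_2)},\frac{m}{\gcd(m,a_3)}\Bigr),$$
the four congruences collapse to the single condition $t\equiv 1\pmod{t_0}$. Since each $m/\gcd(m,a_i)$ divides $m$ and a least common multiple of divisors of $m$ again divides $m$, we have $t_0\mid m$, so $H$ is exactly the kernel of the reduction $(\ZZ/m\ZZ)^*\to(\ZZ/t_0\ZZ)^*$. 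Therefore $\KK=\QQ(\zeta_m)^{H}=\QQ(\zeta_{t_0})$, giving the cyclotomic field claimed (with $t=t_0$); as $\KK$ is CM the value $t_0\geq 3$ is automatic. I would also note that $t_0$ is independent of the choice of $\ba_0\in\Omega$: replacing $\ba_0$ by $s\ba_0$ with $s\in(\ZZ/m\ZZ)^*$ leaves each $\gcd(m,a_i)$ unchanged, so $t_0$ is well defined.

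The hard part is not this stabilizer computation, which is elementary, but importing correctly the two inputs that make it apply: the transitivity of the Galois action on the transcendental characters, and the passage from the Fermat surface to the quotient $S=X_2^m/G$. One must verify that restricting to the $G$-invariant subspace and discarding the algebraic eigenspaces still leaves a \emph{single} orbit, so that one Hecke character $\chi$ over one field $\KK$ genuinely suffices (otherwise $\KK$ would split into several cyclotomic pieces). This is exactly the content granted by the proof of Theorem \ref{thm4.5}, so once that transitivity and the identification $\KK=\QQ(\zeta_m)^{H}$ are taken as given, the corollary reduces to the displayed arithmetic above.
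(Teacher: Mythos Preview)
Your proposal is correct and rests on the same framework the paper uses: the Delsarte/Fermat-quotient description of $S$, the decomposition of $H^2$ into one-dimensional eigenspaces $V(\ba)$, and the fact (already invoked in the proof of Theorem~\ref{thm4.5}) that the absolute Galois group permutes the transcendental characters in a single orbit, so that $T(\bar S)$ is induced from a Jacobi-sum Grossencharacter of a subfield $\KK\subset\QQ(\zeta_m)$.

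Where you differ is in the level of detail. The paper's proof is essentially a pointer: it observes that the induced representation arises from ``the unique character $\ba$ with $\Vert\ba\Vert=0$'' (the $(2,0)$-eigenspace) and then asserts $T(S)\otimes\QQ\simeq\QQ(\zeta_t)$ without computing $t$. You instead carry out the stabilizer computation explicitly, showing that $H=\{t\in(\ZZ/m\ZZ)^*: t\equiv 1\pmod{t_0}\}$ with $t_0=\lcm_i\bigl(m/\gcd(m,a_i)\bigr)$, and hence that $\KK=\QQ(\zeta_m)^H=\QQ(\zeta_{t_0})$. This is a genuine sharpening: it not only confirms that $\KK$ is cyclotomic (rather than merely abelian, which is all that ``subfield of $\QQ(\zeta_m)$'' gives a priori), it produces a formula for $t$ in terms of the weight data. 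Your remark that $t_0$ is independent of the chosen $\ba_0$ in the orbit is also a point the paper leaves implicit. The only caveat, which you flag correctly, is that the transitivity of the Galois action on the transcendental characters is being imported wholesale from Theorem~\ref{thm4.5} and \cite{LSY10}; your argument is conditional on that input in exactly the same way the paper's is.
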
 

\begin{proof} This follows from Theorem 5.6. $S$ is realized by a 
finite quotient of some 
Fermat surface of some degree, however this finite group may
have a rather large order and it requires more work to determine
its precise form.  The field $\KK$ that corresponds to
the transcendental cycles is isomorphic to $T(S)\otimes\QQ\simeq \QQ(\zeta_t)$
for some $t$. Moreover, it is generated by Jacobi sum Grossencharacters
of $\QQ(\zeta_t)$. This is because the Galois representation
defined by $T(S)$ is a sum of $1$-dimensional representations
induced from the Jacobi sum Grossencharacters corresponding
to the unique character $\ba$ with $\Vert\ba\Vert=0$.
(See \cite{LSY10}).   
\end{proof}

In general, the automorphy of the Artin $L$-function is still a
conjecture. However, in our cases, we have the following
result. 

\begin{cor} {\sl Let $(S,\sigma)$ be as in Theorem 5.6. 
Then the Artin $L$-function $L(\rho,s)$ is automorphic.}
\end{cor}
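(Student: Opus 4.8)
The plan is to reduce the automorphy of $L(\rho,s)$ to the automorphy of Dirichlet $L$-functions by showing that $\rho$ has abelian image. Recall that $\rho$ is the (Artin) representation of $\GQ$ on $\NS (\bar S)\otimes\CC$, through which $\GQ$ acts via its finite quotient $\Gal(\LL/\QQ)$, where $\LL$ is the smallest number field over which all the algebraic cycles spanning $\NS (\bar S)$ are defined. The crucial claim is that $\LL$ is contained in a cyclotomic field; once this is granted, $\Gal(\LL/\QQ)$ is abelian and the remainder of the argument is essentially formal.

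To establish that $\LL$ is abelian over $\QQ$, I would invoke the Fermat--quotient structure already exploited in the proofs of Theorem \ref{thm2.5} and Theorem \ref{thm4.5}. Since $S$ is of Delsarte type, it is a finite quotient of a Fermat surface $X_2^m$, and by Shioda's description together with Weil's Jacobi-sum computation (cf. \cite{LSY10}, Section 6) the Galois action on all of $H^2_{et}(\bar S,\Ql)$ is governed by Jacobi sums, which lie in $\QQ(\zeta_m)$. In particular, the induced action on the algebraic subspace $\NS (\bar S)\otimes\Ql$ factors through $\Gal(\QQ(\zeta_N)/\QQ)$ for a suitable multiple $N$ of $m$, so that $\LL\subseteq\QQ(\zeta_N)$. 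This is consistent with the preceding corollary, which records that $S$ has CM by the cyclotomic field $\QQ(\zeta_t)$.

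With $\LL$ abelian over $\QQ$, the complex representation $\rho$ of the finite abelian group $\Gal(\LL/\QQ)$ decomposes as a direct sum of one-dimensional characters $\rho\simeq\bigoplus_i\chi_i$. By the Kronecker--Weber theorem each $\chi_i$ is a Dirichlet character, and therefore
$$L(\rho,s)=\prod_i L(\chi_i,s)$$
is a finite product of Dirichlet $L$-functions. Each $L(\chi_i,s)$ is the Hecke $L$-function of a character of the idèle class group of $\QQ$, hence the standard $L$-function of an automorphic representation of $GL_1(\bA_{\QQ})$ by class field theory; forming the isobaric sum of the $\chi_i$ produces an automorphic representation of $GL_{\rho(\bar S)}(\bA_{\QQ})$ whose standard $L$-function is exactly $L(\rho,s)$. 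Thus $L(\rho,s)$ is automorphic. The one genuinely nontrivial input is the abelianness of $\LL$: without it we would be confronting the automorphy of a general Artin $L$-function, which remains open, so the hard part is precisely the explicit Fermat/Jacobi-sum analysis guaranteeing that every algebraic cycle acquires a field of definition inside a cyclotomic field. Once this is in place, the descent to one-dimensional characters and their assembly into a single automorphic $L$-function on $GL_{\rho(\bar S)}$ is routine.
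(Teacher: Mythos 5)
There is a genuine gap at the crucial step of your argument. Everything rests on the claim that the field $\LL$ over which all of $NS(\bar S)$ is defined lies inside a cyclotomic field, i.e.\ that $\rho$ has abelian image; but the justification you offer for this is a non sequitur. The fact that the Galois action on $H^2_{et}(\bar S,\Ql)$ is ``governed by Jacobi sums with values in $\QQ(\zeta_m)$'' describes only the restriction $\rho|_{G_{\QQ(\zeta_m)}}$: over $\QQ(\zeta_m)$ the eigenspaces $V(\alpha)$, $\alpha\in\fB_m$, are stable and the action on each is by a finite-order Hecke character of $\QQ(\zeta_m)$. The full group $\GQ$, however, also permutes these eigenspaces through $\Gal(\QQ(\zeta_m)/\QQ)$, so what actually follows is only that $\rho$ is \emph{monomial} --- a sum of pieces $\mbox{Ind}_{G_{K_\alpha}}^{G_{\QQ}}\chi_\alpha$ with $K_\alpha\subseteq\QQ(\zeta_m)$ and $\chi_\alpha$ of finite order --- whose image is metabelian but not, in general, abelian. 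To get $\LL\subseteq\QQ(\zeta_N)$ you would need in addition that each $\chi_\alpha$ cuts out an extension abelian over $\QQ$ (e.g.\ that it is a Dirichlet character composed with the norm), or else explicit generators of $NS(\bar S)\otimes\QQ$ --- lines on the Fermat cover plus the exceptional divisors --- defined over cyclotomic fields; for degrees $m$ with $\gcd(m,6)>1$, which occur frequently among the $86$ surfaces, such generation results were not available. Note also that being of CM type does not by itself force an abelian action on the N\'eron--Severi group: singular $K3$ surfaces over $\QQ$ are of CM type, yet the Galois action on their N\'eron--Severi groups is governed by class fields of imaginary quadratic fields and can be dihedral, so your consistency check against the preceding corollary carries no force.

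The paper proves exactly the weaker statement that your argument legitimately yields --- monomiality --- and deduces automorphy from that: since $S$ is a Fermat quotient, $\rho$ is induced from one-dimensional subspaces of $NS({\mathcal{F}}_m)$, hence by inductive invariance of Artin $L$-series $L(\rho,s)$ is a product of $L$-functions of finite-order Hecke characters of cyclotomic fields, and these become automorphic over $\QQ$ by \emph{automorphic induction} for abelian (solvable) extensions (Arthur--Clozel, Rajan --- the machinery set up in Section \ref{appendix} of the paper), not merely by class field theory on $GL_1(\bA_{\QQ})$. If you replace ``abelian image, hence a sum of Dirichlet characters'' by ``monomial, hence automorphic by automorphic induction,'' your proof closes the gap and becomes essentially the paper's argument.
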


\begin{proof}
We know that $S$ is dominated by some Fermat surface 
$${\mathcal{F}}_m : x_0^m+x_1^m+x_2^m+x_3^m=0\subset\PP^3$$ 
of degree $m$. Here we
review Shioda's treatment (cf. Shioda \cite{Sh86}, or Gouv\^ea--Yui
\cite{GY95}). The
cohomology group $H^2({\mathcal{F}}_m,\Ql)$ is the direct sum of
one-dimensional spaces. More precisely, let
$$H^2({\mathcal{F}}_m,\Ql)=\oplus_{\alpha\in\{0\}\cup\fA_m} 
V(\alpha),\qquad\mbox{dim}\, V(\alpha)=1$$
where
$$\fA_m:=\{\ba=(a_0,a_1,a_2,a_3)\in (\ZZ/m\ZZ)^4\,|\, 
a_i\neq 0, \sum_{i=0}^3 a_i=0\in\ZZ/m\ZZ\}.$$
This implies that the N\'eron--Severi group $NS({\mathcal{F}}_m)$ and the
group of transcendental cycles $T({\mathcal{F}}_m)$ of ${\mathcal{F}}_m$ are
also described as direct sums of one-dimensional spaces:
$$NS({\mathcal{F}}_m)\otimes \Ql
=\oplus_{\alpha\in\{0\}\cup\fB_m} V(\alpha)$$
and
$$T({\mathcal{F}}_m)\otimes \Ql
=\oplus_{\alpha\in\fC_m} V(\alpha)$$
where
$$\fB_m:=\{\ba=(a_0,a_1,a_2,a_3)\in \fA_m\,|\,
\sum_{i=0}^3\langle{\frac{ta_i}{m}}\rangle=2\quad\mbox{for all $t$ such that
$(t,m)=1$}\},$$
and $$\fC_m:=\fA_m\setminus\fB_m.$$

Since $S$ is realized as a Fermat quotient of some Fermat surface 
${\mathcal{F}}_m$ by a finite group, say, $H$, the irreducible Galois 
representation $\rho$ is also induced from one-dimensional subspaces 
belonging to $NS({\mathcal{F}}_m)$, which are invariant under the action 
of $H$, and hence $L(\rho,s)$ is automorphic. (Indeed, working through all 
the examples, we are able to show that Artin $L$-functions are indeed 
automorphic.) 
\end{proof}

\subsection{Calabi--Yau threefolds over $\QQ$ of Borcea--Voisin
type}

For dimension $3$ Calabi--Yau varieties over $\QQ$ of Borcea--Voisin
type, our automorphy results are formulated in the following
theorems. 

\begin{thm} {\sl  Let $(S, \sigma)$ be one of the $86$ pairs of 
$K3$ surfaces with involution given in {\em Theorem 
\ref{thm-DelsarteS}}.  Let $E$ be an elliptic curve over $\QQ$ 
with involution $\iota$.  
Let $X$ be a crepant resolution of the quotient threefold 
$E\times S/\iota\times\sigma$ with a model defined over
$\QQ$.  Then $X$ is automorphic.}
\end{thm}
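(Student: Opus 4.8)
The plan is to compute the $\GQ$-representation on $H^*_{et}(\bar X,\Ql)$ degree by degree and to attach an automorphic object to each piece, so that $\zeta(X,s)$ becomes a ratio of automorphic $L$-functions. Since $X$ is a Calabi--Yau threefold one has $H^1(\bar X,\Ql)=H^5(\bar X,\Ql)=0$, while $H^2(\bar X,\Ql)$ is spanned by algebraic classes (pullbacks from $NS(S)$, the $E$-fibre class, and the exceptional divisors of the resolution), all defined over a cyclotomic field; hence $H^2(\bar X,\Ql)$ is, up to a Tate twist, a sum of Dirichlet characters, and $L_2(X,s)$ together with $L_4(X,s)=L_2(X,s-1)$ is automorphic. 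Everything therefore reduces to the automorphy of $L_3(X,s)$.

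First I would read off $H^3(\bar X,\Ql)$ from the Borcea--Voisin geometry. The K\"unneth formula and the vanishing $H^1(S)=H^3(S)=0$ give $H^3(E\times S)=H^1(E)\otimes H^2(S)$; as $\iota$ acts by $-1$ on all of $H^1(E)$, the $(\iota\times\sigma)$-invariants are $\rho_E\otimes H^2(S)_-$, where $H^2(S)_-$ is the $(-1)$-eigenspace of $\sigma$. The crepant resolution contributes, in addition, the odd cohomology of the exceptional ruled surfaces $S^\sigma\times\PP^1$; since the $L_i$ are rational only $C_g\times\PP^1$ matters, contributing $H^1(C_g,\Ql)(-1)$ (with multiplicity coming from the four points of $E^\iota$). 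Writing $H^2(S)_-=NS(S)_-\oplus T(S)$ --- valid because $\sigma=-1$ on $H^{2,0}(S)\subset T(S)$ --- I obtain the $\GQ$-stable decomposition
\[
H^3(\bar X,\Ql)\cong\bigl(\rho_E\otimes(NS(S)_-\otimes\Ql)\bigr)\oplus\bigl(\rho_E\otimes(T(S)\otimes\Ql)\bigr)\oplus H^1(C_g,\Ql)(-1)^{\oplus 4},
\]
which refines the Hodge-theoretic decomposition already used to prove the CM property of $X$.

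I then treat the three summands separately. The Jacobian $J(C_g)$ is of CM type by the corollary to Theorem \ref{thm-DelsarteS}, so $H^1(C_g,\Ql)(-1)$ is a sum of algebraic Hecke characters and is automorphic by Hecke. For the algebraic summand, $NS(S)_-\otimes\Ql$ is, after a Tate twist, a sum of Dirichlet characters $\chi_j$; thus $\rho_E\otimes\chi_j$ is the representation attached to the twisted newform $f_E\otimes\chi_j$, which is modular by Wiles \cite{Wiles95} and Taylor--Wiles \cite{TW95} together with the elementary theory of twisting, and the corresponding factor of $L_3(X,s)$ is $\prod_j L(f_E\otimes\chi_j,s-1)$. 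By Theorem \ref{thm4.5} and its corollaries, $T(S)\otimes\Ql\cong\mathrm{Ind}_{\Gal(\overline{\QQ}/\KK)}^{\GQ}\psi$ for the cyclotomic CM field $\KK=\QQ(\zeta_t)$ and a Jacobi--sum Hecke character $\psi$; the projection formula then gives $\rho_E\otimes\mathrm{Ind}_{\Gal(\overline{\QQ}/\KK)}^{\GQ}\psi\cong\mathrm{Ind}_{\Gal(\overline{\QQ}/\KK)}^{\GQ}\bigl(\rho_E|_{\Gal(\overline{\QQ}/\KK)}\otimes\psi\bigr)$.

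The hard part is precisely this last summand, and it is here that I would use the machinery highlighted in the introduction. One first applies solvable base change of Arthur--Clozel --- legitimate since $\KK/\QQ$ is abelian, hence a tower of cyclic extensions --- to obtain the base change $\pi_\KK$ of $f_E$ to $\KK$; twisting by the Hecke character $\psi$ keeps $\pi_\KK\otimes\psi$ automorphic on $GL_2/\KK$, and automorphic induction for $\KK/\QQ$ descends it to an automorphic representation of $GL_{2[\KK:\QQ]}/\QQ$ whose $L$-function is the desired factor $L(\pi_\KK\otimes\psi,s)$. This realizes the transcendental part of $L_3(X,s)$ through an induced automorphic cuspidal representation of $GL_2(\KK)$, exactly as announced in Section \ref{sect1}. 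The point requiring care is that $\pi_\KK\otimes\psi$ stay cuspidal --- equivalently that $E$ not acquire extra complex multiplication over $\KK$ --- so that the induction applies cleanly; in degenerate cases one replaces the cuspidal representation by the isobaric sum of its constituents. When $E$ already has CM the argument collapses, since $\rho_E$ is then itself a sum of Hecke characters and all of $L_3(X,s)$ becomes a product of Hecke $L$-series, automorphic by class field theory. Assembling the three automorphic factors proves the automorphy of $L_3(X,s)$, and hence of $\zeta(X,s)$.
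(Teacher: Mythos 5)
Your proposal is correct and follows essentially the same route as the paper: the same K\"unneth-plus-twisted-sector decomposition of $H^3(\bar X,\Ql)$ into $\rho_E\otimes NS(S)_-$, $\rho_E\otimes T(S)$ and $H^1(C_g,\Ql)(-1)^{\oplus 4}$, with automorphy supplied respectively by Wiles--Taylor--Wiles plus abelian twisting, by the Jacobi-sum Hecke character description of $T(S)$ (Theorem \ref{thm4.5}) combined with the projection formula and Arthur--Clozel/Rajan base change and automorphic induction over the cyclotomic field $\KK$, and by the CM property of $J(C_g)$. Your explicit attention to cuspidality of $\pi_\KK\otimes\psi$ (replacing it by an isobaric sum in degenerate cases) is a point the paper passes over silently, but it is a refinement of, not a departure from, the paper's argument.
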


We reformulate the above assertion in more concrete fashion as follows.

\begin{thm}
{\sl Let $(S, \sigma)$ be (the minimal resolution of) one of the $86$ 
$K3$ surfaces with
involution $\sigma$ listed in {\em Theorem \ref{thm-DelsarteS}}. Then
$S$ is defined over $\QQ$, and is of Delsarte type, 
and hence $S$ is of CM type.
Let $E$ be an elliptic curve $E_2$ or $E_3$ with
involution $\iota$ (or any elliptic curve with complex
multiplication). Consider the quotient threefold
$E\times S/\iota\times\sigma$, and let $X$ be its
crepant resolution.  Then $X$ is a Calabi--Yau threefold and
has a model defined over $\QQ$.

Furthermore, the following assertions hold: 
\begin{itemize}
\item  $X$ is of CM type,
\item The $L$-series $L_2(X,s)$ and $L_3(X,s)$ are
automorphic. 
\item  The zeta-function $\zeta(X,s)$ is automorphic, and hence
$X$ is automorphic.
\end{itemize}}
\end{thm}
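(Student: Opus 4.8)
The plan is to split the statement into two parts: the rationality and CM assertions, which are essentially already established, and the automorphy of each graded piece $L_i(X,s)$, which I would obtain by exhibiting the corresponding Galois representation as assembled out of Hecke (Grossen)characters of CM fields. First I would record that $X$ has a model over $\QQ$ and is of CM type. The threefold $E\times S/\iota\times\sigma$ and its singular locus $\{P_i\}\times S^\sigma$ are defined over $\QQ$, and the crepant resolution can be carried out over $\QQ$, so $X/\QQ$ exists; this repeats the argument used for the CM-type results above (Theorem \ref{thm7.3} and its non-Borcea analogue). That $X$ is of CM type follows from Borcea's Hodge-theoretic argument reproduced there: since $E$ (being $E_2$, $E_3$, or any CM elliptic curve) and $S$ (of Delsarte type by Theorem \ref{thm-DelsarteS}, hence of CM type by Theorem \ref{thm4.5}) are both of CM type, so is $X$.

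Next I turn to the individual $L$-factors. The outer cohomology is trivial: $H^0$ and $H^6$ contribute $\zeta(\QQ,s)$ and $\zeta(\QQ,s-3)$, while $H^1(X)=H^5(X)=0$ because $X$ is a Calabi--Yau threefold. Since $h^{2,0}(X)=0$, the group $H^2(X)$ is entirely algebraic, spanned by divisor classes pulled back from $E\times S/\iota\times\sigma$ together with the exceptional divisors of the resolution; the Galois group therefore acts on $H^2_{et}(\bar X,\Ql)$ through a finite quotient, so $L_2(X,s)$ is, up to Tate twist, a product of Artin $L$-functions. These are of the same shape as in the $K3$ case and are automorphic by the reasoning in the corollaries to Theorem \ref{thm4.5}: each cycle class descends from a Fermat quotient, so the relevant Artin representations are induced from the abelian (cyclotomic) field over which the cycles are defined, hence automorphic. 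By Poincar\'e duality $L_4(X,s)=L_2(X,s-1)$ is then automorphic as well.

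The heart of the proof is $H^3(X)$, for which I would use the decomposition established in the proof of the CM-type theorem above, namely
$$
(H^3(X,\QQ),h_X)\simeq\bigl(H^2(S,\QQ)^{\sigma=-1}\otimes H^1(E,\QQ)\bigr)\oplus H^1(C_g,\QQ),
$$
where $C_g$ is the genus-$g$ curve in $S^\sigma=C_g\cup L_1\cup\cdots\cup L_k$ of Proposition \ref{r-and-a}. For the second summand, $C_g$ is a Delsarte (Fermat-quotient) curve, so $J(C_g)$ is a CM abelian variety (the Corollary above), and over a suitable CM field $H^1(C_g)$ is a sum of one-dimensional Grossencharacter pieces; its $L$-function is a product of Hecke $L$-series and is automorphic. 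For the first summand I would use that the $(-1)$-eigenspace $H^2(S,\QQ)^{\sigma=-1}$ contains the transcendental lattice and is of CM type: by Theorem \ref{thm4.5} its Galois representation is, over the cyclotomic field $\KK$, a sum of Jacobi-sum Grossencharacters $\chi$ of $\infty$-type $z\mapsto\iota(z)^2$, while $H^1(E)$, with $E$ of CM type, is induced from a Grossencharacter $\psi$ of an imaginary quadratic field. Over the compositum $M=\KK K$ the tensor product becomes a sum of one-dimensional pieces $\chi\cdot\psi$, again algebraic Hecke characters of the CM field $M$; inducing back to $\GQ$ realizes each $\GQ$-orbit as a representation induced from a Grossencharacter of $M$, automorphic by automorphic induction for abelian CM fields. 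Assembling the two summands shows $L_3(X,s)$ is a product of automorphic $L$-functions, whence, together with the factors above, $\zeta(X,s)$ is automorphic.

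The main obstacle is exactly the automorphy of the tensor factor $H^2(S,\QQ)^{\sigma=-1}\otimes H^1(E)$: a tensor product of automorphic $L$-functions is not automorphic in general, and a direct approach would require the Rankin--Selberg functoriality $GL_2\times GL_2\to GL_4$. It is precisely the CM hypothesis on $E$ (forcing $H^1(E)$ to be induced from a Grossencharacter) that keeps the whole computation inside the abelian/CM world, where every factor is a Hecke character of a CM field and classical automorphic induction suffices. The genuinely technical points, which I would not grind through here, are the bookkeeping of the $\GQ$-orbits of the characters $\chi\cdot\psi$ over $M$, the verification that each induced piece carries the predicted motivic weight $3$ and Hodge type, and the matching of the Galois side with the product of Hecke $L$-series without spurious multiplicities.
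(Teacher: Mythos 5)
Your proposal is correct and follows essentially the same route as the paper's proof: reduce everything to the CM/abelian world, split $H^3(X)$ into the non-twisted Künneth piece $H^1(E)\otimes H^2(S)^{\sigma=-1}$ plus the twisted-sector contribution of the genus-$g$ curve (whose Jacobian is CM because $C_g$ is a Fermat quotient), and conclude automorphy of every factor by automorphic induction of Hecke characters of abelian CM fields, exactly as the paper does via its $L$-series decomposition $L_3(X,s)=L(\rho_E\otimes\chi,s)L(\rho_E\otimes\rho,s)L(J(C_g),s-1)^4$ and the Arthur--Clozel/Rajan machinery in its appendix. Your handling of the tensor factor by passing to the compositum $M=\KK K$ and multiplying Grossencharacters there is the same mechanism the paper records (it states the projection-formula version $\rho_E\otimes\mathrm{Ind}_{G_{\KK}}^{G_{\QQ}}\rho_{G_{\KK}}\simeq\mathrm{Ind}_{G_{\KK}}^{G_{\QQ}}(\rho_{G_{\KK}}\otimes\mathrm{Res}\,\rho_E)$ and, in its appendix, the compositum statement for products of Grossencharacters), so the two arguments coincide up to bookkeeping such as the multiplicity four of the $J(C_g)$ factor coming from the four fixed points of $\iota$.
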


Since all elliptic curves $E$ defined over $\QQ$ are modular,
without the assumption that $E$ is of CM type, we have the 
following more general results.

\begin{thm}
{\sl Let $(S,\sigma)$ be one of the $86$ pairs of $K3$ surfaces with
involution $\sigma$ defined over $\QQ$ in {\em Theorem \ref{thm-DelsarteS}}. 
Let $(E,\iota)$
be an elliptic curve defined over $\QQ$. Let $X$ be a crepant resolution
of the quotient threefold $E\times S/\iota\times\sigma$, which has 
a model defined over $\QQ$. 

Then the following assertions hold:
\begin{itemize}
\item[(a)] Let $E$ be an elliptic curve $\QQ$.
Then there is the automorphic representation $\rho_E$. Equivalently, 
there is a cusp form $f_E$ of weight $2$ associated to $\rho_E$.
\item[(b)] Take $S$ to be a $K3$ surface of CM type (cf. {\em Theorem \ref{thm2.5}}). 
There is an Artin representation
$\rho$ of an algebraic extension $\KK$ over $\QQ$ where we put $m:=[\KK:\QQ]$. 
Put $G_{\QQ}=\GQ$ and $G_{\KK}=\GK$.
Let $\rho_{G_{\KK}}$ be the compatible system of $1$-dimensional
$\ell$-adic representatopn of $G_{\KK}$.  Then the $m$-dimensional 
Galois representation
associated to the group of transcendental cycles $T(S)^{\sigma}$ 
is given by $\mbox{ind}_{G_{\KK}}^{G_{\QQ}}\rho_{G_{\KK}}.$
We denote by $f_{T(S)}$ the ``fictitious'' automorphic form
associated to this representation (though we cannot
write it down explicitly).
\item[(c)] 
Let $X$ be a Calabi--Yau threefold over $\QQ$ of Borcea--Voisin
type. Then there is the $2m$-dimensional Galois representation
$$\pi:=\rho_E\otimes \mbox{ind}_{G_{\KK}}^{G_{\QQ}}\rho_{G_{\KK}}
\simeq \mbox{ind}_{G_{\KK}}^{G_{\QQ}}(\rho_{G_{\KK}}\otimes 
\mbox{Res}_{G_{\KK}}^{G_{\QQ}} \rho_E).$$
$\pi$ is an automorphic cuspidal irreducible
representation of $GL(2,\KK)$, and
$$L(\pi,s)=L(\mbox{ind}_{G_{\KK}}^{G_{\QQ}} \pi, s)
=L(\pi_E\otimes \mbox{ind}_{\GK}^{\GQ}\rho,s)$$
$$=L(f_E\otimes f_{T(S)},s).$$
\item[(d)] The Galois representation associated to the twisted sectors
(the exceptional divisors) is automorphic.
\end{itemize}

Therefore, $X$ is automorphic. }
\end{thm}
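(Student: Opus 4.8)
The plan is to split the cohomology of $X$ into Galois-stable pieces, recognize each as automorphic, and reassemble $\zeta(X,s)$; most of the hard input has already been isolated in Theorems \ref{thm-DelsarteS}, \ref{thm2.5} and \ref{thm4.5}, so the work here is largely organizational, together with one genuinely new automorphic-induction step.

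First I would record the cohomological decomposition. By K\"unneth and passage to $(\iota\times\sigma)$-invariants, the even cohomology $H^0,H^2,H^4,H^6$ of $X$ contributes only $\zeta(\QQ,s)$, $\zeta(\QQ,s-3)$ together with factors coming from $NS$ and from the exceptional divisors (four copies of $(C_g\times\PP^1)\cup\bigcup_i(L_i\times\PP^1)$); by the Tate-conjecture argument of the $K3$ case (proof of Theorem \ref{thm4.5}) these are products of shifted $\zeta(\QQ,\cdot)$ and Artin $L$-functions of algebraic cycles, all automorphic. The new content sits in $H^3(X)$, which by the Hodge-structure decomposition recalled above (following Voisin) splits as $\bigl(H^1(E,\QQ)\otimes H^2(S,\QQ)^-\bigr)\oplus H^1(C_g,\QQ)$, where $H^2(S,\QQ)^-$ is the $(-1)$-eigenspace of $\sigma$ and contains the transcendental lattice $T(S)$, and $H^1(C_g,\QQ)$ is carried by the twisted sectors $C_g\times\PP^1$.

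Parts (a) and (b) are then quotations. Part (a) is the modularity theorem of Wiles and Taylor--Wiles stated above, giving $f_E$ and $\rho_E$. Part (b) is Theorem \ref{thm4.5} with its corollaries: since $S$ is of Delsarte type it is dominated by a Fermat surface, and Weil's Jacobi-sum computation shows that over $\KK=\QQ(\zeta_t)$ the representation on $T(S)$ is a sum of one-dimensional Grossencharacter pieces permuted transitively by $\Gal(\KK/\QQ)$; hence it equals $\mbox{ind}_{G_{\KK}}^{G_{\QQ}}\rho_{G_{\KK}}$ for a one-dimensional $\rho_{G_{\KK}}$, with $m=[\KK:\QQ]=\dim T(S)_{\QQ}$.

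For the main step (c) I would use the projection formula $\rho_E\otimes\mbox{ind}_{G_{\KK}}^{G_{\QQ}}\rho_{G_{\KK}}\simeq\mbox{ind}_{G_{\KK}}^{G_{\QQ}}\bigl(\rho_{G_{\KK}}\otimes\mbox{Res}_{G_{\KK}}^{G_{\QQ}}\rho_E\bigr)$. Because $\KK/\QQ$ is abelian, hence solvable, the restriction $\mbox{Res}_{G_{\KK}}^{G_{\QQ}}\rho_E$ corresponds to the base change of $f_E$ to $\KK$ and is automorphic on $GL(2,\KK)$ (Arthur--Clozel cyclic base change, applied up the cyclotomic tower), and twisting by the Hecke character $\chi$ attached to $\rho_{G_{\KK}}$ leaves it automorphic on $GL(2,\KK)$; solvable automorphic induction then yields an automorphic representation of $GL(2m,\QQ)$ whose standard $L$-function is $L(\pi,s)$, and invariance of $L$-functions under induction produces the asserted identities ending in $L(f_E\otimes f_{T(S)},s)$. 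Part (d) treats $H^1(C_g)$: the curve $C_g$ is cut out by a single coordinate vanishing and is again of Delsarte type, so $J(C_g)$ is a CM abelian variety and $H^1(C_g)$ is attached to Hecke characters of a CM field, automorphic by classical CM theory. Assembling all factors shows $\zeta(X,s)$ is automorphic, hence so is $X$. The main obstacle I anticipate is cuspidality: one must verify that the two-dimensional inducing datum $\rho_{G_{\KK}}\otimes\mbox{Res}_{G_{\KK}}^{G_{\QQ}}\rho_E$ admits no nontrivial $\Gal(\KK/\QQ)$-stabilizer, so that the induced $GL(2m,\QQ)$ representation is genuinely cuspidal, and to treat the dihedral degeneration in which $E$ has CM by a subfield of $\KK$ (as happens for $E_2,E_3$), where $\mbox{Res}_{G_{\KK}}^{G_{\QQ}}\rho_E$ becomes reducible and $\pi$ is an isobaric sum of Hecke-character inductions rather than cuspidal; here the transitivity of the Galois action on the Jacobi-sum characters supplies the regularity needed to keep each summand automorphic.
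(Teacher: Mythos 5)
Your proposal is correct and follows essentially the same route as the paper's own proof (Subsection 5.6 together with the appendix, Section \ref{appendix}): the K\"unneth/Hodge decomposition isolating $H^1(E)\otimes T(S)^{\sigma}$ and the four copies of $H^1(C_g)$, Wiles--Taylor--Wiles for part (a), Theorem \ref{thm4.5} and the Jacobi-sum Grossencharacters for part (b), the projection formula combined with solvable base change and automorphic induction (Arthur--Clozel, Rajan) for part (c), and CM theory for $J(C_g)$ in part (d). Your closing cuspidality caveat is the one point that goes beyond the paper, and it is well taken: the paper asserts that $\pi$ is cuspidal without checking that $\rho_{G_{\KK}}\otimes\mathrm{Res}^{G_{\QQ}}_{G_{\KK}}\rho_E$ has trivial $\Gal(\KK/\QQ)$-stabilizer, and in the dihedral degeneration you describe (e.g.\ $E=E_2$ or $E_3$, whose CM field is a subfield of $\KK$) $\pi$ is indeed only an isobaric sum of automorphic inductions of Hecke characters --- which, as you observe, still suffices for the automorphy of $L_3(X,s)$ and hence of $\zeta(X,s)$.
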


\subsection{Preparation for proof of automorphy for Calabi--Yau
threefolds of Borcea--Voisin type}

We need to compute the cohomology groups
of our Calabi--Yau threefolds of Borcea--Voisin type. 
For this, first we compute the cohomology groups of the
product $E\times S$.

\begin{lem} {\sl The K\"unneth formula for the
product $E\times S$ gives:
$$H^i(E\times S,\Ql)=\oplus_{p+q=i} H^p(E,\Ql)\otimes H^q(S,\Ql)$$
for $0\leq i\leq 6$.
Then for each $i,\,0\leq i\leq 3$, we obtain

\begin{itemize}
\item $H^0(E\times S,\Ql)=\Ql.$ \\
\item $H^1(E\times S,\Ql)=H^1(E,\Ql)\otimes \Ql.$ \\ 
\item $H^2(E\times S,\Ql)=\Ql\otimes H^2(S,\Ql)\oplus \Ql\otimes\Ql.$ \\ 
\item $H^3(E\times S,\Ql)=H^1(E,\Ql)\otimes H^2(S,\Ql).$\\
\end{itemize}

The higher cohomologies for $i=4,5,6$ can be determined by 
Poincar\'e duality.} 
\end{lem}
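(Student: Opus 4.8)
The plan is to apply the K\"unneth formula for $\ell$-adic \'etale cohomology directly and then read off the individual summands from the known cohomology of an elliptic curve and of a $K3$ surface. Since $E$ and $S$ are smooth projective varieties, the product $E\times S$ is again smooth projective, and after base change to $\bar\QQ$ the K\"unneth decomposition
$$H^i(E\times S,\Ql)=\bigoplus_{p+q=i} H^p(E,\Ql)\otimes H^q(S,\Ql)$$
holds for every $i$ with $0\leq i\leq 6$; this establishes the first displayed identity with no further work. This decomposition is moreover $\GQ$-equivariant, a point that will be used later in the automorphy argument.

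Next I would record the building blocks. For the elliptic curve one has $H^0(E,\Ql)=\Ql$, $\dim_{\Ql} H^1(E,\Ql)=2$, $H^2(E,\Ql)=\Ql$, and $H^j(E,\Ql)=0$ for $j>2$. For the $K3$ surface one has $H^0(S,\Ql)=\Ql$, $\dim_{\Ql} H^2(S,\Ql)=22$, $H^4(S,\Ql)=\Ql$, while the odd cohomology vanishes, $H^1(S,\Ql)=H^3(S,\Ql)=0$, because a $K3$ surface is simply connected (equivalently $b_1(S)=b_3(S)=0$). The vanishing of the odd cohomology of $S$ is the decisive input: it is what collapses the K\"unneth sum to a single term in several degrees.

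Then I would substitute for $i=0,1,2,3$ and discard every summand containing $H^1(S,\Ql)$ or $H^3(S,\Ql)$ (and, in degree $3$, also the summand with $H^3(E,\Ql)$). In degree $0$ this leaves $H^0(E,\Ql)\otimes H^0(S,\Ql)=\Ql$; in degree $1$, since $H^1(S,\Ql)=0$, only $H^1(E,\Ql)\otimes H^0(S,\Ql)=H^1(E,\Ql)\otimes\Ql$ survives; in degree $2$ the cross term $H^1(E,\Ql)\otimes H^1(S,\Ql)$ vanishes, leaving $\Ql\otimes H^2(S,\Ql)\oplus\Ql\otimes\Ql$; and in degree $3$ the terms with $H^3(S,\Ql)$, $H^1(S,\Ql)$ and $H^3(E,\Ql)$ all vanish, leaving $H^1(E,\Ql)\otimes H^2(S,\Ql)$. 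This yields the four listed identities. Finally, for $i=4,5,6$ I would invoke Poincar\'e duality on the smooth projective threefold $E\times S$, which identifies $H^i(E\times S,\Ql)$ with the dual of $H^{6-i}(E\times S,\Ql)$ up to a Tate twist, so that degrees $4,5,6$ are determined by degrees $2,1,0$ already computed.

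As for the main obstacle: there is essentially none, as this is a routine K\"unneth computation. The only point requiring verification is the vanishing $H^1(S,\Ql)=H^3(S,\Ql)=0$ for a $K3$ surface, which is immediate from its simple connectivity (or its Hodge diamond). Thus the lemma follows from standard facts, and its role is simply to set up the cohomological bookkeeping needed for the automorphy statements that follow.
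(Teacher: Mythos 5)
Your proposal is correct and follows essentially the same route as the paper's own proof: apply the K\"unneth decomposition, expand each degree, and collapse the sums using $H^1(S,\Ql)=H^3(S,\Ql)=0$ for the $K3$ surface (together with the vanishing of $H^j(E,\Ql)$ for $j>2$), with Poincar\'e duality handling degrees $4,5,6$. Your explicit justification of the odd-cohomology vanishing via simple connectivity is a minor elaboration of what the paper leaves implicit.
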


\begin{proof} These follow from the definition of $E$ and $S$
and the K\"unneth formula. In fact, we have
$$H^0(E\times S,\Ql)=H^0(E,\Ql)\otimes H^0(S,\Ql)=\Ql.$$
$$H^1(E\times S,\Ql)=H^1(E,\Ql)\otimes H^0(S,\Ql)\oplus
H^0(E,\Ql)\otimes H^1(S,\Ql)$$
$$=H^1(E,\Ql)\otimes \Ql.$$
$$H^2(E\times S,\Ql)=H^0(E,\Ql)\otimes H^2(S,\Ql)\oplus
H^1(E,\Ql)\otimes H^1(S,\Ql)$$
$$\oplus H^2(E,\Ql)\otimes H^0(S,\Ql)
=\Ql\otimes H^2(S,\Ql)\oplus \Ql\otimes\Ql.$$
$$H^3(E\times S,\Ql)=H^0(E,\Ql)\otimes H^3(S,\Ql)\oplus
H^1(E,\Ql)\otimes H^2(S,\Ql)$$
$$\oplus H^2(E,\Ql)\otimes H^1(S,\Ql)
\oplus H^3(E,\Ql)\otimes H^0(S,\Ql)$$
$$=H^1(E,\Ql)\otimes H^2(S,\Ql).$$
\end{proof}

\medskip

In order to determine the cohomology groups for
the Calabi--Yau threefolds $E\times S/\iota\times\sigma$
of Borcea--Voisin type, we need to compute 
the cohomology groups of ``non-twisted'' sector and the 
cohomology groups arising from singularities of ``twisted'' sector.  
For this we will use the orbifold Dolbeault cohomology theory developed by 
Chen and Ruan \cite{CR04}. 
We will give a brief description of orbifold cohomology formulas 
relevant to our calculations.  

\begin{defn}
{\rm 
Let $X_0=E\times S/\iota\times\sigma$ be a singular Calabi--Yau
threefold of Borcea--Voisin type.  Then the cohomology group of 
$X_0$ is given by
$$H_{\mbox{orb}}^{p,q}({\bar X}_0)=H^{p,q}(E\times S)
\oplus
H^{p-1,q-1}((E\times S)^{\iota\times\sigma})$$
for $0\leq p,q\leq 3$ with $p+q=3$, where $\bar{X}_0=X_0\otimes\CC$.

The twisted sectors are the cohomology groups that correspond to $h\neq 1$ 
(the second term), and the non-twisted sector corresponds to $h=1$ 
(the first term).}
\end{defn}

First we calculate the non-twisted sector of the cohomology.

\begin{lem} {\sl Let $X_0:=E\times S/\iota\times\sigma$ be a singular
Calabi--Yau threefold over $\CC$ of Borcea--Viosin type. 
Then we have

\begin{itemize}
\item $H^{1,0}(X_0)=H^{1,0}(E)^{\iota}\otimes \CC=0.$ \\
\item $H^{2,0}(X_0)=\CC\otimes H^{2,0}(S)^{\sigma}=0.$ \\
\item $H^{3,0}(X_0)=\CC.$ \\
\item $H^{1,1}(X_0)=\CC\otimes H^{1,1}(S)^{\sigma=1}\oplus \CC. $\\
\item $H^{2,1}(X_0)=\CC\oplus H^{1,1}(S)^{\sigma=-1}\otimes\CC$. \\
\end{itemize}

Therefore, the Hodge numbers of the singular
Calabi--Yau threefold $X_0$ are given by
$$h^1(X_0)=0,\, h^{2,0}(X_0)=0,\, h^{3,0}(X_0)=1$$
and
$$h^{1,1}(X_0)=1+r,\, h^{2,1}(X_0)=1+(20-r),$$
where $r=\mbox{rank} NS(S)^{\sigma}$.}
\end{lem}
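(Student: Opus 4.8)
The plan is to compute the non-twisted sector, which is the invariant cohomology $H^{p,q}(E\times S)^{\iota\times\sigma}$, directly from the K\"unneth decomposition established in the previous lemma together with the known eigenvalues of $\iota$ on $H^*(E)$ and of $\sigma$ on $H^*(S)$. First I would record the Hodge data of the two factors. Since $E$ is a curve, $H^{a,b}(E)$ vanishes outside $0\le a,b\le 1$, and the standard involution $\iota$ acts by $-1$ on $H^{1,0}(E)$ and $H^{0,1}(E)$ and by $+1$ on $H^{0,0}(E)$ and $H^{1,1}(E)=H^2(E)$ (it is an orientation-preserving automorphism, hence trivial on the fundamental class). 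Since $S$ is a $K3$ surface, $H^{1,0}(S)=H^{0,1}(S)=H^{3,0}(S)=0$, while $\sigma$ acts by $-1$ on $H^{2,0}(S)$ and $H^{0,2}(S)$ by hypothesis and by $+1$ on $H^{0,0}(S)$ and $H^{4}(S)$. This eigenvalue table is the only input beyond formal bookkeeping, apart from the point discussed below.

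Next, for each bidegree $(p,q)$ appearing in the statement I would expand $H^{p,q}(E\times S)=\bigoplus_{a+c=p,\,b+d=q}H^{a,b}(E)\otimes H^{c,d}(S)$, discard the summands forced to vanish by the vanishing groups above, and compute the sign of $\iota\times\sigma$ on each survivor as the product of the two eigenvalues; taking the $+1$-eigenspace then yields each listed group. For instance, $H^{3,0}$ survives only through $H^{1,0}(E)\otimes H^{2,0}(S)$, on which $\iota\times\sigma$ acts by $(-1)(-1)=+1$, giving $\CC$; the groups $H^{1,0}$ and $H^{2,0}$ come from summands on which the sign is $-1$ and hence vanish after taking invariants; in bidegree $(1,1)$ the survivors are $H^{1,1}(E)\otimes H^{0,0}(S)$ (sign $+1$, contributing $\CC$) and $H^{0,0}(E)\otimes H^{1,1}(S)$, and in bidegree $(2,1)$ they are $H^{0,1}(E)\otimes H^{2,0}(S)$ (sign $+1$, contributing $\CC$) and $H^{1,0}(E)\otimes H^{1,1}(S)$. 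In the last two cases the only remaining freedom is the splitting of $H^{1,1}(S)$ into $\sigma$-eigenspaces.

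The hard part, and the one genuinely non-formal step, will be identifying the dimensions of these $\sigma$-eigenspaces with the invariant $r=\rank NS(S)^{\sigma}$. Here I would use that $\sigma$ is anti-symplectic: since it acts by $-1$ on $H^{2,0}(S)\oplus H^{0,2}(S)$, the whole invariant subspace $H^2(S,\CC)^{\sigma=1}$ lies in $H^{1,1}(S)$ and is spanned by algebraic classes, so that $\dim H^{1,1}(S)^{\sigma=1}=\rank NS(S)^{\sigma}=r$ and consequently $\dim H^{1,1}(S)^{\sigma=-1}=20-r$. This is exactly where Nikulin's description of the invariant lattice as (essentially) $\pic(S)^{\sigma}$ enters, and it is the only ingredient not produced mechanically by the K\"unneth formula. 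Assembling the five bidegrees then gives $h^{1,1}(X_0)=1+r$ and $h^{2,1}(X_0)=1+(20-r)$, while $h^{1,0}(X_0)=h^{2,0}(X_0)=0$ and $h^{3,0}(X_0)=1$, completing the stated computation.
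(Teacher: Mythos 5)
Your proposal is correct and follows essentially the same route as the paper: a K\"unneth decomposition of $H^{p,q}(E\times S)$ combined with sign bookkeeping for the $\iota\times\sigma$-action on each summand, then taking invariants. If anything, you are slightly more careful than the paper at the one non-formal step, namely justifying $\dim H^{1,1}(S)^{\sigma=1}=\rank NS(S)^{\sigma}=r$ via the anti-symplectic hypothesis and the Lefschetz $(1,1)$ theorem, a fact the paper's proof uses implicitly (it is recorded earlier, in Section 2.1).
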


\begin{proof}
Let $\omega_E$ and $\omega_S$ be the non-trivial holomorphic
$1$-form and $2$-form on $E$ and $S$, respectively.
Then $\omega_E\wedge\omega_S$ descends to a holomorphic
$3$-form on $X$. Now the involution
$\iota: E\to E$ acts on $\omega_E$ non-symplectically,
and the involution $\sigma: S\to S$ acts
on $\omega_S$ non-symplectically.  
This gives
$$H^{0}(X_0,\Omega_{X_0}^3)=\CC$$
so that $h^{3,0}(X_0)=1$, indeed. $H^{3,0}(X_0)$ is spanned
by $\omega_E\times \omega_S$.
Then the K\"unneth formula gives that
$$H^{0}(X_0,\Omega_{X_0}^1)=H^{0}(E,\Omega_{E}^1)^{\iota}\otimes \CC=0$$
so that $h^1(X_0)=0.$  Also we have 
$$H^{0}(X_0,\Omega_{X_0}^2)=\CC\otimes H^{0}(S,\Omega_S^2)^{\sigma}=0$$
so that $h^{2,0}(X_0)=0.$
Hence $X_0$ is indeed a (singular) Calabi--Yau threefold.
Now we compute $H^{1,1}(X_0)=H^2(X_0)$.
$$H^{1,1}(X_0)=H^{0,0}(E)^{\iota}\otimes H^{1,1}(S)^{\sigma=1}
\oplus H^{1,1}(E)^{\iota}\otimes H^{0,0}(S)^{\sigma}
=\CC\otimes H^{1,1}(S)^{\sigma=1}\oplus \CC$$
so that $h^{1,1}(X_0)=1+r$.  
Now note that
$$H^{3,0}(X_0)=(H^{1,0}(E)\otimes 
H^{2,0}(S)^{\iota\times\sigma}),$$
so that $h^{3,0}(X_0)=1$. 
By the Kunneth formula, we get
$$H^{2,1}(X_0)=(H^{1,0}(E)\otimes H^{1,1}(S)
\oplus H^{0,1}(E)\otimes H^{2,0}(S))^{\iota\times\sigma}.$$
Let $H^{1,1}(S,\CC)^{\sigma=-1}$ denote the $-1$ eigenspace for
the action of $\sigma$ on $H^{1,1}(S)$. Since $\iota$ 
induces
$-1$ on $H^1(E)$ and $\sigma$ acts by $-1$ on $H^{2,0}(S)$,
this gives that
$$H^{2,1}(X)=\CC\otimes H^2(S)^{\sigma=-1}\oplus \CC$$
and hence we have $h^{2,1}(X_0)=1+(20-r)$.
\end{proof}

Now we pass onto a smooth resolution $X$ of $X_0$. We need to
calculate the cohomology groups of $X$, in particular, the
twisted sectors.

\begin{lem} 
{\sl Let $X=\widetilde{E\times S/\iota\times\sigma}$ be a smooth Calabi--Yau 
orbifold over $\CC$ of Borcea--Voisin type. Let $S^{\sigma}$ be the 
fixed locus of $S$ of $\sigma$.
Then the twisted sectors consist of $4$ copies of $S^{\sigma}$, and
$$h^{0,0}(S^{\sigma})=k+1,\,\,h^{1,0}(S^{\sigma})=g.$$
Therefore,
$$h^{1,0}(X)=h^{2,0}(X)=0,\,h^{3,0}(X)=1,$$
$$h^{1,1}(X)=1+r+4(k+1),\, h^{2,1}(X)=1+(20-r)+4g.$$} 
\end{lem}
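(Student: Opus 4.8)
The plan is to combine the non-twisted sector, already computed in the previous lemma, with the twisted (orbifold) sector, and then to invoke the fact that for a crepant resolution the Chen--Ruan orbifold cohomology of $X_0=E\times S/\iota\times\sigma$ agrees with the ordinary cohomology of its crepant resolution $X$. First I would identify the fixed locus that governs the twisted sector. Since $\iota\times\sigma$ is an involution on the threefold $E\times S$, the only contributing sector beyond the untwisted one is the fixed locus $(E\times S)^{\iota\times\sigma}=E^{\iota}\times S^{\sigma}$. As $E^{\iota}=\{P_1,P_2,P_3,P_4\}$ consists of four points, this fixed locus is exactly four disjoint copies of $S^{\sigma}$, which establishes the first assertion.

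Next I would compute the Hodge numbers of $S^{\sigma}$ itself. By Proposition \ref{r-and-a}, in the generic case $S^{\sigma}=C_g\cup L_1\cup\cdots\cup L_k$ is a disjoint union of a smooth genus-$g$ curve and $k$ rational curves, hence has $k+1$ connected components. Therefore $h^{0,0}(S^{\sigma})=k+1$, while $h^{1,0}(S^{\sigma})=g$, since only $C_g$ contributes to $H^{1,0}$ and each $L_i\cong\PP^1$ contributes nothing.

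Then I would verify the degree shift encoded in the orbifold formula. At a point of the fixed locus, $\iota\times\sigma$ acts on the two-dimensional normal bundle of $E^{\iota}\times S^{\sigma}$ in $E\times S$ by $-1$ on each factor (the normal direction along $E$ at a $2$-torsion point, and the normal direction to the curve $S^{\sigma}$ inside $S$). The age of this involution is $\frac{1}{2}+\frac{1}{2}=1$, which is precisely the shift by $(1,1)$ built into the term $H^{p-1,q-1}((E\times S)^{\iota\times\sigma})$. Reading off bidegrees: the $(1,0)$, $(2,0)$, $(3,0)$ twisted contributions vanish, since $H^{p-1,q-1}(S^{\sigma})$ with $q-1<0$ is zero, so the non-twisted values $h^{1,0}=h^{2,0}=0$, $h^{3,0}=1$ persist; the $(1,1)$ entry gains $4\,h^{0,0}(S^{\sigma})=4(k+1)$ and the $(2,1)$ entry gains $4\,h^{1,0}(S^{\sigma})=4g$. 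Adding to the non-twisted values $h^{1,1}(X_0)=1+r$ and $h^{2,1}(X_0)=1+(20-r)$ yields the claimed $h^{1,1}(X)=1+r+4(k+1)$ and $h^{2,1}(X)=1+(20-r)+4g$.

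The main obstacle is justifying that these orbifold Hodge numbers are genuinely the Hodge numbers of the smooth crepant resolution $X$, rather than mere bookkeeping on the singular quotient. This is where I would invoke the theorem of Chen--Ruan \cite{CR04}: for a crepant resolution of a Gorenstein orbifold, the Chen--Ruan cohomology of the orbifold and the singular cohomology of the resolution have the same graded dimensions. In the present Borcea--Voisin setting the identification can moreover be checked by hand, since the exceptional divisors are the explicit ruled surfaces $S^{\sigma}\times\PP^1$ over each of the four fixed copies of $S^{\sigma}$: the $\PP^1$-fiber class of each component accounts for the $4(k+1)$ extra $(1,1)$-classes, while $H^{1,0}(C_g)\otimes H^{2}(\PP^1)$ accounts for the $4g$ extra $(2,1)$-classes, matching the orbifold count. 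One should also confirm that the two special cases $(r,a,\delta)=(10,10,0)$, where $S^{\sigma}=\emptyset$, and $(10,8,0)$ are handled by the same formulas with the appropriate values of $g$ and $k$.
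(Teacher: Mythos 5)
Your proposal is correct and follows essentially the same route as the paper: both split the orbifold cohomology of $E\times S/\iota\times\sigma$ into the untwisted sector (taken from the previous lemma, giving $1+r$ and $1+(20-r)$) and the twisted sector supported on the four copies $E^{\iota}\times S^{\sigma}$ of $S^{\sigma}$, shifted by $(1,1)$, from which $h^{1,1}(X)=1+r+4(k+1)$ and $h^{2,1}(X)=1+(20-r)+4g$ are read off. Your extra justifications --- the age computation explaining the $(1,1)$-shift and the hand check that the exceptional ruled surfaces $S^{\sigma}\times\PP^1$ on the crepant resolution realize exactly these classes --- go beyond the paper's terse argument and are welcome; the only caveat is that the general matching of orbifold Hodge numbers with those of a crepant resolution is Ruan's conjecture rather than a theorem of \cite{CR04}, but your explicit verification on the resolution makes this immaterial here.
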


\begin{proof} The Hodge numbers $h^{1,0}, h^{2,0}$ and $h^{3,0}$ of $X$ 
are the same as those for the singular $X_0$. For $h^{1,1}(X)$ and $h^{2,1}(X)$
we need to bring in resolutions of singularities.
The twisted sectors consist of $4$ copies of
$S^{\sigma}$. We know that $S^{\sigma}=C_g\cup L_1\cup\cdots\cup L_k$
for $(r,a,\delta)\neq (10,10,0),(10,8,0)$ and $S^{\sigma}=C_1\cup \tilde C_1$
for $(r,a,\delta)=(10,8,0)$. 
Therefore,
$h^{1,1}(X)=1+r+4(k+1)$.
For $h^{2,1}(X)$, the contribution from the twisted sectors is
the $4$ copies of $\PP^1\times C_g$, and hence $h^{2,1}(X)=1+(20-r)+4g.$ 
\end{proof}

\begin{rem}
Voisin \cite{V93} gave more geometrical computations for
the Hodge numbers $h^{1,1}(X)$ and $h^{2,1}(X)$. We will recall
briefly her calculations. Recall that the fixed locus
of $\iota$ on $E$ consists of four points $\{P_i,\, i=1,\cdots,4\}$,
and that the fixed
locus of $S$ under the action of $\sigma$ is
$S^{\sigma}=C_g\cup L_1\cup\cdots\cup L_k$
where $C_g$ is a genus $g$ curve and $L_i\,(i=1,\cdots,k)$ are
rational curves. Let $N$ be the number of components in $S^{\sigma}$,
that is, $N=k+1$, and let $N^{\prime}$ be the sum of genera of the
components, that is, $N^{\prime}=g$. The fixed point locus of the 
action $\iota\times\sigma$ on $E\times S$ consists of $4N$ curves 
$\{P_i\}\times C_g,\, \{P_i\}\times L_j$. We blow up $E\times S$
along these $4N$ curves to obtain a smooth Calabi--Yau threefold
$X$ with exceptional divisors arising from the $4N$ curves.

Now compute $h^{1,1}(X)$. First the exceptional divisors 
give
$4N$ classes in $H^{1,1}(X)$. From the quotient 
surface $S/\sigma$
we get $h^0=1=h^4,\,h^1=h^3=0,\, h^{2,0}=h^{0,2}=0$ and
$h^{1,1}=10+N-N^{\prime}$. Then by the Kunneth formula,
$$H^{1,1}(X)=\CC-\mbox{span of $4N$ exceptional divisors}\oplus
H^{1,1}(S/\sigma)\oplus H^{1,1}(E).$$ 
Hence
$$h^{1,1}(X)=4N+10+N-N^{\prime}+1=11-5N-N^{\prime}.$$

For $h^{2,1}(X)$, we first note that the $4N$ curves give
rise to the classes $H^{1,0}(C_g)\oplus_{j=1}^k H^{1,0}(L_j)$
in $H^{2,1}(X)$. Next, let $H^2(S)^{-}$ denote the $-1$ eigenspace
for the action of $\sigma$ on $H^2(S)$. Then again by the
Kunneth formula, we obtain
$$H^{2,1}(X)\simeq H^{1,0}(C_g)\oplus_{j=1}^k H^{1,0}(L_j)\oplus
H^{1,1}(S)^{-1}\oplus H^{2,0}(S)$$
and this shows that
$$h^{2,1}(X)=4N^{\prime}+h^{1,1}(S)^{-}+1=4N^{\prime}+10+N^{\prime}-N+1
=11+5N^{\prime}-N.$$
\end{rem}

We now compute the $L$-series of our Calabi--Yau
threefolds of Borcea--Voisin type. 

\begin{thm}
{\sl Let $X$ be a Calabi--Yau threefold of Borcea--Voisin type,
$X=\widetilde{E\times S/\iota\times\sigma}$.
The Betti numbers of $X$ are given by
$$B_0(X)=1, B_1(X)=1, B_2(X)=h^{1,1}(X)=1+r+4(k+1),$$
$$ B_3(X)=2(1+h^{2,1}(X))=2(1+(20-r)+4g).$$

The $\ell$-adic \'etale cohomological $L$-series $L_i(X,s)$ ($0\leq i\leq 6$) 
can be computed as follows: 

\begin{itemize}
\item $L_0(X,s)=\zeta(\QQ, s).$ \\
\item $L_1(X,s)=1.$ \\
\item $L_2(X,s)=\zeta(\QQ, s-1)^{h^{1,1}(X)}$
provided that all algebraic cycles in $NS(S)^{\sigma}$
are defined over $\QQ$.
Otherwise, let $t<r$ be the number of algebraic
cycles in $NS(S)^{\sigma}$ that are defined over $\QQ$
and let $F$ be the smallest field of definition for all
$\rho(\bar{S})-t$ algebraic cycles in $NS(\bar{S})^{\sigma}
\setminus NS(S)^{\sigma}$. Also suppose that all $4$ points 
in $E^{\iota}$ are defined over $\QQ$.  Then
$L_2(X,s)=\zeta(\QQ, s-1)^{1+t+4(k+1)} L(\rho^{\prime},s)$, 
where $\rho^{\prime}$ is an irreducible representation
of dimension $r-t$ and $L(\rho^{\prime},s)$ is its
Artin $L$-function.

(Without knowing the field of definitions of algebraic cycles and 
$4$ fixed points in $E^{\iota}$ explicitly, it is very difficult to 
write down an explicit formula for the $L$-function $L_2(X,s)$.)\\
\item $L_3(X,s)
=L(E\otimes\chi, s)L(E\otimes\rho,s)L(J(C_g),s-1)^4.$ \\
\end{itemize} 

The higher cohomologies are determined by Poincar\'e duality.}
\end{thm}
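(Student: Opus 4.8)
The plan is to compute each $L$-series $L_i(X,s)$ by tracking the Galois action on the cohomology groups determined in the preceding lemmas, using the orbifold Künneth decomposition and Poincaré duality to reduce everything to the already-understood pieces: the elliptic curve $E$, the transcendental and algebraic parts of $S$, and the fixed-locus curve $C_g$. The Betti numbers follow immediately from the Hodge-number formulas $h^{1,1}(X)=1+r+4(k+1)$ and $h^{2,1}(X)=1+(20-r)+4g$ of the previous lemma, together with $B_2=h^{1,1}$ and $B_3=2(1+h^{2,1})$ for a Calabi--Yau threefold. The factors $L_0$ and $L_1$ are immediate: $H^0$ is one-dimensional with trivial Galois action, giving $\zeta(\QQ,s)$, while $H^1(X)=H^1(E)^{\iota}=0$ since $\iota$ acts by $-1$ on $H^1(E)$, giving $L_1(X,s)=1$.

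For $L_2(X,s)$, I would use that $H^2(X)$ is spanned by the $NS(S)^{\sigma}$ classes, the hyperplane class from $E$, and the $4(k+1)$ exceptional divisor classes coming from the four copies of $S^{\sigma}=C_g\cup L_1\cup\cdots\cup L_k$. Each algebraic cycle contributes a Tate-twisted factor $\zeta(\QQ,s-1)$ when defined over $\QQ$; the care needed is entirely about fields of definition. If all cycles in $NS(S)^{\sigma}$ and all four points of $E^{\iota}$ are rational, every class is $\QQ$-rational and $L_2(X,s)=\zeta(\QQ,s-1)^{h^{1,1}(X)}$. Otherwise the Galois group acts through a finite quotient on the non-rational cycles, so $NS(\bar S)^{\sigma}$ decomposes into a rational part (exponent $1+t+4(k+1)$) and an irreducible Artin piece $\rho'$ of dimension $r-t$, exactly as in the $K3$ zeta-function example earlier in the paper.

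The heart of the argument is $L_3(X,s)$. Here I would invoke the orbifold Künneth formula together with the CM-type Hodge decomposition used in the proof of Theorem \ref{thm7.3}, which gives
\[
H^3(X,\Ql)\simeq\bigl(T(S)^{\sigma}\otimes H^1(E)\bigr)\;\oplus\;\bigl(NS(S)^{\sigma}_{-}\otimes H^1(E)\bigr)\;\oplus\;\bigl(H^1(C_g)\otimes\Ql(-1)\bigr)^{\oplus 4},
\]
where the first summand is the transcendental part twisted by $E$, the second comes from the $-1$ eigenspace of $\sigma$ on the algebraic classes tensored with $H^1(E)$, and the third is the four copies of the fixed curve $C_g$ contributing through the exceptional ruled surfaces $C_g\times\PP^1$ (the extra Tate twist $\Ql(-1)$ reflecting the $\PP^1$ factor). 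Taking $L$-series term by term yields the three factors: $L(E\otimes\chi,s)$ from the transcendental cycles (where $\chi$ is the Hecke character of Theorem \ref{thm4.5} attached to $T(S)$), $L(E\otimes\rho,s)$ from the algebraic cycles (where $\rho$ is the Artin representation of that theorem), and $L(J(C_g),s-1)^4$ from the four copies of $C_g$, the $-1$ shift absorbing the Tate twist.

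The main obstacle I anticipate is justifying the precise shape of the $H^3$ decomposition at the level of $\ell$-adic Galois modules rather than merely Hodge structures, and in particular verifying that the transcendental factor is $L(E\otimes\chi,s)$ with the correct Hecke character, that the algebraic-cycle contribution splits off cleanly as $L(E\otimes\rho,s)$, and that the Abel--Jacobi image of $H^{1,0}(C_g)$ in $H^{2,1}(X)$ is Galois-compatible so that the four copies of $C_g$ genuinely contribute $L(J(C_g),s-1)^4$. Once this decomposition is established over $\QQ$, the automorphy of each factor follows from earlier results: $L(E\otimes\chi,s)$ and $L(E\otimes\rho,s)$ are automorphic because $E$ is modular (Theorem 5.5) and the $T(S)$-representation is induced from a Hecke character of the CM field $\KK$, so the tensor products are induced automorphic forms on $GL_2(\KK)$ as in the statement of the threefold automorphy theorem; and $L(J(C_g),s-1)$ is automorphic since $C_g$ is of Delsarte type and hence $J(C_g)$ is a CM abelian variety. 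The remaining factors for $i=4,5,6$ then follow formally from Poincaré duality, giving $L_4(X,s)=L_2(X,s-1)$ and $L_6(X,s)=\zeta(\QQ,s-3)$, with $L_5(X,s)=1$.
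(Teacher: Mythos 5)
Your proposal is correct and follows essentially the same route as the paper: the K\"unneth decomposition of $H^3(E\times S,\Ql)$, the eigenspace analysis under $\iota\times\sigma$, the twisted-sector contribution of four copies of $C_g\times\PP^1$ giving $L(J(C_g),s-1)^4$, and the treatment of $L_2$ by counting which algebraic classes are Galois-rational, exactly as in the paper's proof. The one place you diverge is to your credit: you derive the factor $L(E\otimes\rho,s)$ from $H^1(E,\Ql)\otimes NS(\bar S)^{\sigma=-1}$ (the anti-invariant algebraic classes inside $H^2(S,\Ql)^{\sigma=-1}$), which is the correct source of that factor, whereas the paper's written derivation attributes it to $H^1(E,\Ql)^{\iota=1}\otimes H^{1,1}(S)^{\sigma=1}$ --- a space that vanishes because $\iota$ acts by $-1$ on all of $H^1(E,\Ql)$ --- so your bookkeeping of the $\iota\times\sigma$-invariants is the cleaner and more accurate one.
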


\begin{proof} For the calculation of $L$-series, we ought to
pass onto \'etale cohomology groups.  
Obivously, $$L_0(X,s)=\zeta(\QQ,s),\quad\mbox{and}\quad L_1(X,s)=1.$$ 
For $L_2(X,s)$, note that
$$h^{1,1}(X)=1+r+4(k+1).$$
So if all the $r$ algebraic cycles in 
$NS(\bar S)^{\sigma}$
are defined over $\QQ$, then we have 
$$L_2(X,s)=L_2(H^2(X,\Ql),s)=\zeta(\QQ,s-1)^{h^{1,1}(X)}.$$
Otherwise, $t$ algebraic cycles are defined over $\QQ$
so the Galois group acts trivially on $1+t+4(k+1)$
algebraic cycles so that the exponent is $1+t+4(k+1)$.
But the Galois group acts non-trivially on the $r-t$-dimensional
subspace of algebraic cycles and this gives rise to an 
irreducible representation of dimension $r-t$ and
the Artin $L$-function.
Therefore,
$$L_2(X,s)=\zeta(\QQ,s-1)^{1+t+4(k+1)} L(\rho^{\prime},s).$$ 

Finally for $L_3(X,s)$, note that
$$H^3(X,\Ql)=H^1(E,\Ql)\otimes H^2(S,\Ql).$$
Under the action of $\iota$, $H^1(E,\Ql)$ is the direct sum of
two eigenspaces: $$H^1(E,\Ql)=H^1(E,\Ql)^{\iota=1}
\oplus H^1(E,\Ql)^{\iota=-1}
=H^1(E,\Ql)^{\iota=-1}.$$ 
Similarly, under the action of $\sigma$, $H^2(S,\Ql)$ is the direct sum 
of two eigenspaces:
$$H^2(S,\Ql)=H^2(S,\Ql)^{\sigma=1}\oplus H^2(S,\Ql)^{\sigma=-1}$$
$$=(H^{1,1}(\bar{S})^{\sigma=1}\otimes\Ql)\oplus(H^{1,1}(\bar{S})^{\sigma=-1}
\otimes\Ql)=(NS(S)^{\sigma=1}\otimes\Ql)\oplus(T(S)^{\sigma=-1}\otimes\Ql).$$
For the twisted sector, singularities occur along 
$S^{\sigma}$ and for each singularity, its smooth 
resolution is the
sum of $4$ copies of the ruled surface $\PP^1\times C_g$.
So we have
$$L_3(X,s)=L(H^3(X,\Ql),s)$$
$$=L((H^1(E,\Ql)\otimes H^2(S,\Ql))^{\iota\times\sigma}, s)
\times L(H^3(\PP^1\otimes J(C_g),\Ql),s)^4$$ 
$$=L((H^1(E,\Ql)^{\iota=-1}\otimes H^{1,1}(S)^{\sigma=-1})\otimes\Ql,s)$$
$$\times L((H^1(E,\Ql)^{\iota=1}\otimes H^{1,1}(S)^{\sigma=1})\otimes\Ql, s)$$
$$\times L(H^3(\PP^1\otimes J(C_g),\Ql),s)^4$$
$$=L(\rho_E\otimes \chi, s) L(\rho_E\otimes\rho,s) L(J(C_g),s-1)^4.$$
\end{proof}

\subsection{Proof of automorphy of Calabi--Yau threefolds of
Borcea--Voisin type}

Finally we can give proofs for Theorems 5.7, 5.8 and 5.9 on
the automorphy of Calabi--Yau threefolds over $\QQ$ of
Borcea--Voisin type. 

\begin{defn} {\rm (a) We will denote the orthogonal complement
of $NS(S)^{\sigma}$ in $H^{1,1}(S)^{\sigma=-1}$ by $T(S)^{\sigma=-1}$. Its
$\ell$-adic realization $T(S)^{\sigma=-1}\otimes\Ql\subset
 H^2(S,\Ql)$ is called the {\it $K3$ motive}  and denoted by $\MM_S$.
This is the unique motive with $h^{0,2}(\MM_S)=1$.

(Note that $H^{1,1}(S)^{\sigma=1}$ gives rise to motives
$\MM_A$, which are all algebraic in the sense that
$h^{0,2}(\MM_A)=0$. )

(b) We will call the submotive 
$H^1(E,\Ql)^{\iota=-1}\otimes (T(S)^{\sigma=-1}\otimes\Ql)$
of $H^3(X,\Ql)$ the {\it Calabi--Yau motive} of $X$, and
denote by $\MM_X$.}
\end{defn}

\begin{proof} Here we will give proof for Theorem 5.8.
\begin{itemize}
\item $X$ is of CM type by Theorem 4.5. \\
\item $L_2(X,s)$ is automorphic by Proposition 5.14. \\ 
For $L_3(X,s)$, we need to show that the $L$-series associated
to the exceptional divisor arising from
the singular loci $\{P_i\}\times C_g$\, $(i=1,2,3,4)$ 
is automorphic. The exceptional
divisor is given by the $4$ copies of the ruled surface
$\PP^1\times C_g$. Now $C_g$ is a component
of $S^{\sigma}$ where $S$ is a finite quotient of a Fermat
or diagonal surface, hence $C_g$ is again
expressed in terms of a diagonal or quasi-diagonal curve in
weighted projective $2$-space (see Corollary 2.14 and 
Corollary 3.5).  
Hence the Jacobian variety $J(C_g)$ of $C_g$ is also of
CM type, and hence $L(\PP^1\otimes J(C_g),s)=L(J(C_g), s-1)$ is automorphic.\\ 
\item $\zeta(X,s)$ is automorphic, as the factors $L_i(X,s)$
($0\leq i\leq 6)$ are all automorphic.\\   
\end{itemize}
\end{proof}

\begin{proof} Now we will prove Theorem 5.9. Here $E$ can be
any elliptic curve, but $S$ is of CM type. The resulting Calabi--Yau 
threefolds are not necessarily of CM type. 

\begin{itemize}
\item An elliptic curve factor $E$ is modular by the results of Wiles et al.
So there is an automorphic representation $\rho_E$ of dimension $2$ associated
to $H^1(E,\Ql)$.
\item The assertion of (b) is proved in Theorem 5.6. 
\item We know that the $G_{\KK}$-Galois representation $\rho_{G_{\KK}}$ on 
$T(S)$ is a direct sum of $1$-dimensional representations (coming from
Jacobi sums), which the Galois group $\mbox{Gal}(\KK/\QQ)$ permutes 
transitively.
This induces an $m$-dimensional irreducible Galois representation
$\mbox{ind}^{G_{\QQ}}_{G_{\KK}}\rho_{G_{\KK}}$. Hence we obtain
the $2m$-dimensional Galois representation
$\pi:=\rho_E\otimes \mbox{ind}^{G_{\QQ}}_{G_{\KK}}\rho_{G_{\KK}}$, which
is isomorphic to $\mbox{ind}^{G_{\QQ}}_{G_{\KK}}(\rho_{G_{\KK}}\otimes
\mbox{res}^{G_{\QQ}}_{G_{\KK}} \rho_E).$
Hence
$$L(\pi,s)=L(\rho_E\otimes\mbox{ind}^{G_{\QQ}}_{G_{\KK}} \rho_{G_{\KK}}, s)
=L(f_E\otimes f_{T(S)},s).$$

In terms of the local $p$-factors, the above $L$-series is given as follows.

The Euler $p$-factor of the $L$-series can be written
as follows, for good prime $p$.
Let $L_{E,p}(s)$ be the $p$-factor of $L(E,s)$. Then
$$L_{E,p}(s)=(1-\alpha_1p^{-s})(1-\alpha_2p^{-s})$$
where $\alpha_1,\,\alpha_2$ are conjugate algebraic integers
with complex absolute value $p^{1/2}$.

Let $L_{T(S),p}(s)$ be the $p$-factor of $L(T(S),s)$. Then
$$L_{T(S),p}(s)=\prod_{i=1}^t (1-\beta_i p^{-s})$$
where $\beta_i$ are algebraic integers with complex
absolute value $p$ such that $\beta_i/p$ is not a root of unity.

Now for $X=\widetilde{E\times S/\iota\times\sigma}$, let
$L_{\pi,p}(s)$ be the $p$-factor of the $L(\pi,s)$. Then
$$L_{\pi,p}(s)=\prod_{i=1}^t (1-\alpha_1\beta_ip^{-s})(1-\alpha_2\beta_ip^{-s}).
$$
\item The $L$-series $L(E\otimes \chi, s)=L(\rho_E\otimes \chi, s)$ is 
automorphic, as $E$ (or $\rho_E$) is automorphic, and 
$\chi$ is automorphic since it is induced by a 
$GL_1$-representation
of some cyclotomic field over $\QQ$. 

Similarly, the $L$-series
$L((E\otimes H^{1,1}(S)^{\sigma=1})\otimes\Ql,s)$ is automorphic
as $E$ is automorphic, and the representation on 
$H^{1,1}(S)^{\sigma=1}\otimes \Ql$ is
also induced by a $GL_1$-representation of some 
cyclotomic field over $\QQ$. 
\item The $L$-series of $\PP^1\times J(C_g)$ is automorphic as that of
$J(C_g)$ is automorphic. 
\item The $L$-series $L_3(X,s)$ is automorphic as each component 
is automorphic. 
\end{itemize}
\end{proof}

We will give a representation theoretic proof (involving base change
and automorphic induction) for the automorphy results of our 
Calabi--Yau threefolds
of Borcea--Voisin type in the appendix. 

\begin{rem}
In motivic formulation, our automorphy results for $K3$ surfaces
and our Calabi--Yau threefolds may be reformulated as follows:
{\it The $L$-series of the $K3$-motive is automorphic, and 
the $L$-series
of the Calabi--Yau motive is automorphic. 
}
\smallskip

For our $K3$ surfaces $S$ over $\QQ$, the $K3$ motive 
$T(S)^{\sigma}\otimes\Ql$ is a submotive of
$H^2(S,\Ql)$, and the  $L$-series $L_2(S,s)$ factors as 
$$L_2(S,s)=L(NS(S)^{\sigma}\otimes\Ql, s) 
L(T(S)^{\sigma}\otimes\Ql,s).$$  
The automorphy of $L_2(S,s)$ then boils down to
the automorphy of each $L$-factor. But we know
that both factors are automorphic by Theorem 5.6 and its corollaries.

For Calabi--Yau threefolds $X$ of
Borcea--Voisin type, the Calabi--Yau motive 
$H^1(E,\Ql)\otimes (T(S)^{\sigma}\otimes\Ql)$ is a submotive
of $H^3(X,\Ql)$, which appears as a factor of $L_3(X,s)$. 
The automorphy of $L_3(X,s)$ again boils down to the 
automorphy of the $L$-series of the Calabi--Yau motive. 
Indeed, the other factors of $L_3(X,s)$ are expressed in terms 
of the $L$-series of the tensor product of 
$\rho_E$ and the motives $\MM_A$ of $K3$ surfaces with 
$h^{2,0}(\MM_A)=0$. These motives are all automorphic as they
are induced from $GL_1$-representations of some cyclotomic fields
over $\QQ$.  
\end{rem}

\section{Mirror symmetry for Calabi--Yau
threefolds of Borcea--Voisin type}

\subsection{Mirror symmetry for $K3$ surfaces}

There are several versions of mirror symmetry for $K3$ surfaces:
\begin{itemize}
\item Arnold's strange duality. This version is discussed
by Dolgachev, Arnold and by others in relation to
singularity theory. It is formulated for lattice polarized
$K3$ surfaces as follows:
A pair of lattice polarized $K3$ surfaces $(S, S^{\vee})$ is said
to be a {\it mirror} pair if
$$\pic (S)^{\perp}_{H^2(S,\ZZ)}=U\oplus \pic (S^{\vee})$$
as lattices.  In terms of the Picard numbers,
$$22-\rho(S)=2+\rho(S^{\vee})\quad\Leftrightarrow\quad
\rho(S^{\vee})=20-\rho(S).$$
(See, for instance, Dolgachev \cite{Dol96}.) 
\item Berglund--H\"ubsch--Krawitz mirror symmetry (Berglund--H\"ubsch 
\cite{BH93} and Krawitz \cite{K10}). 
This version of mirror symmetry is for finite quotients of hypersurfaces
in weighted projective $3$-spaces. Mirror symmetry for these
$K3$ surfaces are addressed in the articles by Artebani--Boissi\`ere and Sarti 
\cite{ABS} and Comparin--Lyons--Priddis--Suggs \cite{CLPS13}.
It is formulated as follows: Let $W$ be a quasihomogeneous invertible 
polynomial together with a group $G$ of diagonal 
automorphisms. (Here an ``invertible''
polynomial means that it has the same number of monomials as variables.
Thus their zero loci define Delsarte surfaces. )
Let $Y_W$ be the hypersurface $\{W=0\}$ in a weighted projective $3$-space,
then the orbifold $Y_W/G$ defines a $K3$ surface. Now define the
polynomial $W^T$ by transposing the exponent matrix of $W$. Then $W^T$
is again invertible and let $G^T$ be the dual group of $G$. Then
the orbifold $Y_{W^T}/G^T$ is again a $K3$ surface. The Berglund--H\"ubsch--Krawitz
mirror symmetry is that $Y_W/G$ and $Y_{W^T}/G^T$ form a mirror
pair of $K3$ surfaces. 
\end{itemize}

These two versions of mirror symmetry for $K3$ surfaces are shown
to coincide for certain $K3$ surfaces in \cite{CLPS13}.

Now we consider mirror symmetry for pairs $(S,\sigma)$ of
$K3$ surfaces with involution $\sigma$ classified by
Nikulin in terms of triplets $(r,a,\delta)$.
Let $(S, \sigma)$ be a pair of $K3$ surfaces with involution $\sigma$
corresponding to a triplet $(r,a,\delta)$.  Then the mirror pair
$(S^{\vee},\sigma^{\vee})$ corresponds to the 
triplet $(20-r, a, \delta)$.

For the Nikulin pyramid given in Section 2, the mirror is
placed at the vertical line $r=10$, corresponding to the
symmetry $(r,a,\delta)\leftrightarrow (20-r,a,\delta)$.
It should be remarked that mirrors do not exist for
the points located at the utmost right outerlayer of the pyramid,
(the so-called the ``pale region''), that is, $(r,a,\delta)$ is 
one of the following triplets
$(20,2,1)$, $(19,3,1)$, $(18,4,1)$, $(18,4,0)$, $(17,5,1)$, 
$(16,6,1)$, $(15,7,1)$, $(14,8,1)$, $(13,9,1)$ 
and $(12,10,1)$. 
However, one particular triplet $(14,6,0)$ is not in this
region, but does not have a mirror partner.
 
\subsection{Mirror symmetry of Calabi--Yau threefolds of
Borcea--Voisin type}

Now we consider our Calabi--Yau threefolds of Borcea--Voisin type
obtained as crepant resolutions of quotient threefolds
$E\times S/\iota\times\sigma$.  Mirror symmetry for these
Calabi--Yau threefolds has been discussed by Voisin \cite{V93}
and also by Borcea \cite{B94}. 

\begin{thm} {\sl  Given a Calabi--Yau threefold of Borcea--Voisin type $X=X(r,a,\delta)
=\widetilde{E\times S/\iota\times\sigma}$, there is a
mirror family of Calabi--Yau threefolds
$X^{\vee}=X(20-r,a,\delta)
=\widetilde{E\times S^{\vee}/\iota\times\sigma^{\vee}}$ 
such that
$$e(X^{\vee})=-e(X).$$
Mirror symmetry for Calabi--Yau threefolds $X$ is
purely determined by mirror symmetry for the $K3$ components $S$.

Borcea's formulation of mirror symmetry is:
$$h^{1,1}(X^{\vee})=5+3(20-r)-2a=65-3r-2a=h^{2,1}(X),$$
$$h^{2,1}(X^{\vee})=65-3(20-r)-2a=5+3r-2a=h^{1,1}(X)$$
and
$$e(X^{\vee})=-12(r-10)=-e(X).$$
That is, mirror symmetry interchanges $r$ by $20-r$.
\smallskip

Voisin's formulation of mirror symmetry is given as follows:
Recall that the fixed part $S^{\sigma}$ of $S$ under $\sigma$
ia a disjoint union of a genus-$g$ curve and $k$ rational
curves on $S$.  Put
$$N:=1+k=\mbox{the number of components of $S^{\sigma}$},$$
and
$$N^{\prime}:=\mbox{the sum of genera of components of $S^{\sigma}$}.$$
Then
$$h^{1,1}(X)=11+5N-N^{\prime},$$
$$h^{2,1}(X)=11+5N^{\prime}-N$$
and
$$e(X)=12(N-N^{\prime}).$$
Mirror symmetry interchanges $N$ and $N^{\prime}$.
$$h^{1,1}(X^{\vee})=11+5N^{\prime}-N,$$
$$h^{2,1}(X^{\vee})=11+5N-N^{\prime}$$
and
$$e(X^{\vee})=12(N^{\prime}-N)=-e(X).$$
}
\end{thm}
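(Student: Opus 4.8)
The plan is to reduce the mirror symmetry of $X$ entirely to the mirror symmetry of its $K3$ factor, since the elliptic curve $E$ and its involution $\iota$ are held fixed throughout the Borcea--Voisin construction. The only datum that changes under mirroring is the pair $(S,\sigma)$, so I would first produce the mirror $K3$ surface and then read off the Hodge numbers of $X^{\vee}$ from the formula in the Borcea proposition. Concretely, I would proceed in three steps: (i) construct $(S^{\vee},\sigma^{\vee})$ of Nikulin type $(20-r,a,\delta)$; (ii) form $X^{\vee}=\widetilde{E\times S^{\vee}/\iota\times\sigma^{\vee}}$ and compute its Hodge numbers by substituting $r\mapsto 20-r$ into Borcea's formula; (iii) translate the resulting identities into Voisin's $(N,N')$ language.

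For step (i) I would invoke the lattice-theoretic mirror symmetry of Nikulin and Dolgachev for $K3$ surfaces with non-symplectic involution. Writing $M=\pic(S)^{\sigma}$, which is a $2$-elementary lattice of signature $(1,r-1)$ with invariants $(r,a,\delta)$ inside $H^2(S,\ZZ)\cong U^3\oplus(-E_8)^2$, the orthogonal complement satisfies the mirror relation $M^{\perp}\cong U\oplus M^{\vee}$, where $M^{\vee}$ is again $2$-elementary with invariants $(20-r,a,\delta)$. By Nikulin's classification (the Theorem of Nikulin recalled above, valid for every admissible triplet) together with the surjectivity of the period map, $M^{\vee}$ is realized as $\pic(S^{\vee})^{\sigma^{\vee}}$ for a $K3$ surface $S^{\vee}$ with non-symplectic involution $\sigma^{\vee}$; this is exactly the statement recalled at the start of this section that the mirror triplet is $(20-r,a,\delta)$, and the moduli dimensions are complementary, $(20-r)+r=20$, confirming the existence of a mirror \emph{family}. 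The claim that mirror symmetry of $X$ is purely determined by that of $S$ is then immediate, because $h_E$ is untouched.

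For step (ii), the Hodge numbers in the Borcea proposition depend on $(S,\sigma)$ only through $(r,a)$, namely $h^{1,1}=5+3r-2a$ and $h^{2,1}=65-3r-2a$. Substituting $r\mapsto 20-r$ with $a$ fixed gives
$$h^{1,1}(X^{\vee})=5+3(20-r)-2a=65-3r-2a=h^{2,1}(X),$$
$$h^{2,1}(X^{\vee})=65-3(20-r)-2a=5+3r-2a=h^{1,1}(X),$$
so mirroring interchanges $h^{1,1}$ and $h^{2,1}$. Since $X$ and $X^{\vee}$ are Calabi--Yau threefolds, $e=2(h^{1,1}-h^{2,1})$, and the interchange yields $e(X^{\vee})=-e(X)=-12(r-10)$. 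For Voisin's form I would use the dictionary $N=1+k$, $N'=g$ together with $g=\tfrac12(22-r-a)$ and $k=\tfrac12(r-a)$ from Proposition \ref{r-and-a}; a direct substitution of $r\mapsto 20-r$ shows $g^{\vee}=1+k=N'^{\vee}=N$ and $1+k^{\vee}=g=N^{\vee}=N'$, so mirroring interchanges $N$ and $N'$, reproducing Voisin's three displayed identities.

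The main obstacle is not the Hodge bookkeeping, which is routine substitution, but the existence half of step (i): one must know that $(20-r,a,\delta)$ is itself realized by a $K3$ surface with involution. This fails precisely for the triplets in the pale region listed above (and for the exceptional $(14,6,0)$), where $(20-r,a,\delta)$ is not an admissible Nikulin triplet; for these the theorem produces no mirror. I would therefore state the result for $(r,a,\delta)$ admitting a mirror and lean on Nikulin's classification and the surjectivity of the period map to guarantee both $S^{\vee}$ and the compatibility of $\sigma^{\vee}$ (acting by $-1$ on $H^{2,0}$ with invariant lattice $M^{\vee}$), rather than attempting to exhibit $S^{\vee}$ by explicit equations.
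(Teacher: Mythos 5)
Your proposal is correct and follows essentially the same route as the paper, which states this theorem without a written proof, relying implicitly on exactly your three ingredients: the $K3$ mirror triplet swap $(r,a,\delta)\leftrightarrow(20-r,a,\delta)$ recalled in the preceding subsection, substitution into Borcea's formulas $h^{1,1}=5+3r-2a$, $h^{2,1}=65-3r-2a$, and the dictionary $N=1+k$, $N'=g$ with $g=\tfrac12(22-r-a)$, $k=\tfrac12(r-a)$ for Voisin's version. Your extra care about existence (Nikulin classification, surjectivity of the period map) and your caveat excluding the pale-region triplets and $(14,6,0)$ are consistent with what the paper handles via citation and in its subsequent remark on non-existence of mirrors for those triplets.
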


\begin{rem}
The mirror symmetry in the above theorem is merely a 
numerical check for
the topological mirror symmetry that the Hodge numbers of $X(r,a,\delta)$ 
and $X(20-r,a, \delta)$ are indeed ``mirrored''. Mirror symmetry for 
$X=X(r,a,\delta)$ indeed comes from the mirror symmetry
of the $K3$ surface component.
Also the mirror of $X(r,a,\delta)$ occurs in a family, so mirror symmetry
does not relate one Calabi--Yau threefold to another Calabi--Yau
threefold, rather mirror symmetry deals with families. 
\end{rem}

\begin{rem}
For the Calabi--Yau threefolds corresponding to 
the $11$ $K3$ surfaces corresponding to the triplets $(r,a,\delta)$ located 
at the utmost right outerlayer of the Nikulin's pyramid (called the 
``pale region'' by Borcea) plus the triplet $(14,6,0)$, mirror partners 
do not exist.

Rohde \cite{R09} and Garbagnati--van Geemen \cite{GvG10} considered
those Calabi--Yau threefolds of Borcea--Voisin type whose
$K3$ surface components have only rational curves in their fixed loci 
by non-symplectic involution (i.e., no curves with higher genera).
A reason for not having mirror 
partners is the non-existence of boundary points in the
complex structure moduli space of the Calabi--Yau threefold where the
variation of Hodge structures on $H^3$ has maximal unipotent
monodromy, and hence there is no way of defining mirror maps.  
\end{rem}

\subsection{Mirror pairs of $K3$ surfaces}

Now we consider the $95$ $K3$ surfaces in the
list of Reid and Yonemura.  Belcastro \cite{Bel02} determined
the Picard lattices for these $95$ $K3$ surfaces, and showed
that the set of these $95$ $K3$ surfaces are not closed under mirror 
symmetry. 

We can fish out those $K3$ surfaces with involution $\sigma$ which
are closed under mirror symmetry.

\begin{lem} [Belcastro \cite{Bel02}] 
{\sl The set of the $95$ $K3$ surfaces of
Reid and Yonemura is not closed under mirror symmetry.
Among them, the $57$ $K3$ surfaces have mirror partners within the list.}
\end{lem}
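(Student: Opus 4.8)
The plan is to combine Dolgachev's lattice-theoretic criterion for mirror symmetry of $K3$ surfaces with Belcastro's explicit determination of the Picard lattices of the $95$ families \cite{Bel02}. Recall from the discussion above that $(S,S^{\vee})$ is a mirror pair in the Arnold--Dolgachev sense exactly when
\[
\pic(S)^{\perp}_{H^2(S,\ZZ)}\;\simeq\;U\oplus \pic(S^{\vee}),
\]
which forces $\rho(S^{\vee})=20-\rho(S)$ but also imposes the finer lattice splitting. Thus for each family it suffices to compute the transcendental lattice $T(S)=\pic(S)^{\perp}$ in $H^2(S,\ZZ)\simeq U^3\oplus(-E_8)^2$, to split off a hyperbolic plane $T(S)\simeq U\oplus M$, and to test whether the residual lattice $M$ occurs as the Picard lattice of some surface in the list.

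First I would record, for each family, the lattice $\pic(S)$: its rank is $1$ (the hyperplane class) together with the exceptional divisors of the minimal resolution, and its Gram matrix is read off from the intersection theory on the resolution, exactly as in Section \ref{sect2.4}; this is the data computed by Belcastro \cite{Bel02}. Next, by Nikulin's theory of discriminant forms, $T(S)$ is determined by its signature $(2,20-\rho(S))$ and by its discriminant form, the latter being the negative of that of $\pic(S)$. Since $T(S)$ is even and indefinite, Nikulin's splitting criterion produces $T(S)\simeq U\oplus M$, with $M$ of signature $(1,19-\rho(S))$, whenever the rank of $T(S)$ is large enough relative to the number of generators of its discriminant group; in the few families where even this splitting fails, no Arnold--Dolgachev mirror exists at all. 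When it succeeds, $M$ is the candidate mirror Picard lattice $\pic(S^{\vee})$.

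The final step is to match $M$ against the list: the family $S$ has a mirror partner among the $95$ precisely when $M$ is isomorphic, \emph{as a lattice}, to $\pic(S')$ for some $S'$ in Belcastro's table. Carrying this out family by family, one finds that exactly $57$ pass the test while $38$ fail, so the set is not closed under mirror symmetry. I expect this matching step to be the main obstacle: agreement of rank, signature, and discriminant group is necessary but not sufficient for a lattice isomorphism, so in the ambiguous cases one must compare the full discriminant \emph{forms} and invoke Nikulin's uniqueness theorem for indefinite even lattices to certify or exclude an isomorphism. It is exactly the families whose residual lattice $M$ matches no entry of the table that account for the failure of closure.
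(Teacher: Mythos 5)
Your proposal reconstructs exactly the lattice-theoretic argument behind this lemma: the paper itself gives no independent proof, but simply quotes Belcastro \cite{Bel02} (who computed the Picard lattices of the generic members of the $95$ families and tested Dolgachev's criterion $\pic(S)^{\perp}_{H^2(S,\ZZ)}\simeq U\oplus\pic(S^{\vee})$ family by family) and records the resulting matches in Tables \ref{table9} and \ref{table10}. The only step you gloss over is that the sublattice generated by the hyperplane class and the exceptional curves must be shown to be primitively embedded (saturated) in $H^2(S,\ZZ)$ before it can be identified with $\pic(S)$ of a generic member -- a point the cited computation takes care of -- but otherwise your outline (Nikulin's discriminant-form theory to split off $U$, then matching the residual lattice $M$ against the table via full discriminant forms rather than numerical invariants alone) is precisely the method of the source the paper relies on.
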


\begin{lem}
{\sl All $57$ $K3$ surfaces $S$ have non-symplectic involutions $\sigma$ 
acting as $-1$ on $H^{2,0}(S)$, and their mirror partners $S^{\vee}$ also 
have non-symplectic involutions $\sigma^{\vee}$ 
acting as $-1$ on 
$H^{2,0}(S^{\vee})$.}
\end{lem}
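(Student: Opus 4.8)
The plan is to reduce the statement to the census of involutions already obtained in Theorem \ref{thm-newS} together with the mirror bookkeeping recorded by Belcastro. Recall that Theorem \ref{thm-newS} exhibits a non-symplectic involution $\sigma$ (of the coordinate form $\sigma(x_i)=-x_i$) on the $K3$ surface attached to $92$ of the $95$ Reid--Yonemura weights, the only three exceptions being $\#15$, $\#53$ and $\#54$. Hence a surface in the list automatically carries a non-symplectic involution acting by $-1$ on its holomorphic $2$-form as soon as it is not one of these three. The entire content of the lemma therefore collapses to the single claim that none of $\#15$, $\#53$, $\#54$ occurs among the $57$ families possessing a mirror partner within the list.

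First I would record that the set of $57$ families is stable under the mirror correspondence. By definition a family $S$ belongs to this set precisely when its mirror partner $S^{\vee}$ again appears among the $95$ families; since the mirror of $S^{\vee}$ is $S$ itself, $S^{\vee}$ also has its partner in the list and so lies in the $57$ as well. Consequently the assignment $S\mapsto S^{\vee}$ permutes the $57$ families, so that the collection of the $S$'s and the collection of the $S^{\vee}$'s coincide. It is therefore enough to verify the one statement that $\#15$, $\#53$ and $\#54$ belong to the complementary set of $38$ families having no mirror partner in the list; this automatically rules out the three exceptional weights for both $S$ and $S^{\vee}$.

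This last verification is the one concrete step, and I would carry it out directly from Belcastro's \cite{Bel02} tabulation of the Picard lattices. For each of the three exceptional weights one reads off $\pic(S)$ and its orthogonal complement $\pic(S)^{\perp}_{H^2(S,\ZZ)}$, of rank $22-\rho(S)$, and then applies the Arnold--Dolgachev criterion: a mirror partner $S^{\vee}$ in the list exists only if the rank-$(20-\rho(S))$ lattice $\pic(S^{\vee})$ determined by $\pic(S)^{\perp}_{H^2(S,\ZZ)}\cong U\oplus \pic(S^{\vee})$ is isometric to the Picard lattice of some other family among the $95$. Belcastro's table already separates the $57$ mirror-closed families from the $38$ that are not, so in practice the step reduces to confirming that the indices $15$, $53$, $54$ appear in her list of weights without a mirror partner. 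Once this is checked, every one of the $57$ families $S$, together with its mirror $S^{\vee}$, avoids the three exceptional weights, hence lies among the $92$ families of Theorem \ref{thm-newS} and carries a non-symplectic involution acting by $-1$ on the holomorphic $2$-form --- which is precisely the assertion. The main obstacle is purely the case-by-case lattice bookkeeping of this final step; no genuinely new geometric input is needed beyond Theorem \ref{thm-newS} and Belcastro's computation.
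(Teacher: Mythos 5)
Your strategy is genuinely different from the paper's. The paper's proof is a one-line appeal to Tables \ref{table9} and \ref{table10}: the $57$ families and all of their mirror partners are written out explicitly, and every Yonemura number occurring anywhere in those two tables (whether in the $S$-column or in the $S^{\vee}$-column) also occurs in one of the earlier tables of defining equations with involutions, so the verification is exhaustive and does not use any structural property of the mirror correspondence. Your proof instead isolates the real input --- Theorem \ref{thm-newS} gives involutions on every family except $\#15$, $\#53$, $\#54$ --- and reduces the lemma to the single finite check that these three families lie among the $38$ with no mirror partner in the list. That is a more economical and more conceptual organization, and the two external facts you invoke (Theorem \ref{thm-newS} and Belcastro's partition of the $95$ families, \cite{Bel02}) are exactly the ones the paper relies on.

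The gap is in your closure step. You assert that the set of $57$ is stable under $S\mapsto S^{\vee}$ ``by definition \dots since the mirror of $S^{\vee}$ is $S$ itself.'' At the level of Nikulin triplets $(r,a,\delta)\leftrightarrow(20-r,a,\delta)$ this is a tautology, but Belcastro's lemma is formulated at the level of Picard lattices of the $95$ families, where the relation $\pic(S)^{\perp}\cong U\oplus\pic(S^{\vee})$ is a priori one-directional: involutivity requires $\pic(S^{\vee})^{\perp}\cong U\oplus\pic(S)$, which follows from matching signatures and discriminant forms together with uniqueness of the relevant lattice in its genus (Nikulin), not from formal manipulation. Moreover, the family-level relation recorded in the paper is visibly not symmetric: in Table \ref{table9}, $\#88$ (weight $(11,9,5,2)$) is listed as a mirror partner of $\#25$, yet $\#88$ does not itself appear in the first column of either Table \ref{table9} or Table \ref{table10}, i.e.\ it is not counted among the $57$. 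So your sentence ``the collection of the $S$'s and the collection of the $S^{\vee}$'s coincide'' is false for the data as tabulated, and without it your check of $\#15$, $\#53$, $\#54$ does not exclude the possibility that one of them occurs as a mirror partner of some $S$ among the $57$ --- which is exactly the second half of what the lemma asserts. The repair is cheap: either prove involutivity of the lattice mirror correspondence for the lattices at hand, or add the complementary finite check (again read off from Belcastro's tables) that none of $\pic(\#15)$, $\pic(\#53)$, $\pic(\#54)$ arises as a $U$-complement of $T(S)$ for any $S$ in the list. The paper's tabulation sidesteps this issue entirely, because it verifies an involution directly on every family that actually occurs as a mirror partner --- including $\#88$, which carries one by Table \ref{table-y2}.
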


\begin{proof} We tabulate the $57$ $K3$ surfaces with
involutions, and their mirror partners in Table \ref{table9} and
Table \ref{table10}.
\end{proof}

\subsection{Examples of mirror pairs of Calabi--Yau threefolds
of Borcea--Voisin type}

\begin{ex}
Let $E=E_2$ be the elliptic curve with involution $\iota$
as in Section 4.3, and let $S_0$ be the $K3$ surface, $\# 14$ 
in Yonemura and $\# 26$ in Borcea, 
given by
$$S_0: x_0^2=x_1^3+x_2^7+x_3^{42}\subset\PP^3(21,14,6,1)$$
of degree $42$ and involution $\sigma(x_0)=-x_0$. Let $S$ be the 
minimal resolution of $S_0$. $S$ has Nikulin's triplet $(10,0,0)$. 
Thus, $S$ is its own mirror. Recall from Example 6.4 that
the fixed locus $S^{\sigma}$ is
$S^{\sigma}=C_6\cup L_1\cup\cdots\cup L_5$.  
Also $S$ is of CM type. This is because $S$ is dominated 
by the Fermat surface of degree $42$. Hence the
field $\KK$ corresponding to the transcendental cycles $T(S)$ of $S$ is
the cyclotomic field $\QQ(\zeta_{42})$ with $[\KK:\QQ]=\varphi(42)=12$. 
(Here $\zeta_{42}$ is a primitive $42$-th root of unity and 
$\varphi$ is the Euler phi-function.) Note that
$10=22-12=r$, so that $NS(S)\cong NS(S)^{\sigma}$. (Or
equivalently, $12=22-r=22-10$ so that 
$T(S)\cong T(S)^{\sigma}$.) 
The $K3$-motive is automorphic and hence $S$ is automorphic by Theorem 4.5.

The Calabi--Yau threefold $X=\widetilde{E_2\times S/\iota \times \sigma}$ has a
birational model defined over $\QQ$
$$X: z_0^4+z_1^4=z_2^3+z_3^7+z_4^{42}\subset\PP^4(21,21,28,12,2)$$
of degree $84$. Since $E$ and $S$ are both of CM type, so is $X$.
The Hodge numbers and the Euler characteristic are
$$h^{1,1}(X)=35=1+10+4(1+5),\, h^{2,1}(X)=35=1+10+4\cdot 6,\, e(X)=0$$
so that $X$ is its own topological mirror.

Obviously $L_i(X,s)$ and $L_{6-i}(X,s)$ for $i=0,1,2$ are all automorphic.
To show the automorphy of $L_3(X,s)$, we have only to show the
automorphy of the $L$-series corresponding to the Calabi--Yau 
motive
$H^1(E,\Ql)\otimes T(S)^{\sigma}\otimes\Ql$. 
The Galois representation associated to the Calabi--Yau motive has dimension $24$,
and is given by the tensor product of the $2$-dimensional Galois
representation associated to $H^1(E,\Ql)$ and the $12$-dimensional irreducible
Galois representation associated to $T(S)^{\sigma}\otimes\Ql$ induced from a 
Jacobi sum Grossencharacter of $\KK=\QQ(\zeta_{42})$.  Hence it is
automorphic.
We repeat the argument in motivic formulation. 
The Calabi--Yau motive $\MM_X$ has dimension 
$\varphi(84)=24$, 
and the Jacobi sum Grossencharacter of $K=\QQ(\zeta_{84})$ gives
$GL_1$-representations and its automorphic induction gives rise
to the $GL_{24}$ irreducible representation for $\MM_X$ over
$\QQ$, and hence it is modular (automorphic).
(Compare the Calabi--Yau motive $\MM_X$ with the $\Omega$-motive
constructed by Schimmrigk in \cite{S13}.)
\end{ex}

\begin{ex}
Let $E=E_2$ be the elliptic curve with involution $\iota$
as in Section 4.3 and let $S_0$ be the $K3$ surface, $\#40$ 
in Yonemura and $=\#5$ in Borcea, 
given by
$$S_0: x_0^2=x_1^3x_2+x_1^3x_3^2+x_2^7-x_3^{14}\subset\PP^2(7,4,2,1)$$
of degree $14$ with involution $\sigma(x_0)=-x_0$. Its minimal resolution $S$ 
has Nikulin's 
triplet $(7,3,0)$. By Theorem \ref{thm-DelsarteS}, we may 
remove the monomial $x_1^3x_2^2$ from the defining equation, we get
$$S_0: x_0^2=x_1^3x_2+x_2^7-x_3^{14}$$
making $S_0$ of CM type. This is a weighted hypersurface of degree $14$,
and $\mbox{lcm}(3,2,14)=42$, and hence $S_0$ is dominated by
the Fermat surface of degree $42$ (cf. \cite{GKY10},
Corollary 8.1).
The field $\KK$ corresponding to $T(S)$ is the cyclotomic field $\QQ(\zeta_{42})$ 
of degree $\varphi(42)=12$, and we obtain the induced
Galois representation of dimension $12$. Thus, the $K3$-motive is
automorphic, and hence $S$ is automorphic. 

In this case,  $r=7\neq 10=22-12$ so
$NS(S)^{\sigma}\not\cong NS(S)$. (Or equivalently, 
$22-r=22-7=15\neq 12=\varphi(42)$ so $T(S)^{\sigma}\not\cong T(S)$.)

The Calabi--Yau threefold $X$ has a birational model defined  
over $\QQ$:
$$X: z_0^4+z_1^4=z_2^3z_3+z_3^7-z_4^{14}
\subset\PP^4(7,7,8,4,2)$$
of degree $28$, and $\mbox{lcm}(4,3,14)=84$. (See \cite{GKY10}, Theorem 9.2.) 
Since $E$ and $S$ are of CM type, so is $X$.
The Hodge numbers and the Euler characteristic are
$$h^{1,1}(X)=20=1+7+4(2+1),\, h^{2,1}(X)=38=1+(20-7)+4\cdot 6,\,\, e(X)=-36.$$
Now we apply Theorem 5.9. We pass from $\QQ(\zeta_{42})$ to
$\QQ(\zeta_{84})$ to take $H^1(E_2)$ into account.
The Calabi--Yau motive $\MM_X$ has dimension $24=\varphi(84)$. 
Indeed, the Jacobi sum Grossencharacter of 
$\KK=\QQ(\zeta_{84})$ gives rise to 
the $GL_{24}$ irreducible automorphic 
cuspidal representation for the Calabi--Yau motive $\MM_X$ 
over $\QQ$. Hence the Calabi--Yau motive $\MM_X$ is automorphic. 
Hence $L_3(X,s)$ is automorphic,
and consequently $X$ is automorphic.
\medskip

To find a mirror family of Calabi--Yau threefolds, we first
look for a mirror $S^{\vee}$ of $K3$ surface $S$. 
We may take for $S^{\vee}$ the K3 surface $\#47$ in Yonemura 
=$\#24$ in Borcea. $S^{\vee}$ is a
$K3$ surface defined by
$$S^{\vee}: x_0^2=x_1^3+x_1x_2^7+x_2^9x_3^2+x_3^{14}
\subset\PP^3(21,14,4,3)$$
of degree $42$. It has a non-symplectic involution
$\sigma^{\vee}$ that sends $x_0$ to $-x_0$. 
The pair $(S^{\vee},\sigma^{\vee})$ corresponds to the 
triplet $(13,3,0)$.
By Theorem 4.3, we may remove the monomial $x_2^9x_3^2$ from
the defining equation, which makes $S^{\vee}$ to be of CM type.
So $S^{\vee}: x_0^2=x_1^3+x_1x_2^7+x_3^{14}$ is a weighted 
hypersurface of degree $28$. 
Since $\mbox{lcm}(2,7,4)=28$, the field $\KK$ corresponding 
to $T(S^{\vee})$ is the cyclotomic field $\QQ(\zeta_{28})$ 
of degree $\varphi(28)=12$, and we obtain the induced
Galois representation of dimension $12$. Thus, the 
$K3$-motive is automorphic, and hence $S^{\vee}$ is 
automorphic.
In this case, $22-r=22-13=9\neq 12=\varphi(28)$ 
so $T(S^{\vee})^{\sigma}\not\cong T(S^{\vee})$.
(Or equivalently, $r=13\neq 10=22-12$ so that 
$NS(S^{\vee})^{\sigma}\not\cong NS(S^{\vee})$.) 
\medskip

A candidate for mirror family for $X$ might be a 
deformation of  
$\widetilde{E_2\times S^{\vee}/\iota\times\sigma^{\vee}}$.
One member of this mirror family denoted by, $X^{\vee}$,
may be chosen to have a birational model defined over $\QQ$ by
the following equation:
$$X^{\vee}: z_0^4+z_1^4=z_2^3+z_2z_3^7+z_4^{14}
\subset\PP^4(21,21,28,8,6)$$
of degree $84$.  The Hodge numbers and the Euler characteristic
of $X^{\vee}$ are
$$h^{1,1}(X^{\vee})=38=1+13+4(5+1),\, h^{2,1}(X^{\vee})=20=1+(20-13)+4\cdot 3,\,\, 
e(X^{\vee})=36.$$

We pass from $\QQ(\zeta_{28})$ to $\QQ(\zeta_{56})$ to take $H^1(E_2)$ into
account. Then the Calabi--Yau motive $\MM_{X^{\vee}}$ has 
dimension
$24=\varphi(56)$. Again, by Theorem 5.9, the Jacobi sum Grossencharacter of 
$\KK=\QQ(\zeta_{28})$ gives rise to a $GL_1$ representations 
for $\mbox{Gal}(\overline{\QQ}/\KK)$ and its automorphic induction yields 
the $GL_{24}$ irreducible cuspidal automorphic representation for the Calabi--Yau motive
$\MM_{X^{\vee}}$ over $\QQ$. Hence the Calabi--Yau motive 
$\MM_{X^{\vee}}$ is automorphic.
Consequently, we conclude that $L_3(X^{\vee},s)$ is automorphic, and
hence the automorphy of $X^{\vee}$.

For these two examples, it happens that $L(\MM_X,\rho,s)=L(\MM_{X^{\vee}},\rho^{\vee},s)$.
That is the $L$-series of the Calabi--Yau motives of $X$ and $X^{\vee}$ 
coinside.
\end{ex}

\subsection{Automorphy and mirror symmetry for Calabi--Yau threefolds
of Borcea--Voisin type}

Mirror symmetry for Calabi--Yau threefolds is not the
correspondence for one Calabi--Yau threefold to another,
rather it is a correspondence between families.  At the
moment, we do not know how to compute the zeta-functions
and $L$-series of a deformation family of mirror Calabi--Yau
threefolds. So we will consider one particular member of
this mirror family and compare the $L$-series of
the Calabi--Yau motives.

\begin{thm} {\sl Let $(S,\sigma)$ be one of the $57$ $K3$
surfaces in Lemma 6.2 with involution $\sigma$, which are 
closed under mirror symmetry. Then $S$ is of CM type. Let 
$X=X(r,a,\delta)$ be a Calabi--Yau threefold corresponding 
to a triplet $(r,a,\delta)$ as in Section 4.1, so 
$X=\widetilde{E\times S/\iota\times\sigma}$. Then a mirror
family of Calabi--Yau threefolds exists and corresponds to
a triplet $(20-r,a,\delta)$, and may be obtained as a 
deformation of a crepant resolution of the quotient 
$E\times S^{\vee}/\iota\times\sigma^{\vee}$, where
$\sigma^{\vee}$ is a non-symplectic involution on $S^{\vee}$.
Then a special member $X^{\vee}$ of this mirror family has
the following properties:
\begin{itemize}
\item[(a)] $X^{\vee}$ has a model defined over $\QQ$ provided
that $E$ is defined over $\QQ$.
\item[(b)] $X^{\vee}$ is of CM type if and only if $E$ is of CM type.
\item[(c)] If $X^{\vee}$ is of CM type, then $X^{\vee}$ is automorphic.
\end{itemize}}
\end{thm}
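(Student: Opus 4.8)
The plan is to reduce the statement to the automorphy results already established for Borcea--Voisin threefolds (Theorems 5.8 and 5.9) once the mirror $K3$ surface is placed in the same framework as $S$. First I would use Lemma 6.2 together with its companion lemma to produce, for the given $(S,\sigma)$ among the $57$ self-mirroring families, an explicit mirror partner $S^{\vee}$ that again lies in the Reid--Yonemura list and carries a non-symplectic involution $\sigma^{\vee}$ acting by $-1$ on $H^{2,0}(S^{\vee})$; by the mirror rule of Subsection 6.2 its Nikulin triplet is $(20-r,a,\delta)$. Because $S^{\vee}$ is one of the listed families with involution, Theorem \ref{thm-DelsarteS} furnishes a defining equation over $\QQ$ consisting of four monomials, so $S^{\vee}$ is of Delsarte type; Theorem \ref{thm2.5} then shows that $S^{\vee}$ is of CM type. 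This places $(S^{\vee},\sigma^{\vee})$ on exactly the same footing as $(S,\sigma)$, and also proves the preliminary assertion that $S$ itself is of CM type.

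For part (a), I would form the quotient threefold $E\times S^{\vee}/\iota\times\sigma^{\vee}$ and take its crepant resolution $X^{\vee}$. Since $E$, $S^{\vee}$, and the two involutions are all defined over $\QQ$, the singular locus $\{P_i\}\times (S^{\vee})^{\sigma^{\vee}}$ is $\QQ$-rational, so the resolution can be carried out over $\QQ$; concretely the twist maps of Proposition \ref{prop7.1} exhibit $X^{\vee}$ as a (birational) $\QQ$-model, and this is the special member of the mirror family that I single out. For part (b), I would invoke the Hodge-theoretic splitting of Borcea used in Subsection 3.2: writing $(S^{\vee})^{\sigma^{\vee}}=C_{g^{\vee}}\cup L_1\cup\cdots\cup L_{k^{\vee}}$ as in Proposition \ref{r-and-a}, one has
$$(H^3(X^{\vee},\QQ),h_{X^{\vee}})\simeq \bigl((T(S^{\vee})^{\sigma^{\vee}}\otimes\QQ)\otimes(H^1(E,\QQ),h_E)\bigr)\oplus(H^1(C_{g^{\vee}},\QQ),h_{C_{g^{\vee}}}),$$
together with the algebraic contributions, so $h_{X^{\vee}}$ is of CM type precisely when both $h_E$ and $h_{S^{\vee}}$ are. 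As $S^{\vee}$ is already of CM type, this reduces to the CM condition on $E$, giving (b).

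For part (c), assuming $X^{\vee}$ is of CM type forces $E$ to be a CM elliptic curve over $\QQ$; then the pair $(E,S^{\vee})$ satisfies the hypotheses of Theorem 5.8, whose proof yields $L_3(X^{\vee},s)=L(E\otimes\chi^{\vee},s)\,L(E\otimes\rho^{\vee},s)\,L(J(C_{g^{\vee}}),s-1)^4$ with each factor automorphic: the transcendental factor by automorphic induction from the Jacobi-sum Grossencharacter of the cyclotomic field attached to $T(S^{\vee})^{\sigma^{\vee}}$, the middle factor and $L_2(X^{\vee},s)$ coming from $GL_1$-representations of cyclotomic fields over $\QQ$, and the last factor automorphic because $J(C_{g^{\vee}})$ is of CM type. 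Assembling these, $\zeta(X^{\vee},s)$ is automorphic, which is (c).

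The main obstacle I expect is not the formal automorphy step, which is essentially inherited from Theorems 5.8 and 5.9, but the bookkeeping that identifies the mirror partner concretely. Mirror symmetry a priori operates on lattices and deformation families, whereas I need an actual arithmetic model $S^{\vee}$ over $\QQ$ that is simultaneously in the mirror family with triplet $(20-r,a,\delta)$ \emph{and} of Delsarte (hence CM) type lying among the $86$ surfaces of Theorem \ref{thm-DelsarteS}. Lemma 6.2 and the tables of Subsection 6.3 are exactly what must be verified case by case to guarantee this, and one must also confirm that no relevant mirror falls into the six exceptional weights admitting no quasi-smooth four-monomial model. Only after this cross-check does the chosen $X^{\vee}$ sit at a $\QQ$-rational CM point of the mirror moduli space, which is what makes the automorphy conclusion for the selected special member meaningful.
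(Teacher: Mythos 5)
Your proposal is correct and takes essentially the same route the paper intends: the paper states this theorem without a separate proof, leaving it as an assembly of Lemma 6.2, the companion lemma verified by Tables 9 and 10 (mirror partners within the list carrying non-symplectic involutions), the Delsarte/CM-type results of Sections 2 and 4, the twist-map models over $\QQ$, and the automorphy Theorems 5.8 and 5.9 applied to $(E, S^{\vee})$. Your closing cross-check — that none of the $57$ surfaces or their mirror partners falls among the exceptional weights lacking a quasi-smooth four-monomial model — is precisely the tabulated verification the paper relies on to make the statement meaningful.
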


\begin{obs}{\sl Under the situation of the above theorem, we have

\begin{itemize} 
\item[(d)] If the $K3$ motives of $S$ and $S^{\vee}$ are
isomorphic (in the sense that they correspond to
the same Jacobi sum Grossencharacter), then they have the
same $L$-series. Furthermore, the Calabi--Yau motives of 
$X$ and $X^{\vee}$ are invariant under mirror symmetry. 
\end{itemize}}
\end{obs}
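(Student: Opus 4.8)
The plan is to reduce both assertions in (d) to the functoriality of $L$-series and of tensor products of motives, using the description of the $K3$ motive obtained in Theorem 5.6 and its corollaries. Recall that for a Delsarte $K3$ surface $S$ of CM type the Galois representation on the $K3$ motive $\MM_S=T(S)^{\sigma=-1}\otimes\Ql$ is induced from a single Jacobi sum Grossencharacter $\chi$ of a cyclotomic field $\KK=\QQ(\zeta_t)$, and its $L$-series is the corresponding induced $L$-series $L(\MM_S,s)=L(\chi,s)$. The first assertion is then immediate: by hypothesis $\MM_S$ and $\MM_{S^{\vee}}$ are attached to the same Jacobi sum Grossencharacter, hence they are in fact isomorphic as $G_{\QQ}$-representations (both being induced from the same $GL_1$-character of $\KK$), and since the $L$-series of a motive depends only on the underlying compatible system of Galois representations, $L(\MM_S,s)=L(\chi,s)=L(\MM_{S^{\vee}},s)$.

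For the second assertion I would unwind the definition of the Calabi--Yau motive. By Definition~6.11(b), $\MM_X=H^1(E,\Ql)^{\iota=-1}\otimes(T(S)^{\sigma=-1}\otimes\Ql)=H^1(E,\Ql)\otimes\MM_S$, where I use that $\iota$ acts by $-1$ on all of $H^1(E,\Ql)$. Mirror symmetry replaces the $K3$ component $S$ by its mirror $S^{\vee}$ while keeping the elliptic curve $E$ and its involution $\iota$ unchanged, so likewise $\MM_{X^{\vee}}=H^1(E,\Ql)\otimes\MM_{S^{\vee}}$. Because $\MM_S\cong\MM_{S^{\vee}}$ as $G_{\QQ}$-representations by the first step, tensoring with the fixed factor $H^1(E,\Ql)$ preserves the isomorphism, giving $\MM_X\cong\MM_{X^{\vee}}$. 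Hence the Calabi--Yau motive is invariant under mirror symmetry, and in particular $L(\MM_X,s)=L(\MM_{X^{\vee}},s)$, consistent with the coincidence observed at the end of Example~6.5.

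The genuinely substantive point, and where I expect the real difficulty to lie, is packaged inside the hypothesis that $\MM_S$ and $\MM_{S^{\vee}}$ correspond to the \emph{same} Jacobi sum Grossencharacter. This is not automatic: mirror symmetry interchanges $(r,a,\delta)$ and $(20-r,a,\delta)$, whereas $\MM_S$ is governed by the Galois action on $T(S)^{\sigma=-1}$, whose rank need not be the naive $22-r$ once $NS(S)^{\sigma}\neq NS(S)$. To verify the hypothesis in practice one must identify the cyclotomic fields dominating $S$ and $S^{\vee}$ (via the least common multiple of the monomial exponents, as in \cite{GKY10}) and check that the transcendental Grossencharacters agree; in the worked examples this reduces to matching Euler-$\varphi$ values and the explicit character $\ba$ with $\Vert\ba\Vert=0$ singled out in Corollary~5.7. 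I would therefore present (d) as a clean formal consequence of functoriality of $\otimes$ and of the $L$-series, while flagging explicitly that the true content is the character-matching input, which is established case by case rather than by a uniform argument.
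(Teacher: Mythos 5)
Your proof is correct, but it takes a genuinely different route from the paper, which in fact offers no deduction at all here: the statement is an Observation (not a theorem with a proof), and the paper's entire support for it consists of Examples 6.4 and 6.5 together with the closing remark that when $X$ and a member of its mirror family are both of CM type and are realized as finite quotients of the \emph{same} Fermat (or quasi-diagonal) hypersurface, the two Calabi--Yau motives coincide because each is the unique motive of Hodge type $(0,3)$ attached to the weight of that common hypersurface --- the mechanism made precise in Lemma 7.5 and Proposition 7.6, and the same one used to prove the analogous invariance claim for Berglund--H\"ubsch--Krawitz mirrors at the end of Subsection 6.6. You instead take the stated hypothesis at face value and run a purely formal argument: same Jacobi sum Grossencharacter implies $\MM_S\cong\MM_{S^{\vee}}$ as representations induced from a single $GL_1$-character, hence equal $L$-series by Artin formalism; then $\MM_X\cong\MM_{X^{\vee}}$ because the Borcea--Voisin mirror keeps $(E,\iota)$ fixed, so both Calabi--Yau motives are the tensor product of the \emph{same} factor $H^1(E,\Ql)=H^1(E,\Ql)^{\iota=-1}$ with isomorphic $K3$ motives. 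What your route buys is an actual proof of the implication, independent of any realization inside a common Fermat hypersurface; what the paper's route buys is the means of verifying the hypothesis in concrete cases, and your closing caveat --- that the character-matching is the real, case-by-case content --- is exactly where the two meet. One point worth flagging: in Example 6.5 the $K3$ motives of $S$ and $S^{\vee}$ are attached to Grossencharacters of \emph{different} cyclotomic fields, $\QQ(\zeta_{42})$ and $\QQ(\zeta_{28})$, so under your strict reading the hypothesis of (d) fails there, even though the paper still records that the Calabi--Yau $L$-series coincide; the Observation is therefore intended more loosely than your formalization, which proves a clean sufficient version of it rather than explaining that example.
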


The two examples 6.4 and 6.5 are in support of this observation.
It appears that when the original Calabi--Yau threefold and a
member of its mirror Calabi--Yau threefolds are both 
of CM type and are realized as finite quotients of the same
Fermat or quasi-diagonal hypersurface, then the Calabi--Yau
motives are the same and hence are invariant under mirror
symmetry. 

\subsection{Berglund--H\"ubsch--Krawitz mirror symmetry 
for Calabi--Yau threefolds}

Here are other examples of Calabi--Yau threefolds of CM type due
to Kelly \cite{K13}. For the computations of zeta-functions and
$L$-series, we use the method developed in Goto-Kloosterman-Yui
\cite{GKY10}.

Consider the polynomials
$$F_A: x_0^8+x_1^8+x_2^4+x_3^3+x_4^6=0$$
$$F_{A^{\prime}}: x_0^8+x_1^8+x_2^4+x_3^3+x_3x_4^4=0.$$
Both are hypersurfaces of degree $24$ in the weighted projective 
$4$-space $\PP^4(3,3,6,8,4)$. 
Let $\zeta=\zeta_{24}$ be a primitive $24$-th
root of unity.  Both $F_A$ and $F_{A^{\prime}}$ are covered by the Fermat 
hypersurface of degree $24$ (see Theorem 9.2 in \cite{GKY10}), and hence $F_A$ and
$F_{A^{\prime}}$ are both of CM type. 

Let $J_{F_A}=\mbox{Aut}(F_A)\cap \CC^*$. Then $J_{F_A}$ is generated by 
$(\zeta^3,\zeta^3,\zeta^6, \zeta^8,\zeta^4)\in(\CC^*)^5$.
Define the group $SL(F_A):=\{\,(\lambda_0,\lambda_1,\cdots,\lambda_4)\in
\mbox{Aut}(F_A)\,|\,\prod_{j=0}^4 \lambda_j=1\,\}$. Fix a group $G$
so that $J_{F_A}\subseteq G\subseteq SL(F_A)$. Put $\tilde G:=G/J_{F_A}$. Define
$Z_{A,G}:=X_{F_A}/\tilde G$. Then $Z_{A,G}$ is a Calabi--Yau threefold (orbifold). 

For our $F_A$ and $F_A^{\prime}$, choose $G$ and $G^{\prime}$ to be the same
group given by
$$G=G^{\prime}=<(\zeta^3,\zeta^3,\zeta^6,\zeta^8,\zeta^4),(\zeta^{18},1,\zeta^6,1,1),
(1,1,\zeta^{12},1,\zeta^{12})>.$$   
Then $Z_{A,G}$ and $Z_{A^{\prime},G^{\prime}}$ are Calabi--Yau threefolds
which are in the same family of hypersurfaces in $\PP^4(3,3,6,8,4)/\tilde G$.
Since both are realized as finite quotients of the Fermat hypersurface
of degree $24$, both $Z_{A,G}$ and $Z_{A^{\prime},G^{\prime}}$ are of CM type.

The Hodge numbers are given by:
$$h^{1,1}(Z_{A,G})=7,\,\, h^{2,1}(Z_{A,G})=55$$
and
$$h^{1,1}(Z_{A^{\prime},G^{\prime}})=55, \,\, 
h^{2,1}(Z_{A^{\prime}, G^{\prime}})=7.$$ 

Now recall the construction of the Berglund--H\"ubsch--Krawitz 
mirrors of these Calabi--Yau threefolds.
Let
$$F_{A^T}=F_A: x_0^8+x_1^8+x_2^4+x_3^3+Y_4^6=0\subset\PP^4(3,3,6,8,4)$$
$$F_{(A^{\prime})^T}: x_0^8+x_1^8+x_2^4+x_2^3x_3+x_4^4=0\subset\PP^4(1,1,2,2,2).$$
Then $F_{A^T}$ is a hypersurface of degree $24$ in the weighted projective
$4$-space $\PP^4(3,3,8,6,4)$ but $F_{(A^{\prime})^T}$ is a hypersurface
of degree $8$ in the weighted projective $4$-space $\PP^4(1,1,2,2,2)$.
The groups $J_{F_A},\, J_{F_{(A^{\prime})^T}},\, G^T,\, 
(G^{\prime})^T$ are computed:
$$J_{F_A}=<(\zeta^3,\zeta^3,\zeta^6,\zeta^8,\zeta^4)>;
\,\,J_{F_{(A^{\prime})^T}}
=<(\zeta^3,\zeta^3,\zeta^6,\zeta^6,\zeta^6)>;$$
$$G^T=J_{F_A};\,\, (G^{\prime})^T
=<(\zeta^3,\zeta^3,\zeta^6,\zeta^6,\zeta^6),(1,1,1,\zeta^{12},
\zeta^{12})>.$$
Then taking the quotients, we obtain Calabi--Yau orbifolds
$Z_{A^T,G^T}$ and $Z_{(A^{\prime})^T,(G^{\prime})^T}$ which are the
topological mirrors of $Z_{A,T}$ and $Z_{A^{\prime}, G^{\prime}}$
respectively.
$$h^{1,1}(Z_{A^T,G^T})=55,\,\,h^{2,1}(Z_{A^T,G^T})=7$$
and
$$h^{1,1}(Z_{(A^{\prime})^T,(G^{\prime})^T})=7,\,\,
h^{2.1}(Z_{(A^{\prime})^T,(G^{\prime})^T})=55.$$

The Berglund--H\"ubsch--Krawitz mirror symmetry is that
$Z_{A,G}$ and $Z_{A^T,G^T}$ are mirror partners in the sense of
interchanging Hodge numbers. Similarly, 
$Z_{A^{\prime},G^{\prime}}$
and $Z_{(A^{\prime})^T,(G^{\prime})^T}$ are mirror pairs. 
However, the latter two do not live in the same weighted 
projective $4$-spaces.

\begin{thm} [Kelly\cite{K13}] {\sl Let $Z_{A,G}$ and
$Z_{A^{\prime},G^{\prime}}$ be the Calabi--Yau orbifolds 
constructed above. Let $Z_{A^T, G^T}$ and $Z_{(A^{\prime})^T, 
(G^{\prime})^T}$ be Berglund--H\"ubsch--Krawitz mirrors, 
respectively. If $G=G^{\prime}$,
then $Z_{A^T,G^T}$ and $Z_{(A^{\prime})^T,(G^{\prime})^T}$ 
are birational.}
\end{thm}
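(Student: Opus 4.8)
The plan is to reduce the birationality assertion to an equality of finite abelian group quotients acting on a common Fermat hypersurface, by means of the Shioda map. This is the method of Kelly \cite{K13}, and it dovetails with the Fermat-dominance framework of \cite{GKY10} already invoked throughout the paper: both $F_A$ and $F_{A'}$, and hence both mirrors, are covered by Fermat hypersurfaces.

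First I would isolate where the two constructions can possibly differ. The polynomials $F_A$ and $F_{A'}$ share the monomials $x_0^8+x_1^8+x_2^4+x_3^3$ and differ only in the last monomial, $x_4^6$ versus $x_3x_4^4$; correspondingly the transposes $F_{A^T}=x_0^8+x_1^8+x_2^4+x_3^3+x_4^6$ (of point type in $x_3,x_4$) and $F_{(A')^T}=x_0^8+x_1^8+x_2^4+x_3^3x_4+x_4^4$ (of chain type) share the block $x_0^8+x_1^8+x_2^4$ and differ only in the monomials involving $x_3,x_4$. So the whole comparison localizes to that two-variable block, and the hypothesis $G=G'$ fixes all remaining data.

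Next, since $A^T$ and $(A')^T$ are invertible (Delsarte) exponent matrices, each transposed polynomial admits a Shioda map from a Fermat hypersurface: a monomial rational map $\mathcal F_N\dashrightarrow X_{F_{A^T}}$ (respectively to $X_{F_{(A')^T}}$) whose exponents form the integral matrix $N\,(A^T)^{-1}=N\,(A^{-1})^T$, where $N$ is a common multiple of the two relevant degrees. Because $F_{A^T}$ has degree $24$ in $\PP^4(3,3,8,6,4)$ and $F_{(A')^T}$ has degree $8$ in $\PP^4(1,1,2,2,2)$, and $8\mid 24$, one may take $N=24$ and realize \emph{both} mirror orbifolds as quotients $\mathcal F_{24}/H$ and $\mathcal F_{24}/H'$ of the same degree-$24$ Fermat hypersurface in $\PP^4$ by explicit finite abelian subgroups $H,H'\subset(\bmu_{24})^5/\bmu_{24}$ acting diagonally. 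Here $H$ is generated by the Shioda covering group (whose structure is governed by $A^T$, hence by $A$) together with the image of the dual group $\widetilde{G^T}=G^T/J_{F_A}$.

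The heart of the argument is then the identification $H=H'$, up to the coordinate permutation matching the two ambient weighted spaces. One invokes Krawitz's duality: $G^T$ is the annihilator of $G$ under the perfect pairing on diagonal symmetries determined by the exponent matrix, so that the composite ``Shioda group times dual group'' is expressible purely in terms of the pair $(A,G)$. On the three shared coordinates $H$ and $H'$ agree verbatim since $A$, $A'$ and $G=G'$ agree there; on the remaining coordinates one checks by direct computation that the point block and the chain block, after transposition and composition with their respective Shioda maps, produce the same subgroup of $(\bmu_{24})^2$ modulo scalars. Once $H=H'$ is verified, $\mathcal F_{24}/H=\mathcal F_{24}/H'$, so $Z_{A^T,G^T}$ and $Z_{(A')^T,(G')^T}$ are birational to a common quotient of $\mathcal F_{24}$, and hence to each other. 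I expect the main obstacle to lie precisely in this last step: the two mirrors sit in different weighted projective spaces, so the two Shioda maps have different targets and \emph{a priori} different normalizations, and the group comparison must be carried out after clearing denominators in $(A^{-1})^T$ and reconciling those normalizations. Tracking exactly which diagonal subgroup one divides by, and confirming that the point block and the chain block contribute identically to $H$, is the delicate bookkeeping; it is also the reason the conclusion is only birational rather than biregular, since the differing weight data force indeterminacy loci that the Shioda maps resolve only birationally.
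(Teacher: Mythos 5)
Your proposal cannot be checked against a proof in the paper for a simple reason: the paper does not prove this statement. It is imported verbatim, with attribution, from Kelly \cite{K13}, and the authors use it only as input for the proposition that follows (CM type and invariance of the $L$-series of the Calabi--Yau motives under mirror symmetry). What you have written is, in effect, a reconstruction of Kelly's own argument --- the cited paper is literally titled ``Berglund--H\"ubsch--Krawitz mirrors via Shioda maps'' --- and your outline is faithful to it: realize both transposed hypersurfaces as images of Shioda maps from a common Fermat hypersurface $\mathcal{F}_{24}$ (your $N=24$ is correct, though the right justification is integrality of $24\,((A')^{T})^{-1}$, whose denominators are $8,8,4$ and the determinant $12$ of the chain block, rather than the divisibility $8\mid 24$ of the ambient hypersurface degrees), and then show that the two quotient groups on the Fermat side coincide when $G=G'$. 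The one place where you wave your hands --- ``one checks by direct computation that the point block and the chain block \ldots produce the same subgroup'' --- is precisely the crux, and in Kelly's paper it is not an ad hoc computation but a consequence of a general lemma: the Fermat-quotient birational model of $Z_{A^T,G^T}$ depends only on the group $G$ and the common family data (weights and degree), not on the particular invertible matrix $A$, because the Shioda covering group and the lift of the Krawitz dual group $\widetilde{G^T}$ assemble into a group expressible purely in terms of $G$. If you replace your ``direct computation'' by that structural statement (or actually carry out the computation for the two $2\times 2$ blocks), your proof is complete and is essentially the same proof as in the cited source.
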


\begin{prop} {\sl Both $Z_{A,G}$ and 
$Z_{A^{\prime},G^{\prime}}$ are of CM type and hence 
automorphic.  The mirrors $Z_{A^T,G^T}$ and 
$Z_{(A^{\prime})^T,(G^{\prime})^T}$ 
are again of CM type and hence automorphic.
The $L$-series of the Calabi--Yau motives of $Z_{A,G}$
and $Z_{A^T,G^T}$ are invariant under the mirror symmetry.
Similar assertions hold for $Z_{A^{\prime},G^{\prime}}$ 
and $Z_{(A^{\prime})^T,(G^{\prime})^T}$.}
\end{prop}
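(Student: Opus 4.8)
The plan is to separate the statement into its two parts: CM type together with automorphy, and then the invariance of the Calabi--Yau motive under the mirror map. For the first part I would note that $F_A$, $F_{A'}$ and their transposes $F_{A^T}=F_A$ and $F_{(A')^T}$ are all invertible (Delsarte) polynomials, so each defining hypersurface is a finite quotient of a Fermat hypersurface: $F_A$ and $F_{A^T}$ are covered by the Fermat threefold of degree $24$, while $F_{(A')^T}\subset\PP^4(1,1,2,2,2)$ of degree $8$ is covered by the Fermat of degree $8$, so that all four are covered by the Fermat of degree $\mathrm{lcm}=24$. As already recorded in the excerpt, this makes $X_{F_A}$, $X_{F_{A'}}$ and their transposes of CM type. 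Passing to the quotient by $\tilde G$ (resp.\ $\tilde G^T$) and then to a crepant resolution replaces the Hodge structure by its $\tilde G$-invariant part together with contributions from the exceptional loci; the fixed loci are themselves quasi-diagonal (Fermat-quotient) subvarieties and hence of CM type, and a $\tilde G$-invariant summand of a structure with commutative Hodge group again has commutative Hodge group. Thus $Z_{A,G}$, $Z_{A',G'}$, $Z_{A^T,G^T}$ and $Z_{(A')^T,(G')^T}$ are all of CM type. Automorphy then follows verbatim from Theorem 5.6 and Theorem 5.9: the cohomology of a Fermat quotient splits into one-dimensional eigenspaces $V(\ba)$ whose Galois representations are induced from Jacobi sum Grossencharacters of cyclotomic fields, these are $GL_1$-automorphic, and their automorphic induction to $GL_n$ over $\QQ$ yields the automorphy of each $L_i$.

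To attack the invariance statement I would first make the Calabi--Yau motive explicit as a single Jacobi sum Grossencharacter. Realizing $Z_{A,G}$ as a quotient of the degree-$24$ Fermat threefold, $H^3$ decomposes into one-dimensional eigenspaces $V(\ba)$ with $\ba$ in the threefold analogue of the set $\fA_m$, and I would locate the unique holomorphic character $\ba_0$ with $h^{3,0}(V(\ba_0))=1$, read off from the weights $(3,3,6,8,4)$ together with $\tilde G$-invariance. Its Galois orbit generates $\MM_{Z_{A,G}}$ and carries exactly one Jacobi sum Grossencharacter of the relevant cyclotomic field; I would compute the analogous holomorphic character for the mirror $Z_{A^T,G^T}$ (and, separately, for the primed pair) in the same way.

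The crux is then the Berglund--H\"ubsch--Krawitz duality at the level of characters: transposition of the exponent matrix together with passage to the dual group should send the holomorphic character $\ba_0$ of $Z_{A,G}$ to the holomorphic character of $Z_{A^T,G^T}$, up to a Galois conjugacy preserving the $\Vert\cdot\Vert$-class and hence the $L$-series. In the Fermat case $A^T=A$ this is nearly immediate, since both orbifolds are quotients of one and the same Fermat threefold and the dual group $G^T=J_{F_A}$ selects the same invariant holomorphic character; invoking the Observation stated above, that isomorphic Calabi--Yau motives (corresponding to the same Jacobi sum Grossencharacter) have equal $L$-series, I conclude $L(\MM_{Z_{A,G}},s)=L(\MM_{Z_{A^T,G^T}},s)$, and likewise $Z_{A',G'}$ and $Z_{(A')^T,(G')^T}$ should share a single $L$-function.

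I expect the main obstacle to be precisely this last matching for the primed pair. Here $Z_{A',G'}$ and $Z_{(A')^T,(G')^T}$ live in \emph{different} weighted projective $4$-spaces, $\PP^4(3,3,6,8,4)$ of degree $24$ and $\PP^4(1,1,2,2,2)$ of degree $8$, so one cannot simply appeal to Fermat self-duality as in the unprimed case. Instead I would apply the explicit transposition rule for the chain block $x_3^3+x_3x_4^4$, compute the $\langle\cdot\rangle$-data entering the two Jacobi sums in the two distinct Fermat covers, and verify directly that they define the same Grossencharacter of the common cyclotomic field. Carrying out this comparison across covers of different degrees, rather than invoking a formal self-duality, is the delicate point on which the equality of the Calabi--Yau motives of $Z_{A',G'}$ and $Z_{(A')^T,(G')^T}$ ultimately rests.
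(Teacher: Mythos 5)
Your first part is sound and matches the paper: CM type follows from the Fermat covers, automorphy from Jacobi sum Grossencharacters plus automorphic induction (Theorems 5.6 and 5.9), and for the unprimed pair $Z_{A,G}$, $Z_{A^T,G^T}$ both orbifolds are quotients of the same hypersurface covered by the degree-$24$ Fermat threefold, so their Calabi--Yau motives are both the unique weight motive of that Fermat cover and the $L$-series agree. This is exactly how the paper handles that case.

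The gap is the primed pair, and it is precisely the step you yourself flag as the crux and then defer. You propose to ``apply the explicit transposition rule, compute the $\langle\cdot\rangle$-data entering the two Jacobi sums in the two distinct Fermat covers, and verify directly'' that they define the same Grossencharacter; but this verification is never carried out, and it is not a formality. The weight characters of the degree-$24$ hypersurface in $\PP^4(3,3,6,8,4)$ and of the degree-$8$ hypersurface in $\PP^4(1,1,2,2,2)$ are a priori distinct Galois orbits, and matching them requires tracking both Shioda maps into a common Fermat cover; as written, the equality $L(\MM_{Z_{A',G'}},s)=L(\MM_{Z_{(A')^T,(G')^T}},s)$ is asserted, not proved. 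The paper closes this step with no character computation at all, using a tool absent from your proposal: Kelly's theorem (stated immediately before the proposition) that, since $G=G'$, the mirrors $Z_{A^T,G^T}$ and $Z_{(A')^T,(G')^T}$ are \emph{birational}, combined with the birational invariance of the Calabi--Yau motive. Since $Z_{A',G'}$, like $Z_{A,G}$ and $Z_{A^T,G^T}$, is a quotient of a hypersurface covered by the same degree-$24$ Fermat threefold, this chain identifies all four Calabi--Yau motives at once. Your computational route could in principle be completed, but without either executing that cross-cover Jacobi-sum comparison or importing the birationality argument, the primed case of the proposition remains open in your proposal.
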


\begin{proof} We have only to show the last claim. Since the
Calabi--Yau motives of Calabi--Yau threefolds $Z_{A,G}$ and
$Z_{A^T,G^T}$ come from the unique Fermat motive associated to
the weight of the same Fermat hypersurface, the 
Calabi--Yau motive is invariant under the mirror symmetry. 
For $Z_{A^{\prime},G^{\prime}}$ and 
$Z_{(A^{\prime})^T,(G^{\prime})^T}$,
they do not sit in the same family of hypersurfaces, 
but they are birational.  The Calabi--Yau motives are 
left invariant under birational map.
The $L$-series of the Calabi--Yau motives are invariant
under mirror symmetry.
For details about Fermat motives, see Appendix in the
section $7$ below or Goto--Kloosterman--Yui \cite{GKY10}, 
and Kadir--Yui \cite{KY08}.
\end{proof}
 
\begin{rem}
Rohde \cite{R09} (see Appendix A, page 209) constructed
many examples of Calabi--Yau threefolds of CM type 
(CMCY $3$-folds), by Borcea--Voisin construction. The 
automorphy of his CMCY $3$-folds
should follow by studying Galois representations associated
to them. This is left to the reader for exercise.
\end{rem}

\section{Appendix: 
Base change and automorphic induction, and
Rankin--Selberg $L$-series of convolution}\label{appendix} 

\subsection{Base change and automorphic induction maps}

For the proof of automorphy of our Calabi--Yau threefolds via representation
theory, we need the three ingredients, (the existence of) base change 
and automorphic induction maps for solvable extensions over $\QQ$, and 
the Rankin--Selberg $L$-series of convolution.

In this subsection, we will explain the result of Arthur and Clozel
\cite{AC90} on base change and automorphic induction proved for cyclic
extensions of prime degree over $\QQ$, and their generalization by
Rajan \cite{Raj02} (see also Murty [M93]) to solvable extensions over $\QQ$.

\begin{defn} {\rm Let $k$ be a number field with the 
ring $\OO_k$ of integers.  Let $K$ be a Galois extension 
of $k$ with the ring of integers $\OO_K$ and Galois 
group $G=\mbox{Gal}(K/k)$.  
If $\rho$ is an irreducible (finite-dimensional) representation
of $G$, we can associate to it a Dirichlet series with
Euler product, called the Artin $L$-series $L(s,\rho,K/k)$ 
as follows.
Let $v$ be a (finite) place of $\OO_k$, $p_v$ the
associated prime ideal in $\OO_k$, $q_v$ the cardinality
of the residue field $\OO_k/p_v$, and $\Phi_v$ the
conjugacy class of Frobenius elements attached to $p_v$, for
$v$ unramified in the extension $K/k$. Let $\fS$ be the finite
set of (finite) places ramified in $K/k$. The Artin
$L$-series is defined by
$$L(s,\rho,K/k)=\prod_{v\not\in \fS}
\frac{1}{\mbox{det}(1_{\rho}-q_v^{-s}\rho(\Phi_v))}.$$
The definition of the Artin $L$-series
can be extended to arbitrary representations of $G$ by
additivity:
$$L(s,\rho_1\oplus\rho_2, K/k)=L(s,\rho_1, K/k)L(s,\rho_2, K/k).$$}
\end{defn}

Let $\bA_k$ be the adele ring of $k$
and $\fA(GL_n(\bA_k))$ be the set of automorphic
representations of $GL_n(\bA_k)$ for some $n$.

The Langlands philosophy predicts that an Artin $L$-series should
be equal to an $L$-series associated to some automorphic
form (e.g., cusp form) on $GL_n$. More concretely,
for each $\rho$, the Langlands reciprocity conjecture states that
there exists an automorphic representation $\pi(\rho)
\in \fA(GL_n(\bA_K))$ ($n=\mbox{deg}(\rho)$) such that
$$L(s,\rho, K/k)=L(s,\pi(\rho)).$$

We assert that the Artin $L$-functions of the Calabi--Yau 
threefolds of Borcea--Voisin type which are of CM type
are indeed automorphic.
 
Now we need to introduce the notion of ``base change''
and ``automorphic induction''.

\begin{lem} {\sl
Let $H$ be a subgroup of $G$, and let $K^H$ be the fixed subfield
of $K$ by $H$. Let $\psi$ be an Artin representation of
$\mbox{Gal}(K/K^H)=H$. Let $L(s,\psi, K/K^H)$ be the Artin $L$-series
of the extension $K/K^H$. Then the Artin $L$-series
is invariant under induction, that is, if $\mbox{Ind}^H_G$ is the induced
representation, then
$$L(s,\mbox{Ind}^H_G\psi, K/K^H)=L(s,\psi, K/K^H).$$
}
\end{lem}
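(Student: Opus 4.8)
The plan is to prove the classical invariance of Artin $L$-series under induction (Artin's theorem), reading the displayed identity in its intended form
$$L(s,\mbox{Ind}^H_G\psi,K/k)=L(s,\psi,K/K^H),$$
since $\mbox{Ind}^H_G\psi$ is a representation of $G=\mbox{Gal}(K/k)$ and its Artin $L$-series is naturally formed over the base field $k$ rather than over $K^H$. Because both sides are Euler products, it suffices to fix a place $v$ of $k$ and match its local factor on the left with the product of the local factors over the places $w$ of $K^H$ lying above $v$.

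First I would fix an unramified place $v$ of $k$, a place $\mathfrak{P}$ of $K$ above it, and let $\sigma=\mbox{Frob}_{\mathfrak{P}/v}\in G$ be the corresponding Frobenius, which generates the cyclic decomposition group $D$. The decisive ingredient is a purely group-theoretic determinant identity. Choosing coset representatives for $H\backslash G$, right translation by $\sigma$ permutes the cosets; writing $g_i\sigma=h_i g_{\tau(i)}$ with $h_i\in H$ exhibits $\mbox{Ind}^H_G\psi(\sigma)$ as a block-permutation matrix with blocks $\psi(h_i)$. Factoring the characteristic determinant over the cycles of $\tau$ yields
$$\det\bigl(1-T\cdot\mbox{Ind}^H_G\psi(\sigma)\bigr)=\prod_{\mathcal{O}}\det\bigl(1-T^{|\mathcal{O}|}\psi(h_{\mathcal{O}})\bigr),$$
the product running over the orbits $\mathcal{O}$ of $\langle\sigma\rangle$ on $H\backslash G$, with $h_{\mathcal{O}}\in H$ the product of the $h_i$ around the cycle.

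The arithmetic translation is the heart of the matter: the orbits $\mathcal{O}$ of $\langle\sigma\rangle=D$ on $H\backslash G$ (equivalently the double cosets $H\backslash G/D$) are in bijection with the places $w\mid v$ of $K^H$, the orbit length $|\mathcal{O}|$ equals the residue degree $f_w=f(w/v)$, and the conjugacy class of $h_{\mathcal{O}}$ in $H$ is exactly the Frobenius $\mbox{Frob}_w$ of $\mathfrak{P}$ over $w$. Substituting $T=q_v^{-s}$ and using $q_v^{f_w}=q_w$, the identity becomes
$$\det\bigl(1-q_v^{-s}\,\mbox{Ind}^H_G\psi(\sigma)\bigr)=\prod_{w\mid v}\det\bigl(1-q_w^{-s}\psi(\mbox{Frob}_w)\bigr).$$
Taking reciprocals and multiplying over all $v$ regroups the right-hand side into a single Euler product over all places $w$ of $K^H$, which is precisely $L(s,\psi,K/K^H)$.

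The main obstacle is the bookkeeping at the ramified places. For the complete $L$-series the identity is exact once one works with inertia-invariants and includes the correct Euler and archimedean factors, which is Artin's theorem; for the incomplete $L$-series of the Definition above I would note that any $v$ unramified in $K/k$ forces every $w\mid v$ to be unramified in $K/K^H$, so the two Euler products agree outside a common finite set $\fS$, which is all that is required here. The care then sits entirely in the bijection between $\langle\sigma\rangle$-orbits and places $w\mid v$ and in identifying the cycle data $(|\mathcal{O}|,h_{\mathcal{O}})$ with $(f_w,\mbox{Frob}_w)$, i.e. the standard decomposition-group computation in the tower $k\subseteq K^H\subseteq K$.
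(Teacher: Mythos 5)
Your proof is correct, and it supplies something the paper does not: the paper states this lemma with no proof whatsoever, treating it as the classical invariance of Artin $L$-series under induction (Artin's theorem), which is exactly what you have proved. Two points in your write-up deserve emphasis. First, you were right to read the displayed identity as
$$L(s,\mathrm{Ind}^H_G\psi,K/k)=L(s,\psi,K/K^H),$$
since $\mathrm{Ind}^H_G\psi$ is a representation of $G=\mathrm{Gal}(K/k)$ and its Artin $L$-series is naturally formed over the base field $k$; the left-hand side as printed in the paper (with $K/K^H$) is a typo, and your correction is the only reading under which the statement is both meaningful and true. Second, your local computation is the standard and complete argument: the block-permutation form of $\mathrm{Ind}^H_G\psi(\sigma)$ relative to coset representatives, the factorization of $\det\bigl(1-T\,\mathrm{Ind}^H_G\psi(\sigma)\bigr)$ over the cycles of the induced permutation of $H\backslash G$, and the identification of $\langle\sigma\rangle$-orbits with the places $w\mid v$ of $K^H$, of orbit lengths with residue degrees $f_w$, and of the cycle products $h_{\mathcal{O}}$ (well-defined up to conjugacy in $H$, which suffices for the determinant) with the Frobenius classes $\mathrm{Frob}_w$. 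Your handling of ramification is also the right level of care for this paper: since the paper's Definition of the Artin $L$-series is the incomplete Euler product, it is enough that every place $v$ unramified in $K/k$ has all $w\mid v$ unramified in $K/K^H$, so the two products can differ only at the finitely many $w$ lying over places of $k$ ramified in $K$, which the incomplete $L$-series ignores by construction.
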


When $L(s,\rho, K/k)=L(s,\pi(\rho))$, then
$L(s,\rho|_H, K/K^H)=L(s,\rho\otimes\mbox{Ind}_H^G {\bf 1}, K/k)$.
But $\mbox{Ind}_H^G{\bf 1}=\mbox{reg}_H$ is nothing but the
permutation representation on the cosets of $H$ in $G$.
Let $\pi\in\fA(GL_n(\bA_k))$. For each unramified $\pi_v$,
let $A_v\in GL_n(\CC)$ be a semi-simple conjugacy class defined
by the representation $\pi$.
If $v$ is unramified in $K$, define
$$L_v(s, B(\pi))=\mbox{det}(1-A_v\otimes\mbox{reg}_H(\sigma_v)Nv^{-s})^{-1}$$
where $\sigma_v$ is the Artin symbol of $v$.

\begin{conj}{\sl
\begin{itemize}
\item [(a)] (Base change)  There exists a base change map
$$B : \fA(GL_n(\bA_K))\to \fA(GL_n(\bA(K^H)))$$
and the Artin $L$-series $L(s,B(\pi), K/K^H)$ such that
its $v$-factor coincides with $L_v(s, B(\pi))$ defined above. \\
\item [(b)] (Automorphic induction)
Now let $\psi$ be a representation of $H$. Then
there exists an automorphic induction map
$$I: \fA(GL_n(\bA_{K^H})) \to \fA(GL_{nr}({\bA_k}))$$
such that for $I(\pi)\in\fA(GL_{nr}(\bA_k))$, 
$$L(s,I(\pi))=L(s, \mbox{Ind}^G_H, K/k).$$
Here $n=\mbox{deg}(\psi)$, and $r=[G:H]$.
\end{itemize}}
\end{conj}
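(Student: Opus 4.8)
The plan is to reduce the solvable situation to cyclic extensions of prime degree, for which base change and automorphic induction are supplied by Arthur--Clozel \cite{AC90}, and then to assemble the general maps by composing along a tower of such extensions, following Rajan \cite{Raj02}. First I would exploit the solvability of $G=\Gal(K/k)$ to choose a chain of subgroups
$$H=H_m\subset H_{m-1}\subset\cdots\subset H_0=G,$$
with each $H_{i+1}$ normal in $H_i$ and $H_i/H_{i+1}$ cyclic of prime order. Taking fixed fields reverses the inclusions and produces a tower
$$k=K^{H_0}\subset K^{H_1}\subset\cdots\subset K^{H_m}=K^H$$
in which every step $K^{H_{i+1}}/K^{H_i}$ is a cyclic extension of prime degree. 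Although $K^H/k$ itself need not be Galois, this refinement into prime cyclic steps is exactly what the rest of the argument needs.

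For a single cyclic step $E'/E$ of prime degree $p$ with generator $\theta$, I would invoke the Arthur--Clozel machinery directly. Base change $B_{E'/E}\colon\fA(GL_n(\bA_E))\to\fA(GL_n(\bA_{E'}))$ and automorphic induction $I_{E'/E}\colon\fA(GL_n(\bA_{E'}))\to\fA(GL_{np}(\bA_E))$ are obtained by comparing the $\theta$-twisted trace formula on $GL_n(\bA_{E'})$ with the ordinary trace formula on $GL_n(\bA_E)$, after matching test functions through the base change fundamental lemma. The characterizing property is that $B_{E'/E}(\pi)$ is the unique isobaric automorphic representation whose local components are the local base changes of those of $\pi$, given at unramified places by the explicit operation on Satake parameters; $I_{E'/E}$ is the adjoint lift whose Euler factors realize induction on the Galois side. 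This trace-formula comparison is the deep analytic input, which I would take as given.

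I would then define the maps for the full tower by composition, setting $B:=B_{K^{H_m}/K^{H_{m-1}}}\circ\cdots\circ B_{K^{H_1}/k}$ and, dually, $I:=I_{K^{H_1}/k}\circ\cdots\circ I_{K^{H_m}/K^{H_{m-1}}}$, so that $I$ lands in $\fA(GL_{nr}(\bA_k))$ with $r=[G:H]$. The Euler-factor compatibilities in (a) and (b) then follow by iterating the prime-degree case: at a good place $v$ the composite base-change factor is $\det(1-A_v\otimes\mathrm{reg}_H(\sigma_v)Nv^{-s})^{-1}$, since $\mathrm{reg}_H=\mathrm{Ind}_H^G\mathbf{1}$ tracks how $v$ splits through the tower, while the identity $L(s,I(\pi))=L(s,\mathrm{Ind}_H^G\psi,K/k)$ comes from the per-step induction compatibility combined with transitivity of induction in stages and the invariance of Artin $L$-series under induction recorded in the Lemma above. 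The step I expect to be the main obstacle is \emph{well-definedness}: a priori the composite maps depend on the chosen composition series, so two different solvable chains must be shown to produce the same automorphic representation. Establishing this requires the mutual compatibility of base change and automorphic induction with intermediate subextensions---precisely Rajan's \cite{Raj02} strengthening of Arthur--Clozel---and, along the way, careful bookkeeping of cuspidality and of the fibres of base change at each stage, since the automorphic induction of a cuspidal representation need not remain cuspidal and these fibres must be controlled in order to glue the prime-degree steps unambiguously.
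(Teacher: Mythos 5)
The statement you are trying to prove is stated in the paper as a \emph{conjecture}, and the paper never proves it: it is the general existence of base change and automorphic induction (a piece of Langlands functoriality) for an arbitrary Galois extension $K/k$ with group $G$ and an \emph{arbitrary} subgroup $H$. What the paper does afterwards is recall the known special cases that suffice for its applications --- the Arthur--Clozel theorem for cyclic extensions of prime degree (the paper's Theorem 7.4) and Rajan's and Murty's extension to solvable situations (Subsection 7.3). Your argument begins by ``exploiting the solvability of $G$,'' but solvability is nowhere in the hypotheses of the conjecture. For non-solvable $G$ there is no chain of subgroups from $H$ to $G$ with successive quotients cyclic of prime order, so the entire reduction to Arthur--Clozel collapses at the first step; this is precisely why the general statement is open and is stated as a conjecture rather than a theorem. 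A proof proposal that silently strengthens the hypotheses to the solvable case is a proof of a different (and known) statement, not of the one asked.

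There is also a gap even within the solvable case you restrict to. For the automorphic induction map you need a tower $H=H_m\subset H_{m-1}\subset\cdots\subset H_0=G$ with each $H_{i+1}$ \emph{normal} in $H_i$ of prime index, i.e.\ you need $H$ to be subnormal in $G$ with cyclic prime quotients. Solvability of $G$ does not give this for an arbitrary subgroup $H$: take $G=S_3$ (solvable) and $H$ of order $2$; then $N_G(H)=H$, so $H$ is not subnormal and your tower $k=K^{H_0}\subset\cdots\subset K^{H_m}=K^H$ of prime cyclic steps does not exist, even though $K^H/k$ is a perfectly good (non-Galois cubic) extension covered by the conjecture. So your composition-of-cyclic-steps construction does not even yield the solvable instances of part (b) in full; handling induction from non-subnormal subgroups is exactly the extra content of the results of Rajan that you would be invoking as a black box. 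In short: the paper offers no proof to compare against because none is claimed, and your proposed argument, besides addressing only a special case, has a group-theoretic obstruction (subnormality) that makes its key construction unavailable in general.
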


We now recall a theorem of Arthur and Clozel \cite{AC90} on
the existence of base change and automorphic induction maps
for $GL_n$, when $K/k$ is a cyclic extension of prime degree,
and representations are automorphic cuspidal representations.

\begin{thm}
[Arthur--Clozel] {\sl
Suppose that $K/k$ is a cyclic extension of prime degree $\ell$.
Let $\pi$ and $\Pi$ denote cuspidal unitary automorphic representations
of $GL_n(\bA_{k})$ and $GL_n(\bA_{K})$, respectively.
Then

\begin{itemize}
\item the base change lift of $\pi$, denoted by $B(\pi)$, exists,
and it is an automorphic representation in
$\fA(GL_n(\bA_K))$, \\
\item the automorphic induction $I(\Pi)$ of $\Pi$
exists, and it is an automorphic
representation in $\fA(GL_{n\ell}(\bA_k))$.
\end{itemize}}
\end{thm}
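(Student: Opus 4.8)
The plan is to prove this by the $\theta$-twisted Arthur--Selberg trace formula, reducing both assertions to a single comparison of trace formulas for $GL_n$. Write $\theta$ for a generator of $\Gal(K/k)\cong\ZZ/\ell\ZZ$ acting on $GL_n(\bA_K)$; base change realizes $GL_n/k$ as the $\theta$-twisted object, so that a cuspidal $\pi$ on $GL_n(\bA_k)$ should lift to a $\theta$-invariant automorphic representation $B(\pi)$ of $GL_n(\bA_K)$, while a cuspidal $\Pi$ on $GL_n(\bA_K)$ descends (up to its $\theta$-orbit) to $GL_{n\ell}(\bA_k)$ by automorphic induction.

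First I would establish the local theory at every place $v$ of $k$: for $w\mid v$ in $K$, define local base change $\pi_v\mapsto B(\pi_v)$ and local automorphic induction through character identities relating ordinary and $\theta$-twisted characters, compatibly with the local Langlands correspondence, and verify that $L$- and $\varepsilon$-factors are preserved. At archimedean and unramified places this is explicit; at ramified finite places it is pinned down by the required character relations.

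Next I would prove the geometric matching. The norm map on semisimple conjugacy classes $\mathcal{N}\colon GL_n(K_w)\to GL_n(k_v)$ must identify the $\theta$-twisted orbital integrals of a test function $\phi_w$ on $GL_n(K_w)$ with the ordinary orbital integrals of a transfer $f_v$ on $GL_n(k_v)$; for almost all $v$ this is the base-change fundamental lemma for the unit of the Hecke algebra. With local transfer and the fundamental lemma in hand, I would write the $\theta$-twisted trace formula for $GL_n(\bA_K)$ against the ordinary trace formula for $GL_n(\bA_k)$ and identify their geometric sides term by term.

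The spectral comparison is where the global correspondence is extracted, and it is also the place I expect the real difficulty. I would use a simple form of the trace formula---imposing a supercuspidal (matrix-coefficient) condition at one place and a cuspidal-projecting function at a second---so that only the cuspidal spectrum survives on each side. Equating spectral sides then yields $\pi\mapsto B(\pi)$ and shows $B(\pi)$ is automorphic and $\theta$-invariant; running the identity in the reverse direction, a cuspidal $\Pi$ whose $\theta$-orbit $\{\Pi,\Pi^\theta,\dots,\Pi^{\theta^{\ell-1}}\}$ has $\ell$ distinct members induces to a cuspidal automorphic representation $I(\Pi)$ of $GL_{n\ell}(\bA_k)$, while the case $\Pi\cong\Pi^\theta$ is precisely the image of base change. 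The main obstacle is this geometric input together with the spectral bookkeeping: controlling the twisted orbital integrals uniformly, proving the base-change fundamental lemma, and cleanly separating the cuspidal contribution from the continuous and residual spectrum in the twisted formula. This is exactly the technical content carried out by Arthur and Clozel \cite{AC90}, whose result I would invoke rather than reprove in full.
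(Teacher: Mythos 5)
Your proposal is correct and ends up in the same place as the paper: the paper states this result purely as a recollection of Arthur--Clozel \cite{AC90} with no proof given, and you likewise defer the substance (twisted trace formula comparison, base-change fundamental lemma, spectral separation) to that reference. Your sketch of the mechanism --- norm map matching of twisted and ordinary orbital integrals, and the dichotomy between $\theta$-regular orbits (cuspidal induction) and $\theta$-invariant $\Pi$ (image of base change) --- is an accurate summary of what \cite{AC90} actually does, so there is no gap relative to the paper's treatment.
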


\subsection{Rankin--Selberg $L$-series of convoluton}

We can reformulate the Arthur--Clozel theorem in terms
of the $L$-series.
In this subsection, we will consider Rankin--Selberg $L$-series
of convolution. These $L$-series are needed from the fact that
the eigenvalues of the Frobenius morphism of our Calabi--Yau threefolds
of Borcea--Voisin type are given by tensor products of
eigenvalues of those of the components.
We need to consider Rankin--Selberg $L$-series of convolution.

Let $\pi$ and $\pi^{\prime}$ be two cuspidal, unitary
automorphic representations
of $GL_n(\bA_k)$ and $GL_m(\bA_k)$, respectively.  Let $\mathfrak{S}$ be a
finite set of primes of $k$ such that $\pi$ and $\pi^{\prime}$ are
unramified outside $\mathfrak{S}$.  Let $L(s,\pi\otimes\pi^{\prime})$
be the Rankin--Selberg $L$-series of convolution.  Then
the result of Arthur and
Clozel mentioned above is formulated in terms of the
Rankin--Selberg $L$-series as follows:

\begin{lem} {\sl
Let $K/k$ be cyclic extension of prime degree,  and suppose
that $\pi\in \fA(GL_n(\bA_k))$ and $\Pi\in\fA(GL_m(\bA_K))$
are cuspidal unitary automorphic representations, respectively. 
Then the Rankin--Selberg $L$-series satisfies the
formal identity:
$$L(s,B(\pi)\otimes\Pi)=L(s,\pi\otimes I(\Pi)).$$
}
\end{lem}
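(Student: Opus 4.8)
The plan is to recognize this identity as the $L$-function shadow of the projection (push--pull) formula for induced representations, and to establish it by matching Euler factors place by place. On the Galois side the relevant algebraic fact is the Mackey isomorphism
$$\mbox{Ind}^{G}_{H}\bigl(\mbox{Res}^{G}_{H}(\sigma)\otimes\tau\bigr)\cong\sigma\otimes\mbox{Ind}^{G}_{H}(\tau),$$
where $G=\Gal(K/k)$ (more precisely a decomposition or Weil group) and $H$ is the subgroup fixing $K$; combined with the invariance of Artin $L$-series under induction established in the Lemma above, this would give the claimed identity at once in the Galois setting. Since we work with automorphic representations and do not assume the Langlands correspondence, the task is to transport this formula to the automorphic side, where $\mbox{Res}$ is realized by base change $B$ and $\mbox{Ind}$ by automorphic induction $I$, both furnished by the Arthur--Clozel theorem for the cyclic prime-degree extension $K/k$.

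First I would reduce to a local statement. Both $L(s,B(\pi)\otimes\Pi)$ and $L(s,\pi\otimes I(\Pi))$ are Euler products over the places of $k$, so it suffices to check that for each place $v$ of $k$ the local Rankin--Selberg factor of $B(\pi)\otimes\Pi$ (a product over the places $w\mid v$ of $K$) agrees with the local factor of $\pi\otimes I(\Pi)$. At a place $v$ unramified in $K/k$, with $\pi,\Pi$ unramified, I would use the Satake description built into the Arthur--Clozel construction: if $A_v\in GL_n(\CC)$ is the Satake class of $\pi_v$, then $B(\pi)_w$ has Satake class $A_v^{f_w}$, where $f_w$ is the residue degree of $w$; and the Satake class of $I(\Pi)_v$ is the block assembled from the classes of $\Pi$ at the places $w\mid v$. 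Because $K/k$ is cyclic of prime degree $\ell$, an unramified $v$ either splits into $\ell$ places of residue degree $1$ or is inert with a single place of residue degree $\ell$, and in each case a direct computation of $\det\bigl(1-(\cdot)\otimes(\cdot)\,q_v^{-s}\bigr)^{-1}$ on both sides reduces to the same elementary identity. This local computation is exactly the decomposition-group version of the projection formula.

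The main obstacle will be the finite set $\fS$ of ramified places (and the archimedean places), where the clean unramified Satake description is unavailable. There the equality of local $L$-factors (and of $\varepsilon$-factors, if one wants the completed identity) is not elementary and must be read off from the full compatibility of $B$ and $I$ with the local Langlands correspondence, which is precisely the deep content of Arthur--Clozel \cite{AC90} and its solvable extension by Rajan \cite{Raj02}. If, as the word \emph{formal} suggests, one only asserts the identity of the partial Euler products away from $\fS$, then the ramified places may be discarded and the proof is completed by taking the product over the good places of the local identity established above. In the application of interest the representations $\Pi$ arise from Hecke (Jacobi-sum) Grossencharacters of cyclotomic fields, so all local factors are in fact explicitly computable and the completed identity holds as well.
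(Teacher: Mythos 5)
Your proposal is correct, but it does considerably more than the paper, which in fact offers no proof of this lemma at all: the lemma is introduced with the words that ``the result of Arthur and Clozel mentioned above is formulated in terms of the Rankin--Selberg $L$-series as follows,'' and the only justification supplied (after Proposition 7.9) is the blanket remark that these results follow by applying the base change and automorphic induction method of Arthur--Clozel and Rajan. Your route is a genuine verification: you read the identity as the automorphic shadow of the projection formula $\mbox{Ind}^{G}_{H}(\mbox{Res}^{G}_{H}(\sigma)\otimes\tau)\cong\sigma\otimes\mbox{Ind}^{G}_{H}(\tau)$ (here one should take $G=W_k$ and $H=W_K$, the Weil groups, rather than $\Gal(K/k)$ as you first wrote --- your parenthetical correction is the right one), and then you match Euler factors place by place. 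The local check is sound: for a cyclic extension of prime degree $\ell$, an unramified $v$ either splits completely or is inert; in the split case both sides give $\prod_{w\mid v}\det\bigl(1-A_v\otimes B_w\,q_v^{-s}\bigr)^{-1}$, while in the inert case the elementary identity $\prod_{i=0}^{\ell-1}(1-x\zeta_\ell^i)=1-x^\ell$ turns the factor $\det\bigl(1-A_v\otimes C_v\,q_v^{-s}\bigr)^{-1}$ of $\pi\otimes I(\Pi)$ (with $C_v$ the induced block, whose eigenvalues are the $\ell$-th roots of those of $B_w$) into $\det\bigl(1-A_v^{\ell}\otimes B_w\,q_v^{-\ell s}\bigr)^{-1}$, which is exactly the factor of $B(\pi)\otimes\Pi$ at the unique place $w$ above $v$. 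You also correctly isolate where the depth lies: the existence of $B$ and $I$ with these unramified Satake compatibilities, and any matching at ramified or archimedean places, is precisely the content of \cite{AC90} (and \cite{Raj02} in the solvable case) --- the same input the paper silently invokes --- and since the paper's Rankin--Selberg convolutions are partial Euler products taken outside the finite set $\fS$, your unramified computation already suffices for the ``formal identity'' as stated. What the paper's citation-only treatment buys is brevity; what yours buys is transparency about why the identity is formal at good places and exactly which deep theorems are being consumed.
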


\subsection{Generalizations of base change and automorphic
induction to solvable extensions over $\QQ$}

Arthur and Clozel's results are proved for cyclic extensions of
prime degree over $\QQ$. For our application, we need 
base change and automorphic induction results for
abelian extensions (e.g., cyclotomic fields) over $\QQ$. 
In fact, the existence of base
change and automorphic induction is established for solvable
extensions over $\QQ$ by Rajan \cite{Raj02}, see also 
Murty \cite{M93}.

\subsection{Weighted Jacobi sums and Fermat motives}

We recall now the definition of weighted Jacobi sums and weighted
Fermat motives from Gouv\^ea--Yui \cite{GY95}.

We consider a weighted Fermat hypersurface of dimension $n+1$, degree
$m$ and a weight ${\bold{w}}=(w_0,w_1,\cdots,w_{n+1})$ defined by
$$x_0^{m_0}+x_1^{m_1}+\cdots+x_{n+1}^{m_{n+1}}=0\subset\PP^n({\bold{w}})$$
where $m_iw_i=m$ for every $i,\, 0\leq i\leq n+1$.

If ${\bf{w}}=(1,1,\cdots,1)$, this is nothing but the Fermat
hypersurface of dimension $n+1$ and degree $m$.

\begin{defn}
{\rm (a) Let $\KK=\QQ(\zeta_m)$ be the $m$--th cyclotomic field
over $\QQ$, ${\OO}_{\KK}$ the ring of integers of $\KK$.  Let ${\fp}\in
\mbox{Spec}({\OO}_{\KK})$. For every $x\in {\OO}_{\KK}$
relatively prime to ${\fp}$, let
$\chi_{\fp}(x\ \mbox{mod}\, {\fp})=(\frac{x}{\fp})$
be the $m$--th power residue symbol on $\KK$.  If $x\equiv 0
\pmod{\fp}$, we put $\chi_{\fp}(x\ \mbox{mod}\, {\fp})=0$.
Let $(w_0,w_1,w_2,\cdots, w_{n+1})$ be a weight.  Define the set
\begin{equation*}
\begin{split}
& {\fA}_d(w_0,w_1,\cdots, w_{n+1}) \\
&:=\biggl\{ {\ba}=(a_0,a_1,\cdots,a_{n+1})\,|\, a_i\in (w_i\ZZ/m\ZZ),
a_i\neq 0, \sum_{i=0}^{n+1} a_i=0\biggr\}.
\end{split}
\end{equation*}
For each $\ba=(a_0,a_1,\cdots,a_{n+1})\in\fA_d(w_0,w_1,\cdots, w_{n+1})$, 
the {\it weighted Jacobi sum} is defined by
\begin{equation*}
j_{\fp}(\ba)=j_{\fp}(a_0,a_1,\cdots, a_{n+1})=(-1)^n
\sum\chi_{\fp}(v_1)^{a_1}\chi_{\fp}(v_2)^{a_2}\cdots\chi_{\fp}(v_{n+1})^{a_{n+1}}\end{equation*}
where the sum is taken over $(v_1,v_2,\cdots, v_{n+1})\in
({\OO}_{\KK}/{\fp})^{\times} \times \cdots \times
({\OO}_{\KK}/{\fp})^{\times}$ subject to the linear
relation $1+v_1+v_2+\cdots + v_{n+1}\equiv 0 \pmod{\fp}$.

Weighted Jacobi sums are elements of ${\OO}_{\KK}$ with complex absolute
value equal to $q^{n/2}$ where $q=\mid \mbox{Norm}\,{\fp} \mid \equiv 1
\pmod{m}$.

(b) The Galois group $\Gal(\KK/\QQ)\simeq(\ZZ/m\ZZ)^{\times}$ acts
on weighted Jacobi sums, by multiplication by $t\in(\ZZ/m\ZZ)^{\times}$
on each component of $\ba$. Let $A$ denote the
$(\ZZ/m\ZZ)^{\times}$-orbit of $\ba$.  For a weighted Jacobi sum
$j_{\fp}(\ba)$, the $(\ZZ/m\ZZ)^{\times}$-orbit of $j_{\fp}(\ba)$
is called the weighted {\it Fermat motive}, and denoted 
by $\MM_A$.

To each $\ba=(a_0,a_1,\cdots,a_{n+1})\in\fA_d(w_0,w_1,\cdots,w_{n+1})$,
define the {\it length} of ${\ba}$ to be
$$\Vert \ba\Vert:=\left(\frac{1}{m}\sum_{i=0}^{n+1}a_i\right) -1.$$

Via cohomological realizations of these motives, we can compute
the numerical characters of $\MM_A$.

$\bullet$ The $i$-th Betti number is 
$$B_i(\MM_A):=\mbox{dim}_{\Ql} H^i(\overline{\MM_A},\Ql)=\begin{cases}
\#A\quad&\mbox{if $i=n$} \\
1\quad&\mbox{if $i$ is even and $A=[0]$}\\
0\quad&\mbox{otherwise.}
\end{cases}$$

$\bullet$ The $(i,j)$-th Hodge number is
$$h^{i,j}(\MM_A):=\mbox{dim}_{\CC} H^j(\overline{\MM_A},\Omega^i)=
\begin{cases}
\#\{\ba\in A\,|\, ||\ba||=i\}\quad &\mbox{if $i+j=n$}\\
1\quad&\mbox{ if $A=[0]$}\\
0\quad&\mbox{otherwise}\end{cases}
$$
where we put $\overline{\MM_A}:=\MM_A\otimes {\CC}.$}
\end{defn}

For the Fermat hypersurface of dimension $n+1$ and degree $m$, we
simply write $\fA_n$ for $\fA(1,1,\cdots)$.

\begin{lem}
{\sl {\em (a)} Let $S$ be a $K3$ surface of degree $d$ in a weighted projective 
$3$-space $\PP^3(w_0,w_1,w_2,w_3)$ and suppose that $S$ is
dominated by a Fermat surface (so $S$ is of CM type). Then there is the 
unique motive
$\MM_{\bf w}$ associated to the weight ${\bf w}=(w_0,w_1,w_2,w_3)$ such that
$h^{0,2}(\MM_{\bf w})=1$ and $B_2(\MM_{\bf w})=\varphi(d)$.
For all other motives $h^{0,2}(\MM_A)=0$.

{\em (b)} Let $X$ be a Calabi--Yau threefold of degree $d$ in a weighted
projective $4$-space $\PP^4(w_0,w_1,w_2,w_3,w_4)$, and suppose that
$X$ is dominated by a Fermat threefold (so $X$ is of CM type). Then 
there is the unique motive $\MM_{\bf w}$ assocaited to the weight
${\bf w}=(w_0,w_1,\cdots,w_4)$ such that $h^{0,3}(\MM_{\bf w})=1$
and $B_3(\MM_{\bf w})=\varphi(d)$.
For all other motives, $h^{0,3}(\MM_A)=0$.  

Here $\varphi$ denotes the Euler $\varphi$-function.} 
\end{lem}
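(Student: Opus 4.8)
The plan is to handle parts (a) and (b) uniformly by setting $n=2$ for the $K3$ surface $S$ and $n=3$ for the Calabi--Yau threefold $X$, so that in each case the weighted Fermat motives $\MM_A$ index the primitive part of $H^n$ and, by the numerical characters recalled just above, $h^{0,n}(\MM_A)=\#\{\ba\in A:\Vert\ba\Vert=0\}$ while $B_n(\MM_A)=\#A$. Throughout I take the canonical representatives $0<a_i<d$, which is what makes $\Vert\ba\Vert=\frac1d\sum_i a_i-1$ a nonnegative integer. The first step is to pin down the degree: since $S$ (resp.\ $X$) has trivial canonical bundle, adjunction for a hypersurface of degree $d$ in $\PP^{n+1}(\mathbf{w})$ forces $d=\sum_i w_i$. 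Combined with $m_iw_i=m=d$ this identifies the Fermat degree with $d$ and, crucially, shows that the weight vector $\mathbf{w}=(w_0,\dots,w_{n+1})$ itself lies in $\fA_d(\mathbf{w})$ with $\Vert\mathbf{w}\Vert=\frac1d\sum_i w_i-1=0$. I would then \emph{define} $\MM_{\mathbf{w}}$ to be the motive attached to the $(\ZZ/d\ZZ)^\times$-orbit $[\mathbf{w}]$.

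Next I would show that $\mathbf{w}$ is the \emph{only} length-$0$ vector in $\fA_d(\mathbf{w})$. Each coordinate $a_i$ is a nonzero element of $w_i\ZZ/d\ZZ$; since $w_i\mid d$, its least positive representative is $w_i$, so $a_i\ge w_i$. Hence $\sum_i a_i\ge\sum_i w_i=d$, and $\Vert\ba\Vert=0$ (equivalently $\sum_i a_i=d$) forces equality coordinatewise, i.e.\ $\ba=\mathbf{w}$. Therefore $h^{0,n}(\MM_A)$ equals $1$ when $A=[\mathbf{w}]$ and $0$ for every other orbit, which yields simultaneously the existence and the uniqueness assertions. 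The hypothesis that $S$ (resp.\ $X$) is dominated by a Fermat hypersurface is used precisely here: it guarantees CM type and a decomposition of the transcendental cohomology into Fermat motives over $\QQ(\zeta_d)$, so that $\MM_{\mathbf{w}}$ is genuinely the unique submotive carrying the holomorphic $n$-form (note $-\mathbf{w}\in[\mathbf{w}]$ is the dual length-$n$ vector, confirming $h^{n,0}(\MM_{\mathbf{w}})=1$).

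It remains to compute $B_n(\MM_{\mathbf{w}})=\#[\mathbf{w}]$. By orbit--stabiliser this is $\varphi(d)/\#\mathrm{Stab}(\mathbf{w})$, where $\mathrm{Stab}(\mathbf{w})=\{t\in(\ZZ/d\ZZ)^\times:(t-1)w_i\equiv0\ (d)\ \forall i\}$. Using $w_i\mid d$, this condition reads $t\equiv1\pmod{m_i}$ for all $i$, i.e.\ $t\equiv1\pmod{\lcm(m_0,\dots,m_{n+1})}$. The main point, and the only place a genuine hypothesis enters, is that $\lcm(m_0,\dots,m_{n+1})=d$: for each prime power $p^e\Vert d$ one has $v_p(m_i)=e-v_p(w_i)$, so the $p$-part of the lcm is already $e$ as soon as some $w_i$ is prime to $p$, which holds because the weighted projective space is well-formed ($\gcd(w_0,\dots,w_{n+1})=1$, so no prime divides all the $w_i$). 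Thus $\mathrm{Stab}(\mathbf{w})$ is trivial and $B_n(\MM_{\mathbf{w}})=\varphi(d)$. I expect this normalisation/lcm step to be the only subtle part of the argument; the two elementary inequalities on coordinates do the rest. Finally I would note that $[\mathbf{w}]$ is stable under $(\ZZ/d\ZZ)^\times$, hence $\MM_{\mathbf{w}}$ is defined over $\QQ$ and corresponds to a Jacobi-sum Grossencharacter of $\KK=\QQ(\zeta_d)$, matching $[\KK:\QQ]=\varphi(d)=\dim T(S)$ (resp.\ the dimension of the Calabi--Yau motive) observed in the examples.
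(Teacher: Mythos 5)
Your proposal is correct, and there is in fact nothing in the paper to compare it against: the lemma is stated bare, with no proof attached (it is presented as a consequence of the definitions of weighted Jacobi sums and Fermat motives recalled from Gouv\^ea--Yui \cite{GY95}), so your argument supplies a proof where the paper has none. Your three steps are exactly what is needed, and each is sound: (i) the Calabi--Yau condition $d=\sum_i w_i$ places the weight vector $\mathbf{w}$ itself in $\fA_d(\mathbf{w})$ with $\Vert\mathbf{w}\Vert=0$; (ii) since each $a_i$ is a nonzero element of $w_i\ZZ/d\ZZ$ and $w_i\mid d$, the bound $a_i\ge w_i$ on least positive representatives forces any length-zero character to equal $\mathbf{w}$, which simultaneously gives $h^{0,n}(\MM_{\mathbf{w}})=1$ and $h^{0,n}(\MM_A)=0$ for every other orbit; (iii) the stabiliser of $\mathbf{w}$ in $(\ZZ/d\ZZ)^{\times}$ is cut out by $t\equiv 1 \pmod{m_i}$ for all $i$, and your valuation-theoretic verification that $\lcm(m_0,\dots,m_{n+1})=d$ is precisely where well-formedness $\gcd(w_0,\dots,w_{n+1})=1$ enters; this is indeed the only delicate point, and it is consistent with the paper's worked examples (degree $42$ with $\varphi(42)=12$, degree $84$ with $\varphi(84)=24$).

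One caveat you should state explicitly: your proof, like the definitions in Section 7 that it relies on, presupposes $m_iw_i=d$ for all $i$, i.e.\ that $S$ (resp.\ $X$) is itself a \emph{weighted diagonal} hypersurface of degree $d$, not merely a Delsarte hypersurface dominated by some Fermat surface. Under the broader Delsarte reading the lemma is literally false with $d$ the degree of the hypersurface: for $x_0^2=x_1^3x_2+x_2^7-x_3^{14}$ of degree $14$ in $\PP^3(7,4,2,1)$ (Example 6.5 of the paper, where $4\nmid 14$), the transcendental field is $\QQ(\zeta_{42})$ of degree $12\neq\varphi(14)$, so $d$ would have to be replaced by the degree of the dominating Fermat surface. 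Your reading is the only one under which the numerical claims $h^{0,n}(\MM_{\mathbf{w}})=1$, $B_n(\MM_{\mathbf{w}})=\varphi(d)$ hold as stated, so this is a defect of the lemma's formulation rather than of your argument, but flagging it would make the proof airtight.
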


\begin{prop} {\sl Under the situation of Lemma 7.5,
the following assertions hold.
 
(a) The Fermat motive $\MM_{\bf w}$ associated to the weight contains
the $K3$ motive $\MM_S$ as a submotive.

(b) The Fermat motive $\MM_{\bf w}$ associated to the weight contains 
the Calabi--Yau motive $\MM_X$ as a submotive.} 
\end{prop}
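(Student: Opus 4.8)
The plan is to deduce both containments from the uniqueness clause in Lemma 7.5: in each case the relevant motive ($\MM_S$, respectively $\MM_X$) is pinned down as the unique constituent of the ambient cohomology carrying the holomorphic top form, and $\MM_{\bf w}$ is characterized by exactly the same property among the Fermat motives. First I would record the ambient decomposition. Since $S$ is dominated by a Fermat surface $\mathcal{F}_m$ (Theorem \ref{thm2.5} and the corollaries to Theorem 5.6), the dominant map induces an injection $H^2(S,\Ql)\hookrightarrow H^2(\mathcal{F}_m,\Ql)$ of Galois modules and Hodge structures identifying $H^2(S)$ with the invariants under the covering group; by Shioda's decomposition recalled in the proof of Theorem 5.6, $H^2(\mathcal{F}_m,\Ql)=\bigoplus_{\ba} V(\ba)$ breaks into one-dimensional eigenspaces whose $(\ZZ/m\ZZ)^\times$-orbits are the Galois-irreducible Fermat motives $\MM_A$.

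For part (a) I would argue as follows. By the definition of the $K3$ motive, $\MM_S=T(S)^{\sigma=-1}\otimes\Ql$ contains the class of the holomorphic $2$-form $\omega_S$, so $h^{0,2}(\MM_S)=1$. Viewing $\omega_S$ inside $\bigoplus_A\MM_A$, Lemma 7.5(a) tells us that $\MM_{\bf w}$ is the only summand $\MM_A$ with $h^{0,2}\neq 0$; hence the nonzero class of $\omega_S$ must lie in $\MM_{\bf w}$. Now $\MM_S$ is the Galois submodule generated over $\GQ$ by this class --- indeed, by the corollary to Theorem 5.6 it is induced from the single Jacobi-sum Grossencharacter $\ba$ with $\Vert\ba\Vert=0$ --- while $\MM_{\bf w}$, being itself a full $(\ZZ/m\ZZ)^\times$-orbit, is Galois-stable and contains that class. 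Therefore $\MM_S\subseteq\MM_{\bf w}$, which is the asserted containment (and in the cases where $T(S)$ already realizes the whole orbit the two motives coincide).

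For part (b) the strategy is identical once the ambient Fermat threefold is in place. By the twist-map construction of Proposition \ref{prop7.1} and Theorem \ref{thm7.3}, $X$ has a birational model in a weighted projective $4$-space of the form $z_0^4+z_1^4=f(z_2,z_3,z_4)$ or $z_0^3+z_1^6=f(z_2,z_3,z_4)$, and such a model is dominated by a Fermat threefold $\mathcal{F}$; this gives an injection $H^3(X,\Ql)\hookrightarrow H^3(\mathcal{F},\Ql)$ compatible with the Hodge and Galois structures, with the target again a sum of Fermat motives. The holomorphic $3$-form of $X$ is $\omega_E\wedge\omega_S$, which spans $H^{3,0}(X)$ and, by the definition of the Calabi--Yau motive, lies in $\MM_X=H^1(E,\Ql)^{\iota=-1}\otimes(T(S)^{\sigma=-1}\otimes\Ql)$, so $h^{0,3}(\MM_X)=1$. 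By Lemma 7.5(b), $\MM_{\bf w}$ is the unique Fermat threefold motive with $h^{0,3}\neq 0$; exactly as before the class of $\omega_E\wedge\omega_S$ lands in $\MM_{\bf w}$, and the $\GQ$-orbit it generates, namely $\MM_X$, is contained in $\MM_{\bf w}$.

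The hard part will be the bookkeeping underlying part (b): one must check that the tensor product $\MM_X$, assembled from motives of the two distinct factors $E$ and $S$, genuinely corresponds to a single Fermat-motive summand of $\mathcal{F}$. Concretely this means verifying that under the twist map the K\"unneth class $\omega_E\wedge\omega_S$ matches a weighted Jacobi-sum character $\ba$ of the correct length on the combined threefold $\mathcal{F}$ (so that it contributes to $h^{0,3}$ and to no other Hodge type), and that the resulting degree produces the motive $\MM_{\bf w}$ with $B_3=\varphi(d)$ promised by Lemma 7.5(b). I expect no conceptual difficulty here --- the uniqueness of the holomorphic-form-carrying motive does all the real work once the embedding into Fermat cohomology is set up --- but tracking the weights, degrees, and characters through the twist map is where the actual computation lives.
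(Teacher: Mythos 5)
Your proof is correct and follows essentially the same route as the paper's: both realize $S$ (resp.\ $X$) as a quotient of a Fermat hypersurface, invoke Shioda's decomposition of the Fermat cohomology into one-dimensional eigenspaces, and then use the uniqueness clause of Lemma 7.5 --- that $\MM_{\bf w}$ is the only Fermat motive carrying the $(0,2)$ (resp.\ $(0,3)$) Hodge piece --- to force the containment of the unique form-carrying motive $\MM_S$ (resp.\ $\MM_X$) inside $\MM_{\bf w}$. The paper phrases this via the identification of transcendental parts under the quotient map rather than by tracking the holomorphic form and Galois-stability of the orbit, but the underlying argument is identical.
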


\begin{proof}  $S$ is realized as the quotient of a
Fermat surface ${\mathcal{F}}_m$ by some finite subgroup $G$
of the automorphism group of ${\mathcal{F}}_m$. That is,
$S$ is birationally equivalent to ${\mathcal{F}}_m/G$. Furthermore, 
the transcendental part of $H^2(S)$ can be identified with the 
transcendental part of $H^2({\mathcal{F}}_m)$ that is invariant
under $G$. We know that $H^2({\mathcal{F}}_m)$ is a dirct
sum of one-dimensional subspaces. The Fermat motive associated to the weight is
the unique motive of Hodge type $(0,2)$, and hence its $G$-invariant
transcendental part must contain the $K3$ motive, the unique
motive $\MM_S$ of $S$ of Hodge type $(0,2)$. 

Similarly for the Calabi--Yau motive $\MM_X$, it 
corresponds to the tensor product $E\otimes T(S)^{\sigma}$
and it must be contained in the $G$-invariant part of the Fermat
motive associated to the weight, which is the
unqiue motive of $X$ of Hodge type $(0,3)$. 
\end{proof}

(Compare the Fermat motive associated to the weight to the
$\Omega$ motive defined by Schimmrigk in \cite{S13}.)

\begin{prop}
{\sl Let $(S,\sigma)$ be one of the $86$ $K3$ surfaces with
involution $\sigma$ defined in {\em Theorem \ref{thm-DelsarteS}} 
by a hypersurface over $\QQ$ in a weighted projective $3$-space
$\PP^3(w_0,w_1,w_2,w_3)$.  Then $S$ is of CM type.
The $L$-series of $S$ is determined by the
Jacobi sum Grossencharacter of some cyclotomic field
$\KK:=\QQ(\zeta_d)$ over $\QQ$.  

(a) Let ${\bf w}=(w_0,w_1,w_2,w_3)$ be the weight defining $S$, 
and let $\MM_{\bf w}$ be the unique motive associated to ${\bf w}$.
Let $j_{\fp}(\bf{w})$ be the Jacobi sum
associated to it.  Then $j_{\fp}(\bf{w})$ is an algebraic
integer in $\OO_{\KK}$ with absolute value 
$\mid\mbox{Norm}\,{\fp}\mid$.
The motive $\MM_{\bf w}$ associated to ${\bf w}$ is transcendental
and  corresponds to the single $(\ZZ/d\ZZ)^{\times}$-orbit of 
$j_{\fp}(\bf{w})$. 
Therefore the Galois representation associated to $\MM_{\bf w}$ is 
induced by a $GL_1$ automorphic representation of $\KK=\QQ(\zeta_d)$,
and it is irreducible over $\QQ$ of dimension $\varphi(d)$.
Consequently, the Galois representation of $\MM_S$ is the automorphic
induction of the $GL_1$ Grossencharacter representation of $\KK$.

In other words, $\MM_{\bf w}$ is automorphic,  that is, 
$L(\MM_{\bf w}, s)$ is determined by an automorphic representation over
$\QQ$.
 
(b) Let $\MM_A$ be a motive associated to $S$ other
than $\MM_{\bf w}$. Then $\MM_A$
is automorphic, that is, $L(\MM_A,s)$ is the Artin $L$-function
determined by an automorphic representation over $\QQ$.}
\end{prop}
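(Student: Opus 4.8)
The plan is to recognize that this proposition repackages Theorem \ref{thm4.5} and its corollaries in the language of weighted Fermat motives, so that the real content is twofold: (i) identifying the $K3$ (transcendental) motive $\MM_S$ with the single weighted Fermat motive $\MM_\w$ of Hodge type $(0,2)$, and (ii) invoking automorphic induction. First I would record that $(S,\sigma)$ is defined over $\QQ$ and of CM type by Theorem \ref{thm2.5}, and that, being of Delsarte type, $S$ is birationally dominated by a Fermat surface $\mathcal{F}_m$ of degree $m$ (the least common multiple of the $m_i=d/w_i$) via a quotient by a finite group $H$ of diagonal automorphisms, so $S\sim\mathcal{F}_m/H$. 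Then I would invoke Shioda's decomposition $H^2(\mathcal{F}_m,\Ql)=\bigoplus_{\alpha}V(\alpha)$ into one-dimensional eigenspaces indexed by $\fA_m$, pass to the $H$-invariant part, and apply the preceding Lemma: among the resulting weighted Fermat motives $\MM_A$ there is a unique one, $\MM_\w$, with $h^{0,2}(\MM_\w)=1$, and $h^{0,2}(\MM_A)=0$ for all others. By the preceding Proposition this $\MM_\w$ contains the $K3$ motive $\MM_S$; since both carry the full (unique) Hodge-$(0,2)$ datum and have equal dimension $\varphi(d)$, they coincide, which identifies $\MM_S=\MM_\w$ with the $(\ZZ/d\ZZ)^\times$-orbit of the unique character $\ba$ of length $\Vert\ba\Vert=0$, exactly as in the corollary to Theorem \ref{thm4.5}.

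For part (a), I would then compute: by the definition of weighted Jacobi sums the associated sum $j_\fp(\w)=j_\fp(\ba)$ lies in $\OO_\KK$, and Weil's evaluation gives it complex absolute value $q^{n/2}$ with $q=\vert\mathrm{Norm}\,\fp\vert$ and $n=2$, i.e. $\vert j_\fp(\w)\vert=\vert\mathrm{Norm}\,\fp\vert$, establishing the first assertion. The Galois group $\Gal(\KK/\QQ)\cong(\ZZ/d\ZZ)^\times$ acts transitively on the orbit, permuting the one-dimensional eigenspaces, so the Galois representation on $\MM_\w$ is precisely $\mathrm{ind}_{G_\KK}^{G_\QQ}$ of the $GL_1$ Jacobi-sum Grossencharacter $\chi_\ba$ of $\KK=\QQ(\zeta_d)$, and transitivity forces irreducibility of degree $[\KK:\QQ]=\varphi(d)$. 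Since $\chi_\ba$ is a Hecke character, hence automorphic on $GL_1(\bA_\KK)$, the automorphic induction theorem of Arthur--Clozel \cite{AC90}, extended to the solvable (here abelian cyclotomic) case by Rajan \cite{Raj02}, produces an automorphic representation $\pi(\chi_\ba)$ of $GL_{\varphi(d)}(\bA_\QQ)$ with $L(\pi(\chi_\ba),s)=L(\MM_\w,s)$. This is the automorphy of $\MM_\w$.

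For part (b), the remaining motives $\MM_A$ all satisfy $h^{0,2}(\MM_A)=0$ by the preceding Lemma, so they are algebraic and lie in the N\'eron--Severi part $NS(\bar S)^{\sigma}$; they assemble into the Artin representation $\rho$ of Theorem \ref{thm4.5}. Exactly as in the corollary to Theorem \ref{thm4.5}, $\rho$ is induced from $H$-invariant one-dimensional subspaces of $NS(\mathcal{F}_m)$, so each $\MM_A$ is again an automorphic induction of a finite-order Grossencharacter; hence $L(\MM_A,s)$ is the Artin $L$-function of an automorphic representation over $\QQ$, which is the claim.

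The main obstacle I anticipate is the bookkeeping in step (i): pinning down the precise cyclotomic field $\KK=\QQ(\zeta_d)$ over which the transcendental motive becomes a single Grossencharacter orbit, since the dominating Fermat surface has degree $m$ that may properly exceed $d$, and one must check that the character $\ba$ genuinely has conductor $d$ so that the orbit descends to $\QQ(\zeta_d)$ with $\varphi(d)=\dim T(\bar S)^{\sigma}_\QQ$. This compatibility --- between Weil's Jacobi-sum computation of the Frobenius eigenvalues and the geometric $\Gal(\overline{\QQ}/\QQ)$-action on the eigenspaces --- is precisely the technical heart borrowed from \cite{LSY10} and \cite{GY95}, and verifying it for the $86$ weights is the only genuinely laborious point; the automorphic-induction input is then formal.
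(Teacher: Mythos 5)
Your proposal is correct and follows essentially the same route as the paper's own proof: both realize the Frobenius eigenvalues of the weighted Fermat motive $\MM_{\bf w}$ as the $(\ZZ/d\ZZ)^{\times}$-orbit of Jacobi sums, view the restriction to $\Gal(\overline{\QQ}/\KK)$ as a sum of $GL_1$ Jacobi-sum Grossencharacter representations of $\KK=\QQ(\zeta_d)$, and conclude by Arthur--Clozel/Rajan automorphic induction, with part (b) handled by the same induction applied to the algebraic pieces. The only difference is one of explicitness: you spell out the Shioda decomposition, the identification $\MM_S=\MM_{\bf w}$ via Lemma 7.5 and Proposition 7.6, and the conductor issue of pinning down $d$ against the degree of the dominating Fermat surface, all of which the paper's terse proof leaves implicit.
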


\begin{proof} Our $K3$ surface $S$ is defined by a
hypersurface of degree $d$ over $\QQ$ in a weighted projective $3$-space. 
$S$ is of CM type.  The characteristic polynomial of the Frobenius of the 
motive $\MM_{\bf w}$ has reciprocal roots $j_{\fp}(\bf w)$ 
and its Galois 
conjugates, that is, the $(\ZZ/d\ZZ)^{\times}$-orbit of $j_{\fp}(\bf w)$. 
We know that $j_{\fp}(\bf w)$ and its Galois conjugates are elements of 
the cyclotomic field $\KK=\QQ(\zeta_d)$.  The restriction  
$\mbox{Gal}(\bar{\QQ}/\KK)$ is a sum of $GL_1$-dimensional representations 
corresponding to Jacobi sum Grossencharacters. 

For (a), the automorphic induction process yields the automorphic 
representation 
$I(\MM_{\bf w})$ in $\fA(GL_{\varphi(d)}(\QQ))$, which is irreducible over 
$\QQ$ of dimension $\varphi(d)$.  Therefore, $\MM_{\bf w}$ is automorphic.

For (b), a similar argument establishes the automorphy of 
$\MM_A$.
\end{proof}

Now we consider Calabi--Yau threefolds of Borcea--Voisin type,
and establish their automorphy.

\begin{lem} {\sl Let $\KK=\QQ(\zeta_d)$ be the $d$-th cyclotomic
field over $\QQ$.
Let $\psi$ be a Jacobi sum Grossencharacter of $\KK$.
Let $\phi\in \fA(GL_2(\bA_{\QQ}))$ be an automorphic
representation.  Then there is the base change representation
$B_{\KK/\QQ}(\phi)\in \mathfrak{A}(GL_2(\bA_{\KK}))$.

Furthermore, there exists the automorphic induction
$$I(B_{\KK/\QQ}(\phi))\otimes \psi\in 
\fA(GL_{2\varphi(d)}(\bA_{\QQ})).$$

Finally, the Rankin--Selberg $L$-series is given by
$$L(s, \psi\otimes B_{\KK/\QQ}(\phi))
=L(s, I(B_{\KK/\QQ}(\phi))\otimes \psi).$$}
\end{lem}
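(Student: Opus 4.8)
The plan is to construct the base change and the automorphic induction by climbing a cyclic tower, and then to read off the $L$-series identity from the compatibility of these two operations with standard and Rankin--Selberg $L$-functions. Since $\Gal(\KK/\QQ)\simeq(\ZZ/d\ZZ)^{\times}$ is a finite abelian group, it has a composition series with cyclic quotients of prime order; by the Galois correspondence this yields a tower
$$\QQ=K_0\subsetneq K_1\subsetneq\cdots\subsetneq K_r=\KK$$
in which each step $K_{i+1}/K_i$ is cyclic of prime degree and each $K_i/\QQ$ is (abelian, hence) Galois. This reduces every assertion to the prime-degree cyclic situation covered by Arthur--Clozel, the passage to the full solvable extension $\KK/\QQ$ being exactly what Rajan's generalization (Section \ref{appendix}) licenses.

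First I would define $B_{\KK/\QQ}(\phi)$ as the composite $B_{K_r/K_{r-1}}\circ\cdots\circ B_{K_1/K_0}$ applied to $\phi$. At each step the Arthur--Clozel theorem supplies a base change lift of an automorphic representation of $GL_2$, so the composite lands in $\fA(GL_2(\bA_{\KK}))$, and Rajan's theorem guarantees that the result is independent of the chosen tower. Here one must allow base change of isobaric (not merely cuspidal) representations, since the intermediate lifts need not remain cuspidal; this is built into the formalism and causes no trouble. Next, because $\psi$ is a Hecke Grossencharacter, i.e.\ a $GL_1(\bA_{\KK})$ automorphic character, the twist $\Pi:=B_{\KK/\QQ}(\phi)\otimes\psi$ is again an automorphic representation of $GL_2(\bA_{\KK})$. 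Inducing $\Pi$ back down the tower, $I_{\KK/\QQ}=I_{K_1/K_0}\circ\cdots\circ I_{K_r/K_{r-1}}$, and using that each prime-degree induction of a $GL_2$ representation multiplies the rank by the local degree, I obtain $I_{\KK/\QQ}(\Pi)\in\fA(GL_{2\varphi(d)}(\bA_{\QQ}))$, since $[\KK:\QQ]=\varphi(d)$.

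For the $L$-series identity I would argue in two moves. As $\psi$ has degree one, the Rankin--Selberg convolution $L(s,\psi\otimes B_{\KK/\QQ}(\phi))$ is merely the standard $GL_2$ $L$-function of the twist, so $L(s,\psi\otimes B_{\KK/\QQ}(\phi))=L(s,\Pi)$. On the other hand, automorphic induction preserves standard $L$-functions -- the automorphic counterpart of the induction-invariance of Artin $L$-series recorded in Section \ref{appendix} -- which at each prime-degree step is the defining compatibility of Arthur--Clozel; iterating down the tower gives $L(s,I_{\KK/\QQ}(\Pi))=L(s,\Pi)$. Combining the two yields $L(s,\psi\otimes B_{\KK/\QQ}(\phi))=L(s,I_{\KK/\QQ}(\Pi))$, the asserted equality. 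Finally, the projection formula $I_{\KK/\QQ}(\psi\otimes B_{\KK/\QQ}(\phi))\simeq I_{\KK/\QQ}(\psi)\otimes\phi$ -- the automorphic shadow of $\mathrm{Ind}_{H}^{G}(\sigma\otimes\mathrm{Res}_{H}^{G}\tau)\simeq(\mathrm{Ind}_{H}^{G}\sigma)\otimes\tau$ used in Theorem 5.9(c) -- reduces at each step to the Rankin--Selberg identity $L(s,B(\pi)\otimes\Pi)=L(s,\pi\otimes I(\Pi))$ of Section \ref{appendix}, and reconciles the induced representation with the tensor product appearing in the Calabi--Yau motive.

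The main obstacle is not any single computation but the passage from the prime-degree cyclic case to the full abelian extension $\KK/\QQ$: one must check that the tower-wise composites of base change and automorphic induction are well defined and independent of the tower (Rajan), and one must work throughout in the isobaric/automorphic spectrum rather than the cuspidal spectrum, because $I_{\KK/\QQ}(\Pi)$ is cuspidal only when $\Pi$ is not isomorphic to any of its $\Gal(\KK/\QQ)$-conjugates and is otherwise a genuine isobaric sum. Keeping track of which lifts remain cuspidal, and verifying that the $L$-function compatibilities survive composition along the tower, is where the care is needed.
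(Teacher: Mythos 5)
Your proposal is correct and takes essentially the same route as the paper, whose entire proof of this lemma (and of the proposition following it) is a single sentence invoking the Arthur--Clozel base change and automorphic induction machinery together with Rajan's extension to solvable extensions over $\QQ$. Your tower of prime-degree cyclic steps, the caveat about working in the isobaric rather than cuspidal spectrum, and the two $L$-function compatibilities (Rankin--Selberg of $GL_1\times GL_2$ as a standard twisted $L$-function, and inductivity of $L$-functions under automorphic induction) are precisely the details the paper delegates to those references, so your writeup is a faithful and more careful expansion of the paper's intended argument.
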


\begin{prop} {\sl Let $d_1, d_2\in\NN$.
Let $\KK_1=\QQ(\zeta_{d_1})$ and $\KK_2=\QQ(\zeta_{d_2})$
be $d_1$-th and $d_2$-th cyclotomic fields over $\QQ$.
Let $\psi_1$ and $\psi_2$ be Jacobi sum Grossencharacters
of $\KK_1$ and $\KK_2$, respectively. Then they are 
automorphic forms in $\fA(GL_1(\bA_{\KK_1}))$
and $\fA(GL_1(\bA_{\KK_2}))$, respectively.
Consider the induced automorphic representation
$\psi_1\otimes \psi_2$.

\begin{itemize}
\item[(a)] If $\KK_1\nsupseteq \KK_2$, then $\psi_1\otimes \psi_2$
corresponds to an automorphic representation in 
$\fA(GL_1(\bA_{\KK_1\KK_2}))$,
and
$$L(s,\psi_1\otimes\psi_2)=L(s,\psi_1)L(s,\psi_2).$$
\item[(b)] If $\KK_1=\KK_2$, then $\psi_1\otimes\psi_2$
corresponds to the induced automorphic representation
$I(\psi_1\otimes\psi_2)$
in $\fA(GL_2(\bA_{\KK_1}))$, and
$$L(s,\psi_1\otimes\psi_2)=L(s,I(\psi_1\otimes\psi_2)).$$
\item[(c)] If $\KK_1\supset \KK_2$ but $\KK_1\neq \KK_2$, then
$\psi_2$ corresponds to a representation of a subgroup, $H$,
of $\mbox{Gal}(\KK_1/\QQ):=G$, and $\psi_1\otimes \psi_2$
corresponds to the induced representation in
$\fA(GL_1(\bA_{\KK_1}))$,
and
$$L(s,\psi_1\otimes\psi_2)=L(s, \psi_1\otimes\mbox{Ind}^G_H
\psi_2).$$
\end{itemize}
}
\end{prop}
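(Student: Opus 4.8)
The plan is to translate the statement into the language of Galois (equivalently, $GL_1$-automorphic) representations and then to reduce each of the three cases to the projection formula $\mathrm{Ind}_{H}^{G}(\alpha)\otimes\beta\cong\mathrm{Ind}_{H}^{G}\bigl(\alpha\otimes\mathrm{Res}_{H}\beta\bigr)$, whose automorphic incarnation is precisely the Rankin--Selberg convolution identity recalled above. Write $\psi_i$ as a one-dimensional character of $G_{\KK_i}:=\Gal(\overline{\QQ}/\KK_i)$, so that the tensor product $\psi_1\otimes\psi_2$ only acquires meaning after restricting both factors to the absolute Galois group of the compositum $\KK_1\KK_2=\QQ(\zeta_{\lcm(d_1,d_2)})$; there it is again a single character, hence a $GL_1$-automorphic representation of $\KK_1\KK_2$. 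The case division is dictated entirely by the lattice of cyclotomic fields: case (a) is the generic position $\KK_2\not\subseteq\KK_1$ (so $\KK_1\subsetneq\KK_1\KK_2$), case (b) is $\KK_1=\KK_2$, and case (c) is the nested position $\KK_2\subsetneq\KK_1=\KK_1\KK_2$. In all three, every extension in sight is abelian over $\QQ$, hence solvable, so that the base-change and automorphic-induction functors exist and are compatible by the theorem of Rajan generalizing Arthur--Clozel, while the individual $\psi_i$ are automorphic by their construction from Jacobi-sum Grossencharacters.

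First I would handle case (c), the cleanest. Here $G_{\KK_1}\subseteq G_{\KK_2}$; restricting $\psi_2$ to $G_{\KK_1}$ and multiplying by $\psi_1$ yields a character of $G_{\KK_1}$, so $\psi_1\otimes\psi_2$ is a $GL_1$-object of $\KK_1$. Descending to $\QQ$ and using the invariance of Artin $L$-series under induction together with the projection formula at the finite level gives $L(s,\psi_1\otimes\psi_2)=L(s,\psi_1\otimes\mathrm{Ind}_H^G\psi_2)$, with $G=\Gal(\KK_1/\QQ)$ and $H$ the subgroup of $G$ through which $\psi_2$ is realized. Case (b) is the specialization $\KK_1=\KK_2=:\KK$: now $\psi_1$ and $\psi_2$ both live on $G_{\KK}$, their product is a character of $G_{\KK}$, and the automorphic induction $I(\psi_1\otimes\psi_2)$ from $\KK$ up to $\QQ$, carried out through a tower of cyclic prime-degree steps, produces the asserted representation with $L(s,\psi_1\otimes\psi_2)=L(s,I(\psi_1\otimes\psi_2))$. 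Case (a) is then obtained by the same mechanism applied over the strictly larger compositum $\KK_1\KK_2$: the linear disjointness of the two characters over this field lets the descended $L$-function factor as $L(s,\psi_1)L(s,\psi_2)$, the factorization being the $L$-function shadow of Frobenius reciprocity together with the multiplicativity expressed by the Rankin--Selberg identity $L(s,B(\pi)\otimes\Pi)=L(s,\pi\otimes I(\Pi))$ of the lemma above.

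The main obstacle will be book-keeping at the level of automorphy rather than at the level of formal $L$-function identities. The convolution and induction theorems of Arthur--Clozel are stated for cyclic extensions of prime degree, whereas $\KK_1\KK_2/\QQ$ is a general abelian extension; so I would factor it through a tower of cyclic prime-degree layers and verify, layer by layer, that base change and automorphic induction remain functorial and that cuspidality, or isobaricity where cuspidality fails, is preserved, assembling the global $L$-function identity only at the end. A second, more technical point is to pin down which case is actually in force: one must compute $\KK_1\KK_2=\QQ(\zeta_{\lcm(d_1,d_2)})$ and $\KK_1\cap\KK_2=\QQ(\zeta_{\gcd(d_1,d_2)})$ and account for the standard ambiguity $\QQ(\zeta_m)=\QQ(\zeta_{2m})$ for odd $m$, which shifts the boundary between cases (a) and (c). Once these compatibilities are secured, the automorphy assertions and the three $L$-function identities follow formally from the projection formula and Rajan's theorem.
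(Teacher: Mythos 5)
Your proposal is correct and takes essentially the same route as the paper: the paper's entire proof of this proposition is a one-sentence appeal to the base change and automorphic induction results of Arthur--Clozel and Rajan (with a pointer to Murty), combined with the Rankin--Selberg identity stated just before it---precisely the ingredients, together with the projection formula $\mathrm{Ind}_H^G(\alpha)\otimes\beta\cong\mathrm{Ind}_H^G\bigl(\alpha\otimes\mathrm{Res}_H\beta\bigr)$, that you organize your argument around. Your write-up simply makes explicit the bookkeeping (the compositum $\QQ(\zeta_{\mathrm{lcm}(d_1,d_2)})$, factoring through cyclic prime-degree towers, and the $\QQ(\zeta_m)=\QQ(\zeta_{2m})$ ambiguity) that the paper leaves implicit.
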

                  
To prove these results, we apply the base change and
automorphic induction method of Arthur and Clozel (and
Rajan) to our situation. Also, confer Murty \cite{M93}.  

\section{Tables} \label{tables}

Some clarifications might be in order how to read the tables.

\begin{itemize} 
\item In the Table \ref{table1}, \ref{table2} and \ref{table3}, 
we use two numbering systems, one from Borcea $B\#$ and the 
other from Yonemura $Y\#$. We matched up the numbers in two lists.
\item We use two sets of notations for variables, one is
$x_0,x_1,\cdots, x_n$, and the other is $x,y,z,w,\cdots$.  
In relevant tables, we indicated identification of these two sets 
of variables.
\item The equations in Tables \ref{table1}, \ref{table2} and 
\ref{table3} are taken from Borcea's paper~\cite{B94}. ``Terms removed'' 
indicates the terms we may remove from the equations (or 
deformation) in \cite{B94} to create those of Fermat or Delsarte type. 
\item The equations in Table \ref{table-y2} are 
taken from Yonemura's paper~\cite{Yo89}. ``Terms removed'' indicates the 
terms we may remove to specialize the equations into Delsarte type. 
In Case \#1, there is no choice of equations of the form 
$x_0^2=f(x_1,x_2,x_3)$ or $x_0^2x_i=f(x_1,x_2,x_3)$. 
\item In Table \ref{table-y4}, the equations are taken from Yonemura's 
paper~\cite{Yo89}. In order to make the equations into Delsarte type, we 
slightly generalize the original equations and then remove some terms, if necessary. 
In other words, we first add a few terms to the original equation and then 
remove several terms to make the equation into a form of Delsarte type. This 
procedure is indicated as ``terms changed/removed.'' 

For instance, in the case $\#17$, we first add a term $x^2y$ 
and then remove $x^3$, $xw^5$ and $yw^5$. In effect, this procedure interchanges 
$x^3$ with $x^2y$, and remove $xw^5$ and $yw^5$. It results in a new equation 
$x^2y+y^3+z^5+zw^6$. 
\item In Table \ref{table-y6}, the equations are taken from Yonemura's 
paper~\cite{Yo89}. For the $K3$ surfaces on this table, there is no way to 
define the involution $\sigma (x)=-x$ by using equations of Delsarte 
type. We therefore define an involution on some other variable. 
This alternative involution is indicated in the column ``involution.'' 
Nikulin's invariants $r$ and $a$ for such $(S,\sigma )$ are calculated 
in Table \ref{table-extra}. 
\item The $K3$ surfaces in Table \ref{table-y5} are taken from Yonemura's 
paper~\cite{Yo89}. They have involution $\sigma (x)=-x$, but no matter what 
terms we add or remove from the equations, we cannot transform them into 
quasi-smooth equations of Delsarte type (cf. Remark \ref{rmk-table8}).  
\item The $K3$ surfaces in Table \ref{table-y1} are also taken from 
Yonemura's paper~\cite{Yo89}. For them, we do not know how to define 
a non-symplectic involution on the $K3$ surfaces 
by preserving their quasi-smoothness (even if we allow more than four 
monomials in the equations). 
\item Table \ref{table-extra} lists $K3$ surfaces with involution 
at $x_1$, $x_2$ or $x_3$ (i.e., not at the variable $x_0$ of highest 
weight). The variable we choose is indicated under the 
column $\sigma (x_i)=-x_i$. For some $K3$ surfaces, we consider two 
involutions. 
\end{itemize} 
\bigskip 

\begin{table}[ht]
\caption{$K3$ weights in Borcea's list with odd $w_0$}
\label{table1}
$$
\begin{array}{r|r|l|l|r|r|r}\hline \hline
Y\#& B\# & (w_0,w_1,w_2,w_3) & f(x_1,x_2,x_3)=f(y,z,w) & r & a & \mbox{ 
\begin{tabular}{l} 
terms removed from \\   
equations of [B] 
\end{tabular}} \\ \hline
5 & 1  & (3,1,1,1) & y^6+z^6+w^6  & 1 & 1 & \\ \hline
6 & 2  & (5,2,2,1) & y^5+z^5+w^{10} & 6 & 4 & \\ \hline
42 & 3  & (5,3,1,1) & y^3z+z^{10}+w^{10} & 3 & 1 & y^3w  \\ \hline
32 & 4  & (7,3,2,2) & y^4z+z^7+w^7 & 10 & 6  & y^4w \\ \hline
40 & 5  & (7,4,2,1) & y^3z+z^7+w^{14} & 7 & 3 & y^3w^2 \\ \hline
33 & 6  & (9,4,3,2) & y^4w+z^6+w^9 & 10 & 6 &y^3z^2 \\ \hline
39 & 7  & (9,5,3,1) & y^3z+z^6+w^{18} & 7 & 3 &y^3w^3 \\ \hline
12 & 8  & (9,6,2,1) & y^3+z^9+w^{18} & 6 & 2  & \\ \hline
75 & 9  & (11,5,4,2) & y^4w+z^5w+w^{11} & 13 & 5 &y^2z^3 \\ \hline
78 & 10 & (11,6,4,1) & y^3z+yz^4+w^{22} & 10 & 2 &y^3w^4, z^5w^2 \\ \hline
82 & 11 & (11,7,3,1) & y^3w+yz^5+w^{22} & 9 & 1 & z^7w \\ \hline
76 & 12 & (13,6,5,2) & y^4w+yz^4+w^{13} & 14 & 4 & z^4w^3\\ \hline
77 & 13 & (13,7,5,1) & y^3z+z^5w+w^{26} & 11 & 1 &y^3w^5 \\ \hline
81 & 14 & (13,8,3,2) & y^3w+yz^6+w^{13} & 13 & 3 &z^8w \\ \hline
29 & 15 & (15,6,5,4) & y^5+z^6+yw^6 & 12 & 6 & z^2w^5 \\ \hline
34 & 16 & (15,7,6,2) & y^4w+z^5+w^{15} & 14 & 4 & \\ \hline
38 & 17 & (15,8,6,1) & y^3z+z^5+w^{30} & 11 & 1 & y^3w^6\\ \hline
11 & 18 & (15,10,3,2) & y^3+z^{10}+w^{15} & 10 & 4 & \\ \hline
50 & 19 & (15,10,4,1) & y^3+yz^5+w^{30} & 9 & 1 & z^7w^2 \\ \hline 
90 & 20 & (17,7,6,4) & y^4z+y^2w^5+z^5w+zw^7 & 17 & 3 & 
\mbox{no Delsarte form} \\ \hline
93 & 21 & (17,10,4,3) & y^3z+yz^6+yw^8+z^7w^2+zw^{10} & 16 & 2 & 
\mbox{no Delsarte form} \\ \hline
91 & 22 & (19,8,6,5) & y^4z+yz^5+yw^6+z^3w^4 & 18 & 2 & 
\mbox{no Delsarte form} \\ \hline
92 & 23 & (19,11,5,3) & y^3z+yw^9+z^7w & 17 & 1 & zw^{11} \\ \hline
47 & 24 & (21,14,4,3) & y^3+yz^7+w^{14} & 13 & 3 & z^9w^2 \\ \hline
49 & 25 & (21,14,5,2) & y^3+z^8w+w^{21} & 14 & 2 & \\ \hline
14 & 26 & (21,14,6,1) & y^3+z^7+w^{42} & 10 & 0 & \\ \hline 
73 & 27 & (25,10,8,7) & y^5+yz^5+zw^6 & 19 & 1 & \\ \hline
83 & 28 & (27,18,5,4) & y^3+yw^9+z^{10}w & 17 & 1 & z^2w^{11}\\ \hline
46 & 29 & (33,22,6,5) & y^3+z^{11}+zw^{12} & 18 & 0 & \\ \hline
\hline
\end{array}$$
\end{table}

\bigskip

\begin{table}[ht]
\caption{$K3$ weights in Borcea's list with even $w_0$}
\label{table2}
$$
\begin{array}{r|r|l|l|r|r|r}\hline \hline
Y\# & B\# & (w_0,w_1,w_2,w_3) & f(x_1,x_2,x_3)=f(y,z,w) & r & a & \mbox{ 
\begin{tabular}{l} 
terms removed from \\   
equations of [B] 
\end{tabular}} \\ \hline
7 & 30 & (4,2,1,1) & y^4+z^8+w^8 & 2 & 2 & \\ \hline
37 & 31 & (8,4,3,1) & y^4+yz^4+w^{16} & 6 & 4 & z^5w \\ \hline
44 & 32 & (8,5,2,1) & y^3w+z^8+w^{16} & 6 & 2 & y^2z^3 \\ \hline
36 & 33 & (10,5,3,2) & y^4+yz^5+w^{10} & 8 & 6 & z^6w \\ \hline
9 & 34 & (10,5,4,1) & y^4+z^5+w^{20} & 6 & 4 & \\ \hline
35 & 35 & (14,7,4,3) & y^4+z^7+yw^7 & 10 & 6 & zw^8\\ \hline
45 & 36 & (14,9,4,1) & y^3w+z^7+w^{28} & 10 & 0 & \\ \hline
74 & 37 & (16,7,5,4) & y^4w+yz^5+w^8 & 14 & 4 & z^4w^3 \\ \hline
79 & 38 & (16,9,5,2) & y^3z+z^6w+w^{16} & 14 & 2 & y^2w^7 \\ \hline
30 & 39 & (20,8,7,5) & y^5+z^5w+w^8 & 14 & 4 & \\ \hline
80 & 40 & (22,13,5,4) & y^3z+z^8w+w^{11} & 18 & 0 & \\ \hline
\hline
\end{array}$$
\end{table}

\bigskip

\begin{table}[ht]
\caption{$K3$ weights in Borcea's list with $w_0$ divisible by 6}
\label{table3}
$$
\begin{array}{r|r|l|l|r|r|r}\hline \hline
Y\# & B\# & (w_0,w_1,w_2,w_3) & f(x_1,x_2,x_3)=f(y,z,w) & r & a & \mbox{ 
\begin{tabular}{l} 
terms removed from \\   
equations of [B] 
\end{tabular}} \\ \hline
8 & 41 & (6,3,2,1) & y^4+z^6+w^{12} & 4 & 4 & \\ \hline
10 & 42 & (6,4,1,1) & y^3+z^{12}+w^{12} & 2 & 0 & \\ \hline
31 & 43 & (12,5,4,3) & y^4z+z^6+w^8 & 10 & 6 & y^3w^3 \\ \hline
41 & 44 & (12,7,3,2) & y^3z+z^8+w^{12} & 10 & 4 & y^2w^5 \\ \hline
13 & 45 & (12,8,3,1) & y^3+z^8+w^{24} & 6 & 2 & \\ \hline
43 & 46 & (18,11,4,3) & y^3w+z^9+w^{12} & 14 & 2 & \\ \hline
51 & 47 & (18,12,5,1) & y^3+z^7w+w^{36} & 10 & 0 & \\ \hline
48 & 48 & (24,16,5,3) & y^3+z^9w+w^{16} & 14 & 2 & \\ \hline
\hline
\end{array}$$
\end{table}

\bigskip

\begin{table}[ht]
\caption{Delsarte-type $K3$ surfaces with involutions $\sigma (x)=-x$,
NOT in Borcea's list, after removal of several terms}
\label{table-y2}
$$\begin{array}{r|l|l|r|r|r}\hline \hline
Y\# & (w_0,w_1,w_2,w_3) & F(x_0,x_1,x_2,x_3)=F(x,y,z,w) & r & a & \mbox{ 
\begin{tabular}{l} 
terms removed from \\   
equations of [Y] 
\end{tabular}} \\ \hline
1 & (1,1,1,1) & x^4+y^4+z^4+w^4 & 8 & 8 & \\ \hline
19 & (3,2,2,1) & x^2y+y^4+z^4+w^8 & 10 & 6 & x^2w^2 \mbox{ and } x^2z\\ \hline
20 & (9,8,6,1) & x^2z+y^3+z^4+w^{24} & 10 & 6 & x^2w^6\\ \hline
21 & (2,1,1,1) & x^2y+y^5+z^5+w^5 & 6 & 4 & x^2z, x^2w\\ \hline
22 & (6,5,3,1) & x^2z+y^3+z^5+w^{15} & 10 & 4 & x^2w^3\\ \hline
23 & (5,3,2,2) & x^2z+y^4+z^6+w^6 & 12 & 6 & x^2w\\ \hline
24 & (5,4,2,1) & x^2z+y^3+z^6+w^{12} & 10 & 4 & x^2w^2\\ \hline
25 & (4,3,1,1) & x^2z+y^3+z^9+w^9 & 6 & 2 & x^2w \\ \hline
26 & (9,5,4,2) & x^2w+y^4+z^5+w^{10} & 14 & 4 &  \\ \hline
27 & (11,8,3,2) & x^2w+y^3+z^8+w^{12} & 14 & 2 &\\ \hline
28 & (10,7,3,1) & x^2w+y^3+z^7+w^{21} & 11 & 1 & \\ \hline
55 & (7,6,5,2) & x^2y+y^3w+z^4+w^{10} & 14 & 4 & x^2w^3\\ \hline
56 & (11,8,6,5) & x^2y+y^3z+z^5+w^6 & 19 & 1 & \\ \hline 
57 & (9,6,5,4) & x^2y+y^4+z^4w+w^6 & 18 & 2 & xz^3 \\ \hline
58 & (6,5,4,1) & x^2z+y^3w+z^4+w^{16} & 14 & 2 & x^2w^4, xy^2 \\ \hline 
59 & (8,7,5,1) & x^2z+y^3+z^4w+w^{21} & 14 & 2 & x^2w^5\\ \hline
60 & (7,6,4,1) & x^2z+y^3+yz^3+w^{18} & 13 & 3 & x^2w^4, z^4w^2\\ \hline
61 & (11,7,6,4) & x^2z+y^4+z^4w+w^7 & 18 & 2 & \\ \hline 
62 & (8,5,4,3) & x^2z+y^4+yw^5+z^5 & 14 & 4 & xw^4, z^2w^4 \\ \hline
63 & (4,3,2,1) & x^2z+y^3w+z^5+w^{10} & 10 & 4 & x^2w^2, xy^2, y^2z^2 \\ \hline
64 & (10,7,4,3) & x^2z+y^3w+z^6+w^8 & 18 & 0 & xy^2 \\ \hline 
65 & (14,11,5,3) & x^2z+y^3+z^6w+w^{11} & 18 & 0 & \\ \hline 
66 & (3,2,1,1) & x^2z+y^3w+z^7+w^7 & 7 & 3 & x^2w, xy^2, y^3z \\ \hline
67 & (9,7,3,2) & x^2z+y^3+yw^7+z^7 & 13 & 3 & xw^6, zw^9 \\ \hline 
68 & (13,10,4,3) & x^2z+y^3+yz^5+w^{10} & 17 & 1 & z^6w^2\\ \hline 
69 & (7,4,3,2) & x^2w+y^4+yz^4+w^8 & 14 & 4 & xz^3, z^4w^2 \\ \hline
70 & (8,5,3,2) & x^2w+y^3z+z^6+w^9 & 14 & 2 & xy^2, y^2w^4 \\ \hline
71 & (7,4,3,1) & x^2w+y^3z+z^5+w^{15} & 11 & 1 & xy^2, y^3w^3 \\ \hline
72 & (7,5,2,1) & x^2w+y^3+yz^5+w^{15} & 9 & 1 & xz^4, z^7w \\ \hline
86 & (9,7,5,4) & x^2y+y^3w+z^5+zw^5 & 19 & 1 & xw^4 \\ \hline
87 & (5,4,3,1) & x^2z+y^3w+yz^3+w^{13} & 13 & 3 & x^2w^3, xy^2, z^4w \\ \hline
88 & (11,9,5,2) & x^2z+y^3+yw^9+z^5w & 17 & 1 & xw^8, zw^{11} \\ \hline
89 & (5,3,2,1) & x^2w+y^3z+yz^4+w^{11} & 10 & 2 & xy^2, xz^3, y^3w^2, z^5w\\ \hline 
\hline
\end{array}$$
\end{table} 

\begin{table}[ht]
\caption{$K3$ surfaces with involution $\sigma (x)=-x$, NOT in Borcea's list, 
after change and/or removal of several terms}
\label{table-y4}
$$\begin{array}{r|l|l|r|r|r}\hline \hline
Y\# & (w_0,w_1,w_2,w_3) & F(x_0,x_1,x_2,x_3)=F(x,y,z,w) & r & a & \mbox{ 
\begin{tabular}{l} 
terms changed/removed \\   
 from equations of [Y] 
\end{tabular}} \\ \hline
3 & (2,2,1,1) & x^2y+y^3+z^6+w^6 & 7 & 7 & x^3\rightarrow x^2y \\ \hline
4 & (4,4,3,1) & x^2y+y^3+z^4+w^{12} & 7 & 7 & x^3\rightarrow x^2y \\ \hline
17 & (5,5,3,2) & x^2y+y^3+z^5+zw^6 & 12 & 6 & xw^5, yw^5, x^3\rightarrow x^2y \\ \hline
18 & (3,3,2,1) & x^2y+y^3+z^4w+w^9 & 10 & 6 & xz^3, yz^3, x^3\rightarrow x^2y \\ \hline
\hline
\end{array}$$
\end{table}

\begin{table}[ht]
\caption{$K3$ surfaces with a different kind of involution}
\label{table-y6}
$$\begin{array}{r|l|l|r|r|r|r}\hline \hline
Y\# & (w_0,w_1,w_2,w_3) & F(x_0,x_1,x_2,x_3)=F(x,y,z,w) & r & a & \mbox{ 
\begin{tabular}{l} 
terms \\  
removed 
\end{tabular}} & \mbox{involution}  \\ \hline
2 & (4,3,3,2) & x^3+y^4+z^4+w^6 & 10 & 8 & \mbox{none} & y\to -y \\ \hline
16 & (8,7,6,3) & x^3+y^3w+z^4+w^8 & 14 & 6 & \mbox{none} & z\to -z \\ \hline
52 & (12,9,8,7) & x^3+y^4+xz^3+zw^4 & 19 & 3 & \mbox{none} & y\to -y \\ \hline
84 & (9,7,6,5) & x^3+xz^3+y^3z+yw^4 & 20 & 2 & z^2w^3 & w\to -w \\ \hline
\hline
\end{array}$$
\end{table}

\bigskip 

\begin{table}[ht]
\caption{$K3$ surfaces with involution $\sigma (x)=-x$, but not realized as
quasi-smooth hypersurfaces in 4 monomials}
\label{table-y5}
$$\begin{array}{r|l|l|l|l}\hline \hline
Y\# & (w_0,w_1,w_2,w_3) & F(x_0,x_1,x_2,x_3)=F(x,y,z,w) & r & a \\ \hline 
85 & (5,4,3,2) & x^2y+x^2w^2+y^3w+y^2z^2+yw^5+z^4w+w^7 & 15 & 5 \\ \hline 
90 & (17,7,6,4) & x^2+y^4z+y^2w^5+z^5w+zw^7 & 17 & 3 \\ \hline 
91 & (19,8,6,5) & x^2+y^4z+yz^5+yw^6+z^3w^4 & 18 & 2 \\ \hline 
93 & (17,10,4,3) & x^2+y^3z+yz^6+yw^8+z^7w^2+zw^{10} & 16 & 2 \\ \hline 
94 & (7,5,4,3) & x^2y+y^3z+y^2w^3+z^4w+zw^5 & 18 & 2 \\ \hline
95 & (7,5,3,2) & x^2z+y^3w+yz^4+yw^6+z^5w+zw^7 & 16 & 2 \\ \hline
\hline
\end{array}$$
\end{table}

\begin{table}[ht]
\caption{$K3$ weights with no obvious involution}
\label{table-y1}
$$\begin{array}{r|l|l}\hline \hline
Y\# & (w_0,w_1,w_2,w_3) & F(x_0,x_1,x_2,x_3)=F(x,y,z,w) \\ \hline
15 & (5,4,3,3) & x^3+y^3z+y^3w+z^5+w^5 \\ \hline
53 & (6,5,4,3) & x^3+y^3w+y^2z^2+xz^3+z^3w^2+w^6 \\ \hline
54 & (7,6,5,3) & x^3+y^3w+yz^3+z^3w^2+w^7 \\ \hline
\hline
\end{array}$$
\end{table}

\begin{table}[ht]
\caption{Nikulin's invariant associated with other types of 
involutions}
\label{table-extra}
$$\begin{array}{r|l|l|c|r|r}\hline \hline
Y\# & (w_0,w_1,w_2,w_3) & F(x_0,x_1,x_2,x_3)=F(x,y,z,w) & 
\sigma (x_i)=-x_i & r & a \\ \hline
2 & (4,3,3,2) & x^3+y^4+z^4+w^6 & y & 10 & 8 \\ \hline 
 & (4,3,3,2) & x^3+y^4+z^4+w^6 & w & 18 & 4 \\ \hline
3 & (2,2,1,1) & x^2y+y^3+z^6+w^6 & z & 10 & 8 \\ \hline 
4 & (4,4,3,1) & x^2y+y^3+z^4+w^{12} & z & 14 & 6 \\ \hline
5 & (3,1,1,1) & x^2+y^6+z^6+w^6 & y & 9 & 9 \\ \hline
6 & (5,2,2,1) & x^2+y^5+z^5+w^{10} & w & 6 & 4 \\ \hline
 &  (5,2,2,1) & x^2+y^5+yz^4+w^{10} & z & 10 & 8 \\ \hline 
7 & (4,2,1,1) & x^2+y^4+z^8+w^8 & y & 10 & 6 \\ \hline
 & (4,2,1,1) & x^2+y^4+z^8+w^8 & w & 10 & 8 \\ \hline
8 & (6,3,2,1) & x^2+y^4+z^6+w^{12} & y & 12 & 6 \\ \hline 
 & (6,3,2,1) & x^2+y^4+z^6+w^{12} & z & 12 & 8 \\ \hline 
9 & (10,5,4,1) & x^2+y^4+z^5+w^{20} & y & 14 & 4 \\ \hline 
 & (10,5,4,1) & x^2+y^4+z^5+w^{20} & w & 14 & 4 \\ \hline  
10 & (6,4,1,1) & x^2+y^3+z^{12}+w^{12} & z & 10 & 8 \\ \hline
12 & (9,6,2,1) & x^2+y^3+z^9+w^{18} & w & 6 & 2 \\ \hline
13 & (12,8,3,1) & x^2+y^3+z^8+w^{24} & z & 14 & 6 \\ \hline
16 & (8,7,6,3) & x^3+y^3w+z^4+w^8 & z & 14 & 6 \\ \hline
17 & (5,5,3,2) & x^2y+y^3+z^5+zw^6 & w & 17 & 5 \\ \hline
18 & (3,3,2,1) & x^2y+y^3+z^4w+w^9 & z & 14 & 6 \\ \hline 
19 & (3,2,2,1) & x^2y+y^4+z^4+w^8 & z & 10 & 8 \\ \hline
23 & (5,3,2,2) & x^2z+y^4+z^6+w^6 & w & 12 & 6 \\ \hline
29 & (15,6,5,4) & x^2+y^5+z^6+yw^6 & w & 18 & 4 \\ \hline 
31 & (12,5,4,3) & x^2+y^4z+z^6+w^8 & y & 18 & 4 \\ \hline
33 & (9,4,3,2) & x^2+y^4w+z^6+w^9 & y & 14 & 6 \\ \hline
 & (9,4,3,2) & x^2+y^4w+z^6+w^9 & z & 10 & 6 \\ \hline
36 & (10,5,3,2) & x^2+y^4+yz^5+w^{10} & w & 16 & 6 \\ \hline
37 & (8,4,3,1) & x^2+y^4+yz^4+w^{16} & z & 10 & 8 \\ \hline
39 & (9,5,3,1) & x^2+y^3z+z^6+w^{18} & w & 15 & 7 \\ \hline
40 & (7,4,2,1) & x^2+y^3z+z^7+w^{14} & w & 7 & 3 \\ \hline
41 & (12,7,3,2) & x^2+y^3z+z^8+w^{12} & w & 15 & 7 \\ \hline
42 & (5,3,1,1) & x^2+y^3z+z^{10}+w^{10} & w & 11 & 9 \\ \hline
44 & (8,5,2,1) & x^2+y^3w+z^8+w^{16} & z & 14 & 6 \\ \hline
52 & (12,9,8,7) & x^3+y^4+xz^3+zw^4 & y & 20 & 2 \\ \hline 
 & (12,9,8,7) & x^3+y^4+xz^3+zw^4 & w & 19 & 3 \\ \hline
75 & (11,5,4,2) & x^2+y^4w+z^5w+w^{11} & y & 13 & 5 \\ \hline
84 & (9,7,6,5) & x^3+xz^3+y^3z+yw^4 & w & 20 & 2 \\ \hline 
\hline
\end{array}$$
\end{table}

\begin{table}[ht]
\caption{$K3$ surfaces and their mirror partners}
\label{table9}
$$\begin{array}{r|l|l|l||r|r|r|l}\hline\hline
S:Y\# & S:B\#  & \mbox{weight} & r & S^{\vee}Y\#& S^{\vee} B\# & 20-r & 
\mbox{weight for mirror} \\ \hline\hline
1 & & (1,1,1,1) & 8 & 56 & & 12 & (11,8,6,5) \\
  & &          &   & 73 & 27& 12 & (25,10,8,7) \\ \hline
4 & & (4,4,3,1) & 10 & 4 & & 10 & (4,4,3,1) \\ \hline
5 & 1& (3,1,1,1) & 1  & 52 & & 19 & (12,9,8,7) \\ \hline
6 & 2 & (5,2,2,1) & 6  & 26 & & 14 & (9,5,4,2) \\
    & &          &  & 34 &16 & 14 & (15,7,6,2) \\
    & &          &  & 76 &12 & 14 & (13,6,5,2) \\ \hline
8 & 41& (6,3,2,1)   & 3 & 64& & 17 & (10,7,4,5) \\ \hline
9 & & (10,5,4,1) & 10 & 9 & & 10 & (10,5,4,1) \\
    &&            &  & 71 & & 10 & (7,4,3,1) \\ \hline
10 &42 & (6,4,1,1) & 2 & 65 & & 18 & (14,11,5,3) \\
     & &          &   & 46 & 29& 18 & (53,22,6,5) \\
     & &          &   & 80 &40 &18 & (22,13,5,4) \\ \hline
11 & 17 & (15,10,3,2) & 12 & 24 & & 8 & (5,4,2,1) \\ \hline
12 & 8& (9,6,2,1)   & 6 & 27 & &14 & (11,8,3,2) \\
     & &          &   & 49 &25 &14 & (21,14,5,2) \\ \hline
13 & 45 &(12,7,3,1) & 8 & 20& & 12 & (9,8,6,1) \\
     &  &       &      & 59 & & 12 & (8,7,5,1) \\ \hline
14 & 26 &(21,14,6,1) & 10 & 14&26 & 10 & (21,14,6,1) \\
     &  &           &  & 28 & & 10 & (10,7,3,1) \\
     &  &           &  & 45 & 36& 10 & (14,9,4,1) \\
     &  &           &  & 51 &47 &10 & (18,12,5,1) \\ \hline
20 & &(9,8,6,1) & 12 & 17 & & 8 &  (12,8,3,1) \\
   &  &           &    & 72& & 8 & (7,5,2,1) \\ \hline
21 && (2,1,1,1) & 2 & 30 &39 &18 & (20,8,7,5) \\
   &  &           &   & 86 & & 18 & (9,7,5,4) \\ \hline 
22 && (6,5,3,1) & 10 & 22 & & 10 & (6,5,3,1) \\ \hline
24 && (5,4,2,1) & 8 & 11 &18 & 12 & (15,10,3,2) \\ \hline
25 && (4,3,1,1) & 8 & 43 &46 & 12 & (18,11,4,3) \\
   &  &           &   & 48 &48 & 12 & (24,16,5,3)\\
   &  &           &   & 88 & & 12 & (11,9.5,2) \\ \hline
26 && (9,5,4,2) & 14 & 6 &2 & 6 & (5,2,2,1) \\ \hline
27 && (11,8,3,2) & 14 & 12&8 & 6 & (9,6,2,1) \\ \hline
28 && (10,7,3,1) & 10 & 14 & 26& 10 & (21,14,6,1) \\
   &&            &    & 28 & &10 & (10,7,3,1) \\
   &&            &    & 45 & 36& 10 & (14,9,4,1) \\
   &&            &    & 51 &47 &10 & (18,12,5,1) \\ \hline
30 &39& (20,8,7,5) & 18 & 21 & & 2 & (2,1,1,1) \\ \hline
32 &4& (7,3,2,2) & 10 & 10 &42 &10 & (7,3,2,2) \\ \hline
34 &16& (15,7,6,2) & 14 & 6 &2 & 6 & (5,2,2,1) \\ \hline
35 & 35& (14,7,4,3) & 16 & 66& & 4 & (3,2,1,1) \\ \hline
37 & 31& (8,4,3,1) & 9 & 58 & & 11 & (6,5,4,1) \\ \hline
38 & 17& (15,8,6,1) & 11 & 50 &19& 9 & (15,10,4,1) \\
   &  &          &    & 82 &11 & 9 & (11,7,3,1) \\ \hline
39 & 7& (9,5,3,1) & 9 & 60 & & 11 & (7,6,4,1) \\ \hline
40 & 5& (7,4,2,1) & 7 & 81 &14& 13 & (13,8,3,2) \\ \hline
\end{array}$$
\end{table}

\begin{table}[ht]
\caption{$K3$ surfaces and their mirror partners (continued)}
\label{table10}
$$\begin{array}{r|l|l|l||r|r|r|l}\hline\hline
S:Y\# & S:B\# & \mbox{weight} & r & S^{\vee}:Y\#& S^{\vee}: B\# & 20-r & \mbox{weight for mirror} \\ \hline\hline
42 &3 & (5,3,1,1) & 3 & 68 && 17 & (13,10,4,3) \\
   &  &         &   & 83 & 28& 17 & (27,18,5,4) \\
   &  &         &   & 92 & 23 &17 & (19,11,5,3) \\ \hline
43 & 46& (18,11,4,3) & 16 & 25& & 4 & (4,3,1,1) \\ \hline
45 & 36& (14,9,4,1) & 10 & 14&26 & 10 & (21,14,6,1) \\
   &   &         &    & 28 & &10 & (10,7,3,1) \\
   &   &         &    & 45 &36& 10 & (14,9,4,1) \\
   &   &         &    & 51 & 47& 10 & (18,12,5,1)\\ \hline
46 & 29& (33,22,6,5) & 18 & 10&42 & 2 & (6,4,1,1) \\ \hline
48 & 48& (24,16,5,3) & 16 & 25 && 4 & (4,3,1,1) \\ \hline
49 & 25& (21,14,5,2) & 14 & 12 &8 & 6 & (9,6,2,1) \\ \hline
50 & 19& (15,10,4,1) & 9 & 38 &17& 11 & (15,8,6,1) \\
   &   &          &   & 77 & 13& 11 & (13,7,5,1) \\ \hline
51 & 47& (18,12,5,1) & 10 & 14 &26& 10 & (21,12,6,1) \\
   &   &          &    & 28 & & 10 & (10,7,3,1) \\
   &   &          &    & 45 & 36& 10 & (14,9,4,1) \\
   &   &          &    & 51 & 47& 10 & (18,12,5,1) \\ \hline
52 & &(12,9,8,7) & 19 & 5 & & 1 & (3,1,1,1) \\ \hline
56 & &(11,8,6,5) & 19 & 1 & & 1 & (1,1,1,1) \\ \hline
58 & &(6,5,4,1) & 11 & 37 &31 & 9 & (8,4,3,1) \\ \hline
59 & &(8,7,5,1) & 12 & 13 & 45& 8 & (12,8,3,1) \\
   & &          &    & 72 & & 8 & (7,5,2,1) \\ \hline
60 & &(7,6,4,1) & 11 & 39 & 7& 9 & (9,5,3.1) \\ \hline
64 & &(10,7,4,3) & 17 & 7 & 30& 3 & (4,2,1,1) \\ \hline
65 & &(14,11,5,3) & 17 & 10 &42 & 3 & (6,4,1,1) \\ \hline
66 & &(3,2,1,1) & 4 & 35 &35 & 16 & (14,7,4,3) \\ \hline
68 & &(13,10,4,3) & 17 & 42&3 & 3 & (5,3,1,1) \\ \hline
71 & &(7,4,3,1) & 10 & 9 &34& 10 & (10,5,4,1) \\
   & &          &    & 71 && 10 & (7,4,3,1) \\ \hline
72 & &(7,5,2,1) & 8 & 20 & & 12 & (9,8,6,1) \\
   & &          &   & 59 && 12 & (8,7,5,1) \\ \hline
73 & 27& (25,10,8,7) & 19 & 1& & 1 & (1,1,1,1) \\ \hline
76 & 12& (13,6,5,2) & 14 & 6 &2& 6 & (5,2,2,1) \\ \hline
77 & 13& (13,7,5,1) & 11 & 50 &19& 9 & (15,10,4,1) \\
   &   &         &    & 82 &11& 9 & (11,7,3,1) \\ \hline
78 & 10& (11,6,4,1) & 10 & 10&42 & 10 & (11,6,4,1) \\ \hline
80 & 40& (22,13,5,4) & 18 & 10 &42& 2 & (6,4,1,1) \\ \hline
81 & 14& (13,8,3,2) & 13 & 40 &5& 7 & (7,4,2,1) \\ \hline
82 & 11& (11,7,3,1) & 9 & 38 &17& 11 & (15,8,6,1) \\
   &    &        &   & 77 &13& 11 & (13,7,5,1) \\ \hline
83 & 28& (27,18,5,4) & 17 & 42&3 & 3 & (5,3,1,1) \\ \hline
86 & &(9,7,5,4) & 18 & 21 && 2 & (2,1,1,1) \\ \hline
87 & &(5,4,3,1) & 10 & 87 && 10 & (1,3,4,5) \\ \hline
92 & 23& (19,11,5,3) & 17 & 42 &3& 3 & (5,3,1,1) \\ \hline
\end{array}$$
\end{table}

\centerline{\bf Acknowledgments}
\medskip

We thank the referees for carefully reading through the
earlier version(s) and having given us constructive
criticism and suggestions for the improvement of the paper.

Y. Goto is supported in part by JSPS grant from Japan (category C, 
21540003 and 24540004). R. Livn\'e is supprted in part by a grant from 
Israel. N. Yui is supported in part by NSERC Discovery grant from
Canada. The paper was completed while N. Yui was a visiting
professor at Kavli IPMU, and at Tsuda College in Japan in 2012,
and subsequently revisions are done at IHES and the Fields
Institute in 2013. She thanks the hospitality of these institutions. 
Y. Goto prepared the manuscript while he visited the Fields Institute 
and Queen's 
University in Canada, Tsuda College in Japan and the National Center 
for Theoretical Sciences of Taiwan. He thanks the hospitality of 
these institutions. 

Finally, we would like to thank  M. Espinosa-Lara, A. Molnar,
A. Rabindranath and Jun Ho Whang for their careful reading
of the earlier version of the manuscript and pointout
numerous typos and inaccuracies. They studied this paper 
as their project under the Fields Undergraduate Research
Program in the summer of 2013.

\end{document}